\definecolor{mylinkcolor}{rgb}{0.8,0,0}
\definecolor{myurlcolor}{rgb}{0,0,0.8}
\definecolor{mycitecolor}{rgb}{0,0,0.8}
\DeclareSymbolFont{cyrletters}{OT2}{wncyr}{m}{n}
\DeclareMathSymbol{\Sha}{\mathalpha}{cyrletters}{"58}
\newtheorem{defn}{Definition}[section]
\newtheorem{definition}[defn]{Definition}
\newtheorem{corollary}[defn]{Corollary}
\newtheorem{lemma}[defn]{Lemma}
\newtheorem{thm}[defn]{Theorem}
\newtheorem{proposition}[defn]{Proposition}
\theoremstyle{definition}
\newtheorem*{ack}{Acknowledgements}
\newtheorem{remark}[defn]{Remark}
\newtheorem{example}[defn]{Example}
\newcommand{\QQ}{\mathbb Q}
\newcommand{\ZZ}{\mathbb Z}
\newcommand{\Z}{\mathbb Z}
\newcommand{\Q}{\mathbb{Q}}
\newcommand{\FF}{\mathbb F}
\newcommand{\arrow}{\longrightarrow}
\newcommand{\tor}{\mathrm{tors}}
\newcommand\myiso{\stackrel{\mathclap{\normalfont\mbox{\small $p$}}}{-}}
\newcommand\myisot{\stackrel{\mathclap{\normalfont\mbox{\small $3$}}}{-}}
\begin{document}
	
	
	
	\title[2-adic Galois images of non-CM isogeny-torsion graphs defined over $\QQ$]{2-adic Galois images of non-CM isogeny-torsion graphs defined over $\QQ$}
	
\author{Garen Chiloyan}
\email{garen.chiloyan@gmail.com} 
\urladdr{https://sites.google.com/view/garenmath/home}



\subjclass{Primary: 11F80, Secondary: 11G05, 11G15, 14H52.}

\begin{abstract}
		Let $\mathcal{E}$ be a $\QQ$-isogeny class of elliptic curves defined over $\QQ$ without CM. The isogeny graph associated to $\mathcal{E}$ is a graph which has a vertex for each elliptic curve in $\mathcal{E}$ and an edge for each $\QQ$-isogeny of prime degree that maps one elliptic curve in $\mathcal{E}$ to another elliptic curve in $\mathcal{E}$, with the degree recorded as a label of the edge. An isogeny-torsion graph is an isogeny graph where, in addition, we label each vertex with the abstract group structure of the torsion subgroup over $\QQ$ of the corresponding elliptic curve. Then, the main statement of the article is a classification of the $2$-adic Galois Image of Galois that occurs at each vertex of all isogeny-torsion graphs consisting of elliptic curves defined over $\QQ$ without CM.  
	\end{abstract}

\maketitle


\section{Introduction}\label{section introduction}

Let $E$ and $E'$ be elliptic curves defined over $\QQ$. Then $E$ is said to be $\QQ$-isogenous to $E'$ if there is a non-constant isogeny $\phi \colon E \to E'$ defined over $\QQ$. This relation is an equivalence relation. Let $\mathcal{E}$ be the $\QQ$-isogeny class of $E$; the set of elliptic curves over $\QQ$, up to isomorphism, that are $\QQ$-isogenous to $E$. A theorem of Kenku states that $\mathcal{E}$ is a set consisting of $1 - 4$, $6$, or $8$ curves.

The isogeny graph associated to $\mathcal{E}$ is a graph that represents as a vertex each elliptic curve defined over $\QQ$ in the $\QQ$-isogeny class $\mathcal{E}$, and the edges represent $\QQ$-isogenies of prime degree that map one curve in $\mathcal{E}$ to another curve in $\mathcal{E}$, with the degree recorded as a label of the edge. An isogeny-torsion graph is an isogeny graph where, in addition, we label each vertex with the abstract group structure of the torsion subgroup over $\QQ$ of the corresponding elliptic curve. Two isogeny-torsion graphs are the same if they have the same number of vertices and all of the labels of the vertices (torsion subgroups) and edges (degrees of isogenies) are the same. The main goal of this article is to classify the $2$-adic Galois image at each vertex of all isogeny-torsion graphs attached to $\QQ$-isogeny classes of elliptic curves defined over $\QQ$ without CM.

\begin{example}\label{T4 ITG example}

There are four elliptic curves in the $\QQ$-isogeny class with LMFDB (\cite{lmfdb}) label \texttt{17.a} which we denote $E$, $E'$, $E''$, and $E'''$. The isogeny graph associated to \texttt{17.a} is shown below on the left and the isogeny-torsion graph associated to \texttt{17.a} is shown below on the right.

\begin{center} $\begin{tikzcd}
     & E'''                                 &      \\
     & E \arrow[u, "2", no head] \arrow[ld, "2"', no head] \arrow[rd, "2", no head] &      \\
E' &                                      & E''
\end{tikzcd} \hspace{10mm} \begin{tikzcd}
     & \Z / 2 \Z                                 &      \\
     & \Z / 2 \Z \times \Z / 2 \Z \arrow[u, "2", no head] \arrow[ld, "2"', no head] \arrow[rd, "2", no head] &      \\
\Z / 4 \Z &                                      & \Z / 4 \Z
\end{tikzcd}$
\end{center}
\end{example}
The rational isogenies of elliptic curves defined over $\QQ$ have been described completely in the literature. One of the most important milestones in the classification of rational isogenies was \cite{mazur1}, where Mazur dealt with the case of isogenies of prime degree. The complete classification of rational isogenies, for prime or composite degree, was completed due to work of Fricke, Kenku, Klein, Kubert, Ligozat, Mazur  and Ogg, among others (see Theorem \ref{thm-ratnoncusps} below, and \cite{lozano0}, Section 9). In particular, the work of Kenku \cite{kenku} shows that there are at most $8$ elliptic curves in each isogeny class over $\QQ$ (see Theorem \ref{thm-kenku} below). Theorem \ref{thm-mainisogenygraphs} follows directly from the classification of rational isogenies over $\QQ$.

\begin{thm}[Theorem 1.2, \cite{gcal-r}]\label{thm-mainisogenygraphs}
	There are $26$ isomorphism types of isogeny graphs that are associated to $\QQ$-isogeny classes of elliptic curves defined over $\QQ$. More precisely, there are sixteen types of (linear) $L_{k}$ graphs of $k = 1-4$ vertices, three types of (nonlinear two-primary torsion) $T_{k}$ graphs of $k = 4$, $6$, or $8$ vertices, six types of (rectangular) $R_k$ graphs of $k = 4$ or $6$ vertices, and one (special) $S$ graph.
\end{thm}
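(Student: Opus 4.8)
The plan is to deduce the statement by a finite combinatorial analysis from the complete classification of rational isogenies recalled in Theorem~\ref{thm-ratnoncusps} and from Kenku's bound, Theorem~\ref{thm-kenku}. The basic observation is that every $\QQ$-isogeny between two curves of a $\QQ$-isogeny class $\mathcal{E}$ factors, up to $\QQ$-isomorphism, into $\QQ$-isogenies of prime degree; hence the isogeny graph of $\mathcal{E}$ is connected and is recovered from its prime-degree edges, and the prime labels that can occur lie in $\{2,3,5,7,13,37,43,67,163\}$, the primes $\ell$ for which $X_0(\ell)$ carries a noncuspidal rational point. Moreover, the full list of integers $n$ admitting a noncuspidal rational point on $X_0(n)$ --- namely $n\in\{1,\dots,10,12,13,16,18,25,27,37,43,67,163\}$ --- records exactly which strings of prime-degree isogenies through a common vertex are permitted; combined with the size bound of Theorem~\ref{thm-kenku} this reduces the classification to a bounded case check.

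First I would dispose of the ``large'' primes. If an edge carries a label $\ell\in\{7,13,37,43,67,163\}$, then none of $2\ell$, $3\ell$, $\ell^2$ (nor any other relevant product) lies in the admissible list of $n$, so no curve of $\mathcal{E}$ can admit a second independent $\QQ$-isogeny; hence $\mathcal{E}$ consists of exactly two curves and its graph is $L_2$ with label $\ell$. This produces six of the sixteen linear graphs at once. For the remaining primes $2,3,5$ one checks which of the degrees $3,5,9,10,25,27$ can be strung along a path through a fixed vertex; this bookkeeping yields $L_1$, the graphs $L_2$ with labels $2,3,5$, the linear graphs $L_3$ (four types) and $L_4$ (two types), and in particular it bounds the length of a chain of $3$-isogenies (respectively of $5$-isogenies) as well as the way such a chain can be attached to a single $2$-isogeny.

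The heart of the matter --- and the step I expect to be the main obstacle --- is the $2$-primary part together with its interaction with a chain of $3$-isogenies. Here one must enumerate all possible shapes of the subgraph of $\mathcal{E}$ spanned by its $2$-isogenies: a vertex may support one, two, or three $2$-isogenies (the last case being equivalent to full rational $2$-torsion on the curve), while the availability of the degrees $4,8,16$ but the absence of $32$, together with the Kenku bound of eight vertices, forces these $2$-isogeny subgraphs to be linear chains, a single $4$-cycle (the rectangular shape $R_4$), or the ``$T$''-configurations $T_4, T_6, T_8$. Grafting onto these the admissible chains of $3$-isogenies (degrees $6,9,12,18$ being allowed but $24,36$ not) and enumerating how the resulting pendants may be attached produces the remaining rectangular graphs on six vertices and the unique special graph $S$ on eight vertices. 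I would complete the proof by exhibiting an explicit $\QQ$-isogeny class realizing each of the resulting $16+3+6+1=26$ graphs --- for instance via LMFDB labels --- and checking that no two of them agree as graphs with labelled vertices and edges, so that the classification is both complete and without repetitions.
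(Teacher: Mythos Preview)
The paper does not actually prove this theorem; it is quoted from \cite{gcal-r}, and the only justification given is the one-line remark preceding the statement that it ``follows directly from the classification of rational isogenies over $\QQ$.'' Your overall strategy --- a finite combinatorial analysis built on Theorem~\ref{thm-ratnoncusps} and Theorem~\ref{thm-kenku} --- is exactly what that remark is pointing at, so in spirit you are doing the right thing.

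However, your execution has real errors that would cause the argument to fail. Your list of admissible cyclic isogeny degrees omits $11,14,15,17,19,21$ (compare part (2) of Theorem~\ref{thm-ratnoncusps}), and correspondingly your list of primes omits $11,17,19$. This matters immediately: you place $7$ among the ``large'' primes and assert that $2\cdot 7$ and $3\cdot 7$ are not admissible, but $X_0(14)$ and $X_0(21)$ do have noncuspidal rational points, and these give rise to the $R_4(14)$ and $R_4(21)$ graphs. The primes that truly behave as you describe are $11,13,17,19,37,43,67,163$; with $7$ you must allow a second edge. Your counts of linear graphs are also off: there are two $L_3$ types ($L_3(9)$ and $L_3(25)$) and one $L_4$ type, not four and two. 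Finally, the $R_4$ graphs are never $2$-primary: a square of $2$-isogenies would force $E[2]\subset\ker\phi$ for a degree-$4$ isogeny $\phi$, contradicting cyclicity. The rectangular graphs all arise from a pair of isogenies of distinct primes (e.g.\ $2$ and $3$, or $3$ and $5$), so they belong to the ``mixed'' analysis, not the $2$-isogeny subgraph step.
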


The isogeny class degree of an isogeny graph (and also of the $\QQ$-isogeny class) is the greatest degree of a finite, cyclic, $\QQ$-isogeny on the isogeny graph. In the case of a linear graph of $L_{2}$ or $L_{3}$ type or in the case of a rectangular graph of $R_{4}$ type, the isogeny class degree of the isogeny graph is written in parentheses to distinguish it from other isogeny graphs of the same size and shape. There are examples of isogeny graphs of $R_{4}$ type that are generated by a $10$-isogeny and there are examples of an isogeny graph of $R_{4}$ type that are generated by a $14$-isogeny. One cannot distinguish an $R_{4}(10)$ graph from an $R_{4}(14)$ graph simply by the size and shape of the graph (see below).
\begin{center} \begin{tikzcd}
E_{1} \arrow[d, no head, "5"'] \arrow[r, no head, "2"] & E_{2} \arrow[d, no head, "5"] \\
E_{3} \arrow[r, no head, "2"']                & E_{4}               
\end{tikzcd} \hspace{10mm} \begin{tikzcd}
E_{1} \arrow[d, no head, "7"'] \arrow[r, no head, "2"] & E_{2} \arrow[d, no head, "7"] \\
E_{3} \arrow[r, no head, "2"']                & E_{4}               
\end{tikzcd} \end{center}
The main theorem in \cite{gcal-r} was the classification of isogeny-torsion graphs that occur over $\QQ$.
\begin{thm}[Theorem 1.3, \cite{gcal-r}] There are $52$ isomorphism types of isogeny-torsion graphs that are associated to elliptic curves defined over $\QQ$. In particular, there are $23$ types of $L_{k}$ graphs, $13$ types of $T_{k}$ graphs, $12$ types of $R_{k}$ graphs, and $4$ types of $S$ graphs. Moreover, there are $16$ isomorphism types of isogeny-torsion graphs that are associated to elliptic curves over $\QQ$ with complex multiplication.
\end{thm}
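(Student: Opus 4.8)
The plan is to bootstrap from the classification of the underlying (unlabelled) isogeny graphs. By Theorem~\ref{thm-mainisogenygraphs} there are exactly $26$ isomorphism types of isogeny graphs over $\QQ$, so it is enough, for each of these $26$ shapes, to determine the finite set of admissible labellings of its vertices by torsion subgroups. By Mazur's torsion theorem, the torsion subgroup over $\QQ$ of any curve in a $\QQ$-isogeny class is one of the fifteen groups $\ZZ/n\ZZ$ for $n\in\{1,\dots,10,12\}$ and $\ZZ/2\ZZ\times\ZZ/2m\ZZ$ for $m\in\{1,2,3,4\}$, so each shape admits only finitely many a priori labellings, and the theorem amounts to the statement that, after discarding the inadmissible ones, exactly $23$, $13$, $12$, and $4$ labelled graphs survive on the $L_k$, $T_k$, $R_k$, and $S$ shapes, for a total of $52$.

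The engine for discarding labellings is the interaction between a rational cyclic isogeny and torsion. If $\phi\colon E\to E'$ is a cyclic $\QQ$-isogeny of degree $N$ and $\ell\nmid N$ is prime, then $\phi$ restricts to a Galois-equivariant isomorphism $E[\ell^\infty](\QQ)\xrightarrow{\sim} E'[\ell^\infty](\QQ)$; hence the prime-to-$N$ part of the torsion is the \emph{same} at every vertex of the graph. Consequently the only freedom is in the $\ell$-primary torsion for the (at most two) primes $\ell\mid N$, and for those primes the way the $\ell$-primary torsion changes along an $\ell$-power edge is controlled by the rational points of $X_0(\ell^a)$, $X_1(\ell^b)$, and of fibre products of such modular curves. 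Since rational cyclic $\ell$-isogenies of non-CM curves over $\QQ$ occur only for $\ell\in\{2,3,5,7,11,13,17,19,37\}$ and rational points of prime order $\ell$ only for $\ell\in\{2,3,5,7\}$, the resulting local pictures form short explicit lists that one then assembles along each graph.

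Concretely, one proceeds family by family. For the linear graphs $L_k$ and the rectangular graphs $R_k$, whose class degree $N$ is a prime power or a product of two small primes, one first fixes the common prime-to-$\ell$ torsion and then works out, edge by edge, how a rational $\ell$-isogeny can create or destroy $\ell$-primary torsion: it can do so only through a rational subgroup of order $\ell$, so the $\ell$-part of the torsion order changes by at most a factor of $\ell$ across an edge, with the precise behaviour read off the relevant modular curve. For the $T_k$ graphs, which carry up to three independent $2$-isogenies and up to eight vertices, one analyses how the $2$-primary torsion---which is one of $1$, $\ZZ/2\ZZ$, $\ZZ/4\ZZ$, $\ZZ/8\ZZ$, $\ZZ/2\ZZ\times\ZZ/2\ZZ$, $\ZZ/2\ZZ\times\ZZ/4\ZZ$, $\ZZ/2\ZZ\times\ZZ/8\ZZ$---propagates, using parametrizations of $X_0(2^k)$ together with the Weil-pairing constraint relating a rational point of order $2^k$ on $E$ to the kernel of the dual isogeny. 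The single $S$ shape is settled by direct inspection of the finitely many $\QQ$-isogeny classes that realize it. In every case the \emph{existence} of a claimed labelled graph is certified by exhibiting one explicit curve in the class (for instance a curve of least conductor drawn from~\cite{lmfdb}), while \emph{non-existence} of the remaining a priori labellings follows from the constraints above. Adding $23+13+12+4=52$, and handling the CM curves separately---there are only thirteen $j$-invariants of CM elliptic curves over $\QQ$ (among whose isogeny classes the extra isogeny primes $43$, $67$, $163$ appear), and one enumerates their isogeny classes directly to find that $16$ of the graph types occur---finishes the count.

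The step I expect to be the principal obstacle is the $T_k$ analysis and the accompanying control of $2$-primary torsion: because there can be three independent rational $2$-isogenies and the $2$-primary torsion can be as large as $\ZZ/2\ZZ\times\ZZ/8\ZZ$, one must simultaneously track the rational $2$-torsion and every rational cyclic $2$-power subgroup across as many as eight curves, ruling out incompatible adjacencies (for example a vertex labelled $\ZZ/2\ZZ\times\ZZ/8\ZZ$ adjacent to a vertex with trivial torsion) by a careful study of rational points on several genus-$0$ and genus-$1$ modular curves---equivalently, of the possible images of the mod-$2^k$ representations. Keeping this bookkeeping exhaustive---every one of the $26$ shapes, every vertex, and both the existence and the non-existence halves---without dropping a sporadic configuration is where the real work lies.
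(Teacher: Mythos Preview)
The paper does not prove this statement at all: it is quoted verbatim as Theorem~1.3 of \cite{gcal-r} and used as an input to the present work, which concerns the $2$-adic Galois images attached to those already-classified isogeny-torsion graphs. There is therefore no proof in this paper to compare your proposal against.

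That said, your outline is a reasonable sketch of the strategy actually carried out in \cite{gcal-r}: start from the $26$ isogeny-graph shapes, observe that torsion away from the isogeny degree is invariant across the class, analyse how $\ell$-primary torsion changes along $\ell$-power edges via rational points on the relevant modular curves, exhibit LMFDB witnesses for existence, and rule out the remaining labellings by modular-curve constraints. One small slip: you list $19$ among the primes for which non-CM rational $\ell$-isogenies occur, but $L_2(19)$ (like $L_2(43)$, $L_2(67)$, $L_2(163)$) arises only in the CM case, as noted in the isogeny-graph theorem just above; this does not affect the shape of your argument. Your identification of the $T_k$ (two-primary) analysis as the hardest part matches the emphasis in \cite{gcal-r}, where the bulk of the case-by-case work indeed goes into tracking full two-torsion and cyclic $2$-power subgroups across up to eight curves.
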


\begin{example}
Reconsider the $\QQ$-isogeny class \texttt{17.a}. The image of the $2$-adic Galois representation attached to the elliptic curves in the $\QQ$-isogeny class are given below (with RZB labels. See \cite{rouse}).
\begin{center} $\begin{tikzcd}
     & \operatorname{H}_{242b} &      \\
     & \operatorname{H}_{210b} \arrow[u, "2", no head] \arrow[ld, "2"', no head] \arrow[rd, "2", no head] &      \\
\operatorname{H}_{221a} & & \operatorname{H}_{242d}
\end{tikzcd}$
\end{center}
\end{example}
Let $\mathcal{E}$ be a $\QQ$-isogeny class of elliptic curves defined over $\QQ$ without CM and let $\mathcal{G}$ be the isogeny-torsion graph associated to $\mathcal{E}$. We will say that the $2$-adic Galois image attached to $\mathcal{G}$ is the data of the classification of the $2$-adic Galois image attached to each elliptic curve represented by the vertices of $\mathcal{G}$. The main theorem of this paper is the following:

\begin{thm}\label{main theorem}
Let $\mathcal{G}$ be an isogeny-torsion graph associated to a $\QQ$-isogeny class of non-CM elliptic curves defined over $\QQ$. Then
\begin{enumerate}
    \item if $\mathcal{G}$ is of $L_{1}$ type, then the $2$-adic Galois Image attached to $\mathcal{G}$ is one of the $22$ arrangements in Table \ref{2-adic Galois Images of L1 Graphs},
    \item if $\mathcal{G}$ is of $L_{2}(2)$ type, then the $2$-adic Galois Image attached to $\mathcal{G}$ is one of the $80$ arrangements in Table \ref{2-adic Galois Images of L22 Graphs},
    \item if $\mathcal{G}$ is of $T_{4}$ type, then the $2$-adic Galois Image attached to $\mathcal{G}$ is one of the $60$ arrangements in Table \ref{2-adic Galois Images of T4 Graphs},
    \item if $\mathcal{G}$ is of $T_{6}$ type, then the $2$-adic Galois Image attached to $\mathcal{G}$ is one of the $81$ arrangements in Table \ref{2-adic Galois Images of T6 Graphs},
    \item if $\mathcal{G}$ is of $T_{8}$ type, then the $2$-adic Galois Image attached to $\mathcal{G}$ is one of the $53$ arrangements in Table \ref{2-adic Galois Images of T8 Graphs},
    \item if $\mathcal{G}$ is of $S$ type, then the $2$-adic Galois Image attached to $\mathcal{G}$ is one of the $5$ arrangements in Table \ref{2-adic Galois Images of S Graphs Table},
    \item if $\mathcal{G}$ is of $R_{6}$ type, then the $2$-adic Galois Image attached to $\mathcal{G}$ is one of the $2$ arrangements in Table \ref{2-adic Galois Images of R6 Graphs Table},
    \item if $\mathcal{G}$ is of $R_{4}$ type, then the $2$-adic Galois Image attached to $\mathcal{G}$ is one of the $13$ arrangements in Table \ref{2-adic Galois Images of R4 Graphs Table},
    \item if $\mathcal{G}$ is of $L_{3}(9)$ or $L_{3}(25)$ type, then the $2$-adic Galois Image attached to $\mathcal{G}$ is conjugate to $\operatorname{GL}(2, \ZZ_{2})$ (see Table \ref{2-adic Galois Images of L3 Graphs Table}),
    \item if $\mathcal{G}$ is of $L_{2}(p)$ type where $p$ is an odd prime, then the $2$-adic Galois Image attached to $\mathcal{G}$ is one of the $34$ arrangements in Table \ref{2-adic Galois Images of L2 Graphs Table Odd}.
\end{enumerate}
\end{thm}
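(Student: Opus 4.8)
The plan is to reduce the statement to a finite verification against the Rouse--Zureick-Brown (RZB) classification of $2$-adic images of Galois for non-CM elliptic curves over $\QQ$ (\cite{rouse}), carried out separately for each of the graph types occurring in the statement, whose complete list is supplied by the classification of isogeny-torsion graphs over $\QQ$ from \cite{gcal-r}. Two structural facts drive the reduction. First, an isogeny $\phi\colon E\to E'$ of odd degree $d$ induces a map $T_2(E)\to T_2(E')$ whose cokernel is killed by $d$, hence an isomorphism of $G_\QQ$-modules; so curves joined by a path of odd-degree edges have the same $2$-adic image up to conjugacy in $\GL(2,\ZZ_2)$. Second, a rational $2$-isogeny $\phi\colon E\to E'$ corresponds to a $G_\QQ$-stable line $C\subset E[2]$, and the $2$-adic image of $E'$ is obtained from that of $E$ by the explicit ``$2$-isogeny neighbor'' operation: push through $\phi$ and then saturate at the line $C$ (concretely, an integral conjugate of $\operatorname{diag}(1,2)H\operatorname{diag}(1,2)^{-1}$ after a suitable choice of basis). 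Consequently, for each graph type the $2$-adic image at every vertex is determined by the image $H$ at one chosen vertex together with the configuration of $H$'s $\QQ$-rational $2$-power cyclic subgroups, one per $2$-labeled edge.

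With this in hand I would work through the types $L_1$, $L_2(2)$, $T_4$, $T_6$, $T_8$, $S$, $R_6$, $R_4$, $L_3(9)$, $L_3(25)$, and $L_2(p)$ for odd $p$ one at a time. Fix a type and write its isogeny class degree as $N=2^a m$ with $m$ odd. \emph{Step one:} enumerate the RZB subgroups $H\le\GL(2,\ZZ_2)$ compatible with the prescribed $2$-primary data at a chosen vertex --- the torsion subgroup over $\QQ$ pins down which points of $E[2^n]$ are $G_\QQ$-invariant, forcing $H$ into the corresponding parabolic or unipotent-type subgroup, and a rational cyclic $2^b$-isogeny forces $H$ to stabilize a cyclic subgroup of order $2^b$, i.e.\ $X_H$ must dominate $X_0(2^b)$ --- so that intersecting these conditions with the RZB list leaves only finitely many candidates. \emph{Step two:} impose the odd part: the $j$-invariant must also lie on $X_0(m)$, so retain only those $H$ for which the fiber product $X_H\times_{X(1)}X_0(m)$ carries a non-CM, non-cuspidal rational point, using the known rational points of the $X_H$ from \cite{rouse} together with the standard parametrizations (or known point sets) of $X_0(m)$ for the small $m$ that arise. \emph{Step three:} propagate each surviving $H$ around the graph by the neighbor operation, discard internally inconsistent configurations, and confirm against LMFDB (\cite{lmfdb}) that every arrangement listed in the corresponding table is realized. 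This produces the claimed lists of $22$, $80$, $60$, $81$, $53$, $5$, $2$, $13$ and $34$ arrangements, and in the $L_3(9)$, $L_3(25)$ cases it must output the single arrangement in which every vertex carries $\GL(2,\ZZ_2)$.

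The $L_3(9)$ and $L_3(25)$ cases deserve a separate word, since $X_0(9)$ and $X_0(25)$ have genus $0$ and finiteness of their rational points is unavailable: one must instead show, for every proper RZB subgroup $H\subsetneq\GL(2,\ZZ_2)$, that $X_H\times_{X(1)}X_0(9)$ (resp.\ $X_H\times_{X(1)}X_0(25)$) has no non-CM non-cuspidal rational point. This is a finite check --- compute each cover of $X_0(9)$ (degree $12$) or of $X_0(25)$ (degree $30$) and determine its rational points, which turn out to be cusps and CM points only --- and it is eased by the observation that a rational $9$-isogeny forces a rational $3$-isogeny, so $j$ is constrained to $X_0(3)(\QQ)$, a rational curve on which the locus of nonsurjective $2$-adic image is already known.

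The principal obstacle, and where most of the labor lies, is the $2$-isogeny-neighbor bookkeeping at the vertices of large $2$-power valence: the central vertices of $T_4$, $T_6$, $T_8$, which support several independent rational $2$-isogenies, and the corner vertices of $R_4$ and $S$. There one must track a subgroup of $\GL(2,\ZZ_2)$ \emph{together with} its full configuration of rational cyclic $2^b$-subgroups and verify that the neighbor operation is self-consistent all the way around the graph; getting the level-$16$ or level-$32$ subgroups and their neighbors exactly right --- not merely their reductions modulo a small power of $2$ --- is where errors are most likely and is the step I expect to consume the most effort. A secondary obstacle is completeness at the realization step: showing that the finite LMFDB sample genuinely exhausts each table requires, for the mixed-level cases ($S$, $R_4$, $R_6$, and the small-$p$ cases of $L_2(p)$), either a provable parametrization of the relevant fiber product or an explicit determination of its rational points.
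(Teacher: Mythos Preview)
Your proposal is essentially the paper's approach: the same two structural facts (odd-degree isogenies preserve the $2$-adic image; $2$-isogenies act by an explicit conjugation-and-lift operation, which the paper formalizes as Lemma~\ref{2-adic Galois Images} and Corollaries~\ref{2-adic Galois Images corollary 1}--\ref{2-adic Galois Images corollary 4}), the same reduction to the RZB list at a chosen vertex, and the same fiber-product analysis $X_H\times_{X(1)}X_0(m)$ for the odd part. The paper also uses the group-containment reduction you implicitly rely on (if $H\le H'$ and $X_{H'}\times X_0(m)$ has only cusps and CM points, so does $X_H\times X_0(m)$), which cuts the number of fiber products to a short explicit list.

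One point of emphasis: your Step two is where nearly all of the mathematical content lies, and it is not merely a ``finite check.'' Several of the relevant fiber products are not genus $0$ or $1$; the paper must handle genus-$2$ curves via rank-$0$ and rank-$1$ Chabauty (Section~\ref{hyperelliptic curves}), and one genus-$3$ non-hyperelliptic curve ($\operatorname{H}_7\times\operatorname{B}_0(13)$) via an Abel--Jacobi argument with torsion bounds and a mod-$3$ sieve. One case ($\operatorname{H}_{20a}\times\operatorname{B}_1(3)$) is dispatched by local insolubility at $2$. Your sketch does not indicate how you would produce these rational-point determinations, and the phrase ``known point sets of $X_0(m)$'' does not help when both factors are genus $0$ with infinitely many rational points. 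This is not a gap in strategy, but it is the step that actually requires new arguments beyond bookkeeping.
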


In Section \ref{section background} we will talk about the necessary algebraic properties of elliptic curves, isogeny graphs and isogeny-torsion graphs. Table \ref{L_k graphs} - \ref{R_k graphs} in Section \ref{section background} has examples of all isogeny-torsion graphs defined over $\QQ$. Section \ref{section work by rouse and zureick-brown} briefly touches on the work by Rouse and Zureick-Brown, classifying the $2$-adic Galois image attached to non-CM elliptic curves defined over $\QQ$. Section \ref{section lemmas and corollaries} has many lemmas and corollaries, culminating in Lemma \ref{2-adic Galois Images}, Corollary \ref{coprime isogeny-degree}, and Corollaries \ref{2-adic Galois Images corollary 1} - \ref{2-adic Galois Images corollary 4} which will be used to classify the $2$-adic Galois Images attached to non-CM isogeny-torsion graphs defined over $\QQ$.

\begin{ack}
The author would like to express his utmost gratitude to \'Alvaro Lozano-Robledo, Jeremy Rouse, Drew Sutherland, and David Zureick-Brown for their patience and many helpful conversations on this topic.
\end{ack}

\section{Philosophy and structure of the paper}\label{philosophy}

Originally, the intention of this project was to classify the $2$-adic Galois Image attached to all isogeny-torsion graphs over $\QQ$ (with the cases of CM and non-CM in one paper). It soon became clear the methodology of the proofs between non-CM isogeny-torsion graphs and CM isogeny-torsion graphs are rather different and it is better to split the classification into two cases; though in essence it is the same question. The main ideas motivating the papers \cite{CHILOYANinfinite}, \cite{CMisogenytorsiongraphs}, \cite{gcal-r}, and this paper is to think about elliptic curve theory, not necessarily from the viewpoint of individual elliptic curves over $\QQ$, but $\QQ$-isogeny classes of elliptic curves defined over $\QQ$.

The main result in \cite{gcal-r} is the classification of the isogeny-torsion graphs associated to $\QQ$-isogeny classes of elliptic curves defined over $\QQ$. Originally, the authors in \cite{gcal-r} wanted to classify the torsion subgroups of a pair of $\QQ$-isogenous elliptic curves defined over $\QQ$, which was extended to classifying the torsion subgroups of all elliptic curves in a $\QQ$-isogeny class, which was extended to the main result of \cite{gcal-r}.

Let $E/\QQ$ be an elliptic curve, let $\mathcal{E}$ be the $\QQ$-isogeny class of $E$, and let $\mathcal{G}$ be the isogeny-torsion graph associated to $\mathcal{E}$. As $\mathcal{G}$ consists of elliptic curves over $\QQ$, we may say that $\mathcal{G}$ is defined over $\QQ$. The elliptic curve $E$ has CM, if and only if all of the elliptic curves over $\QQ$ in $\mathcal{E}$ have CM. Thus, we may say that $\mathcal{G}$ has CM or not. If we determine the $2$-adic Galois Image attached to all elliptic curves over $\QQ$ in $\mathcal{E}$, then in some sense, it is like determining the $2$-adic Galois Image of $\mathcal{E}$ and the $2$-adic Galois Image attached to $\mathcal{G}$. Let $d$ be a non-zero integer and let $E^{(d)}$ be the quadratic twist of $E$ by $d$. Then the $\QQ$-isogeny class of $E^{(d)}$ consists of the quadratic twist of all elliptic curves over $\QQ$ in $\mathcal{E}$ by $d$. Thus, taking the quadratic twist of $E$ by $d$ would in some sense, cause $\mathcal{E}$ and $\mathcal{G}$ to be twisted by $d$. So we may consider the isogeny-torsion graph associated to the $\QQ$-isogeny class of $E^{(d)}$ to be the quadratic twist of $\mathcal{G}$ by $d$.

With the discussion of the previous paragraph, many of the algebraic properties of an elliptic curve over $\QQ$ can be extended to its associated $\QQ$-isogeny class and to its associated isogeny-torsion graph. For example, one may investigate if $E$ has CM or not, compute $E(\QQ)_{\texttt{tors}}$ and $\rho_{E,2^{\infty}}(G_{\QQ})$, and take quadratic twists of $E$, etc.; all properties that could be, in some sense, extend to the isogeny-torsion graph $\mathcal{G}$ associated to $E$. In this way, $\mathcal{G}$ is a mathematical object that shares many of the algebraic properties of $E$.

Section \ref{section background} will provide a very brief introduction to elliptic curves and their associated objects for example, isogeny graphs and isogeny-torsion graphs. The section concludes with a quick introduction to quadratic twists of elliptic curves and an extension of the definition to groups (to apply to the image of Galois representations). Section \ref{section lemmas and corollaries} contains many lemmas of a group-theoretic flavor. These lemmas will serve as the backbone of the paper. The key lemma in Section \ref{section lemmas and corollaries} is Lemma \ref{2-adic Galois Images}. Let $E/\QQ$ and $E'/\QQ$ be elliptic curves that are isogenous by an isogeny with a cyclic kernel. Then given $\rho_{E,2^{\infty}}(G_{\QQ})$, Lemma \ref{2-adic Galois Images} gives us a way to determine $\rho_{E',2^{\infty}}(G_{\QQ})$. In other words, the $2$-adic Galois image attached to any single elliptic curve in a $\QQ$-isogeny class of elliptic curves over $\QQ$ determines the $2$-adic Galois image attached to all other elliptic curves over $\QQ$ in the $\QQ$-isogeny class (see also Corollary \ref{coprime isogeny-degree} and Corollary \ref{2-adic Galois Images corollary 1} - Corollary \ref{2-adic Galois Images corollary 4}).

Let $E/\QQ$ be a non-CM elliptic curve. Then by the work in \cite{rouse}, $\rho_{E,2^{\infty}}(G_{\QQ})$ is the full lift of $\overline{\rho}_{E,32}(G_{\QQ})$. The authors of \cite{rouse} organized the classification of the image of the $2$-adic Galois representation attached to non-CM elliptic curves over $\QQ$ into the RZB database. Equipped with Lemma \ref{2-adic Galois Images} and the RZB database, one can classify the image of the $2$-adic Galois representation attached to all isogeny-torsion graphs of type $L_{1}$, $L_{2}(2)$, $T_{4}$, $T_{6}$, and $T_{8}$. In other words, one can construct Table \ref{2-adic Galois Images of L1 Graphs} - Table \ref{2-adic Galois Images of T8 Graphs}. Actually, there is a shortcut. The LMFDB has examples of all but twelve arrangements in the RZB database. To classify the image of the $2$-adic Galois represenatation attached to all isogeny-torsion graphs of, say, $T_{4}$, type, one needs to determine all subgroups $H$ of $\operatorname{GL}(2, \ZZ_{2})$ that can serve as the $2$-adic Galois image attached to any ``ideal'' elliptic curve over $\QQ$ in $T_{4}$. The author prefers to work with the elliptic curve in a $T_{4}$ graph with full two-torsion defined over $\QQ$. One then searches the RZB database for $H$, takes the example elliptic curve $E/\QQ$ listed in the reference such that $\rho_{E,2^{\infty}}(G_{\QQ})$ is conjugate to $H$, look up $E$ in the LMFDB (if possible), and write down the image of the $2$-adic Galois representation attached to the elliptic curves over $\QQ$ in the $\QQ$-isogeny class associated to $E$. This is enough to classify the image of the $2$-adic Galois representation attached to all isogeny-torsion graphs that \textbf{do not} have an isogeny of odd prime degree.

Let $M$ and $N$ be integers greater than or equal to $2$ that are coprime. Let $\operatorname{J}$ be a subgroup of $\operatorname{GL}(2, \ZZ / MN \ZZ) \approx \operatorname{GL}(2, \ZZ / M \ZZ) \times \operatorname{GL}(2, \ZZ / N \ZZ)$. Then $\operatorname{J}$ is said to be a product group and is isomorphic to $\operatorname{J}_{M} \times \operatorname{J}_{N}$ for some subgroup $\operatorname{J}_{M}$ of $\operatorname{GL}(2, \ZZ / M \ZZ)$ and some subgroup $\operatorname{J}_{N}$ of $\operatorname{GL}(2, \ZZ / N \ZZ)$. The modular curve generated by $\operatorname{J}$ is said to be the fiber product of $\operatorname{J}_{M}$ and $\operatorname{J}_{N}$. The most difficult part of this project was classifyng the image of the $2$-adic Galois representation attached to isogeny-torsion graphs with an isogeny of odd prime degree. For example, is it possible for an isogeny-torsion graph of $L_{2}(3)$ type with torsion configuration $([3],[1])$ to have $2$-adic Galois configuration $(\operatorname{H}_{20a}, \operatorname{H}_{20a})$? The answer to this question is no and the proof requires the classification of the rational points on the fiber product of $\operatorname{H}_{20a}$ and $\operatorname{X}_{1}(3)$; in other words, the modular curve generated by $\operatorname{H}_{20a} \times \operatorname{X}_{1}(3)$.

There is an interplay between reduction and group containment that reduces the number of modular curves we need to investigate. For example, let $\operatorname{H} \leq \operatorname{H}'$ be subgroups of $\operatorname{GL}(2, \ZZ / N \ZZ)$ for some positive integer $N$ and let $\operatorname{X}$ be the modular curve generated by $\operatorname{H}$ and let $\operatorname{X}'$ be the modular curve generated by $\operatorname{H}'$. If $E/\QQ$ is a non-CM elliptic curve such that $\overline{\rho}_{E,N}(G_{\QQ})$ is not conjugate to a subgroup of $\operatorname{H}'$, then $\overline{\rho}_{E,N}(G_{\QQ})$ is not conjugate to a subgroup of $\operatorname{H}$. Similarly, if all of the rational points on $\operatorname{X}'$ are CM points or cusps, then all of the rational points on $\operatorname{X}$ are CM points or cusps. The reason why this is true is because there is a commutative diagram

$$\begin{tikzcd}
\operatorname{X}_{\operatorname{H}} \arrow[rrd, "\operatorname{\pi}_{\operatorname{H}}"'] \arrow[rr, "\phi"] &  & \operatorname{X}_{\operatorname{H'}} \arrow[d, "\pi_{\operatorname{H'}}"] \\
                                                                                                             &  & \mathbb{P}^{1}(\mathbb{Q})                                               
\end{tikzcd}$$
where $\pi_{\operatorname{H}}$ and $\pi_{\operatorname{H}'}$ are the maps from $\operatorname{X}$ and $\operatorname{X}'$, respectively, to the \textit{j}-line (see Section 2 in \cite{SZ}). Now let $N$ and $N'$ be positive integers such that $N$ divides $N'$. Let $\operatorname{K}$ be a subgroup of $\operatorname{GL}(2, \ZZ / N \ZZ)$ and let $\operatorname{K}'$ be a subgroup of $\operatorname{GL}(2, \ZZ / N' \ZZ)$ such that the reduction of $\operatorname{K}'$ modulo $N$ is conjugate to $\operatorname{K}$. Denote the modular curve generated by $\operatorname{K}$ to be $\operatorname{Y}$ and denote the modular curve generated by $\operatorname{K}'$ to be $\operatorname{Y}'$. If $E/\QQ$ is a non-CM elliptic curve such that $\overline{\rho}_{E,N}(G_{\QQ})$ is not conjugate to $\operatorname{K}$, then $\overline{\rho}_{E,N'}(G_{\QQ})$ is not conjugate to $\operatorname{K}'$. Similarly, if all of the rational points on $\operatorname{Y}$ are CM points or cusps, then all of the rational points on $\operatorname{Y}'$ are CM points or cusps. In Section \ref{Determination of 2-adic Galois Images}, we start the classification of the rational points on the fiber products in question. We leave the fiber products of genus $1$ to Section \ref{elliptic curves} and conclude. The labels for the groups used in the proofs come from the RZB database. On the other hand, in the tables, the groups are denoted of the form $A = \texttt{N.i.g.n}$ where $A$ is the label of some group in the RZB database and $\texttt{N.i.g.n}$ is the label of that same group coming from the RSVZB database. The term $N$ denotes the level of the group, $i$ the index of the group in $\operatorname{GL}(2, \ZZ / N \ZZ)$, $g$ denotes the genus of the modular curve generated by the group, and $n$ is a tiebreaker (see pages 9-10 in \cite{Rouse2021elladicIO} for how the groups are organized). Finally, for an elliptic curve $E/\QQ$ and a positive integer $N$ we denote the image of the $2$-adic Galois representation attached to $E$ and the image of the mod-$N$ Galois representation attached to $E$ as $\rho_{E,2^{\infty}}(G_{\QQ})$ and $\overline{\rho}_{E,N}(G_{\QQ})$, respectively. The \textit{j}-invariants were computed with the results in the RZB database and the SZ database (see the tables at the end of \cite{SZ}) or at times taken from the LMFDB. Models of fine modular curves were taken from the LMFDB.

\section{Background}\label{section background}

\subsection{Elliptic curves, isogeny graphs, and isogeny-torsion graphs}

Let $E/\QQ$ be an elliptic curve. Then $E$ has the structure of an abelian group with identity $\mathcal{O}$. Let $N$ be a positive integer. The set of all points on $E$ of order dividing $N$ with coordinates in $\overline{\QQ}$ is a group, denoted $E[N]$ and is isomorphic to $\ZZ / N \ZZ \times \ZZ / N \ZZ$. An element of $E[N]$ is called an $N$-torsion point. Let $E'/\QQ$ be an elliptic curve. An isogeny mapping $E$ to $E'$ is a non-constant rational morphism $\phi \colon E \to E'$ that maps the identity of $E$ to the identity of $E'$. An isogeny is a group homomorphism with kernel of finite order. The degree of an isogeny agrees with the order of its kernel.

Let $M$ be an integer and let $[M] \colon E \to E$ be the map such that
\begin{center}
    $\begin{cases}
    [M](P) = \underbrace{P +  \ldots + P}_{\text{M}} & M \geq 1 \\
    [M](P) = \underbrace{(-P) +  \ldots + (-P) }_{\text{-M}} & M \leq -1 \\
    [M](P) = \mathcal{O} & M = 0.
    \end{cases}$
\end{center}
Maps of the form $[M]$ are called multiplication-by-$M$ maps. If $M$ is a non-zero integer, then the degree of $[M]$ is equal to $\left| M \right|^{2}$. The multiplication-by-$M$ maps are elements of the endomorphism ring of $E$, $\operatorname{End}(E)$. If $\operatorname{End}(E)$ contains a map that is not a multiplication-by-$M$ map, then $E$ is said to have complex multiplication (or that $E$ is CM). Otherwise $E$ does not have complex multiplication (or that $E$ is non-CM). If $E$ has CM, then $\operatorname{End}(E)$ is ring-isomorphic to an order in a quadratic field.

\begin{example}
Let $E$ be the elliptic curve with LMFDB label \texttt{11.a1}. Then $E$ does not have CM. In other words, $\operatorname{End}(E) \cong \ZZ$.
\end{example}

\begin{example}
Let $E$ be the elliptic curve $y^{2}z = x^{3} - xz^{2}$. Consider the isogeny $[i] \colon E \to E$ that maps $\mathcal{O} = [0:1:0]$ to $\mathcal{O}$ and maps a point $(a:b:1)$ in $E$ to the point $(-a:ib:1)$. The degree of $[i]$ is equal to $1$ as non-zero elements of $E$ are mapped by $[i]$ to non-zero elements of $E$. As $[i]$ is not equal to the identity map nor the inversion map, $[i]$ is an endomorphism of $E$ that is not a multiplication-by-$M$ map. Hence, $E$ has CM and $\operatorname{End}(E) = \ZZ + [i] \cdot \ZZ \cong \ZZ[i]$. Note that the $i$ in $\ZZ + [i] \cdot \ZZ$ designates the map $[i]$ and the $i$ in $\ZZ[i]$ designates a root of the polynomial $x^{2}+1$. 
\end{example}

The group $G_{\QQ}:= \operatorname{Gal}\left(\overline{\QQ}/\QQ\right)$ acts on $E[N]$ for all positive integers $N$. From this action, we have the mod-$N$ Galois representation attached to $E$:
$$ \overline{\rho}_{E,N} \colon G_{\QQ} \to \operatorname{Aut}(E[N]).$$
After identifying $E[N] \cong \ZZ / N \ZZ \times \ZZ / N \ZZ$ and fixing a set of (two) generators of $E[N]$, we may consider the mod-$N$ Galois representation attached to $E$ as
$$ \overline{\rho}_{E,N} \colon G_{\QQ} \to \operatorname{GL}(2,\ZZ / N \ZZ).$$
Moreover, for a prime $\ell$, we have $\rho_{E,\ell^{\infty}}(G_{\QQ}) = \varprojlim_{N \geq 1}\overline{\rho}_{E,\ell^{N}}(G_{\QQ})$. The group $\rho_{E,2^{\infty}}(G_{\QQ})$ is of level $2^{m}$ if $m$ is the least non-negative integer such that for each positive integer $n$, $\overline{\rho}_{E,2^{m+n}}(G_{\QQ})$ is the full lift of $\overline{\rho}_{E,2^{m}}(G_{\QQ})$ inside the group $\operatorname{GL}(2, \ZZ / 2^{m+n} \ZZ)$. For example, if $\rho_{E,2^{\infty}}(G_{\QQ}) = \operatorname{GL}(2, \ZZ_{2})$ then the level of $\rho_{E,2^{\infty}}(G_{\QQ})$ is equal to $1$. Let $u$ be an element of $\left(\ZZ / N \ZZ \right)^{\times}$. By the properties of the Weil pairing, there exists an element of $\overline{\rho}_{E,N}(G_{\QQ})$ whose determinant is equal to $u$. Moreover, $\overline{\rho}_{E,N}(G_{\QQ})$ has an element that behaves like complex conjugation. If $E$ is non-CM, then by Lemma 2.8 of \cite{SZ}, $\overline{\rho}_{E,N}(G_{\Q})$ has an element conjugate to $\left(\begin{array}{cc}
    1 & 1 \\
    0 & -1
\end{array}\right)$ or $\left(\begin{array}{cc}
    1 & 0 \\
    0 & -1
\end{array}\right)$ that represents complex conjugation.

\begin{definition}
Let $E/\QQ$ be a (homogenized) elliptic curve. A point $P$ on $E$ is said to be defined over $\QQ$ or rational if $P = [a:b:c]$ for some $a,b,c \in \QQ$.
\end{definition}
The set of all points on $E$ defined over $\QQ$ is denoted $E(\QQ)$. By the Mordell--Weil theorem, $E(\QQ)$ has the structure of a finitely-generated abelian group. Let $E(\QQ)_{\text{tors}}$ denote the set of all points on $E$ defined over $\QQ$ of finite order.

\begin{thm}[Mazur \cite{mazur1}]\label{thm-mazur}
		Let $E/\Q$ be an elliptic curve. Then
		\[
		E(\Q)_\tor\simeq
		\begin{cases}
		\Z/M\Z &\text{with}\ 1\leq M\leq 10\ \text{or}\ M=12,\ \text{or}\\
		\Z/2\Z\oplus \Z/2N\Z &\text{with}\ 1\leq N\leq 4.
		\end{cases}
		\]
	\end{thm}
Moreover, each of the fifteen torsion subgroups occur for infinitely many \textit{j}-invariants. We now move on to the possible isogenies on elliptic curves over $\QQ$ with finite, cyclic kernel.

\begin{defn}
Let $E/\QQ$ be an elliptic curve. We say a subgroup $H$ of $E$ of finite order is said to be $\QQ$-rational if $\sigma(H) = H$ for all $\sigma \in G_{\QQ}$.
\end{defn}

\begin{remark}

Let $E/\QQ$ be an elliptic curve and let $P$ be a point on $E$ defined over $\QQ$ of finite order. Then the group generated by $P$ is certainly $\QQ$-rational. In general, the elements of a $\QQ$-rational subgroup of $E$ need not be \textit{fixed} by the action of $G_{\QQ}$. For example, $E[3]$ is a $\QQ$-rational group but by Theorem \ref{thm-mazur}, $G_{\QQ}$ fixes one or three of the nine elements of $E[3]$.
\end{remark}
	
\begin{lemma}[III.4.12, \cite{Silverman}]\label{Q-rational}
Let $E/\QQ$ be an elliptic curve. Then for each finite, cyclic, $\QQ$-rational subgroup $H$ of $E$, there is a unique elliptic curve defined over $\QQ$ up to isomorphism denoted $E / H$, and an isogeny $\phi_{H} \colon E \to E / H$ with kernel $H$.
\end{lemma}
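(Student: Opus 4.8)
The plan is to build the quotient first over $\overline{\QQ}$, where every finite subgroup of an elliptic curve can be divided out, and then to descend the construction to $\QQ$ using that $H$ is $\QQ$-rational. First I would introduce, for each $P \in H$, the translation map $\tau_{P}\colon E \to E$, $Q \mapsto Q+P$; the pullbacks $\tau_{P}^{*}$ give an embedding of $H$ into the group of automorphisms of $\overline{\QQ}(E)$ fixing $\overline{\QQ}$. Setting $L := \overline{\QQ}(E)^{H}$, the inclusion $L \hookrightarrow \overline{\QQ}(E)$ corresponds to a finite morphism $\phi_{0}\colon E \to C_{0}$ of degree $|H|$ onto a smooth projective curve $C_{0}/\overline{\QQ}$ with $\overline{\QQ}(C_{0}) = L$, whose fibers are exactly the $H$-orbits on $E$. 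Since $\tau_{P}$ has no fixed points for $P \neq \mathcal{O}$, the action of $H$ is free and $\phi_{0}$ is unramified, so Riemann--Hurwitz forces $2g(C_{0}) - 2 = |H|(2\cdot 1 - 2) = 0$, i.e.\ $g(C_{0}) = 1$. Declaring $O' := \phi_{0}(\mathcal{O})$ to be the origin makes $(C_{0},O')$ an elliptic curve, and $\phi_{0}$ --- being a nonconstant morphism of curves sending $\mathcal{O}$ to $O'$ --- is then an isogeny (\cite[III.4.8]{Silverman}) whose kernel $\phi_{0}^{-1}(O')$ is the $H$-orbit of $\mathcal{O}$, namely $H$. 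Uniqueness over $\overline{\QQ}$ is immediate: another isogeny $\psi\colon E \to C_{0}'$ with kernel $H$ factors as $\psi = \lambda\circ\phi_{0}$ for a unique isomorphism $\lambda$, since both maps are surjective with the same kernel.

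Next I would carry out the descent. Write $\overline{\QQ}(E) = \QQ(E)\otimes_{\QQ}\overline{\QQ}$ with its $G_{\QQ}$-semilinear action, whose fixed field is $\QQ(E)$. Because conjugating $\tau_{P}^{*}$ by $\sigma \in G_{\QQ}$ yields $\tau_{\sigma(P)}^{*}$ and $H$ is $\QQ$-rational, the set $\{\tau_{P}^{*}:P\in H\}$ --- and hence the subfield $L$ --- is $G_{\QQ}$-stable. Galois descent for fields then gives $L = F\otimes_{\QQ}\overline{\QQ}$ with $F := L\cap\QQ(E)$; this $F$ is the function field of a smooth projective curve over $\QQ$, which I will call $E/H$, with $(E/H)_{\overline{\QQ}}\cong C_{0}$, and the inclusion $F\hookrightarrow\QQ(E)$ defines a $\QQ$-morphism $\phi_{H}\colon E\to E/H$ base-changing to $\phi_{0}$. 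Since $\mathcal{O}\in E(\QQ)$, the point $\phi_{H}(\mathcal{O})$ is $\QQ$-rational, so $(E/H,\phi_{H}(\mathcal{O}))$ is an elliptic curve over $\QQ$ and $\phi_{H}$ is an isogeny defined over $\QQ$ with kernel $H$ (all of which can be verified after base change to $\overline{\QQ}$). For uniqueness over $\QQ$: given two solutions $(E_{1},\phi_{1})$ and $(E_{2},\phi_{2})$, the unique $\overline{\QQ}$-isomorphism $\lambda$ with $\phi_{2}=\lambda\circ\phi_{1}$ satisfies $\lambda^{\sigma}=\lambda$ for every $\sigma\in G_{\QQ}$ --- again by uniqueness over $\overline{\QQ}$ applied to the $\sigma$-conjugated data --- hence $\lambda$ is defined over $\QQ$.

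I expect the descent step to be the only genuine obstacle: the translation maps $\tau_{P}$ are a priori defined only over the field generated by the coordinates of the points of $H$, which can be a nontrivial extension of $\QQ$, so one must check that the \emph{quotient} nonetheless lives over $\QQ$. This is exactly where $\QQ$-rationality of $H$ enters, through Galois descent of the fixed field; equivalently, one could phrase it by saying that $H$ is a finite \'etale $\QQ$-subgroup scheme of $E$ and invoking the existence of quotients by such subgroup schemes, the function-field argument above being the hands-on version valid in characteristic $0$.
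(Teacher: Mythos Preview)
Your proof is correct and follows the standard construction (fixed field under translations, Riemann--Hurwitz, Galois descent), which is essentially the argument given in Silverman~\cite[III.4.12]{Silverman}. Note, however, that the paper does not supply its own proof of this lemma: it is simply cited from Silverman, so there is no in-paper argument to compare against.
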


\begin{remark}

Note that it is only the elliptic curve $E/H$ that is unique (up to isomorphism) but the isogeny $\phi_{H}$ is not. For any isogeny $\phi$, the isogeny $-\phi$ has the same domain, codomain, and kernel as $\phi$. Moreover, for any positive integer $N$, the morphisms $\phi$ and $[N] \circ \phi$ have the same domain and the same codomain. This is why the bijection in Lemma \ref{Q-rational} is with \textit{cyclic}, $\QQ$-rational subgroups of an elliptic curve instead of with all $\QQ$-rational subgroups of an elliptic curve.
\end{remark}

The $\QQ$-rational points on the modular curves $\operatorname{X}_{0}(N)$ have been described completely in the literature, for all $N \geq 1$. One of the most important milestones in their classification was \cite{mazur1}, where Mazur dealt with the case when $N$ is prime. The complete classification of $\QQ$-rational points on $\operatorname{X}_{0}(N)$, for any $N$, was completed due to work by Fricke, Kenku, Klein, Kubert, Ligozat, Mazur  and Ogg, among others (see the summary tables in \cite{lozano0}).
	
	\begin{thm}\label{thm-ratnoncusps} Let $N$ be a positive integer such that $\operatorname{X}_{0}(N)$ has a non-cuspidal $\QQ$-rational point. Then:
		\begin{enumerate}
			\item $N\leq 10$, or $N= 12,13, 16,18$ or $25$. In this case $\operatorname{X}_{0}(N)$ is a curve of genus $0$ and its $\QQ$-rational points form an infinite $1$-parameter family, or
			\item $N=11,14,15,17,19,21$, or $27$. In this case $\operatorname{X}_{0}(N)$ is a curve of genus $1$, i.e.,~$\operatorname{X}_{0}(N)$ is an elliptic curve over $\QQ$, but in all cases the Mordell-Weil group $\operatorname{X}_{0}(N)(\QQ)$ is finite, or 
			
			\item $N=37,43,67$ or $163$. In this case $\operatorname{X}_{0}(N)$ is a curve of genus $\geq 2$ and (by Faltings' theorem) there are only finitely many $\QQ$-rational points on $\operatorname{X}_{0}(N)$, all of which are known explicitly.
		\end{enumerate}
	\end{thm}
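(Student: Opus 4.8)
The plan is to reduce the statement to a genus computation together with the structure theory of rational points on modular curves of genus $0$ and $1$, and then to invoke the deep input of Mazur (prime level) and Kenku (composite level) for the curves of genus $\geq 2$.

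First I would compute $g(X_0(N))$ from the classical formula
\[
g(X_0(N)) = 1 + \frac{\mu}{12} - \frac{\nu_2}{4} - \frac{\nu_3}{3} - \frac{\nu_\infty}{2},
\]
where $\mu = [\mathrm{SL}_2(\ZZ) : \Gamma_0(N)] = N\prod_{p \mid N}\!\left(1 + \tfrac{1}{p}\right)$ and $\nu_2$, $\nu_3$, $\nu_\infty$ are the (explicitly computable) numbers of elliptic points of order $2$, of elliptic points of order $3$, and of cusps. A finite check shows $g(X_0(N)) = 0$ exactly for $N \leq 10$ and $N \in \{12,13,16,18,25\}$, that $g(X_0(N)) = 1$ exactly for $N \in \{11,14,15,17,19,20,21,24,27,32,36,49\}$, and $g(X_0(N)) \geq 2$ for every other $N$. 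Since $X_0(N)$ always has the $\QQ$-rational cusp at $\infty$, in the genus $0$ case $X_0(N) \cong \PP^1_\QQ$, which gives the infinite one-parameter family of part (1), and in the genus $1$ case $X_0(N)$ is an elliptic curve over $\QQ$.

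Next, for the twelve genus $1$ levels I would prove $X_0(N)(\QQ)$ is finite and decide which of these curves carries a non-cuspidal rational point. Finiteness holds because each of these elliptic curves has Mordell--Weil rank $0$, which can be verified by an explicit $2$-descent (or read off from the analytic rank). Having enumerated the finitely many rational points, one composes with the $j$-map $X_0(N) \to X(1)$ to separate cusps from non-cusps: the outcome is that a non-cuspidal $\QQ$-rational point exists precisely for $N \in \{11,14,15,17,19,21,27\}$ and fails for $N \in \{20,24,32,36,49\}$, which is part (2). In each of the seven surviving cases the point can be exhibited explicitly by a known elliptic curve over $\QQ$ admitting a cyclic $N$-isogeny (for several of these $N$ one may take a CM elliptic curve whose CM order has class number one).

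Finally, when $g(X_0(N)) \geq 2$, Faltings' theorem immediately gives finiteness of $X_0(N)(\QQ)$, so everything reduces to showing there is no non-cuspidal $\QQ$-rational point unless $N \in \{37,43,67,163\}$, and to listing the points in those four cases. For $N$ prime this is Mazur's theorem \cite{mazur1}: the study of the Eisenstein ideal and the corresponding quotient of $J_0(N)$ forces $X_0(N)(\QQ)$ to consist only of cusps for every prime $N \notin \{2,3,5,7,11,13,17,19,37,43,67,163\}$, and the non-cuspidal points for $N = 37,43,67,163$ are then written down explicitly (the CM points of discriminants $-43, -67, -163$, and the two exceptional $37$-isogenous curves when $N = 37$). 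For $N$ composite, a $\QQ$-rational point on $X_0(N)$ forces compatible $\QQ$-rational points on $X_0(d)$ for every $d \mid N$ and, via the Atkin--Lehner involutions, on various quotient curves as well; Kenku's work \cite{kenku} shows that these constraints cannot simultaneously be met by a non-cuspidal point once $g(X_0(N)) \geq 2$. Combining the prime and composite cases yields part (3). The hard part is precisely this last step: Mazur's prime-level theorem together with Kenku's reduction of the composite levels is the genuinely deep input, and it is cited rather than reproved --- everything else (the genus formula, the identification with $\PP^1$, the rank-$0$ descents, and the final point count) is bookkeeping that can be done by hand.
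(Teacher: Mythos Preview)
Your proposal is a correct and reasonable sketch of the standard proof, but you should be aware that the paper does not actually prove this theorem at all: it is stated as a background result attributed to ``Fricke, Kenku, Klein, Kubert, Ligozat, Mazur and Ogg, among others'' with a pointer to the summary tables in \cite{lozano0}. There is therefore no ``paper's own proof'' to compare against; the theorem is simply quoted from the literature.

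That said, your outline matches the historical route to the result: compute genera via the classical formula, dispose of genus $0$ and genus $1$ by elementary means (rationality via the cusp at infinity, rank-$0$ descents, point enumeration), and invoke Mazur \cite{mazur1} for prime level and Kenku \cite{kenku} for composite level in genus $\geq 2$. This is exactly the body of work the paper is citing, so your proposal is consonant with the paper's treatment.
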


\begin{defn}
		Let $E/\QQ$ be an elliptic curve. We define $C(E)$ to be the number of distinct finite $\QQ$-rational cyclic subgroups of $E$ (including the trivial subgroup), and we define $C_{p}(E)$ similarly to $C(E)$ but only counting $\QQ$-rational cyclic subgroups of $E$ of order a power of $p$ (like in the definition of $C(E)$, this includes the trivial subgroup), for each prime $p$. 
	\end{defn}

Notice that it follows from the definition that $C(E)=\prod_{p} C_{p}(E)$. 
	
	\begin{thm}[Kenku, \cite{kenku}]\label{thm-kenku} There are at most eight $\QQ$-isomorphism classes of elliptic curves in each $\QQ$-isogeny class. More concretely, let $E / \QQ$ be an elliptic curve. Then $C(E)=\prod_{p} C_{p}(E) \leq 8$. Moreover, each factor $C_{p}(E)$ is bounded as follows:
		\begin{center}
			\begin{tabular}{c|ccccccccccccc}
				$p$ & $2$ & $3$ & $5$ & $7$ & $11$ & $13$ & $17$ & $19$ & $37$ & $43$ & $67$ & $163$ & \text{else}\\
				\hline 
				$C_p\leq $ & $8$ & $4$ & $3$ & $2$ & $2$ & $2$ & $2$ & $2$ & $2$ & $2$ & $2$ & $2$ & $1$.
			\end{tabular}
		\end{center}
		Moreover:
		\begin{enumerate}
			\item If $C_{p}(E) = 2$ for a prime $p$ greater than $7$, then $C_{q}(E) = 1$ for all other primes $q$. 
			\item Suppose $C_{7}(E) = 2$, then $C(E) \leq 4$. Moreover, we have $C_{3}(E) = 2$, or $C_{2}(E) = 2$, or $C(E) = 2$.
			\item $C_{5}(E) \leq 3$ and if $C_{5}(E) = 3$, then $C(E) = 3$.
			\item If $C_{5}(E) = 2$, then $C(E) \leq 4$. Moreover, either $C_{3}(E) = 2$, or $C_{2}(E) = 2$, or $C(E) = 2$. 
			\item $C_{3}(E) \leq 4$ and if $C_{3}(E) = 4$, then $C(E) = 4$. 
			\item If $C_{3}(E) = 3,$ then $C(E) \leq 6$. Moreover, $C_{2}(E) = 2$ or $C(E) = 3$.
			\item If $C_{3}(E) = 2$, then $C_{2}(E) \leq 4$.
		\end{enumerate}
	\end{thm}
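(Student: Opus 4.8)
The plan is to reduce the whole statement to the known classification of $\QQ$-rational points on modular curves — principally the curves $\operatorname{X}_0(N)$ of Theorem~\ref{thm-ratnoncusps}, together with the Weil-pairing constraint on determinants and the rational-point analysis of a few Cartan-type curves at the primes $3,5,7$ — and then to carry out a finite case analysis.

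First I would establish the multiplicativity $C(E)=\prod_p C_p(E)$. If $H\le E$ is a finite cyclic $\QQ$-rational subgroup of order $N=\prod_p p^{e_p}$, then $H=\bigoplus_p H[p^\infty]$ with each $p$-primary part cyclic of order $p^{e_p}$ and Galois-stable (being characteristic in $H$), hence $\QQ$-rational; conversely an internal direct sum of $\QQ$-rational cyclic subgroups of pairwise coprime prime-power orders is again cyclic and $\QQ$-rational. This yields a bijection between finite cyclic $\QQ$-rational subgroups of $E$ and finitely-supported tuples $(H_p)_p$ of $\QQ$-rational cyclic $p$-power subgroups, so $C(E)=\prod_p C_p(E)$; and $C_p(E)=1$ for almost all $p$ by the next step, so the product is finite.

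Next I would bound each $C_p(E)$ by analysing the finite $p$-primary isogeny graph of $E$. A $\QQ$-rational cyclic subgroup of order $p^k$ gives a non-cuspidal $\QQ$-point of $\operatorname{X}_0(p^k)$, so by Theorem~\ref{thm-ratnoncusps} we get $C_p(E)=1$ for $p\notin\{2,3,5,7,11,13,17,19,37,43,67,163\}$, and $E$ has no $\QQ$-rational cyclic subgroup of order $p^2$ unless $p\in\{2,3,5\}$ (one has $\operatorname{X}_0(8),\operatorname{X}_0(16),\operatorname{X}_0(9),\operatorname{X}_0(27),\operatorname{X}_0(25)$ but not $\operatorname{X}_0(32),\operatorname{X}_0(81),\operatorname{X}_0(49),\operatorname{X}_0(125)$). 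Moreover, having $m$ pairwise-independent $\QQ$-rational lines in $E[p]$ means the image of $\overline{\rho}_{E,p}(G_\QQ)$ in $\PGL_2(\FF_p)$ has $m$ common fixed points on $\PP^1(\FF_p)$; a subgroup with three common fixed points is trivial, and a scalar image is ruled out by the Weil pairing (the determinant must surject onto $\FF_p^\times$), so there are at most two such lines and two occur only at a $\QQ$-point of the split-Cartan curve $\operatorname{X}_{\mathrm{sp}}(p)$ — whose non-cuspidal rational points are controlled by results in the spirit of Theorem~\ref{thm-ratnoncusps} for $p=3,5$ and by the determination of $\operatorname{X}_{\mathrm{sp}}^{+}(7)(\QQ)$ (only CM points and cusps) for $p=7$. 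Threading these constraints through the finitely many admissible shapes of the $p$-primary graph gives $C_2(E)\le 8$, $C_3(E)\le 4$, $C_5(E)\le 3$, $C_7(E)\le 2$, and $C_p(E)\le 2$ for the remaining large primes.

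Finally, the refined statements (1)--(7), and the global bound $C(E)\le 8$, come from the observation that if $C_m(E)\ge 2$ and $C_n(E)\ge 2$ with $\gcd(m,n)=1$, then by multiplicativity there is a $\QQ$-rational cyclic subgroup of order $mn$, hence a non-cuspidal $\QQ$-point of $\operatorname{X}_0(mn)$; Theorem~\ref{thm-ratnoncusps} shows this happens only for a short explicit list of composite levels ($6,10,12,14,15,18,21$) and never for, e.g., $mn\in\{20,35,45,55,65,77,\dots\}$ or when $mn$ carries too large a prime power. For instance $C_p(E)=2$ with $p>7$ forces $\operatorname{X}_0(pq)(\QQ)$ to be nonempty off the cusps for every prime $q\ne p$ with $C_q(E)\ge 2$, which is impossible, giving (1); the cases $p=7,5,3$ follow the same pattern combined with the $p$-primary graph shapes from the previous step, and $C(E)=\prod_p C_p(E)\le 8$ is then immediate from the individual bounds together with these mutual exclusions. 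The main obstacle is the third step: proving that the $p$-primary isogeny graphs can only take the claimed shapes requires the sharp input that $\operatorname{X}_0(N)(\QQ)$ is finite or empty off the cusps for all borderline $N$ — precisely Mazur's theorems and their extensions summarised in Theorem~\ref{thm-ratnoncusps} — plus the rational-point determination for the small-level Cartan curves; granting these, what remains is the lengthy but elementary enumeration of Kenku's original argument.
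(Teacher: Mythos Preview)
The paper does not prove this theorem; it is stated with attribution to Kenku \cite{kenku} and used as input for the classification of isogeny graphs, so there is no proof in the paper to compare against.

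Your sketch is a faithful outline of Kenku's original argument and is essentially correct. One small inaccuracy: to rule out two independent $\QQ$-rational subgroups of order $7$ (needed for $C_7(E)\le 2$), the relevant modular curve is $\operatorname{X}_{\mathrm{sp}}(7)$, not its normaliser quotient $\operatorname{X}_{\mathrm{sp}}^{+}(7)$. Since there is a standard isomorphism $\operatorname{X}_{\mathrm{sp}}(p)\cong \operatorname{X}_0(p^2)$, and $49$ does not appear in the list of Theorem~\ref{thm-ratnoncusps}, this case is already covered by your appeal to $\operatorname{X}_0(N)$ and requires no separate Cartan-curve input. The same remark applies at $p=5$ (use $\operatorname{X}_0(25)$) and $p=3$ (use $\operatorname{X}_0(9)$): the split-Cartan curves you invoke are not needed as independent ingredients, which slightly streamlines your third step.
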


Instead of viewing each elliptic curve over $\QQ$ in a $\QQ$-isogeny class individually, we can view them all together. The best way to visualize the $\QQ$-isogeny class is to use isogeny graphs. When present, the subscript $k$ is the number of vertices of an isogeny graph. When present, the integer $(N)$ denotes the maximal degree of an isogeny in the isogeny graph. Note that if present, $N$ equals the isogeny class degree of the isogeny class.

\begin{thm}[Theorem 1.2, \cite{gcal-r}]
	There are $26$ isomorphism types of isogeny graphs that are associated to $\QQ$-isogeny classes of elliptic curves defined over $\QQ$. More precisely, there are $16$ types of (linear) $L_{k}$ graphs (with $k = 1 - 4$ vertices), $3$ types of (nonlinear two-primary torsion) $T_{k}$ graphs (with $k = 4$, $6$, or $8$ vertices), $6$ types of (rectangular) $R_{k}$ graphs (with $k = 4$ or $6$ vertices), and $1$ type of (special) $S$ graph.
	
	Moreover, there are $11$ isomorphism types of isogeny graphs that are associated to elliptic curves over $\QQ$ with complex multiplication, namely the types $L_{2}(p)$ for $p=2,3,11,19,43,67,163$, $L_{4}$, $T_{4}$, $R_{4}(6)$, and $R_{4}(14)$. Finally, the isogeny graphs of type $L_{4}$, $R_{4}(14)$, and $L_{2}(p)$ for $p \in \{19, 43, 67, 167\}$ occur exclusively for elliptic curves with CM.
	\end{thm}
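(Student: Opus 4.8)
The plan is to deduce the whole statement from the classification of $\QQ$-rational points on the curves $\operatorname{X}_{0}(N)$ (Theorem~\ref{thm-ratnoncusps}) together with Kenku's structure theorem (Theorem~\ref{thm-kenku}), reducing it to a finite combinatorial enumeration. Fix $E\in\mathcal{E}$. By Lemma~\ref{Q-rational} the vertices of the isogeny graph are in bijection with the finite $\QQ$-rational cyclic subgroups of $E$, and two vertices $E/H$, $E/H'$ are joined by an edge of degree $\ell$ exactly when one of $H,H'$ is contained in the other with index $\ell$. Writing each such $H$ as the product of its $p$-parts and using $C(E)=\prod_{p}C_{p}(E)$, the graph decomposes: its $p$-primary sub-graph $\Gamma_{p}$ (on the $\QQ$-rational cyclic subgroups of $p$-power order) is independent of the $q$-parts for $q\neq p$, and the full graph is obtained from the $\Gamma_{p}$ by a ``product'' together with the extra edges forced by coprimality. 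It therefore suffices to (a) enumerate the possible $\Gamma_{p}$, and (b) enumerate the admissible ways of combining them.

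For step (a), a chain of $k$ consecutive $p$-isogenies issuing from $E$ exhibits a cyclic $\QQ$-rational subgroup of order $p^{k}$, that is, a non-cuspidal $\QQ$-point on $\operatorname{X}_{0}(p^{k})$; by Theorem~\ref{thm-ratnoncusps} this puts $p^{k}$ into the finite list $\{2,3,4,5,7,8,9,11,13,16,25,27,37,43,67,163\}$. For odd $p$ one combines this with the action of $\overline{\rho}_{E,p}(G_{\QQ})$ on the $p+1$ $\QQ$-rational lines of $E[p]$ --- a split Cartan fixes exactly two of them --- to see that $\Gamma_{p}$ is always a single chain with $C_{p}(E)$ vertices, where $C_{p}(E)$ is bounded exactly as in Theorem~\ref{thm-kenku}. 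For $p=2$ the combinatorics of $S_{3}=\operatorname{GL}(2,\FF_{2})$ on $\PP^{1}(\FF_{2})$ behave differently: a subgroup fixing two of the three lines of $E[2]$ fixes all three, so a curve admitting two distinct $\QQ$-rational $2$-isogenies has full rational $2$-torsion and sits at a branch point. Iterating this one level up (inside $\operatorname{GL}(2,\ZZ/4\ZZ)$ and $\operatorname{GL}(2,\ZZ/8\ZZ)$, subject to $C_{2}(E)\le 8$) shows $\Gamma_{2}$ is one of: a single vertex, the edge $L_{2}(2)$, or one of three branched configurations underlying $T_{4}$, $T_{6}$, $T_{8}$.

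For step (b), parts (1)--(7) of Theorem~\ref{thm-kenku} force at most one odd prime to contribute a nontrivial $\Gamma_{p}$ and tightly constrain its coexistence with a nontrivial $\Gamma_{2}$ (for instance $C_{3}(E)=3\Rightarrow C_{2}(E)\le 2$ and $C_{7}(E)=2\Rightarrow C(E)\le 4$). Going through the admissible vectors $(C_{p}(E))_{p}$ and forming the corresponding products produces exactly the linear graphs $L_{k}$ for $k\le 4$ (among them $L_{3}(9)$ and $L_{3}(25)$ from $\operatorname{X}_{0}(9)$ and $\operatorname{X}_{0}(25)$, and $L_{4}$ from $\operatorname{X}_{0}(27)$), the three $T_{k}$, the six rectangular $R_{k}$ (products of a $2$-edge with an odd-prime chain, arising from $\operatorname{X}_{0}(2\ell)$ with $\ell\in\{3,5,7\}$ and their longer or degenerate variants), and the single $S$-graph (a $T_{4}$-configuration crossed with a $3$-edge). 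Collapsing graph-isomorphisms and recording the isogeny-class degree in parentheses only when it is needed to separate same-shape graphs yields the count $26$. For the CM assertions, one runs through the thirteen $\QQ$-rational CM $j$-invariants --- those of the class-number-one orders of discriminant $-3,-4,-7,-8,-11,-12,-16,-19,-27,-28,-43,-67,-163$ --- computes each isogeny graph directly, and reads off the eleven listed shapes; the CM-exclusivity of $L_{4}$, $R_{4}(14)$ and $L_{2}(\ell)$ for $\ell\in\{19,43,67,163\}$ is exactly the statement that the genus-$\ge 1$ curves $\operatorname{X}_{0}(27)$, $\operatorname{X}_{0}(14)$, $\operatorname{X}_{0}(19)$, $\operatorname{X}_{0}(43)$, $\operatorname{X}_{0}(67)$, $\operatorname{X}_{0}(163)$ have no non-cuspidal, non-CM rational point, part of the classical determination recalled in Theorem~\ref{thm-ratnoncusps} and in \cite{mazur1}.

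I expect step (b) to be the main obstacle: one must prove that the global graph really is forced to be the ``product plus coprimality-diagonals'' of its prime-primary pieces, with no sporadic configuration slipping through, and then carry out the enumeration of admissible $(C_{p}(E))_{p}$ and the collapse to isomorphism types without error or redundancy --- this is where the parenthetical degree labels have to be handled carefully. Within step (a), separating $T_{4}$, $T_{6}$, $T_{8}$ and pinning down the $S$- and $R$-shapes needs some careful $2$-adic group theory. The remaining ingredient, the input that $\operatorname{X}_{0}(14)$ and $\operatorname{X}_{0}(27)$ carry only CM rational points, is classical but is precisely what makes the CM-exclusivity claims hold.
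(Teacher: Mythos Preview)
The paper does not prove this theorem; it is quoted from \cite{gcal-r} and the only comment offered is that it ``follows directly from the classification of rational isogenies over $\QQ$'' (i.e.\ from Theorems~\ref{thm-ratnoncusps} and~\ref{thm-kenku}). Your sketch is precisely an elaboration of that one-line remark, and it is the standard route: identify vertices with cyclic $\QQ$-rational subgroups via Lemma~\ref{Q-rational}, factor the graph over primes, use the $\operatorname{X}_{0}(N)$ classification to bound each $\Gamma_{p}$, observe that the $p=2$ combinatorics in $\operatorname{GL}(2,\FF_{2})$ force the $T_{k}$ branching while odd $p$ give only chains, and then let Kenku's inequalities (1)--(7) cut the admissible products down to the $26$ shapes. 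The CM statements are likewise handled as you say, by running through the thirteen CM $j$-invariants and by the known determination of non-cuspidal rational points on $\operatorname{X}_{0}(14)$, $\operatorname{X}_{0}(27)$, and $\operatorname{X}_{0}(\ell)$ for $\ell\in\{19,43,67,163\}$.

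Two small points worth tightening if you flesh this out. First, your assertion that $\Gamma_{p}$ is a chain for odd $p$ needs slightly more than ``split Cartan fixes two lines'': you should also note that the $p$-primary graph is a tree (vertices are cyclic $p$-power subgroups ordered by inclusion), so valency $\le 2$ at every vertex forces a path. Second, your description of the $S$-graph as ``a $T_{4}$-configuration crossed with a $3$-edge'' is correct, but when you do the enumeration in step~(b) you must also check that no other $T_{k}\times$(odd chain) product survives Kenku's constraints --- in particular $T_{6}$ and $T_{8}$ cannot carry an odd isogeny because $C_{2}(E)\in\{6,8\}$ already forces $C_{p}(E)=1$ for all odd $p$ by $C(E)\le 8$ and parts (6)--(7) of Theorem~\ref{thm-kenku}. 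With those two points nailed down, your sketch is a correct outline and matches what the cited reference does.
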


The main theorem in \cite{gcal-r} was the classification of isogeny-torsion graphs that occur over $\QQ$
\begin{thm}[Theorem 1.3, \cite{gcal-r}] There are $52$ isomorphism types of isogeny-torsion graphs that are associated to $\QQ$-isogeny classes of elliptic curves defined over $\QQ$. In particular, there are $23$ types of $L_{k}$ graphs (see Table \ref{L_k graphs}), $13$ types of $T_{k}$ graphs (see Table \ref{T_k graphs}), $12$ types of $R_{k}$ graphs (see Table \ref{R_k graphs}), and $4$ types of $S$ graphs (see Table \ref{S_k graphs}).
\end{thm}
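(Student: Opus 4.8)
The plan is to build the classification in two layers: first pin down the underlying (unlabeled) isogeny graph, then enumerate the torsion labels it can carry. The first layer is already supplied by Theorem~\ref{thm-mainisogenygraphs} and its CM refinement: there are exactly $26$ isomorphism types of isogeny graph over $\QQ$, distributed among the linear families $L_k$, the two‑primary families $T_k$, the rectangular families $R_k$, and the special graph $S$. So the problem reduces to the following: for each of these $26$ shapes, determine which assignments of a group from Mazur's list (Theorem~\ref{thm-mazur}) to the vertices actually occur as the collection $\bigl(E(\QQ)_{\mathrm{tors}}\bigr)_{E}$ for the curves $E$ in a $\QQ$-isogeny class realizing that shape, and then count the resulting labeled graphs up to graph isomorphism.

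The engine driving the enumeration is the behavior of rational torsion under a cyclic $\QQ$-isogeny $\phi\colon E\to E'$ of prime degree $\ell$ with kernel $H=\langle P\rangle$ (as furnished by Lemma~\ref{Q-rational}). Writing $\hat\phi$ for the dual isogeny, one has $\hat\phi\circ\phi=[\ell]$, so $\phi$ induces maps $E(\QQ)_{\mathrm{tors}}\to E'(\QQ)_{\mathrm{tors}}$ and back whose composites are multiplication by $\ell$. This forces the prime‑to‑$\ell$ parts of $E(\QQ)_{\mathrm{tors}}$ and $E'(\QQ)_{\mathrm{tors}}$ to be isomorphic, and constrains the $\ell$-parts to differ by at most one factor of $\ell$ in exponent or rank across the edge, with the direction governed by whether the kernel point $P$ is itself rational. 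Combining this edge‑local constraint with the global bounds of Kenku's theorem (Theorem~\ref{thm-kenku}) on the numbers $C_p(E)$, and with the structure of $X_0(N)$ recorded in Theorem~\ref{thm-ratnoncusps}, one propagates torsion information around each graph: for the linear graphs this is a short induction along the path; for the $T_k$ graphs one anchors at the distinguished central vertex (whose curve has the richest $2$-torsion behavior) and works outward; for the $R_k$ graphs one uses that the two ``directions'' of the rectangle carry coprime isogeny degrees, so the torsion splits as a product and each factor is treated separately.

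Having produced, for each shape, a finite list of candidate torsion‑labelings consistent with all of the above, the remaining work is twofold: prune the candidates that are nonetheless impossible, and exhibit a witness for each survivor. Pruning uses finer input --- the mod‑$\ell$ Galois image must be reducible in the prescribed pattern and simultaneously contain an element of the right shape representing complex conjugation, and for composite isogeny‑class degree one must check mutual compatibility of the several cyclic subgroups involved; in a handful of cases one invokes the explicit list of rational points on $X_0(N)$ for $N\in\{11,14,15,17,19,21,27,37,43,67,163\}$. For realizability one exhibits an explicit elliptic curve over $\QQ$ --- in practice by searching the isogeny classes in the LMFDB, or by using the classical parametrized families of curves with prescribed torsion and isogenies --- whose $\QQ$-isogeny class has the required isogeny‑torsion graph. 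Tallying the survivors yields $23$ labeled $L_k$ graphs, $13$ labeled $T_k$ graphs, $12$ labeled $R_k$ graphs, and $4$ labeled $S$ graphs, hence $52$ in all.

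The main obstacle is completeness rather than realizability: one must be certain that no admissible torsion configuration has been overlooked, and this is most delicate for the $2$-primary part in the $T_k$ and $R_k$ graphs, where a vertex may carry $\ZZ/2\ZZ\oplus\ZZ/2N\ZZ$ and the several $2$-isogenies present impose interlocking conditions on the mod‑$2^m$ Galois image. The cleanest way to make this airtight is to route the case analysis through the full list of possibilities for $\rho_{E,2^{\infty}}(G_{\QQ})$ --- equivalently, through the subgroup lattice of $\operatorname{GL}(2,\ZZ/2^m\ZZ)$ for small $m$ --- which is precisely the bridge to the main theorem of the present paper.
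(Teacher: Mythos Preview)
This theorem is not proved in the present paper at all: it is quoted from \cite{gcal-r} as background, with no argument given here. So there is no ``paper's own proof'' to compare against. Your sketch is a plausible outline of how the result in \cite{gcal-r} is obtained --- the two-layer strategy (fix the isogeny graph via Theorem~\ref{thm-mainisogenygraphs}, then enumerate compatible torsion labelings using Mazur, Kenku, and the edge-local constraints from $\hat\phi\circ\phi=[\ell]$) is indeed the skeleton of that argument --- but as written it remains a sketch: the ``pruning'' and ``realizability'' steps are asserted rather than carried out, and the actual case analysis is where all the content lives.

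One point deserves correction. Your final paragraph suggests that the cleanest way to nail down the $2$-primary torsion in the $T_k$ and $R_k$ cases is to route through the full classification of $\rho_{E,2^\infty}(G_\QQ)$, and you frame this as ``the bridge to the main theorem of the present paper.'' The dependency runs the other way: the classification of isogeny-torsion graphs in \cite{gcal-r} is logically prior to, and an input for, the present paper's $2$-adic image classification. The proof in \cite{gcal-r} does not rely on the Rouse--Zureick-Brown database or on the results of this paper; it proceeds by direct analysis of how rational torsion propagates along isogenies, together with the known rational points on the relevant $X_0(N)$ and $X_1(N)$. Invoking the $2$-adic image classification here would be circular in the context of the present paper.
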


\newpage

\begin{table}[h!]
	\renewcommand{\arraystretch}{1.25}
	\begin{tabular}{ |c|c|c|c| }
		\hline
		Isogeny Graph & Label & Isomorphism Types & LMFDB Label (Isogeny Class) \\
		
		\hline
		$E_1$ & $L_{1}$ & ([1]) & 37.a \\
		\hline
		
		\multirow{12}*{$E_1 \myiso E_2$} & {$L_{2}(2)$} & ([2],[2]) & $46.a$ \\
		\cline{2-4} 
		& \multirow{2}*{$L_{2}(3)$} & $([1],[1])$ & $176.c$ \\
		\cline{3-4}
		& & $([3],[1])$ & $44.a$ \\
		\cline{2-4} 
		& \multirow{2}*{$L_{2}(5)$} & $([1],[1])$ & $75.c$ \\
		\cline{3-4}
		& & $([5],[1])$ & $38.b$ \\
		\cline{2-4} 
		& \multirow{2}*{$L_{2}(7)$} & $([1],[1])$ & $208.d$ \\
		\cline{3-4}
		& & $([7],[1])$ & $26.b$ \\
		\cline{2-4}
		& $L_{2}(11)$ & $([1],[1])$ & $121.a$ \\
		\cline{2-4}
		& $L_{2}(13)$ & $([1],[1])$ & $147.b$ \\
		\cline{2-4}
		& $L_{2}(17)$ & $([1],[1])$ & $14450.b$ \\
		\cline{2-4}
		& $L_{2}(19)$ & $([1],[1])$ & $361.a$ \\
		\cline{2-4}
		& $L_{2}(37)$ & $([1],[1])$ & $1225.b$ \\
		\cline{2-4}
		& $L_{2}(43)$ & $([1],[1])$ & $1849.b$ \\
		\cline{2-4}
		& $L_{2}(67)$ & $([1],[1])$ & $4489.b$ \\
		\cline{2-4}
		& $L_{2}(163)$ & $([1],[1])$ & $26569.b$ \\
		\hline
		
		\multirow{5}*{$E_{1}\myiso E_{2}\myiso E_{3}$} & \multirow{3}*{$L_{3}(9)$} & $([1],[1],[1])$ & $175.b$ \\
		\cline{3-4}
		& & $([3],[3],[1])$ & $19.a$ \\
		\cline{3-4}
		& & $([9],[3],[1])$ & $54.b$ \\
		\cline{2-4}
		& \multirow{2}*{$L_{3}(25)$} & $([1],[1],[1])$ & $99.d$ \\
		\cline{3-4}
		& & $([5],[5],[1])$ & $11.a$ \\
		\hline
		
		\multirow{2}*{$E_{1} \myisot E_{2} \myisot E_{3} \myisot E_{4}$} & \multirow{2}*{$L_{4}$} & $([1],[1],[1],[1])$ & $432.e$ \\
		\cline{3-4}
		& & $([3],[3],[3],[1])$ & $27.a$ \\
		\hline
	\end{tabular}
	\caption{The list of all $L_{k}$ rational isogeny-torsion graphs}
	\label{L_k graphs}
\end{table}

\begin{table}[h!]
 	\renewcommand{\arraystretch}{1.6}
	\begin{tabular} { |c|c|c|c| }
		\hline
		
		Graph Type & Label & Isomorphism Types & LMFDB Label \\
		\hline
		\multirow{4}*{\includegraphics[scale=0.25]{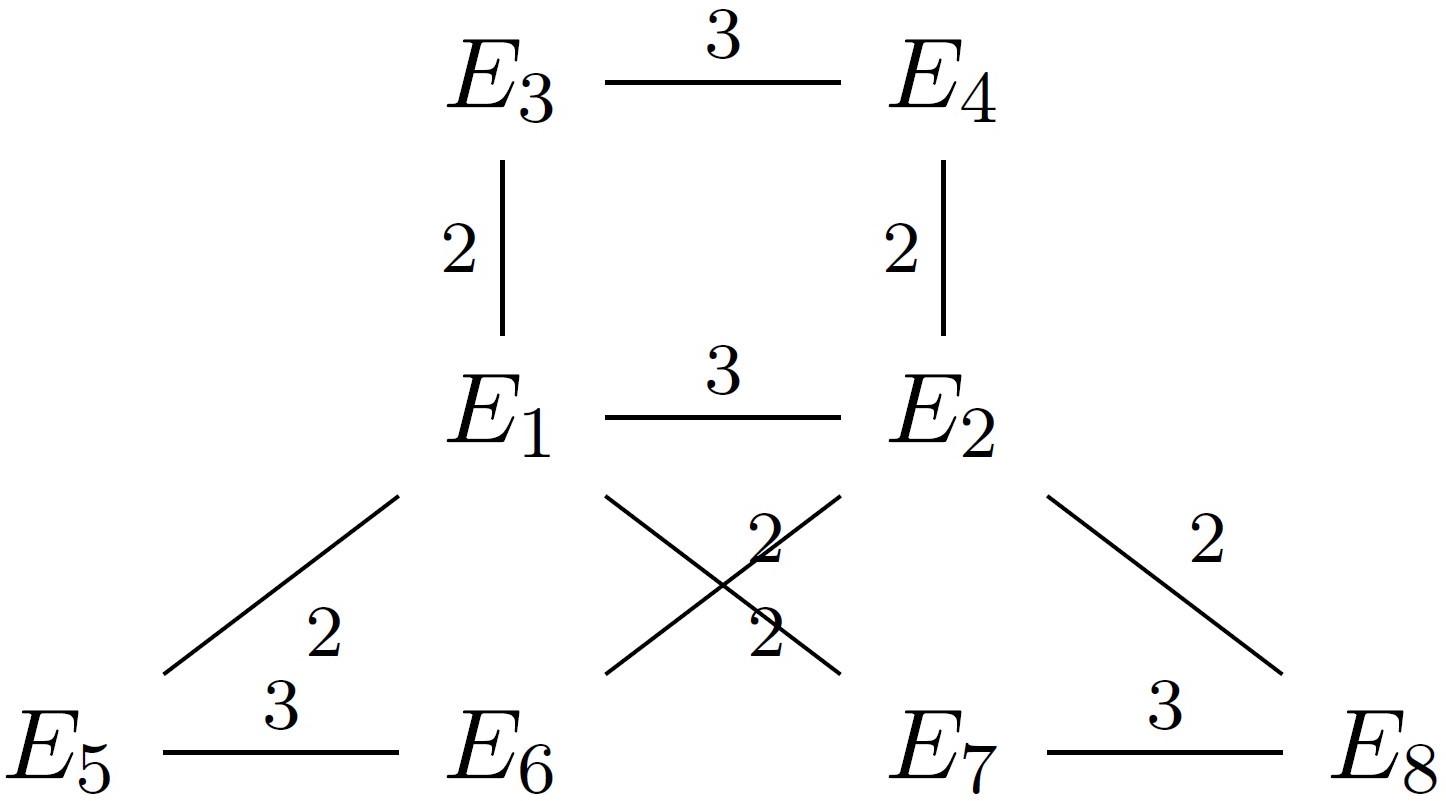}}& \multirow{4}*{$S$} & ([2,2],[2,2],[2],[2],[2],[2],[2],[2]) & 240.b \\
		\cline{3-4}
		& & ([2,2],[2,2],[4],[4],[2],[2],[2],[2]) & 150.b \\
		\cline{3-4}
		& & ([2,6],[2,2],[6],[2],[6],[2],[6],[2]) & 30.a \\
		\cline{3-4}
		& & ([2,6],[2,2],[12],[4],[6],[2],[6],[2]) & 90.c \\
		\hline
	\end{tabular}
	
	\caption{The list of all $S$ rational isogeny-torsion graphs}
	\label{S_k graphs}
\end{table}

\newpage

\begin{table}[h!]
 	\renewcommand{\arraystretch}{1.3}
 	\begin{tabular}{ |c|c|c|c| }
 		\hline
 		Graph Type & Label & Isomorphism Types & LMFDB Label  \\
 		\hline
 		
 		\multirow{4}*{\includegraphics[scale=0.21]{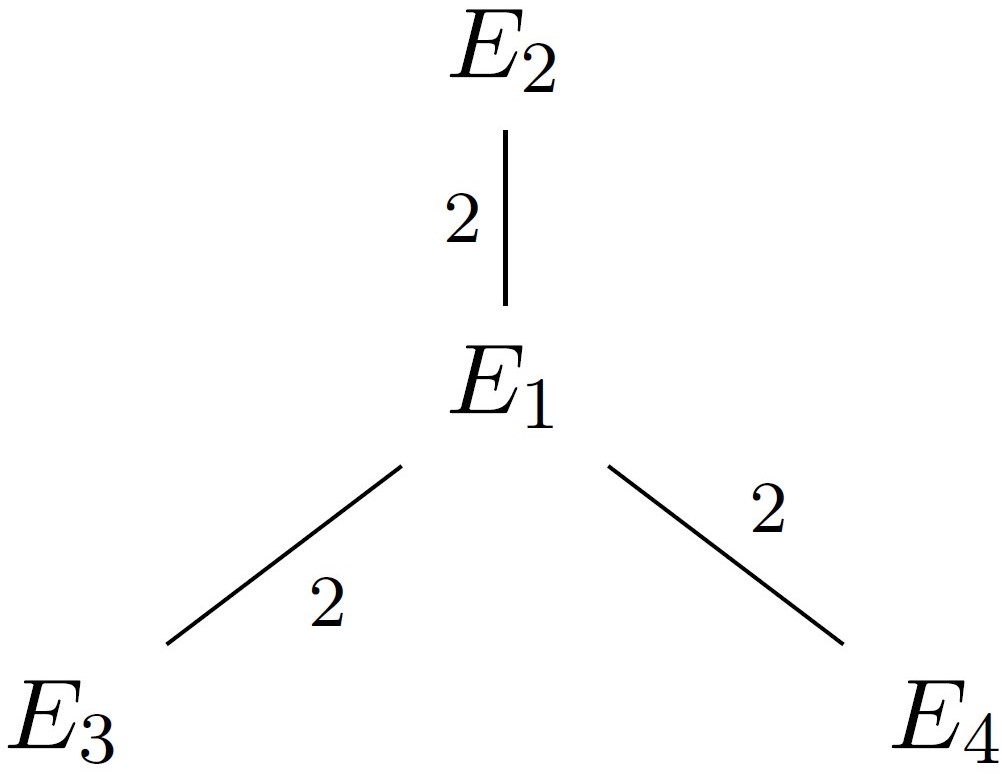}}& \multirow{4}*{$T_{4}$} & ([2,2], [2], [2], [2]) & 120.a \\
 		\cline{3-4}
 		& & ([2,2], [4], [2], [2]) & 33.a \\
 		\cline{3-4}
 		& & ([2,2], [4], [4], [2]) & 17.a \\
 		& & & \\
 		\hline
 		
 		\multirow{4}*{\includegraphics[scale=0.21]{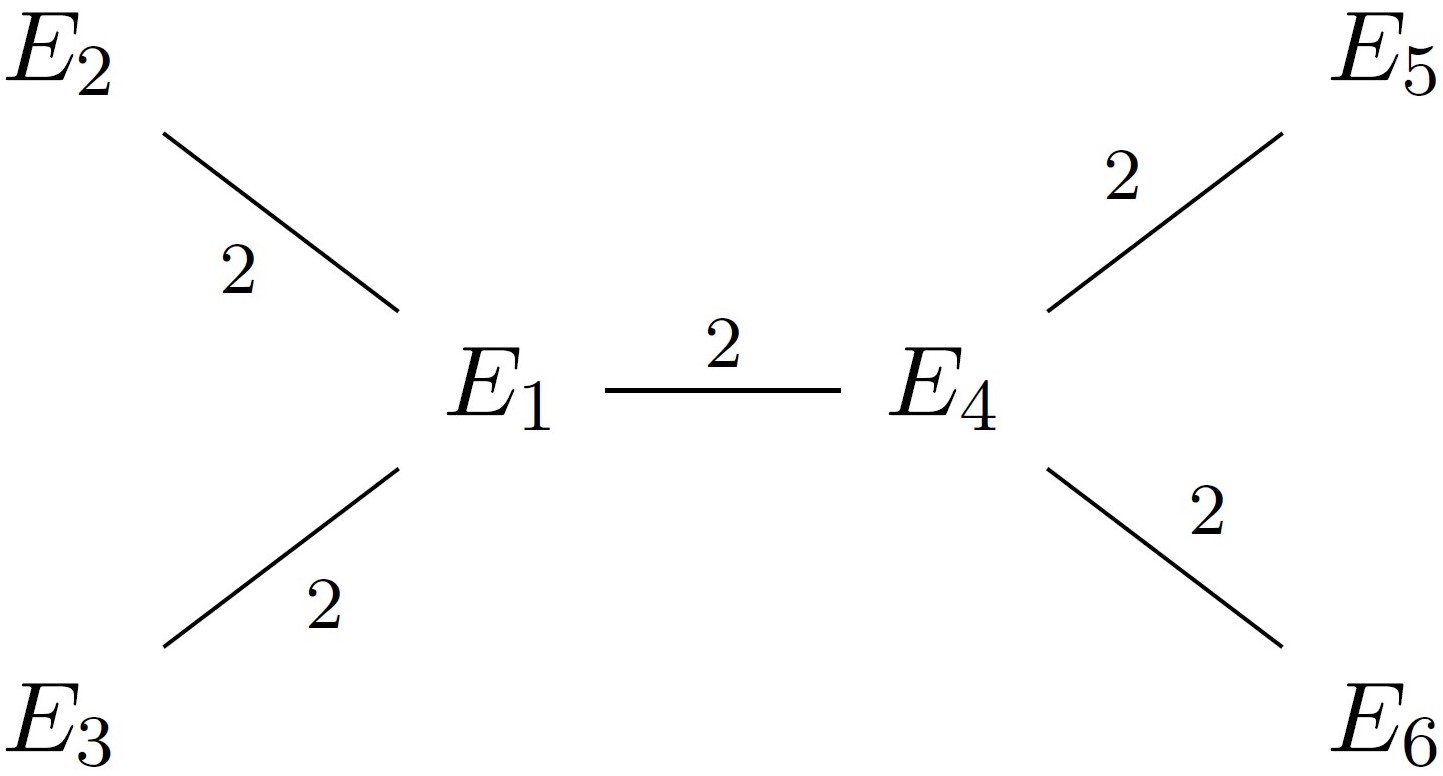}}& \multirow{4}*{$T_{6}$} & ([2,4],[4],[4],[2,2],[2],[2]) & 24.a \\
 		\cline{3-4}
 		& & ([2,4],[8],[4],[2,2],[2],[2]) & 21.a \\
 		\cline{3-4}
 		& & ([2,2],[2],[2],[2,2],[2],[2]) & 126.a \\
 		\cline{3-4}
 		& & ([2,2],[4],[2],[2,2],[2],[2]) & 63.a \\
 		\hline
 		
 		\multirow{6}*{\includegraphics[scale=0.32]{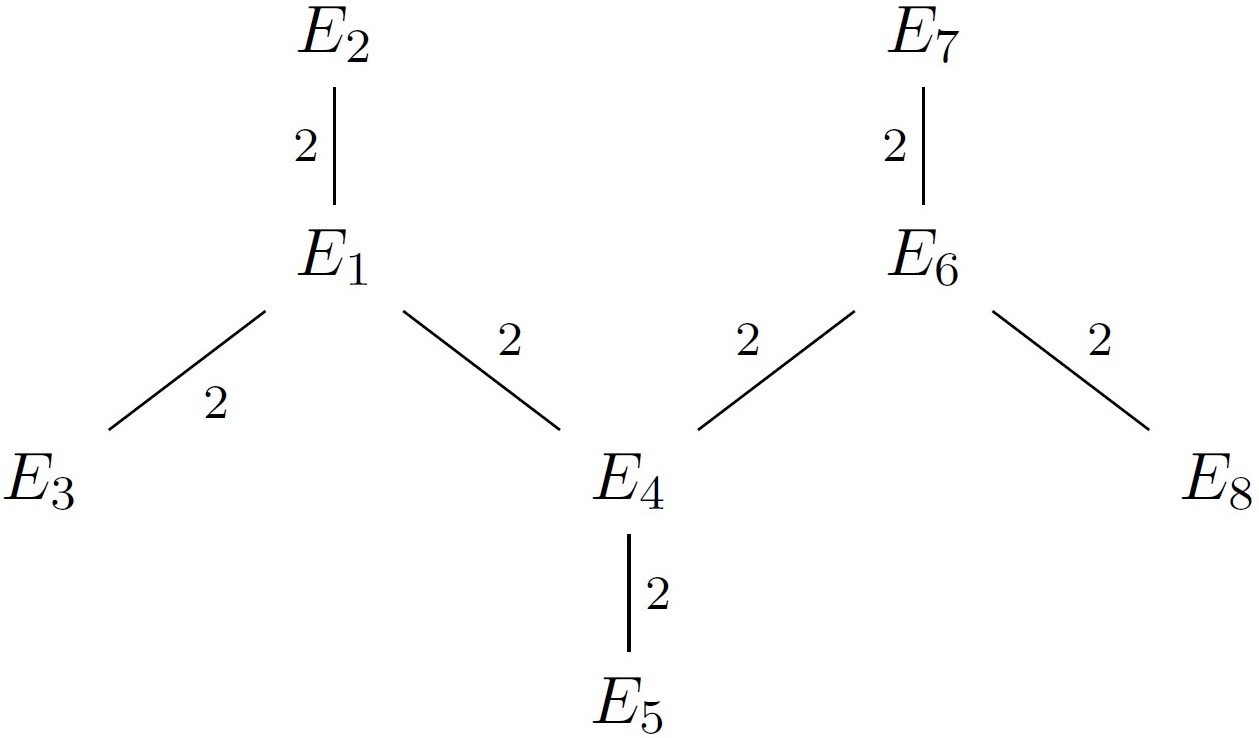}}& \multirow{6}*{$T_{8}$} & ([2,8],[8],[8],[2,4],[4],[2,2],[2],[2]) & 210.e \\
 		\cline{3-4}
 		& & ([2,4],[4],[4],[2,4],[4],[2,2],[2],[2]) & 195.a \\
 		\cline{3-4}
 		& & ([2,4],[4],[4],[2,4],[8],[2,2],[2],[2]) & 15.a \\
 		\cline{3-4}
 		& & ([2,4],[8],[4],[2,4],[4],[2,2],[2],[2]) & 1230.f \\
 		\cline{3-4}
 		& & ([2,2],[2],[2],[2,2],[2],[2,2],[2],[2]) & 45.a \\
 		\cline{3-4}
 		& & ([2,2],[4],[2],[2,2],[2],[2,2],[2],[2]) & 75.b \\
 		
 		\hline
 		
 	\end{tabular}
 	\caption{The list of all $T_{k}$ rational isogeny-torsion graphs}
 	\label{T_k graphs}
 \end{table}
 
 		\begin{table}[h!]
 	\renewcommand{\arraystretch}{1.3}
 	\begin{tabular}{ |c|c|c|c| }
 		\hline
 		Graph Type & Label & Isomorphism Types & LMFDB Label (Isogeny Class) \\
 		
 		\hline
 		\multirow{10}*{\includegraphics[scale=0.3]{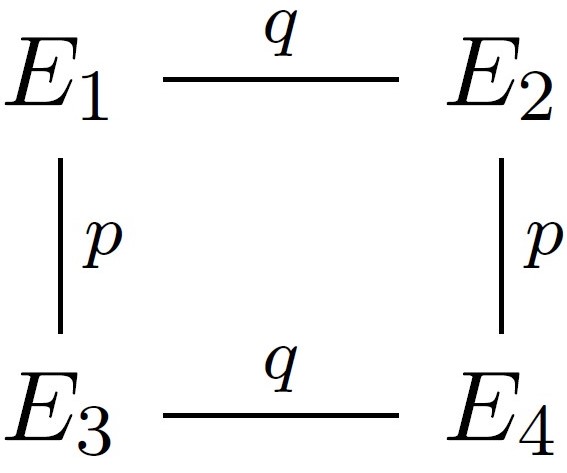}} & \multirow{2}*{$R_{4}(6)$} & $([2],[2],[2],[2])$ & $80.b$ \\
 		\cline{3-4}
 		& & $([6],[6],[2],[2])$ & $20.a$ \\
 		\cline{2-4}
 		& \multirow{2}*{$R_{4}(10)$} & $([2],[2],[2],[2])$ & $150.a$ \\
 		\cline{3-4}
 		& & $([10],[10],[2],[2])$ & $66.c$ \\
 		\cline{2-4}
 		& $R_{4}(14)$ & $([2],[2],[2],[2])$ & $49.a$ \\
 		\cline{2-4}
 		& \multirow{3}*{$R_{4}(15)$} & $([1],[1],[1],[1])$ & $400.d$ \\
 		\cline{3-4}
 		& & $([3],[3],[1],[1])$ & $50.a$ \\
 		\cline{3-4}
 		& & $([5],[5],[1],[1])$ & $50.b$ \\
 		\cline{2-4}
 		& \multirow{2}*{$R_{4}(21)$} & $([1],[1],[1],[1])$ & $1296.f$ \\
 		\cline{3-4}
 		& & $([3],[3],[1],[1])$ & $162.b$ \\
 		\hline
 		\multirow{3}*{\includegraphics[scale=0.25]{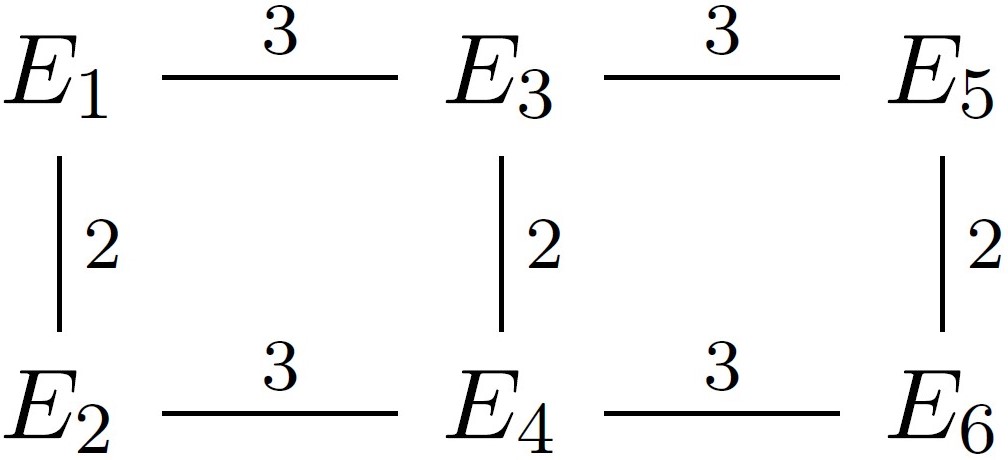}} & \multirow{3}*{$R_{6}$} & $([2],[2],[2],[2],[2],[2])$ & $98.a$ \\
 		\cline{3-4}
 		& & $([6],[6],[6],[6],[2],[2])$ & $14.a$ \\
 		& & & \\
 		\hline
 	\end{tabular}
 	\caption{The list of all $R_{k}$ rational isogeny-torsion graphs}
 	\label{R_k graphs}
 \end{table}
 
 \newpage
	
\subsection{Quadratic Twists}

\begin{lemma}\label{group quadratic twists}
Let $N$ be a positive integer and let $H$ be a subgroup of $\operatorname{GL}(2,\ZZ / N \ZZ)$ that does not contain $\operatorname{-Id}$. Let $H' = \left\langle \operatorname{-Id}, H \right\rangle$. Then $H' \cong \left\langle \operatorname{-Id} \right\rangle \times H$.
\end{lemma}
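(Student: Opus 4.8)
The plan is to apply the standard recognition criterion for internal direct products: a group $G$ is the internal direct product of two subgroups $A$ and $B$ as soon as $A$ and $B$ are both normal in $G$, the intersection $A \cap B$ is trivial, and $AB = G$. I would take $G = H'$, $A = \langle -\operatorname{Id}\rangle$, and $B = H$, and verify the three conditions in turn.

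First I would record the structural facts about $-\operatorname{Id}$. The hypothesis $-\operatorname{Id} \notin H$ forces $-\operatorname{Id} \neq \operatorname{Id}$, so $N \geq 3$ and $\langle -\operatorname{Id}\rangle$ is cyclic of order $2$. The key point is that $-\operatorname{Id}$ is a scalar matrix, hence a central element of $\operatorname{GL}(2, \ZZ / N \ZZ)$, and in particular central in $H'$. Centrality of $-\operatorname{Id}$ immediately gives $\langle -\operatorname{Id}\rangle \trianglelefteq H'$. Moreover, since $-\operatorname{Id}$ commutes with every element of $H$, the subgroup it generates together with $H$ is just the product set: $H' = \langle -\operatorname{Id}, H\rangle = \{\operatorname{Id}, -\operatorname{Id}\}\cdot H = \langle -\operatorname{Id}\rangle H$. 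This simultaneously shows $AB = G$ and, because conjugation by $-\operatorname{Id}$ fixes $H$ pointwise while conjugation by elements of $H$ preserves $H$, that $H \trianglelefteq H'$.

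It remains to check the trivial-intersection condition. Since $\langle -\operatorname{Id}\rangle \cap H$ is a subgroup of the order-$2$ group $\langle -\operatorname{Id}\rangle$, it is either $\{\operatorname{Id}\}$ or all of $\langle -\operatorname{Id}\rangle$; the latter would put $-\operatorname{Id} \in H$, contrary to hypothesis, so $\langle -\operatorname{Id}\rangle \cap H = \{\operatorname{Id}\}$. With all three conditions verified, the recognition criterion yields $H' \cong \langle -\operatorname{Id}\rangle \times H$. There is no real obstacle in this argument; the only point requiring a moment's care is that the degenerate levels $N = 1, 2$ (where $-\operatorname{Id} = \operatorname{Id}$) are excluded automatically by the standing hypothesis $-\operatorname{Id} \notin H$, so the whole proof rests on nothing more than the fact that $-\operatorname{Id}$ is a central involution not lying in $H$.
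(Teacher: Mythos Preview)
Your proof is correct and essentially the same as the paper's. The paper phrases it by exhibiting the explicit multiplication map $\psi\colon \langle -\operatorname{Id}\rangle \times H \to H'$, $(x,h)\mapsto xh$, checking it is a homomorphism (by centrality of $-\operatorname{Id}$) and injective (by $-\operatorname{Id}\notin H$), and then concluding by an order count; you package the identical ingredients as the internal direct product recognition criterion, and your observation that the hypothesis forces $N\geq 3$ is a nice explicit touch the paper leaves implicit.
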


\begin{proof}
Note that $H$ is a subgroup of $H'$ of index $2$ and $\operatorname{-Id}$ is in the center of $\operatorname{GL}(2, \ZZ / N \ZZ)$. Hence, the order of $H'$ is equal to the order of $\left\langle \operatorname{-Id} \right\rangle \times \left\langle H \right\rangle$. Define
$$\psi \colon \left\langle \operatorname{-Id} \right\rangle \times H \to H'$$
by $\psi(x,h) = xh$. This is a group homomorphism as $\operatorname{-Id}$ is in the center of $\operatorname{GL}(2,\Z/N\Z)$. We are done if we prove that $\psi$ is injective. Let $(x,h) \in \left\langle \operatorname{-Id} \right\rangle \times H$ such that $\psi(x,h) = xh = \operatorname{Id}$. Then $h = x^{-1} = x$ as the order of $x$ is equal to $1$ or $2$. As $\operatorname{-Id} \notin H$, $h = x = \operatorname{Id}$ and so, $\psi$ is injective. \end{proof}

\begin{definition}
Let $G$ and $H$ be subgroups of $\operatorname{GL}(2, \ZZ_{2})$. Then we will say that $G$ and $H$ are quadratic twists if $G$ is the same as $H$, up to multiplication of some elements (possibly none) of $H$ by $\operatorname{-Id}$. In particular, if $\left\langle G, \operatorname{-Id} \right\rangle =  \left\langle H, \operatorname{-Id} \right\rangle$, then $H$ and $G$ are quadratic twists.
\end{definition}

\begin{lemma}\label{index of quadratic twists}
Let $N$ be a positive integer, let $H$ be a subgroup of $\operatorname{GL}(2,\Z/N\Z)$, and let $H' = \left\langle H, \operatorname{-Id} \right\rangle$. Let $\chi$ be a character of $H$ of degree two. Then $\chi(H) = H'$ or $\chi(H)$ is a subgroup of $H'$ of index $2$.
\end{lemma}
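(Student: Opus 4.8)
The statement is purely group-theoretic, so the plan is to reduce it to Lemma \ref{group quadratic twists}. First I would pin down the meaning of $\chi(H)$: here $\chi \colon H \to \{\pm 1\}$ is a homomorphism, and $\chi(H)$ denotes the quadratic twist of $H$ by $\chi$, namely the set $\{\chi(h)\cdot h : h \in H\}$, where the sign $\chi(h) \in \{\pm 1\}$ is identified with $\pm\operatorname{Id} \in \operatorname{GL}(2,\ZZ/N\ZZ)$ (this matches the notion of quadratic twist from the Definition preceding the lemma). With this understood, the two facts I would establish are that $\chi(H)$ is a subgroup of $H'$ and that $\langle \chi(H), \operatorname{-Id}\rangle = H'$; the lemma then drops out immediately.

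For the first fact: since $\operatorname{-Id}$ is central in $\operatorname{GL}(2,\ZZ/N\ZZ)$ and $\chi$ is a homomorphism, $(\chi(g)g)(\chi(h)h) = \chi(g)\chi(h)\,gh = \chi(gh)\,gh$, so $\chi(H)$ is closed under multiplication; it contains $\chi(\operatorname{Id})\operatorname{Id} = \operatorname{Id}$; and because $\chi(h)$ has order at most $2$ and $\chi(h) = \chi(h^{-1})$, the inverse of $\chi(h)h$ is $\chi(h^{-1})h^{-1} \in \chi(H)$. Hence $\chi(H)$ is a subgroup, and it is clearly contained in $H' = \langle H, \operatorname{-Id}\rangle$. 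For the second fact, the inclusion $\langle \chi(H),\operatorname{-Id}\rangle \subseteq H'$ is then automatic, while conversely each $h \in H$ satisfies $h = \chi(h)\cdot(\chi(h)h) \in \langle \chi(H), \operatorname{-Id}\rangle$ (again using $\chi(h) = \chi(h)^{-1}$), so $H \subseteq \langle\chi(H),\operatorname{-Id}\rangle$ and therefore $H' = \langle H,\operatorname{-Id}\rangle \subseteq \langle\chi(H),\operatorname{-Id}\rangle$.

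Finally, I would conclude by a dichotomy on whether $\operatorname{-Id} \in \chi(H)$. If $\operatorname{-Id} \in \chi(H)$, then $\chi(H) = \langle \chi(H),\operatorname{-Id}\rangle = H'$. If $\operatorname{-Id} \notin \chi(H)$, apply Lemma \ref{group quadratic twists} with $\chi(H)$ in the role of the group there: since $\langle \chi(H),\operatorname{-Id}\rangle = H'$, we obtain $H' \cong \langle\operatorname{-Id}\rangle \times \chi(H)$, so $|H'| = 2\,|\chi(H)|$ and $\chi(H)$ has index $2$ in $H'$. The only place any genuine care is required is the first fact above — fixing the interpretation of $\chi(H)$ and verifying it really is a subgroup; once that is in place, no case analysis on whether $\operatorname{-Id} \in H$, and no reference to $\ker\chi$, is needed, and the result is essentially a one-line consequence of Lemma \ref{group quadratic twists}.
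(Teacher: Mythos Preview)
Your proof is correct and follows essentially the same approach as the paper: both split on whether $\operatorname{-Id} \in \chi(H)$, establish $\langle \chi(H), \operatorname{-Id}\rangle = H'$, and in the second case invoke Lemma~\ref{group quadratic twists}. Your version is slightly more careful in explicitly verifying that $\chi(H)$ is a subgroup and in isolating the identity $\langle \chi(H), \operatorname{-Id}\rangle = H'$ before the case split rather than inside each case, but the argument is the same.
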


\begin{proof}

The character $\chi$ multiplies some elements of $H$ by $\operatorname{-Id}$. If $\operatorname{-Id} \in \chi(H)$, then we can multiply all of the elements of $H$ that $\chi$ multiplied by $\operatorname{-Id}$ by $\operatorname{-Id}$ again, and recoup all of the elements of $H$. Thus, $\chi(H)$ is a subgroup of $H'$ that contains both $\operatorname{-Id}$ and $H$ and so, $\chi(H) = H'$.

On the other hand, let us say that $\operatorname{-Id} \notin \chi(H)$. Let $\chi(H)' = \left\langle \chi(H), \operatorname{-Id} \right\rangle$. By the same argument from before, we can multiply all of the elements of $H$ that $\chi$ multiplied by $\operatorname{-Id}$ by $\operatorname{-Id}$ again, and recoup all of the elements of $H$ in $\chi(H)'$. In other words, $\chi(H)' = \left\langle \operatorname{-Id}, \chi(H) \right\rangle = \left\langle \operatorname{-Id}, H \right\rangle = H'$. By Lemma \ref{group quadratic twists}, $H' = \left\langle \chi(H), \operatorname{-Id} \right\rangle \cong \left\langle \operatorname{-Id} \right\rangle \times \chi(H)$ and $\chi(H)$ is a subgroup of $H'$ of index $2$.
\end{proof}

Let $E : y^{2} = x^{3} + Ax + B$ be an elliptic curve and let $d$ be a non-zero integer. Then the quadratic twist of $E$ by $d$ is the elliptic curve $E^{(d)} : y^{2} = x^{3} + d^{2}Ax + d^{3}B$. Equivalently, $E^{(d)}$ is isomorphic to the elliptic curve $E^{(d)} : dy^{2} = x^{3} + Ax + B$. Then $E$ is isomorphic to $E^{(d)}$ over $\QQ(\sqrt{d})$ by the map
$$\phi \colon E \to E^{(d)}$$
defined by fixing $\mathcal{O}$ and mapping any non-zero point $(a,b)$ on $E$ to $\left(a,\frac{b}{\sqrt{d}}\right)$. Moreover, the \textit{j}-invariant of $E$ is equal to the \textit{j}-invariant of $E^{(d)}$. Conversely, if $E'/\QQ$ is an elliptic curve such that the \textit{j}-invariant of $E$ is equal to the \textit{j}-invariant of $E'$, then $E$ is a (possibly trivial) quadratic twist of $E'$.

\begin{corollary}\label{subgroups of index 2 contain -Id}
Let $E/\QQ$ be an elliptic curve and let $N$ be a positive integer. Suppose that all subgroups of $\overline{\rho}_{E,N}(G_{\QQ})$ of index $2$ contain $\operatorname{-Id}$. Let $E^{\chi}$ be a quadratic twist of $E$. Then $\overline{\rho}_{E^{\chi},N}(G_{\QQ})$ is conjugate to $\overline{\rho}_{E,N}(G_{\QQ})$.
\end{corollary}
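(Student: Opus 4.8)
The plan is to combine the standard twisting identity for mod-$N$ Galois representations with the observation — which has to be teased out of the hypothesis — that $\operatorname{-Id}$ already lies in $\overline{\rho}_{E,N}(G_{\QQ})$.

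First I would set up the twisting identity. Write $E^{\chi}=E^{(d)}$ and let $\chi\colon G_{\QQ}\to\{\pm1\}$ be the quadratic character of $\QQ(\sqrt d)/\QQ$, identified with $\{\pm\operatorname{Id}\}\subseteq\operatorname{GL}(2,\ZZ/N\ZZ)$, so that $\overline{\rho}_{E^{\chi},N}(\sigma)=\chi(\sigma)\,\overline{\rho}_{E,N}(\sigma)$ for every $\sigma\in G_{\QQ}$. Putting $H=\overline{\rho}_{E,N}(G_{\QQ})$ and $K=\overline{\rho}_{E^{\chi},N}(G_{\QQ})$, every element of $K$ is of the form $\pm h$ with $h\in H$, hence $K\subseteq\langle\operatorname{-Id},H\rangle$; and since $(E^{\chi})^{\chi}\cong E$, the symmetric argument gives $H\subseteq\langle\operatorname{-Id},K\rangle$. (In the terminology of the definition above, $H$ and $K$ are quadratic twists; one could also phrase what follows through Lemma \ref{index of quadratic twists}, but these two containments are all that is needed.)

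The heart of the argument is that the hypothesis forces $\operatorname{-Id}\in H$. For $N\leq 2$ this is automatic, since there $\operatorname{-Id}=\operatorname{Id}$. For $N\geq 3$ the group $(\ZZ/N\ZZ)^{\times}$ has even order, hence a subgroup $D$ of index $2$; because the determinant map $H\to(\ZZ/N\ZZ)^{\times}$ is surjective (Weil pairing), the preimage $H_{D}=\{h\in H:\det h\in D\}$ is a subgroup of $H$ of index $2$, so by hypothesis $\operatorname{-Id}\in H_{D}\subseteq H$. With $\operatorname{-Id}\in H$ in hand, $\langle\operatorname{-Id},H\rangle=H$, whence $K\subseteq H$ from the previous paragraph. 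Now suppose $\operatorname{-Id}\notin K$; then $K$ has index $2$ in $\langle\operatorname{-Id},K\rangle$, and the chain $K\subseteq H\subseteq\langle\operatorname{-Id},K\rangle$ forces $[H:K]\in\{1,2\}$, with $[H:K]=1$ impossible because $\operatorname{-Id}\in H\setminus K$; so $K$ is a subgroup of $\overline{\rho}_{E,N}(G_{\QQ})$ of index $2$, and by hypothesis $\operatorname{-Id}\in K$ — a contradiction. Therefore $\operatorname{-Id}\in K$, so $\langle\operatorname{-Id},K\rangle=K\supseteq H\supseteq K$ and $K=H$; in particular $\overline{\rho}_{E^{\chi},N}(G_{\QQ})$ is conjugate to $\overline{\rho}_{E,N}(G_{\QQ})$.

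I expect the only genuine obstacle to be the determinant step that produces $\operatorname{-Id}\in H$; everything else is formal manipulation with the quadratic-twist lemmas already in hand. Its necessity is what makes the statement interesting: without it the conclusion is false, since twisting by a quadratic character whose field is linearly disjoint from $\QQ(E[N])$ replaces $\overline{\rho}_{E,N}(G_{\QQ})$ by the strictly larger group $\langle\operatorname{-Id},\overline{\rho}_{E,N}(G_{\QQ})\rangle$ whenever $\operatorname{-Id}\notin\overline{\rho}_{E,N}(G_{\QQ})$, so it is precisely the surjectivity of the determinant that lets the hypothesis do its work.
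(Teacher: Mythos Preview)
Your proof is correct and follows essentially the same approach as the paper's: both reduce to showing that the twisted image $K$, which is contained in $\langle\operatorname{-Id},H\rangle$, must coincide with $H$ once one knows $\operatorname{-Id}\in H$. The paper invokes Lemma~\ref{index of quadratic twists} directly (so $K$ is either $H$ or an index-$2$ subgroup of $H$, and in the latter case the hypothesis forces $\operatorname{-Id}\in K$, whence $H\subseteq K$, a contradiction), while you unwind the same dichotomy by hand via the two containments $K\subseteq\langle\operatorname{-Id},H\rangle$ and $H\subseteq\langle\operatorname{-Id},K\rangle$.

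One point worth noting: the paper simply asserts ``Then $H$ contains $\operatorname{-Id}$'' without justification, whereas you supply the missing argument via surjectivity of the determinant and the parity of $|(\ZZ/N\ZZ)^{\times}|$ for $N\geq 3$. This is a genuine improvement --- the hypothesis is vacuous if $H$ has no index-$2$ subgroups, so one really does need to exhibit such a subgroup to extract $\operatorname{-Id}\in H$, and your determinant step does exactly that.
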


\begin{proof}
Denote $\overline{\rho}_{E,N}(G_{\QQ})$ by $H$. Then $H$ contains $\operatorname{-Id}$. By Lemma \ref{index of quadratic twists}, $\overline{\rho}_{E^{\chi},N}(G_{\Q})$ is conjugate to $H$ or is conjugate to a subgroup of $H$ of index $2$. Moreover, $\overline{\rho}_{E^{\chi},N}(G_{\Q})$ is the same as $H$, up to multiplication of some elements of $H$ by $\operatorname{-Id}$. As all subgroups of $H$ of index $2$ contain $\operatorname{-Id}$, we can just multiply the elements of $\overline{\rho}_{E^{\chi},N}(G_{\Q})$ that $\chi$ multiplied by $\operatorname{-Id}$ again by $\operatorname{-Id}$ to recoup all elements of $H$. Hence, $\overline{\rho}_{E^{\chi},N}(G_{\QQ})$ is conjugate to $H$.
\end{proof}

\begin{remark}
Let $E/\QQ$ be an elliptic curve and let $N$ be an integer greater than or equal to $3$. Suppose that $\overline{\rho}_{E,N}(G_{\QQ}) = H' = \left\langle H, \operatorname{-Id}\right\rangle$ where $H$ is a subgroup of $\operatorname{GL}(2, \ZZ / N \ZZ)$ that does not contain $\operatorname{-Id}$. Then there is a non-zero integer $d$ such that $\overline{\rho}_{E^{(d)},N}(G_{\QQ})$ is conjugate to $H$ (see Remark 1.1.3  and Section 10 in \cite{Rouse2021elladicIO}). Conversely, if $\overline{\rho}_{E,N}(G_{\QQ})$ is conjugate to $H$, then $\overline{\rho}_{E^{(d)},N}(G_{\QQ})$ is conjugate to $H'$ where $E^{(d)}$ is a quadratic twist of $E$ by a non-zero, square-free integer $d$, such that $\QQ(E[N])$ does not contain $\QQ(\sqrt{d})$.
\end{remark}

\section{Work by Rouse and Zureick-Brown}\label{section work by rouse and zureick-brown}

Rouse and Zureick-Brown classified the image of the $2$-adic Galois representation attached to non-CM elliptic curves defined over $\QQ$. This paper extends the work of Rouse and Zureick-Brown to an analogous classification for the $2$-adic Galois Image of Galois representations attached to isogeny-torsion graphs over $\QQ$ without CM.
	
\begin{thm}[Rouse, Zureick-Brown, Corollary 1.3, \cite{rouse}]\label{thm-rzb} Let $E$ be an elliptic curve over $\QQ$ without complex multiplication. Then, the index of $\rho_{E,2^\infty}(G_{\QQ})$ in $\operatorname{GL}(2,\Z_{2})$ divides $64$ or $96$; all such indices occur. Moreover, the image of $\rho_{E,2^{\infty}}(G_{\QQ})$ is the inverse image in $\operatorname{GL}(2,\ZZ_{2})$ of the image of $\overline{\rho}_{E,32}(G_{\QQ})$. For non-CM elliptic curve, there are precisely $1208$ possible images for $\rho_{E,2^{\infty}}$.
	\end{thm}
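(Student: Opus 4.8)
The plan is to realize every possible image as (a conjugate of) an open subgroup $H\le\operatorname{GL}(2,\ZZ_2)$ and to enumerate all such $H$ by means of the associated modular curves. To each open $H$ with $-\operatorname{Id}\in H$, surjective determinant $\det(H)=\ZZ_2^{\times}$, and containing an element conjugate to $\smallmat{1}{0}{0}{-1}$ or $\smallmat{1}{1}{0}{-1}$ (the three conditions forced by the Weil pairing and by complex conjugation), one attaches a smooth projective curve $X_H/\QQ$ together with a map $j_H\colon X_H\to X(1)\cong\PP^1$ of degree $[\operatorname{GL}(2,\ZZ_2):H]$; a non-CM $E/\QQ$ has $\rho_{E,2^\infty}(G_{\QQ})$ conjugate into $H$ exactly when $j(E)$ is the image under $j_H$ of a non-cuspidal, non-CM rational point of $X_H$, and the \emph{exact} image is read off by taking $H$ as deep as possible in the subgroup lattice. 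So the theorem reduces to deciding, for each admissible $H$, whether $X_H$ has infinitely many, finitely many, or no non-CM rational points.

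First I would organize the admissible subgroups into a rooted tree with root $\operatorname{GL}(2,\ZZ_2)$, the children of a node being its maximal admissible subgroups. Descending the tree, for each $H$ I would compute its level $2^k$, its index, and the genus of $X_H$ via Riemann--Hurwitz for $j_H$, reading the ramification over $j=0,1728,\infty$ off the permutation action of $\operatorname{GL}(2,\ZZ/2^k\ZZ)$ on the cosets of $H$. A branch is pruned as soon as $X_H$ has no rational point at all (a Hasse-principle check on a conic in the genus-$0$ case; a check of whether the genus-one curve is isomorphic to its Jacobian in the genus-$1$ case). The two quantitative assertions — that the index divides $64$ or $96$, and that $\rho_{E,2^\infty}(G_{\QQ})$ is the full preimage of $\overline{\rho}_{E,32}(G_{\QQ})$ — should emerge from the fact that the pruned tree has bounded depth: I would isolate the group-theoretic statement that an admissible $H$ which is \emph{not} the full preimage of its reduction modulo $32$ is dominated by a level-$64$ modular curve, and then verify that each such curve carries only cusps and the finitely many CM $j$-invariants as rational points, so no deeper image survives.

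For a genus-$0$ curve with a rational point I would fix an isomorphism $X_H\cong\PP^1$, producing an infinite one-parameter family of $j$-invariants, and recurse into the children of $H$. For a genus-$1$ curve with a rational point — hence an elliptic curve over $\QQ$ — I would compute its Mordell--Weil group: if the rank is $0$ the finitely many rational points are torsion and are examined one at a time against the cusps and the CM points, while if the rank is positive a combination of descent with the explicit shape of $j_H$ decides whether the fibres yield infinitely many or only finitely many genuine non-CM curves. The genus-$\ge 2$ curves give finiteness by Faltings, and I would pin down their rational points effectively by Chabauty--Coleman when the rank is below the genus, and otherwise by elliptic-curve Chabauty (via an elliptic quotient) or the Mordell--Weil sieve. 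Finally, to recover the images \emph{not} containing $-\operatorname{Id}$ — which share their modular curve with the index-$2$ overgroup $\langle H,-\operatorname{Id}\rangle$ — I would run the quadratic-twist analysis in the spirit of Lemmas \ref{group quadratic twists}--\ref{index of quadratic twists}, deciding for each occurring $H$ with $-\operatorname{Id}\in H$ which twists of the relevant curves attain the smaller image.

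I expect the main obstacle to be precisely the higher-genus fibres together with the need to prove that the tree really terminates — i.e.\ that no large-index image hides at a deep level. This demands both the group theory bounding how far new subgroups can first appear and the explicit, occasionally delicate, Chabauty-type determination of the rational points on the surviving genus-$2$ and genus-$3$ modular curves; in practice this step is a substantial and largely computational argument. Once the enumeration is complete, the remaining bookkeeping — counting the surviving conjugacy classes of admissible subgroups to arrive at $1208$, and exhibiting for each divisor of $64$ and of $96$ an elliptic curve realizing it as an index — is routine.
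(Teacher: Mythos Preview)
The paper does not prove this theorem at all: it is stated as a result of Rouse and Zureick-Brown and cited from \cite{rouse} without proof, then used as a black box throughout the rest of the article. So there is no ``paper's own proof'' to compare against.

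That said, your proposal is a faithful high-level outline of the strategy Rouse and Zureick-Brown actually carry out in \cite{rouse}: build the tree of admissible open subgroups of $\operatorname{GL}(2,\ZZ_2)$, attach to each the modular curve $X_H$, compute genera, and determine the rational points case by case (genus $0$ via parametrization, genus $1$ via Mordell--Weil, genus $\ge 2$ via Chabauty-type methods), pruning branches with no rational points and handling the $-\operatorname{Id}\notin H$ cases by twist analysis. Your description of the expected obstacles is also accurate. The only caveat is that your sketch is just that --- a sketch --- and the actual proof in \cite{rouse} is a large-scale computation whose correctness rests on the explicit case analysis rather than on any single conceptual lemma; in particular, the termination of the tree and the level-$32$ bound are established empirically by exhausting the subgroup lattice up to the point where every surviving curve has been resolved, not by an a priori group-theoretic bound of the kind you gesture at.
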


\begin{remark}

    Work in \cite{rouse} shows that if $E$ is non-CM, then the level of $\rho_{E,2^{\infty}}(G_{\QQ})$ is equal to $2^{m}$ for some integer $m$ with $0 \leq m \leq 5$.
	\end{remark}
	
	All groups that appear in the RZB database can be found in Tables \ref{2-adic Galois Images of L1 Graphs} - \ref{2-adic Galois Images of T8 Graphs}. The RZB database organizes subgroups of $\operatorname{GL}(2, \ZZ_{2})$ with nomenclature $\operatorname{X}_{n}$ where $n$ is a positive integer, for example $\operatorname{X}_{24}$ or $\operatorname{X}_{n \alpha}$ where $\alpha$ is some letter, for example, $\operatorname{X}_{24e}$. The author of this paper would like to use the nomenclature $\operatorname{H}_{n}$ to denote the group $\operatorname{X}_{n}$ from the RZB database and instead use $\operatorname{X}_{n}$ to denote the modular curve generated by $\operatorname{H}_{n}$. Similarly, we use the nomenclature $\operatorname{H}_{n\alpha}$ to denote the group $\operatorname{X}_{n\alpha}$ from the RZB database and we use the nomenclature $\operatorname{X}_{n\alpha}$ to denote the modular curve generated by $\operatorname{H}_{n\alpha}$.
	
	Groups from the RZB database of the form $\operatorname{H}_{n}$ are subgroups of $\operatorname{GL}(2, \ZZ_{2})$ that contain $\operatorname{-Id}$. Groups from the RZB database of the form $\operatorname{H}_{n \alpha}$ do not contain $\operatorname{-Id}$ and are quadratic twists of $\operatorname{H}_{n}$ (and other groups of the form $\operatorname{H}_{n\alpha}$). For example, $\operatorname{H}_{24}$ contains $\operatorname{-Id}$ and $\operatorname{H}_{24e}$ does not. Moreover, $\operatorname{H}_{24e}$ and $\operatorname{H}_{24}$ are quadratic twists as $\operatorname{H}_{24} = \left\langle \operatorname{H}_{24e}, \operatorname{-Id} \right\rangle$. Suppose that $n$ is a positive integer such that $\operatorname{H}_{n}$ is a group in the RZB database. If there are no letters $\alpha$ such that $\operatorname{H}_{n\alpha}$ is a group in the RZB database, then the only quadratic twist of $\operatorname{H}_{n}$ is $\operatorname{H}_{n}$ itself. For example, $\operatorname{H}_{1} = \operatorname{GL}(2,\ZZ_{2})$ and there are no groups of the form $\operatorname{H}_{1\alpha}$ in the RZB database for any letter $\alpha$. The full lift of $\operatorname{H}_{1}$ modulo $N$ is $\operatorname{GL}(2, \ZZ / N \ZZ)$. Moreover, $\begin{bmatrix} 0 & 1 \\ -1 & 0 \end{bmatrix}^{2} = \operatorname{-Id}$ and by Lemma \ref{subgroups of index 2 contain -Id}, the only quadratic twist of $\operatorname{H}_{1}$ is $\operatorname{H}_{1}$ itself. In general, to prove that some group $H$ in the RZB database has no non-trivial quadratic twists, it suffices to prove that the full lift of $H$ to level $32$ has no non-trivial quadratic twists. We will make use of the fact that some groups in the RZB database have no non-trivial twists without much proof for example in the classification of $2$-adic Galois Images attached to isogeny-torsion graphs of $L_{2}(11)$ type.
	
	The RZB database considers group action on the right. In other words, if $E/\QQ$ is an elliptic curve and $\overline{\rho}_{E,4}(G_{\QQ})$ is conjugate to $H = \left\langle \begin{bmatrix} A_{1} & C_{1} \\ B_{1} & D_{1} \end{bmatrix}, \ldots, \begin{bmatrix} A_{s} & C_{s} \\ B_{s} & D_{s} \end{bmatrix} \right\rangle$, then $\overline{\rho}_{E,4}(G_{\QQ})$ is actually conjugate to its transpose, namely, $\left\langle \begin{bmatrix} A_{1} & B_{1} \\ C_{1} & D_{1} \end{bmatrix}, \ldots, \begin{bmatrix} A_{s} & B_{s} \\ C_{s} & D_{s} \end{bmatrix} \right\rangle$.
	
	If $E(\QQ)_{\text{tors}} \cong \ZZ / 2 \ZZ \times \ZZ / 8 \ZZ$, then, $\overline{\rho}_{E,8}(G_{\QQ})$ is conjugate to $\left\langle
	\begin{bmatrix}	1 & 0 \\ 2 & 1 \end{bmatrix},
	\begin{bmatrix}	3 & 0 \\ 0 & 1 \end{bmatrix},
	\begin{bmatrix}	5 & 0 \\ 0 & 1 \end{bmatrix}
	\right\rangle$. This group appears in the RZB database, but it does not designate elliptic curves over $\QQ$ with a rational torsion subgroup of order $16$. The elliptic curves over $\QQ$ with a rational torsion subgroup of order $16$ correspond to non-cuspidal, $\QQ$-rational points on the modular curve $\operatorname{X}_{193n}$, generated by group $\operatorname{H}_{193n} = \left\langle \begin{bmatrix} 3 & 6 \\ 0 & 1 \end{bmatrix}, 
	\begin{bmatrix} 7 & 0 \\ 0 & 1 \end{bmatrix}, 
	\begin{bmatrix} 5 & 0 \\ 0 & 1 \end{bmatrix} \right\rangle$, which is not conjugate to the former group, but to its transpose. Sometimes, a group in the RZB database will be conjugate to its own transpose but not in general. We have to be careful of this situation.

\section{Lemmas and corollaries}\label{section lemmas and corollaries}
We continue with some more lemmas that will be used to classify the image of the $2$-adic Galois representation attached to isogeny-torsion graphs over $\QQ$ without CM.

\begin{lemma}\label{generators of Z/2^NZ}
	Let $E/\QQ$ be an elliptic curve and let $P_{2}$ and $Q_{2}$ be distinct points on $E$ of order $2$. For each integer $m \geq 2$, let $P_{2^{m}}, Q_{2^{m}} \in E$, such that $[2]P_{2^{m}} = P_{2^{m-1}}$ and $[2]Q_{2^{m}} = Q_{2^{m-1}}$. Then $E[2^{m}] = \left\langle P_{2^{m}}, Q_{2^{m}} \right\rangle$.
	\end{lemma}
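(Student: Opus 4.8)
The statement is really a fact about the abstract $2$-power torsion module, so the plan is to induct on $m$, with neither the Galois action nor the ground field playing any role. For the base case $m=1$ (with the convention $P_{2^1}=P_2$, $Q_{2^1}=Q_2$), note that $E[2]$ is a $2$-dimensional $\FF_2$-vector space, hence has exactly three nonzero elements, and any two distinct nonzero elements of it are $\FF_2$-independent; thus the distinct order-$2$ points $P_2,Q_2$ generate $E[2]$.

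For the inductive step, assume $E[2^{m-1}]=\langle P_{2^{m-1}},Q_{2^{m-1}}\rangle$ and set $H:=\langle P_{2^m},Q_{2^m}\rangle\subseteq E[2^m]$; I will pin down $H$ by a counting argument through the multiplication-by-$2$ map. Applying $[2]$ to the generators, $[2]H=\langle [2]P_{2^m},[2]Q_{2^m}\rangle=\langle P_{2^{m-1}},Q_{2^{m-1}}\rangle=E[2^{m-1}]$ by the inductive hypothesis, so $[2]\colon H\to E[2^{m-1}]$ is onto with kernel $H\cap E[2]$, whence $|H|=|E[2^{m-1}]|\cdot|H\cap E[2]|$. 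Next, iterating the relation $[2]P_{2^j}=P_{2^{j-1}}$ down from $j=m$ to $j=2$ gives $[2^{m-1}]P_{2^m}=P_2$, and likewise $[2^{m-1}]Q_{2^m}=Q_2$; hence $P_2,Q_2\in H$, so by the base-case argument $E[2]=\langle P_2,Q_2\rangle\subseteq H$ and $H\cap E[2]=E[2]$ has order $4$. Therefore $|H|=4\cdot|E[2^{m-1}]|=4\cdot 2^{2m-2}=2^{2m}=|E[2^m]|$, and since $H\subseteq E[2^m]$ the two finite groups coincide.

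There is no serious obstacle here; it is essentially bookkeeping. The two points worth care are that the induction is correctly grounded (the $m=2$ instance of the hypothesis is exactly the base case, since $P_{2^1}=P_2$, $Q_{2^1}=Q_2$) and that the conclusion is independent of the choices of the division points $P_{2^m},Q_{2^m}$, which are only determined up to translation by $E[2]$ — but the argument above never invokes a particular choice. One can also bypass the induction entirely with a $2$-adic descent: if $aP_{2^m}+bQ_{2^m}=\mathcal O$, applying $[2^{m-1}]$ yields $\overline a\,P_2+\overline b\,Q_2=\mathcal O$ in $E[2]$, so $a\equiv b\equiv 0\pmod 2$, and repeating forces $a\equiv b\equiv 0\pmod{2^m}$; thus the natural homomorphism $(\ZZ/2^m\ZZ)^2\to E[2^m]$, $(a,b)\mapsto aP_{2^m}+bQ_{2^m}$, is injective, hence bijective by counting.
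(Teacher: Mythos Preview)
Your proof is correct. The paper takes a more direct route without induction: it observes that $P_{2^m}$ and $Q_{2^m}$ each have order exactly $2^m$, and then shows that the cyclic subgroups $\langle P_{2^m}\rangle$ and $\langle Q_{2^m}\rangle$ intersect trivially, since any nontrivial intersection would contain a common point of order $2$, forcing $P_2=Q_2$. From trivial intersection and matching orders the result follows immediately. Your inductive counting argument via the $[2]$-map is a valid alternative, and your closing ``$2$-adic descent'' is in fact very close in spirit to the paper's argument (both ultimately reduce to the independence of $P_2$ and $Q_2$ in $E[2]$). The paper's version is shorter; your inductive version makes the role of the tower $P_{2^m}\mapsto P_{2^{m-1}}$ more explicit and would generalize cleanly if one ever needed to track more structure along the way.
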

	
	\begin{proof}
    The multiplication-by-$2$ map is surjective as it is a non-constant map between curves of genus $1$. Clearly, $P_{2^{m}}$ and $Q_{2^{m}}$ are points of order equal to $2^{m}$. It remains to prove that $\left\langle P_{2^{m}} \right\rangle$ and $\left\langle Q_{2^{m}} \right\rangle$ intersect trivially. If $\left\langle P_{2^{m}} \right\rangle$ and $\left\langle Q_{2^{m}} \right\rangle$ intersect non-trivially, they would share a point of order $2$. The point of order $2$ in $\left\langle P_{2^{m}} \right\rangle$ is $P_{2}$ and the point of order $2$ in $\left\langle Q_{2^{m}} \right\rangle$ is $Q_{2}$ which are distinct.
	\end{proof}
	
	\begin{remark}\label{remark generators of Z/2^NZ}
	Let $E/\QQ$ be an elliptic curve. For the rest of the paper, we will fix $P_{2}$ and $Q_{2}$ to designate two distinct points on $E$ of order $2$. For each integer $m$, greater than or equal to $2$, we will denote $P_{2^{m}}$ and $Q_{2^{m}}$ to be points on $E$ such that $[2]P_{2^{m}} = P_{2^{m-1}}$ and $[2]Q_{2^{m}} = Q_{2^{m-1}}$. Then, $E[2^{m}] = \left\langle P_{2^{m}}, Q_{2^{m}} \right\rangle$ because $\left\langle P_{2^{m}} \right\rangle$ and $\left\langle Q_{2^{m}} \right\rangle$ are two cyclic groups of order $2^{m}$ that intersect trivially.
	\end{remark}

\begin{lemma}\label{order of 1+2^n}
Let $n$ be an integer greater than or equal to $2$. Then for each positive integer $r$, we have
$$(1+2^{n})^{2^{r}} = 1+2^{n+r}+s$$
where $s$ is a positive integer such that $v_{2}(s) \geq n+r+1$.
\end{lemma}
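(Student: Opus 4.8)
The plan is to prove this by induction on $r$. The statement is a purely $2$-adic (or mod-$2$-power) congruence about the multiplicative behavior of $1+2^n$ in $\ZZ_2^\times$, and the key point is to track the $2$-adic valuation of the ``error term'' $s$ carefully. Concretely, I would restate the claim as $(1+2^n)^{2^r} = 1 + 2^{n+r} + s_r$ with $v_2(s_r) \geq n+r+1$, and induct on $r \geq 1$.

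For the base case $r=1$, I would simply expand $(1+2^n)^2 = 1 + 2^{n+1} + 2^{2n}$, so $s_1 = 2^{2n}$ and $v_2(s_1) = 2n$. Since $n \geq 2$, we have $2n \geq n+2 = n+r+1$, which gives the base case. For the inductive step, assume $(1+2^n)^{2^r} = 1 + 2^{n+r} + s_r$ with $v_2(s_r) \geq n+r+1$. Squaring,
$$(1+2^n)^{2^{r+1}} = \left(1 + 2^{n+r} + s_r\right)^2 = 1 + 2^{n+r+1} + \left(2^{2(n+r)} + 2 s_r + 2^{n+r+1} s_r + s_r^2\right),$$
so I would set $s_{r+1} = 2^{2(n+r)} + 2 s_r + 2^{n+r+1} s_r + s_r^2$ and check that each of the four summands has $2$-adic valuation at least $n+r+2$. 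The term $2^{2(n+r)}$ has valuation $2(n+r) \geq n+r+2$ since $n+r \geq 3$; the term $2 s_r$ has valuation $\geq 1 + (n+r+1) = n+r+2$; the term $2^{n+r+1} s_r$ has valuation $\geq (n+r+1) + (n+r+1) > n+r+2$; and $s_r^2$ has valuation $\geq 2(n+r+1) > n+r+2$. Hence $v_2(s_{r+1}) \geq n+r+2 = n+(r+1)+1$, completing the induction. One should also note $s_r$ is a positive integer at each stage (it is a sum of positive integer terms once we know $s_1 > 0$), so the ``positive integer'' assertion is automatic.

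I do not anticipate a genuine obstacle here; this is a routine valuation bookkeeping argument. The only mild subtlety is making sure the inequality $2(n+r) \geq n+r+2$ is justified, which needs $n+r \geq 2$ — automatic since $n \geq 2$ — and that the base case uses $n \geq 2$ in an essential way (for $n=1$ one has $(1+2)^2 = 1 + 2^2 + 2^2$ and the error term collides with the main term, so the hypothesis $n \geq 2$ cannot be dropped). I would present the argument compactly, folding the four valuation checks into a single line, since each is immediate from $v_2(s_r) \geq n+r+1$ and $n \geq 2$.
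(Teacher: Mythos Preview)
Your proof is correct and follows essentially the same approach as the paper: induction on $r$, the base case $(1+2^n)^2 = 1+2^{n+1}+2^{2n}$ using $2n \geq n+2$, and the inductive step by squaring and checking the $2$-adic valuation of each of the four cross terms. Your additional remark that the hypothesis $n\geq 2$ is genuinely needed (the $n=1$ case collapses) is a nice touch not present in the paper.
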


\begin{proof}

We proceed by induction on $r$. When $r = 1$, we have $(1+2^{n})^{2} = 1+2^{n+1}+2^{2n}$. As $n \geq 2$, we have $v_{2}(2^{2n}) = 2n = n + n \geq n+2 = n+r+1$. Now we assume the lemma is true in the case of a fixed integer $r \geq 2$ and prove the lemma is true for the integer $r+1$. We have
$$(1+2^{n})^{2^{r+1}} = \left(\left(1+2^{n}\right)^{2^{r}}\right)^{2} = (1+2^{n+r}+s)^{2} = 1+2^{n+r+1}+2s+2^{n+r+1}s+2^{2(n+r)}+s^{2}$$
where $s$ is a positive integer such that $v_{2}(s) \geq n+r+1$. Let $m = 2s+2^{n+r+1}s+2^{2(n+r)}+s^{2}$. Next, $v_{2}(2s) = 1 + v_{2}(s) \geq n+r+2$. Next, $v_{2}(2^{n+r+1}s) = n+r+1 + v_{2}(s) \geq 2(n+r+1) \geq n+r+2$. Next, $v_{2}\left(2^{2(n+r)}\right) = 2(n+r) = 2n+2r \geq n+r+2$. Finally, $v_{2}(s^{2}) = 2v_{2}(s) = 2(n+r+1) \geq n+r+2$. As all summands of $m$ are positive, $m$ is a positive integer such that $v_{2}(m) \geq n+r+2$.

\end{proof}

\begin{lemma}\label{kernel of pi 1}
Let $m$ and $n$ be integers such that $m \geq n \geq 2$. Let $\pi \colon \operatorname{GL}\left(2, \ZZ / 2^{m} \ZZ\right) \to \operatorname{GL}\left(2, \ZZ / 2^{n} \ZZ\right)$ be the natural projection map.
\begin{enumerate}
    \item Then the order of $1+2^{n}$ in $\left(\ZZ / 2^{m} \ZZ\right)^{\times}$ is equal to $2^{m-n}$,
    
    \item $\operatorname{Ker}(\pi) = \left\langle \begin{bmatrix} 1+2^{n} & 0 \\ 0 & 1 \end{bmatrix}, \begin{bmatrix} 1 & 0 \\ 0 & 1+2^{n} \end{bmatrix}, \begin{bmatrix} 1 & 2^{n} \\ 0 & 1 \end{bmatrix}, \begin{bmatrix} 1 & 0 \\ 2^{n} & 1 \end{bmatrix} \right\rangle$.
\end{enumerate}
\end{lemma}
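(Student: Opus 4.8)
The plan is to treat the two parts separately, with part (1) following almost immediately from Lemma \ref{order of 1+2^n} and part (2) being the substantive calculation. For part (1), I would apply Lemma \ref{order of 1+2^n} with the exponent $r$ chosen so that $n+r = m$: taking $r = m-n$, the lemma gives $(1+2^{n})^{2^{m-n}} = 1 + 2^{m} + s$ with $v_{2}(s) \geq m+1$, so this is $\equiv 1 \pmod{2^{m}}$, showing the order of $1+2^{n}$ divides $2^{m-n}$. For the reverse inequality, taking $r = m-n-1$ (when $m > n$) gives $(1+2^{n})^{2^{m-n-1}} = 1 + 2^{m-1} + s$ with $v_{2}(s) \geq m$, which is $\equiv 1 + 2^{m-1} \not\equiv 1 \pmod{2^{m}}$. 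Hence the order is exactly $2^{m-n}$. (The edge case $m = n$ is trivial since then $1 + 2^{n} \equiv 1$.)

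For part (2), I would first check the easy inclusion: each of the four displayed generators reduces to the identity modulo $2^{n}$, so the subgroup $K$ they generate is contained in $\operatorname{Ker}(\pi)$. The real work is the reverse inclusion. I would compute $|\operatorname{Ker}(\pi)|$ directly: since $\pi$ is surjective (entries lift freely), $|\operatorname{Ker}(\pi)| = |\operatorname{GL}(2,\ZZ/2^{m}\ZZ)| / |\operatorname{GL}(2,\ZZ/2^{n}\ZZ)|$, and using $|\operatorname{GL}(2,\ZZ/2^{k}\ZZ)| = 2^{4k-3}\cdot 3 \cdot (2-1) = 2^{4k-3}\cdot 3$ (more precisely $2^{4(k-1)}|\operatorname{GL}(2,\FF_{2})| = 2^{4(k-1)}\cdot 6$), one gets $|\operatorname{Ker}(\pi)| = 2^{4(m-n)}$. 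So it suffices to show $K$ has order $2^{4(m-n)}$, or equivalently that $K$ surjects onto $\operatorname{Ker}(\pi)$. I would argue that any element of $\operatorname{Ker}(\pi)$ can be written as $\operatorname{Id} + 2^{n} A$ for a matrix $A$ over $\ZZ/2^{m-n}\ZZ$, and then show by an inductive/filtration argument — peeling off one power of $2$ at a time, i.e. working with the filtration $\operatorname{Ker}(\pi_{k})$ for $n \leq k \leq m$ — that the four generators suffice to realize every such $A$. At each step $\operatorname{Ker}(\pi_{k})/\operatorname{Ker}(\pi_{k+1}) \cong M_{2}(\FF_{2})$ (additively), and the four generators $\begin{bmatrix} 1+2^{k} & 0 \\ 0 & 1\end{bmatrix}, \begin{bmatrix} 1 & 0 \\ 0 & 1+2^{k}\end{bmatrix}, \begin{bmatrix} 1 & 2^{k} \\ 0 & 1\end{bmatrix}, \begin{bmatrix} 1 & 0 \\ 2^{k} & 1\end{bmatrix}$ (which are powers/products of the original four by part (1)) map onto the four standard basis vectors $E_{11}, E_{22}, E_{12}, E_{21}$ of $M_{2}(\FF_{2})$.

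The key computational inputs are: the commutator and product identities in $\operatorname{GL}(2,\ZZ/2^{m}\ZZ)$ showing that, modulo $\operatorname{Ker}(\pi_{k+1})$, the group generated by the four elements is abelian and additively spans $M_{2}(\FF_{2})$; and part (1) of this lemma, which guarantees that from $\begin{bmatrix} 1+2^{n} & 0 \\ 0 & 1\end{bmatrix}$ one obtains $\begin{bmatrix} 1+2^{k} & 0 \\ 0 & 1\end{bmatrix}$ for all $k \geq n$ as appropriate powers (since $(1+2^{n})^{2^{k-n}} \equiv 1 + 2^{k} \pmod{2^{k+1}}$, again by Lemma \ref{order of 1+2^n}). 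I expect the main obstacle to be bookkeeping in this inductive step — specifically verifying that the "off-diagonal times off-diagonal" and "diagonal times off-diagonal" products don't introduce uncontrolled higher-order terms that fall outside the span, so that the count $|K| = 2^{4(m-n)}$ is achieved on the nose rather than merely bounded below. Once the surjectivity of $K \to \operatorname{Ker}(\pi_{k})/\operatorname{Ker}(\pi_{k+1})$ is established for each $k$, a downward induction on $k$ from $m$ to $n$ gives $K = \operatorname{Ker}(\pi)$.
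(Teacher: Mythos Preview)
Your proposal is correct. Part (1) is handled exactly as in the paper, via Lemma~\ref{order of 1+2^n} applied with $r=m-n$ and $r=m-n-1$ (the paper also singles out the trivial edge case $m=n$ and the boundary case $m=n+1$, but the content is identical).

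For part (2), both you and the paper begin by computing $|\operatorname{Ker}(\pi)|=2^{4(m-n)}$ and then aim to show the subgroup $K$ generated by the four matrices has this order. The paper's route is more direct but also more casual: it observes that each generator $G_i$ spans a cyclic group of order $2^{m-n}$ (using part (1) for the diagonal ones and a separate induction for the unipotent ones), notes that the pairwise intersections $\langle G_i\rangle\cap\langle G_j\rangle$ are trivial, and from this concludes $|K|=2^{4(m-n)}$. Strictly speaking, pairwise trivial intersection of four cyclic subgroups does not by itself force the generated group to have order equal to the product of the orders, so the paper's inference is a bit glib. Your filtration argument through the successive quotients $\operatorname{Ker}(\pi_k)/\operatorname{Ker}(\pi_{k+1})\cong M_2(\FF_2)$, with the four generators hitting the standard basis $E_{11},E_{22},E_{12},E_{21}$ at each layer, is the standard rigorous way to close this and is what the paper's argument implicitly relies on. The trade-off is that the paper's version is shorter to write and morally clear, while yours actually justifies the order count without hand-waving.
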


\begin{proof} The lemma is true when $m = n \geq 2$. Suppose $m > n \geq 2$. We break up the rest of the proof in steps.

\begin{enumerate}

    \item Note that $m-n$ is a positive integer. By Lemma \ref{order of 1+2^n},
    $$(1+2^{n})^{2^{m-n}} = 1+2^{m}+s$$
    for some positive integer $s$ such that $v_{2}(s) \geq m+1$. Reducing $(1+2^{n})^{2^{m-n}}$ modulo $2^{m}$, we have $\overline{(1+2^{n})^{2^{m-n}}} = \overline{1+2^{m}+s} = \overline{1}$. Thus, the order of $1+2^{n}$ is a power of $2$ less than or equal to $2^{m-n}$.
    
    We break up the proof in the cases when $m = n+1$ and $m \geq n+2$. If $m = n+1$, then $m - n = 1$. If the order of $1+2^{n}$ is less than $2^{m-n} = 2$, then $1+2^{n}$ would be the identity in $\ZZ / 2^{m} \ZZ$ but that is not true as $m > n$. Hence, the order of $1+2^{n}$ is equal to $2^{m-n}$ as in the lemma when $m = n+1$. If on the other hand, $m \geq n+2$, then $m-n-1$ is a positive integer and using Lemma \ref{order of 1+2^n} again, we see that $$(1+2^{n})^{2^{m-n-1}} = 1+2^{m-1}+s$$
    where $s$ is a positive integer such that $v_{2}(s) \geq m$. Then reducing $(1+2^{n})^{2^{m-n-1}}$ modulo $2^{m}$, we get $\overline{(1+2^{n})^{2^{m-n-1}}} = \overline{1+2^{m-1}+s} = \overline{1 + 2^{m-1}}$ which is not congruent to $1$ modulo $2^{m}$. Thus, again, the order of $2^{n}$ modulo $2^{m}$ is equal to $2^{m-n}$.
    
    \item Let $\begin{bmatrix}
    A & B \\ C & D
    \end{bmatrix}$ be an element of $\operatorname{Ker}(\pi)$. Then $\begin{bmatrix}
    A-1 & B \\ C & D-1
    \end{bmatrix} \equiv \begin{bmatrix} 0 & 0 \\ 0 & 0 \end{bmatrix} \mod 2^{n}$. Clearly, $\operatorname{Ker}(\pi)$ is a group of order $\left(\frac{2^{m}}{2^{n}}\right)^{4} = 2^{4(m-n)}$. Let $G_{1} = \begin{bmatrix} 1+2^{n} & 0 \\ 0 & 1 \end{bmatrix}$, let $G_{2} = \begin{bmatrix} 1 & 2^{n} \\ 0 & 1 \end{bmatrix}$, let $G_{3} = \begin{bmatrix} 1 & 0 \\ 2^{n} & 1 \end{bmatrix}$, and let $G_{4} = \begin{bmatrix} 1 & 0 \\ 0 & 1 + 2^{n} \end{bmatrix}$. By Lemma \ref{order of 1+2^n}, the order of $G_{1}$ and the order of $G_{4}$ are both equal to $2^{m-n}$. By another inductive argument, we can see that the order of $G_{2}$ and $G_{3}$ are both equal to $2^{m-n}$. For $i \in \{1, 2, 3, 4\}$ and $j \in \{1, 2, 3, 4\}$ with $j \neq i$, we have $\left\langle G_{i} \right\rangle \bigcap \left\langle G_{j} \right\rangle = \{\operatorname{Id}\}$. Thus, the group $\left\langle G_{1}, G_{2}, G_{3}, G_{4} \right\rangle$ is contained in $\operatorname{Ker}(\pi)$ and has order equal to $2^{4(m-n)}$.
\end{enumerate}
\end{proof}

\begin{lemma}\label{kernel of pi 2}
Let $m$ be an integer greater than or equal to $2$. Let $\pi \colon \operatorname{GL}(2, \ZZ / 2^{m} \ZZ) \to \operatorname{GL}(2, \ZZ / 2 \ZZ)$ be the natural projection map. Then

\begin{enumerate}
\item The order of $3$ and the order of $5$ in $\left(\ZZ / 2^{s} \ZZ\right)^{\times}$ are both equal to $2^{s-2}$ for all $s \geq 3$

\item $5$ is not contained in the subgroup of $\left( \ZZ / 2^{d} \ZZ \right)^{\times}$ generated by $3$ for all $d \geq 3$ and $\left(\ZZ / 2^{d} \ZZ \right)^{\times} = \left\langle 3, 5 \right\rangle$.

\item $\operatorname{Ker}(\pi) = \left\langle \begin{bmatrix} 3 & 0 \\ 0 & 1 \end{bmatrix}, \begin{bmatrix} 5 & 0 \\ 0 & 1 \end{bmatrix}, \begin{bmatrix} 1 & 0 \\ 0 & 3 \end{bmatrix}, \begin{bmatrix} 1 & 0 \\ 0 & 5 \end{bmatrix}, \begin{bmatrix} 1 & 2 \\ 0 & 1 \end{bmatrix}, \begin{bmatrix} 1 & 0 \\ 2 & 1 \end{bmatrix} \right\rangle$.
\end{enumerate}
\end{lemma}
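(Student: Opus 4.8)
The plan is to prove Lemma \ref{kernel of pi 2} in three steps, exactly matching its three numbered conclusions, and to lean on Lemma \ref{order of 1+2^n} and Lemma \ref{kernel of pi 1} for the arithmetic of $1+2^n$ so that no new induction on powers is really needed.

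\emph{Step 1 (orders of $3$ and $5$).} Observe that $3 = 1+2^{2}$ and $5 = 1+2^{2}$... more carefully, $3 = 1+2\cdot 1$ is a unit $\equiv 3 \pmod 4$ while $5 = 1+2^{2}$. The clean route is: $5 = 1+2^{2}$, so Lemma \ref{order of 1+2^n} (with $n=2$) gives that the order of $5$ in $(\ZZ/2^{s}\ZZ)^{\times}$ is $2^{s-2}$ for all $s \geq 3$ directly. For $3$, note $3^{2} = 9 = 1 + 2^{3}$, so again by Lemma \ref{order of 1+2^n} (with $n=3$, applied in $(\ZZ/2^{s}\ZZ)^{\times}$) the order of $9$ is $2^{s-3}$ for $s \geq 4$, hence the order of $3$ is $2^{s-2}$ for $s \geq 4$; the case $s=3$ is checked by hand ($3^{2}=9\equiv 1 \pmod 8$, $3 \not\equiv 1$, order $2 = 2^{3-2}$). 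So I would first record these order computations, reducing everything to Lemma \ref{order of 1+2^n}.

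\emph{Step 2 ($(\ZZ/2^{d}\ZZ)^{\times} = \langle 3,5\rangle$ and $5 \notin \langle 3\rangle$).} Here I would use the standard structure $(\ZZ/2^{d}\ZZ)^{\times} \cong \ZZ/2\ZZ \times \ZZ/2^{d-2}\ZZ$ for $d \geq 3$, with the $\ZZ/2^{d-2}\ZZ$-factor generated by $5$ (this is immediate from Step 1, since $5$ has order $2^{d-2}$ and $5 \equiv 1 \pmod 4$ lies in the index-$2$ subgroup $1 + 4\ZZ/2^{d}\ZZ$). The element $3 \equiv 3 \pmod 4$ is \emph{not} in $1+4\ZZ/2^{d}\ZZ$, so $3 \notin \langle 5 \rangle$; since $|\langle 3 \rangle| = 2^{d-2} = |\langle 5 \rangle|$ and $\langle 5 \rangle = 1+4\ZZ/2^{d}\ZZ$ has index $2$, we get $\langle 3 \rangle \cap \langle 5 \rangle$ has index $2$ in each, and $\langle 3, 5\rangle$ has order $2 \cdot 2^{d-2} = 2^{d-1} = |(\ZZ/2^{d}\ZZ)^{\times}|$, proving the generation statement. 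For $5 \notin \langle 3 \rangle$: if $5 \in \langle 3 \rangle$ then $\langle 3 \rangle \supseteq \langle 5 \rangle = 1+4\ZZ/2^{d}\ZZ$, but $3 \notin 1+4\ZZ/2^{d}\ZZ$ forces $\langle 3\rangle$ to have order $> 2^{d-2}$, contradicting Step 1. One must double-check the small case $d = 3$ separately, where $(\ZZ/8\ZZ)^{\times} = \{1,3,5,7\}$ and $3\cdot 5 = 15 \equiv 7$, so $\langle 3,5\rangle$ is everything and $5 \notin \langle 3\rangle = \{1,3\}$.

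\emph{Step 3 (the kernel).} Let $K = \operatorname{Ker}(\pi)$ where $\pi\colon \operatorname{GL}(2,\ZZ/2^{m}\ZZ) \to \operatorname{GL}(2,\ZZ/2\ZZ)$. First count: $|K| = |\operatorname{GL}(2,\ZZ/2^{m}\ZZ)| / |\operatorname{GL}(2,\ZZ/2\ZZ)| = 2^{4(m-1)}\cdot 6 / 6 = 6 \cdot 2^{4(m-1)}$ --- wait, more precisely $K$ consists of matrices $\equiv \operatorname{Id} \pmod 2$ that are invertible, and since any matrix $\equiv \operatorname{Id}\pmod 2$ is automatically invertible mod $2^m$, $|K| = (2^{m-1})^{4} = 2^{4(m-1)}$ when $m\geq 2$... but one must be careful: the entries range over residues $\equiv 0$ or $1 \pmod 2$, giving $2^{m-1}$ choices each, so $|K| = 2^{4(m-1)}$. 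Then I would let $G_{1},\dots,G_{6}$ be the six displayed generators and argue the subgroup $G := \langle G_1,\dots,G_6\rangle$ they generate equals $K$ by showing $|G| = 2^{4(m-1)}$. The diagonal part: the subgroup of diagonal matrices in $K$ is $(1+2\ZZ/2^{m}\ZZ)^{2} \cong ((\ZZ/2^{m}\ZZ)^{\times}\text{'s } 2\text{-part})^2$ wait --- the diagonal entries must be units $\equiv 1 \pmod 2$, i.e.\ arbitrary units, giving a group of order $|(\ZZ/2^{m}\ZZ)^{\times}|^{2}$; hmm, that overcounts. I should instead directly note $K$ is generated by $K$-diagonal matrices together with the two unipotent matrices $\begin{bmatrix}1&2\\0&1\end{bmatrix}, \begin{bmatrix}1&0\\2&1\end{bmatrix}$, a standard fact proven by column/row reduction inside $K$; then Step 2 shows the diagonal entries $\langle 3,5\rangle$ realize all units, so $G$ contains all diagonal matrices with unit entries and both elementary unipotents, hence $G = K$. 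So the real content is the reduction argument: given $g \in K$, multiply by powers of $G_1,G_4$ (to adjust the diagonal via Step 2) and by powers of $G_2 = \begin{bmatrix}1&2\\0&1\end{bmatrix}, G_3 = \begin{bmatrix}1&0\\2&1\end{bmatrix}$ (to clear off-diagonal entries, which are $\equiv 0 \pmod 2$) to reduce $g$ to $\operatorname{Id}$; this is the analogue of the argument already used in Lemma \ref{kernel of pi 1}(2).

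\emph{Main obstacle.} The genuinely delicate part is Step 3, specifically proving that the six generators \emph{suffice} --- i.e.\ the reduction argument clearing a general element of $K$ down to the identity using only row/column operations that stay inside $K$. One has to be careful that clearing the $(1,2)$ entry using $G_2$ does not disturb already-cleared entries modulo higher powers of $2$, and that the diagonal can always be corrected using $\langle 3,5\rangle$ (which is where Step 2 is essential). The earlier Lemma \ref{kernel of pi 1}(2) handles the analogous statement for $\pi$ to level $2^n$ with $n\geq 2$, where the diagonal units $1+2^n$ suffice; the difference at level $2$ is precisely that $1+2 = 3$ alone does not generate $(\ZZ/2^{m}\ZZ)^{\times}$, which is why both $3$ and $5$ appear and why Step 2 must be proven first. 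I would therefore present Step 3 by first citing the index computation $[K : \langle G_2, G_3, \text{diagonal part of } K\rangle]$, then invoking Step 2 to identify the diagonal part with $\langle 3,5\rangle^{\oplus 2}$ realized by $G_1, G_4$, and finally a brief order count $2 \cdot 2 \cdot (2^{m-2})^{2} \cdot (2^{m-1})^{2}$ --- which I'd reconcile carefully against $|K| = 2^{4(m-1)}$ before writing the final version.
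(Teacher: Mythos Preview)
Your proposal is correct. Step~1 is essentially identical to the paper's argument (both reduce to Lemma~\ref{order of 1+2^n} via $5=1+2^2$ and $3^2=1+2^3$, with $s=3$ checked directly).

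Steps~2 and~3, however, genuinely differ from the paper. For Step~2, the paper argues by contradiction using an explicit $2$-adic expansion: assuming $3=5^{k}$ for some odd $k$, it writes $(1+2^{2})^{k}$ out using the binary expansion of $k$ and extracts a valuation contradiction. Your route via the mod-$4$ reduction --- observing that $\langle 5\rangle$ lies in and hence equals the index-$2$ subgroup $1+4\ZZ/2^{d}\ZZ$ by the cardinality from Step~1, while $3\not\equiv 1\pmod 4$ --- is shorter and more structural; it avoids any expansion computation.

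For Step~3, the paper groups the six generators into four subgroups $G_{1},G_{2},G_{3},G_{4}$ each of order $2^{m-1}$, notes $G_{i}\cap G_{j}=\{\operatorname{Id}\}$ for $i\neq j$, and concludes that $\langle G_{1},G_{2},G_{3},G_{4}\rangle$ has order $2^{4(m-1)}=|\operatorname{Ker}(\pi)|$. As you yourself flag, pairwise trivial intersection does not by itself yield that the generated subgroup has order equal to the product of the $|G_i|$ in a nonabelian group, so the paper's argument is somewhat elliptical at that point. Your row/column reduction --- clearing the even off-diagonal entries using powers of the unipotent generators (which stay in $K$ because the off-diagonal entries are even and the diagonal entries are units), then reducing the resulting diagonal matrix to the identity via Step~2 --- is the standard rigorous argument and fills that gap cleanly.
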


\begin{proof} Again we break up the proof into steps.

\begin{enumerate}
    \item First we note that for any integer $k \geq 3$, the order of $\left(\ZZ / 2^{k} \ZZ\right)^{\times}$ is a power of $2$. Hence, the order of $3$ and $5$ in $\left(\ZZ / 2^{k} \ZZ\right)^{\times}$ is a power of $2$. Consider $3 = 1+2$. We claim that the order of $3$ in $\left(\ZZ / 2^{s} \ZZ\right)^{\times}$ is equal to $2^{s-2}$ for all $s \geq 3$. Note that $3^{2} = 1+2^{3}$ so the claim is true when $s = 3$ so let $s \geq 4$. Using the first part of Lemma \ref{kernel of pi 1}, we see that the order of $3^{2} = 9 = 1 + 2^{3}$ in $\left( \ZZ / 2^{s} \ZZ\right)^{\times}$ is precisely equal to $2^{s-3}$. Hence, the order of $3$ in $\left(\ZZ / 2^{s} \ZZ\right)^{\times}$ is equal to $2^{s-2}$.
    
    Next, note that $5 = 1 + 2^{2}$. When $s \geq 3 \geq 2$, we may apply the first part of Lemma \ref{kernel of pi 1} to conclude that the order of $5$ in $\left(\ZZ / 2^{s} \ZZ \right)^{\times}$ is equal to $2^{s-2}$.

\item Assume by way of contradiction that there is an integer $d \geq 3$ such that $5 \in \left\langle 3 \right\rangle$ modulo $2^{d}$. As the orders of $3$ and $5$ in $\left(\ZZ / 2^{d} \right)^{\times}$ are the same and equal to a power of $2$, this means that $5$ would generate the same subgroup of $\left(\ZZ / 2^{d} \ZZ\right)^{\times}$ generated by $3$. Hence, $3$ is an odd power of $5$ in $\left(\ZZ / 2^{d} \ZZ\right)^{\times}$.

Then there is an odd integer $k$ such that $3 = 5^{k} = (1+2^{2})^{k}$. Let $0 = k_{1} < \ldots < k_{s}$ be a set of integers such that
$$k = 2^{k_{1}} + \ldots + 2^{k_{s}} = 1 + \ldots + 2^{k_{s}}$$
is the $2$-adic decomposition of $k$. In other words,
$$3 = (1+2^{2})^{k} = (1+2^{2})^{1+ \ldots + 2^{k_{s}}} = (1+2^{2}) \cdots (1+2^{2})^{2^{k_{s}}} = (1+2^{2}) \cdots (1 + \ldots + 2^{2^{k_{s}+1}}).$$
After more expansion, we see that $3 = 1 + (s-1) \cdot 2^{2} + r$ where $r $ is a positive integer such that $v_{2}(r) \geq 3$. If we subtract both sides of the equation by $3$, we get that $-2 + (s-1) \cdot 2^{2} + r$ is equal to $0$ modulo $2^{d}$. Note that $v_{2}(-2+(s-1) \cdot 2^{2} + r) = 1$ but $d \geq 3$, a contradiction.

The order of $\left(\ZZ / 2^{d}\right)^{\times}$ is equal to $2^{d-1}$ for all integers $d \geq 3$. The order of $3$ in $\left(\ZZ / 2^{d} \right)^{\times}$ is equal to $2^{d-2}$ and $5$ is not contained in $\left\langle 3 \right\rangle$. Hence, $\left(\ZZ / 2^{d} \ZZ\right)^{\times} = \left\langle 3, 5 \right\rangle$.

\item Clearly, $\operatorname{Ker}(\pi)$ is the set of all matrices of the form $\begin{bmatrix}
A & B \\ C & D
\end{bmatrix}$ where $A$ and $D$ are odd and $B$ and $C$ are even. Then $\operatorname{Ker}(\pi)$ is a group of order $2^{4(m-1)}$. Clearly the lemma is true when $m = 1$ or $2$. Suppose that $m \geq 3$. Let $G_{1} = \left\langle \begin{bmatrix}
3 & 0 \\ 0 & 1
\end{bmatrix}, \begin{bmatrix}
5 & 0 \\ 0 & 1
\end{bmatrix} \right\rangle$, $G_{2} = \left\langle \begin{bmatrix} 1 & 2 \\ 0 & 1 \end{bmatrix} \right\rangle$, $G_{3} = \left\langle\begin{bmatrix} 1 & 0 \\ 2 & 1 \end{bmatrix} \right\rangle$, and $G_{4} = \left\langle \begin{bmatrix} 1 & 0 \\ 0 & 3 \end{bmatrix}, \begin{bmatrix} 1 & 0 \\ 0 & 5 \end{bmatrix} \right\rangle$. For $i \in \{1, 2, 3, 4\}$, the order of $G_{i}$ is equal to $2^{m-1}$ and for $j \in \{1, 2, 3, 4\}$ and $i \neq j$, we have $G_{i} \bigcap G_{j} = \{\operatorname{Id}\}$. Thus, $\left\langle G_{1}, G_{2}, G_{3}, G_{4} \right\rangle$ is a subgroup of $\operatorname{Ker}(\pi)$ of order $2^{4(m-1)}$.
\end{enumerate}
\end{proof}

\begin{lemma}\label{lifting with kernel}

Let $E/\QQ$ be an elliptic curve without CM. Suppose that $\rho_{E,2^{\infty}}(G_{\QQ})$ is a group of level $2^{m}$ conjugate to
$$\left\langle
\begin{bmatrix} A_{1} & C_{1} \\ B_{1} & D_{1} \end{bmatrix},
\ldots,
\begin{bmatrix} A_{s} & C_{s} \\ B_{s} & D_{s} \end{bmatrix}
\right\rangle \subseteq \operatorname{GL}(2, \ZZ / 2^{m} \ZZ).$$
Let $r$ be a non-negative integer and let $\pi \colon \operatorname{GL}(2, \ZZ / 2^{m+r} \ZZ) \to \operatorname{GL}(2, \ZZ / 2^{m} \ZZ)$ be the natural projection map. Then $\overline{\rho}_{E,2^{m+r}}(G_{\QQ})$ is conjugate to
$$\left\langle \begin{bmatrix} A_{1} & C_{1} \\ B_{1} & D_{1} \end{bmatrix},
\ldots,
\begin{bmatrix} A_{s} & C_{s} \\ B_{s} & D_{s} \end{bmatrix}, \operatorname{Ker}(\pi) \right\rangle.$$
In particular,
\begin{enumerate}
\item if $m = 0$, $\overline{\rho}_{E,2^{m+r}}(G_{\QQ})$ is conjugate to $\operatorname{GL}(2, \ZZ / 2^{m+r} \ZZ)$
\item if $m = 1$, then $\overline{\rho}_{E,2^{m+r}}(G_{\QQ})$ is conjugate to
$$\left\langle
\begin{bmatrix} A_{1} & C_{1} \\ B_{1} & D_{1} \end{bmatrix},
\ldots,
\begin{bmatrix} A_{s} & C_{s} \\ B_{s} & D_{s}\end{bmatrix},
\begin{bmatrix} 3 & 0 \\ 0 & 1 \end{bmatrix},
\begin{bmatrix} 5 & 0 \\ 0 & 1 \end{bmatrix},
\begin{bmatrix} 1 & 0 \\ 0 & 3 \end{bmatrix},
\begin{bmatrix} 1 & 0 \\ 0 & 5 \end{bmatrix},
\begin{bmatrix} 1 & 2 \\ 0 & 1 \end{bmatrix},
\begin{bmatrix} 1 & 0 \\ 2 & 1 \end{bmatrix}
\right\rangle \subseteq \operatorname{GL}\left(2, \ZZ / 2^{m+r} \ZZ\right)$$
\item if $5 \geq m \geq 2$, then $\overline{\rho}_{E,2^{m+r}}(G_{\QQ})$ is conjugate to
$$\left\langle
\begin{bmatrix} A_{1} & C_{1} \\ B_{1} & D_{1} \end{bmatrix},
\ldots,
\begin{bmatrix} A_{s} & C_{s} \\ B_{s} & D_{s}\end{bmatrix},
\begin{bmatrix} 1+2^{m} & 0 \\ 0 & 1 \end{bmatrix},
\begin{bmatrix} 1 & 0 \\ 0 & 1+2^{m} \end{bmatrix},
\begin{bmatrix} 1 & 2^{m} \\ 0 & 1 \end{bmatrix},
\begin{bmatrix} 1 & 0 \\ 2^{m} & 1 \end{bmatrix}
\right\rangle \subseteq \operatorname{GL}(2, \ZZ / 2^{m+r} \ZZ)$$
\end{enumerate}
\end{lemma}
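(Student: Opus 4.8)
The plan is to show that $\overline{\rho}_{E,2^{m+r}}(G_{\QQ})$ is the full lift of $\rho_{E,2^{\infty}}(G_{\QQ})$ modulo $2^{m+r}$, and then to identify that full lift with the group generated by the given matrices together with $\operatorname{Ker}(\pi)$; parts (1)--(3) are then just the explicit descriptions of $\operatorname{Ker}(\pi)$ supplied by Lemma \ref{kernel of pi 1} and Lemma \ref{kernel of pi 2} in the three ranges $m=0$, $m=1$, and $2\le m\le 5$. The starting point is the definition of the level of $\rho_{E,2^{\infty}}(G_{\QQ})$: since this group has level $2^{m}$, for every positive integer $n$ the group $\overline{\rho}_{E,2^{m+n}}(G_{\QQ})$ is, by definition, the full preimage of $\overline{\rho}_{E,2^{m}}(G_{\QQ})$ inside $\operatorname{GL}(2,\ZZ/2^{m+n}\ZZ)$. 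Taking $n=r$ (and noting the statement is trivial for $r=0$, while for $m=0$ the definition gives level $1$ and hence $\operatorname{GL}(2,\ZZ/2^{r}\ZZ)$, which is part (1)), we get that $\overline{\rho}_{E,2^{m+r}}(G_{\QQ})=\pi^{-1}\big(\overline{\rho}_{E,2^{m}}(G_{\QQ})\big)$ where $\pi\colon\operatorname{GL}(2,\ZZ/2^{m+r}\ZZ)\to\operatorname{GL}(2,\ZZ/2^{m}\ZZ)$ is reduction.

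Next I would make the elementary group-theoretic observation that if $\pi\colon G\to \bar G$ is a surjective group homomorphism and $H\le \bar G$ is a subgroup, then $\pi^{-1}(H)=\langle \tilde H,\operatorname{Ker}(\pi)\rangle$ for any set-theoretic lift $\tilde H\subseteq G$ of a generating set of $H$: indeed $\langle\tilde H,\operatorname{Ker}(\pi)\rangle$ is contained in $\pi^{-1}(H)$ since $\pi$ sends $\tilde H$ into $H$ and $\operatorname{Ker}(\pi)$ to the identity, and conversely any $g\in\pi^{-1}(H)$ satisfies $\pi(g)=\pi(\tilde h)$ for some product $\tilde h$ of elements of $\tilde H$ and their inverses, whence $g\,\tilde h^{-1}\in\operatorname{Ker}(\pi)$. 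Applying this with $\bar G=\operatorname{GL}(2,\ZZ/2^{m}\ZZ)$, $H=\rho_{E,2^{\infty}}(G_{\QQ})$ reduced mod $2^{m}$ (conjugate to $\langle\smallmat{A_1}{C_1}{B_1}{D_1},\ldots,\smallmat{A_s}{C_s}{B_s}{D_s}\rangle$ by hypothesis), and lifting each generator $\smallmat{A_i}{C_i}{B_i}{D_i}$ to the matrix with the same entries over $\ZZ/2^{m+r}\ZZ$, gives $\overline{\rho}_{E,2^{m+r}}(G_{\QQ})$ conjugate to $\langle\smallmat{A_1}{C_1}{B_1}{D_1},\ldots,\smallmat{A_s}{C_s}{B_s}{D_s},\operatorname{Ker}(\pi)\rangle$, which is the main assertion. (One should note that conjugation of $\rho_{E,2^{\infty}}(G_{\QQ})$ inside $\operatorname{GL}(2,\ZZ_2)$ descends to conjugation mod $2^{m+r}$, so "conjugate to" is preserved throughout; and that $\pi$ is surjective because reduction maps between these general linear groups are surjective.)

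Finally, for the three itemized cases I would simply substitute the explicit generators of $\operatorname{Ker}(\pi)$. For $m=0$ there is nothing to add beyond what was already said. For $m=1$, $\pi\colon\operatorname{GL}(2,\ZZ/2^{1+r}\ZZ)\to\operatorname{GL}(2,\ZZ/2\ZZ)$ and Lemma \ref{kernel of pi 2}(3) identifies $\operatorname{Ker}(\pi)=\langle\smallmat{3}{0}{0}{1},\smallmat{5}{0}{0}{1},\smallmat{1}{0}{0}{3},\smallmat{1}{0}{0}{5},\smallmat{1}{2}{0}{1},\smallmat{1}{0}{2}{1}\rangle$, giving part (2); here one uses $r\ge 1$, the case $r=0$ being trivial. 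For $2\le m\le 5$ (the range being justified by the remark following Theorem \ref{thm-rzb} that a non-CM $E$ has $2$-adic level $2^m$ with $0\le m\le 5$), Lemma \ref{kernel of pi 1}(2) with $n=m$ identifies $\operatorname{Ker}(\pi)=\langle\smallmat{1+2^m}{0}{0}{1},\smallmat{1}{0}{0}{1+2^m},\smallmat{1}{2^m}{0}{1},\smallmat{1}{0}{2^m}{1}\rangle$, giving part (3). I do not anticipate a serious obstacle: the only subtlety is bookkeeping around the transpose convention of the RZB database (the generators are written $\smallmat{A_i}{C_i}{B_i}{D_i}$ to match that convention), and making sure the definition of "level $2^m$" is invoked correctly so that the full-lift property holds for \emph{all} $r\ge 1$ simultaneously rather than just asymptotically — but this is exactly what that definition provides.
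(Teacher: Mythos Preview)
Your proposal is correct and follows essentially the same approach as the paper: both use the definition of level to identify $\overline{\rho}_{E,2^{m+r}}(G_{\QQ})$ with the full preimage $\pi^{-1}(\overline{\rho}_{E,2^{m}}(G_{\QQ}))$, then argue that any element of this preimage differs from a word in the lifted generators by an element of $\operatorname{Ker}(\pi)$, and finally invoke Lemmas \ref{kernel of pi 1} and \ref{kernel of pi 2} for the explicit kernel generators. Your presentation is slightly more polished in isolating the general group-theoretic fact $\pi^{-1}(H)=\langle \tilde H,\operatorname{Ker}(\pi)\rangle$, but the argument is the same.
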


\begin{proof}
For $i \in \{1, \ldots, s\}$, let $g_{i} = \begin{bmatrix} A_{i} & C_{i} \\ B_{i} & D_{i} \end{bmatrix} \in \operatorname{GL}(2, \ZZ / 2^{m} \ZZ)$ and moreover, let $\widehat{g}_{i} = \begin{bmatrix} A_{i} & C_{i} \\ B_{i} & D_{i} \end{bmatrix} \in \operatorname{GL}(2, \ZZ / 2^{m+r} \ZZ)$. Let $x$ be a matrix in $\overline{\rho}_{E,2^{m+r}}(G_{\Q})$. Then $\pi(x) = y \in \left\langle g_{1}, \ldots, g_{s}\right\rangle$. Let $\widehat{y}$ be the product of those same matrices in $\operatorname{GL}(2, \ZZ / 2^{m} \ZZ)$ but viewed in $\operatorname{GL}(2, \ZZ / 2^{m+r} \ZZ)$. For example, if $y = g_{1} \cdot g_{3} \cdot g_{2} \cdot g_{1}^{-1}$, then let $\widehat{y} = \widehat{g_{1}} \cdot \widehat{g_{3}} \cdot \widehat{g_{2}} \cdot \widehat{g_{1}}^{-1}$. Then $\pi(x) = y = \pi(\widehat{y})$ and so, $x \cdot \widehat{y}^{-1} \in \operatorname{Ker}(\pi)$. Hence, $x \in \left\langle \widehat{g_{1}}, \ldots, \widehat{g_{r}}, \operatorname{Ker}(\pi) \right\rangle$. As $\rho_{E,2^{\infty}}(G_{\QQ})$ is a group of level $2^{m}$, $\overline{\rho}_{E,2^{m+r}}(G_{\QQ})$ is precisely equal to $\left\langle \widehat{g_{1}}, \ldots, \widehat{g_{r}}, \operatorname{Ker}(\pi) \right\rangle$. If the level of $\rho_{E,2^{\infty}}(G_{\QQ})$ is equal to $2^{0} = 1$, then $\rho_{E,2^{\infty}}(G_{\QQ}) = \operatorname{GL}(2, \ZZ_{2})$. The rest of the lemma follows from Lemma \ref{kernel of pi 2} and Lemma \ref{kernel of pi 1}.
\end{proof}

\begin{lemma}\label{Borel 1}

Let $E/\QQ$ be an elliptic curve with a cyclic, $\QQ$-rational subgroup of order $2^{r}$ for some non-negative integer $r$. Let $m$ be a non-negative integer. Then $\overline{\rho}_{E,2^{m+r}}(G_{\QQ})$ is conjugate to a subgroup of $\operatorname{GL}(2, \ZZ / 2^{m+r} \ZZ)$ of the form
$\left\langle
\begin{bmatrix} A_{1} & C_{1} \\ B_{1} & D_{1} \end{bmatrix}, \ldots, \begin{bmatrix} A_{s} & C_{s} \\ B_{s} & D_{s} \end{bmatrix}
\right\rangle$
such that $2^{r}$ divides $C_{1}$, \ldots, $C_{s}$.

\end{lemma}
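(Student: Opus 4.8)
The plan is to produce a $\ZZ/2^{m+r}\ZZ$-basis of $E[2^{m+r}]$ with respect to which every element of $\overline{\rho}_{E,2^{m+r}}(G_{\QQ})$ is a matrix $\begin{bmatrix} A & C \\ B & D \end{bmatrix}$ (in the matrix convention used in this paper) with $2^{r}\mid C$; since a change of basis of $E[2^{m+r}]$ conjugates the image by an element of $\operatorname{GL}(2,\ZZ/2^{m+r}\ZZ)$, exhibiting one such basis gives the statement. The case $r=0$ is vacuous, so I would assume $r\geq 1$.

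First I would fix a generator $P$ of the given cyclic $\QQ$-rational subgroup $H$; then $P$ has order exactly $2^{r}$. Since multiplication by $2^{m}$ on $E$ is surjective (it is a non-constant morphism of smooth projective curves, or a composite of surjective multiplication-by-$2$ maps as in the proof of Lemma \ref{generators of Z/2^NZ}), I can choose $P_{2^{m+r}}\in E$ with $2^{m}P_{2^{m+r}}=P$; then $2^{m+r}P_{2^{m+r}}=2^{r}P=\mathcal O$ while $2^{m+r-1}P_{2^{m+r}}=2^{r-1}P\neq \mathcal O$, so $P_{2^{m+r}}$ has order $2^{m+r}$. To complete it to a basis I would set $P_{2^{k}}:=2^{m+r-k}P_{2^{m+r}}$ for $1\leq k\leq m+r$ (so the telescoping relations $[2]P_{2^{k}}=P_{2^{k-1}}$ hold), choose an order-$2$ point $Q_{2}$ distinct from $P_{2^{1}}$, lift it through the multiplication-by-$2$ maps to a point $Q_{2^{m+r}}$ with $2^{m+r-1}Q_{2^{m+r}}=Q_{2}$, and invoke Lemma \ref{generators of Z/2^NZ}: it yields $E[2^{m+r}]=\langle P_{2^{m+r}},Q_{2^{m+r}}\rangle$ with $\langle P_{2^{m+r}}\rangle\cap\langle Q_{2^{m+r}}\rangle=\{\mathcal O\}$ and both summands of order $2^{m+r}$, so $(P_{2^{m+r}},Q_{2^{m+r}})$ is a basis.

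The core is then a one-line computation. For $\sigma\in G_{\QQ}$, write $\overline{\rho}_{E,2^{m+r}}(\sigma)=\begin{bmatrix} A & C \\ B & D \end{bmatrix}$ in this basis, so that (in the paper's convention) $\sigma(P_{2^{m+r}})=AP_{2^{m+r}}+CQ_{2^{m+r}}$. Applying $\sigma$ to $P=2^{m}P_{2^{m+r}}$ gives $\sigma(P)=AP+2^{m}C\,Q_{2^{m+r}}$. Since $H$ is $\QQ$-rational, $\sigma(P)\in H\subseteq\langle P_{2^{m+r}}\rangle$, and $AP\in\langle P_{2^{m+r}}\rangle$ too, so $2^{m}C\,Q_{2^{m+r}}\in\langle P_{2^{m+r}}\rangle\cap\langle Q_{2^{m+r}}\rangle=\{\mathcal O\}$; because $Q_{2^{m+r}}$ has order $2^{m+r}$, this forces $2^{r}\mid C$. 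Thus, in this basis, $\overline{\rho}_{E,2^{m+r}}(G_{\QQ})$ lies in the subgroup of $\operatorname{GL}(2,\ZZ/2^{m+r}\ZZ)$ of matrices with $(1,2)$-entry divisible by $2^{r}$; being finite, it is generated by finitely many such matrices, which is exactly the asserted form.

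I do not expect a genuine obstacle here; the only care needed is bookkeeping. One must keep the matrix convention straight — this paper follows the RZB (transpose) convention, so it is precisely the $(1,2)$-entry $C$, the coefficient of $Q_{2^{m+r}}$ in $\sigma(P_{2^{m+r}})$, that gets controlled — and one must choose the basis so that it is simultaneously compatible with $H$ (through $2^{m}P_{2^{m+r}}=P$) and with the hypotheses of Lemma \ref{generators of Z/2^NZ} (through the chain $P_{2^{k}}$). Writing the basis-completion step cleanly is the fussiest part, but it introduces no new idea.
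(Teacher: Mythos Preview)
Your approach is essentially identical to the paper's: lift a generator of the $\QQ$-rational subgroup through multiplication by $2^{m}$, complete to a basis of $E[2^{m+r}]$, and use $\QQ$-rationality to force divisibility of the relevant off-diagonal entry. The one issue is a convention slip. You assert that the paper uses the RZB right-action convention, so that with matrix $\begin{bmatrix} A & C \\ B & D \end{bmatrix}$ one has $\sigma(P_{2^{m+r}}) = AP_{2^{m+r}} + CQ_{2^{m+r}}$; in fact the paper uses the left-action (column-vector) convention in its own arguments---see the proof of Lemma~\ref{Borel 2}, where $\sigma(P)=[A]P+[B]Q$ and $\sigma(Q)=[D]Q$ yield the matrix $\begin{bmatrix} A & 0 \\ B & D \end{bmatrix}$---and only transposes when quoting RZB labels. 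With your choice of lifting the $\QQ$-rational generator to the \emph{first} basis vector $P_{2^{m+r}}$, your computation in the paper's convention actually shows $2^{r}\mid B$, not $2^{r}\mid C$. The paper instead lifts the generator to the \emph{second} basis vector $Q_{2^{m+r}}$, so that $\sigma(Q_{2^{m+r}})=[C]P_{2^{m+r}}+[D]Q_{2^{m+r}}$ and the divisibility lands on $C$ as stated. Swapping the roles of your $P$ and $Q$ fixes this immediately; no new idea is needed.
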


\begin{proof}
Let $Q_{2^{r}}$ be a point on $E$ of order $2^{r}$ that generates a $\QQ$-rational group. Let $Q_{2^{m+r}}$ be a point on $E$ such that $[2^{m}]Q_{2^{m+r}} = Q_{2^{r}}$. Let $P_{2^{m+r}}$ be a point on $E$ such that $E[2^{m+r}] = \left\langle P_{2^{m+r}}, Q_{2^{m+r}} \right\rangle$. Let $\sigma \in G_{\QQ}$. Then there are integers $C$ and $D$ such that $\sigma(Q_{2^{m+r}}) = [C]P_{2^{m+r}} + [D]Q_{2^{m+r}}$. Moreover,
$$\sigma(Q_{2^{r}}) = \sigma([2^{m}]Q_{2^{m+r}}) = [2^{m}]\sigma(Q_{2^{m+r}}) = [2^{m}]([C]P_{2^{m+r}} + [D]Q_{2^{m+r}}) = [2^{m}C]P_{2^{m+r}} + [D]Q_{2^{r}}.$$
Note that $Q_{2^{r}}$ generates a $\QQ$-rational group. Thus, $[2^{m}C]P_{2^{m+r}} \in \left\langle Q_{2^{r}} \right\rangle$. As $\left\langle P_{2^{m+r}} \right\rangle \cap \left\langle Q_{2^{m+r}} \right\rangle = \left\{\mathcal{O}\right\}$, $[2^{m}C]P_{2^{m+r}} = \mathcal{O}$ and hence, $2^{m+r}$ divides $2^{m}C$. We conclude that $2^{r}$ divides $C$.
\end{proof}

\begin{lemma}\label{Borel 2}

Let $E/\QQ$ be an elliptic curve and let $r$ be a non-negative integer such that $E$ contains a cyclic, $\QQ$-rational subgroup of order $2^{r}$. Let $m$ be a non-negative integer less than or equal to $r$. Then $\overline{\rho}_{E,2^{m}}(G_{\QQ})$ is conjugate to a subgroup of $\operatorname{GL}(2, \ZZ / 2^{m} \ZZ)$ of the form
$\left\langle
\begin{bmatrix} A_{1} & 0 \\ B_{1} & D_{1} \end{bmatrix}, \ldots, \begin{bmatrix} A_{s} & 0 \\ B_{s} & D_{s} \end{bmatrix}
\right\rangle$.

\end{lemma}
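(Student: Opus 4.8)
The plan is to deduce this immediately from Lemma \ref{Borel 1}. The first observation is that a cyclic, $\QQ$-rational subgroup $H \le E$ of order $2^{r}$ contains, for every $m$ with $0 \le m \le r$, a cyclic, $\QQ$-rational subgroup of order $2^{m}$: the subgroup $H[2^{m}]$ of elements killed by $2^{m}$ is the unique subgroup of $H$ of order $2^{m}$, hence characteristic in $H$; since each $\sigma \in G_{\QQ}$ acts on $E(\overline{\QQ})$ by a group automorphism, $\sigma|_{H}$ is an automorphism of $H$ and therefore fixes $H[2^{m}]$. Thus $E$ has a cyclic, $\QQ$-rational subgroup of order $2^{m}$.

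Now I would apply Lemma \ref{Borel 1} with its parameter ``$r$'' taken to be our $m$ and its parameter ``$m$'' taken to be $0$. This yields that $\overline{\rho}_{E,2^{m}}(G_{\QQ})$ is conjugate to a subgroup $\left\langle \begin{bmatrix} A_{1} & C_{1} \\ B_{1} & D_{1} \end{bmatrix}, \ldots, \begin{bmatrix} A_{s} & C_{s} \\ B_{s} & D_{s} \end{bmatrix} \right\rangle$ of $\operatorname{GL}(2, \ZZ / 2^{m} \ZZ)$ in which $2^{m}$ divides each $C_{i}$. But in $\ZZ / 2^{m} \ZZ$ the condition $2^{m} \mid C_{i}$ forces $C_{i} = 0$, so each generator already has the shape $\begin{bmatrix} A_{i} & 0 \\ B_{i} & D_{i} \end{bmatrix}$, which is exactly the asserted form. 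Equivalently, one could invoke Lemma \ref{Borel 1} with ``$m$'' $=0$ and ``$r$'' $=r$ to obtain lower-triangularity of $\overline{\rho}_{E,2^{r}}(G_{\QQ})$ modulo $2^{r}$, then push forward along the reduction homomorphism $\operatorname{GL}(2, \ZZ / 2^{r} \ZZ) \to \operatorname{GL}(2, \ZZ / 2^{m} \ZZ)$, conjugating by the reduction of the conjugating matrix; this map carries lower-triangular matrices to lower-triangular matrices and carries $\overline{\rho}_{E,2^{r}}(G_{\QQ})$ onto $\overline{\rho}_{E,2^{m}}(G_{\QQ})$.

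There is essentially no obstacle here: the substantive content is already packaged in Lemma \ref{Borel 1}, and the only things to verify are the elementary group-theoretic fact in the first paragraph and the bookkeeping of which variable of Lemma \ref{Borel 1} plays which role. The edge cases $m = 0$ (where $\operatorname{GL}(2, \ZZ / 2^{0} \ZZ)$ is trivial) and $r = 0$ are immediate.
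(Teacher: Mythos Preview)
Your proof is correct. The paper takes essentially the same route as your alternative: it fixes a generator $Q_{2^{r}}$ of the $\QQ$-rational cyclic subgroup, completes to a basis of $E[2^{r}]$, observes directly that each $\sigma$ acts by a lower-triangular matrix, and then reduces modulo $2^{m}$. Your primary route, invoking Lemma~\ref{Borel 1} with parameters $(m,r)\mapsto(0,m)$, is a clean repackaging of exactly that argument and saves repeating the computation; the only extra ingredient you supply is the (correct and easy) observation that the unique subgroup of order $2^{m}$ inside $H$ is again $\QQ$-rational.
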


\begin{proof}

Let $Q_{2^{r}}$ be a point on $E$ that generates a $\QQ$-rational group of order $2^{r}$. Let $\sigma \in G_{\QQ}$. Then there is an integer $D$ such that $\sigma(Q_{2^{r}}) = [D]Q_{2^{r}}$. Let $P_{2^{r}}$ be a point on $E$ such that $E[2^{r}] = \left\langle P_{2^{r}}, Q_{2^{r}} \right\rangle$. Then there are integers $A$ and $B$ such that $\sigma(P_{2^{r}}) = [A]P_{2^{r}} + [B]Q_{2^{r}}$. Hence, as a matrix, $\sigma$ has the representation $\begin{bmatrix}
A & 0 \\ B & D
\end{bmatrix}$. The rest of the proof follows from the fact that $\overline{\rho}_{E,2^{m}}(G_{\QQ})$ is simply the reduction of $\overline{\rho}_{E,2^{r}}(G_{\QQ})$ modulo $2^{m}$.

\end{proof}

\begin{lemma}\label{2-adic Galois Images}

Let $E/\QQ$ and $E'/\QQ$ be elliptic curves that are $\QQ$-isogenous by an isogeny $\phi$ such that $\operatorname{Ker}(\phi)$ is finite, cyclic, and $\QQ$-rational. Let $r$ be greatest non-negative integer such that $2^{r}$ divides the order of $\operatorname{Ker}(\phi)$ and let $m$ be the non-negative integer such that $\rho_{E,2^{\infty}}(G_{\QQ})$ is a group of level $2^{m}$. Then $0 \leq r \leq m \leq 5 \leq 5+r$ and there is a basis of $E\left[2^{m}\right]$ such that $\overline{\rho}_{E,2^{m}}(G_{\QQ})$ is conjugate to a group of the form
$$\left\langle \begin{bmatrix}
A_{1} & C_{1} \\ B_{1} & D_{1}
\end{bmatrix}, \ldots, \begin{bmatrix} A_{s} & C_{s} \\ B_{s} & D_{s} \end{bmatrix} \right\rangle$$
such that $2^{r}$ divides $C_{1}, \ldots, C_{s}$. Moreover,
    \begin{itemize}
        \item if $m = 0$, then both $\overline{\rho}_{E,2^{r+5}}(G_{\QQ})$ and $\overline{\rho}_{E',32}(G_{\QQ})$ are conjugate to $\operatorname{GL}(2, \ZZ / 2^{r+5} \ZZ)$.
        \item If $m = 1$, then $\overline{\rho}_{E,2^{r+5}}(G_{\QQ})$ is conjugate to
        $$\left\langle \begin{bmatrix}
A_{1} & C_{1} \\ B_{1} & D_{1}
\end{bmatrix}, \ldots, \begin{bmatrix} A_{s} & C_{s} \\ B_{s} & D_{s} \end{bmatrix}, \begin{bmatrix}
1 & 2 \\ 0 & 1
\end{bmatrix}, \begin{bmatrix}
1 & 0 \\ 2 & 1
\end{bmatrix}, \begin{bmatrix}
3 & 0 \\ 0 & 1
\end{bmatrix}, \begin{bmatrix}
5 & 0 \\ 0 & 1
\end{bmatrix}, \begin{bmatrix}
1 & 0 \\ 0 & 3
\end{bmatrix}, \begin{bmatrix}
1 & 0 \\ 0 & 5
\end{bmatrix} \right\rangle$$
and $\overline{\rho}_{E',32}(G_{\QQ})$ is conjugate to 
$$\left\langle \begin{bmatrix}
A_{1} & \frac{C_{1}}{2^{r}} \\ 2^{r} \cdot B_{1} & D_{1}
\end{bmatrix}, \ldots, \begin{bmatrix} A_{s} & \frac{C_{s}}{2^{r}} \\ 2^{r} \cdot B_{s} & D_{s} \end{bmatrix}, \begin{bmatrix}
1 & 2^{1-r} \\ 0 & 1
\end{bmatrix}, \begin{bmatrix}
1 & 0 \\ 2^{r+1} & 1
\end{bmatrix}, \begin{bmatrix}
3 & 0 \\ 0 & 1
\end{bmatrix}, \begin{bmatrix}
5 & 0 \\ 0 & 1
\end{bmatrix}, \begin{bmatrix}
1 & 0 \\ 0 & 3
\end{bmatrix}, \begin{bmatrix}
1 & 0 \\ 0 & 5
\end{bmatrix} \right\rangle.$$
        \item If $2 \leq m \leq 5$, then $\overline{\rho}_{E,2^{r+5}}(G_{\QQ})$ is conjugate to
        $$\left\langle \begin{bmatrix}
A_{1} & C_{1} \\ B_{1} & D_{1}
\end{bmatrix}, \ldots, \begin{bmatrix} A_{s} & C_{s} \\ B_{s} & D_{s} \end{bmatrix}, \begin{bmatrix}
1 & 2^{m} \\ 0 & 1
\end{bmatrix}, \begin{bmatrix}
1 & 0 \\ 2^{m} & 1
\end{bmatrix}, \begin{bmatrix}
1+2^{m} & 0 \\ 0 & 1
\end{bmatrix}, \begin{bmatrix}
1 & 0 \\ 0 & 1+2^{m} \end{bmatrix} \right\rangle$$
and $\overline{\rho}_{E',32}(G_{\QQ})$ is conjugate to
$$\left\langle \begin{bmatrix}
A_{1} & \frac{C_{1}}{2^{r}} \\ 2^{r} \cdot B_{1} & D_{1}
\end{bmatrix}, \ldots, \begin{bmatrix} A_{s} & \frac{C_{s}}{2^{r}} \\ 2^{r} \cdot B_{s} & D_{s} \end{bmatrix}, \begin{bmatrix}
1 & 2^{m-r} \\ 0 & 1
\end{bmatrix}, \begin{bmatrix}
1 & 0 \\ 2^{m+r} & 1
\end{bmatrix}, \begin{bmatrix}
1+2^{m} & 0 \\ 0 & 1
\end{bmatrix}, \begin{bmatrix}
1 & 0 \\ 0 & 1+2^{m} \end{bmatrix} \right\rangle$$
    \end{itemize}

\end{lemma}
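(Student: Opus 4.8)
The plan is to reduce to the case where $\operatorname{Ker}(\phi)$ is cyclic, $\QQ$-rational, and of order \emph{exactly} $2^{r}$, and then to transport the adapted form of $\overline{\rho}_{E,2^{r+5}}(G_{\QQ})$ through $\phi$. For the reduction, write $\operatorname{Ker}(\phi)=H_{2}\times H'$ with $H_{2}$ cyclic of order $2^{r}$ and $H'$ cyclic of odd order; both factors are characteristic in $\operatorname{Ker}(\phi)$, hence $\QQ$-rational, so by Lemma \ref{Q-rational} the isogeny factors over $\QQ$ as $E\to E/H_{2}\xrightarrow{\psi}E'$ with $\psi$ of odd degree. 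Since an odd-degree isogeny restricts to a $G_{\QQ}$-equivariant isomorphism on $2^{k}$-torsion for every $k$, $\overline{\rho}_{E',32}(G_{\QQ})$ is conjugate to $\overline{\rho}_{E/H_{2},\,32}(G_{\QQ})$, so I may assume $E'=E/H$ with $H$ cyclic, $\QQ$-rational of order $2^{r}$, and $\phi=\phi_{H}$ of degree $2^{r}$.

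Next I would prove $0\le r\le m\le 5\le 5+r$. Here $m\le 5$ is the Remark following Theorem \ref{thm-rzb}, and $r\ge 0$, $5\le 5+r$ are immediate. For $r\le m$: by Lemma \ref{Borel 2} (with both of its parameters equal to $r$), $\overline{\rho}_{E,2^{r}}(G_{\QQ})$ is conjugate into the Borel subgroup of lower-triangular matrices, hence so is its reduction modulo $2^{m+1}$ whenever $m+1\le r$; but if $r>m$ then, $\rho_{E,2^{\infty}}(G_{\QQ})$ having level $2^{m}$, the group $\overline{\rho}_{E,2^{m+1}}(G_{\QQ})$ is the full preimage of $\overline{\rho}_{E,2^{m}}(G_{\QQ})$ and so contains the kernel of $\operatorname{GL}(2,\ZZ/2^{m+1}\ZZ)\to\operatorname{GL}(2,\ZZ/2^{m}\ZZ)$, and an elementary check (that kernel contains unipotent elements fixing two distinct lines of $\PP^{1}(\ZZ/2^{m+1}\ZZ)$) shows it lies in no Borel --- a contradiction.

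With the inequalities in hand I would set up the basis. Applying Lemma \ref{Borel 1} with its ``$m$'' equal to $5$ and ``$r$'' equal to $r$, its proof produces a point $Q_{2^{r}}$ generating $H$, a point $Q_{2^{r+5}}$ with $[2^{5}]Q_{2^{r+5}}=Q_{2^{r}}$, and a point $P_{2^{r+5}}$ completing a basis $(P_{2^{r+5}},Q_{2^{r+5}})$ of $E[2^{r+5}]$ in which every element of $\overline{\rho}_{E,2^{r+5}}(G_{\QQ})$ has upper-right entry divisible by $2^{r}$. Reducing this basis and group modulo $2^{m}$ gives a presentation $\big\langle \begin{bmatrix}A_{i}&C_{i}\\ B_{i}&D_{i}\end{bmatrix}\big\rangle$ of $\overline{\rho}_{E,2^{m}}(G_{\QQ})$ with $2^{r}\mid C_{i}$ (divisibility survives reduction because $r\le m$); this is the basis claimed in the statement. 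Because the level is $2^{m}$ and $r+5\ge m$, I would then invoke Lemma \ref{lifting with kernel} on this presentation --- its proof in fact gives an equality, in the compatibly lifted basis $(P_{2^{r+5}},Q_{2^{r+5}})$, not merely a conjugacy --- to obtain the three displayed descriptions of $\overline{\rho}_{E,2^{r+5}}(G_{\QQ})$; each kernel generator adjoined there has upper-right entry $0$ or $2^{m}$, again divisible by $2^{r}$.

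Finally I would transport through $\phi$. Setting $P_{2^{5}}=[2^{r}]P_{2^{r+5}}$ and using $H=\langle[2^{5}]Q_{2^{r+5}}\rangle\subseteq\langle Q_{2^{r+5}}\rangle$ together with $\langle P_{2^{r+5}}\rangle\cap\langle Q_{2^{r+5}}\rangle=\{\mathcal{O}\}$, a counting argument shows $f_{1}=\phi(P_{2^{5}})$ and $f_{2}=\phi(Q_{2^{r+5}})$ both have order $32$ and generate $E'[32]$, so $(f_{1},f_{2})$ is a basis. If $\overline{\rho}_{E,2^{r+5}}(\sigma)=\begin{bmatrix}A&C\\ B&D\end{bmatrix}$ in $(P_{2^{r+5}},Q_{2^{r+5}})$ (so $2^{r}\mid C$), applying $\phi$ gives $\sigma(f_{1})=[A]f_{1}+[2^{r}B]f_{2}$ and $\sigma(f_{2})=[C/2^{r}]f_{1}+[D]f_{2}$, i.e. $\overline{\rho}_{E',32}(\sigma)=\Psi(\overline{\rho}_{E,2^{r+5}}(\sigma))$, where $\Psi\colon \begin{bmatrix}A&C\\ B&D\end{bmatrix}\mapsto \begin{bmatrix}A&C/2^{r}\\ 2^{r}B&D\end{bmatrix}$ is defined on the subgroup of $\operatorname{GL}(2,\ZZ/2^{r+5}\ZZ)$ with $2^{r}\mid C$ and is a group homomorphism into $\operatorname{GL}(2,\ZZ/32\ZZ)$ (a short matrix computation, or simply the fact that it is the map induced by $\phi$). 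Hence $\overline{\rho}_{E',32}(G_{\QQ})=\Psi(\overline{\rho}_{E,2^{r+5}}(G_{\QQ}))$, and pushing the generators from the previous step through $\Psi$ yields exactly the listed descriptions in the cases $m=0$ (where $r=0$, $\Psi=\operatorname{id}$, and both images $=\operatorname{GL}(2,\ZZ/32\ZZ)$), $m=1$, and $2\le m\le 5$. The hardest part will be the bookkeeping that keeps the three bases --- the adapted basis of $E[2^{r+5}]$, its reduction feeding Lemma \ref{lifting with kernel}, and the $\phi$-image basis of $E'[32]$ --- mutually compatible, together with the elementary-but-not-automatic fact that the congruence kernel of $\operatorname{GL}(2,\ZZ/2^{m+1}\ZZ)$ over $\operatorname{GL}(2,\ZZ/2^{m}\ZZ)$ lies in no Borel, which is what forces $r\le m$.
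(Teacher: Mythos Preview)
Your proposal is correct and follows essentially the same route as the paper's proof: establish $r\le m$ via the contradiction between a Borel containment (from Lemma~\ref{Borel 2}) and the presence of the congruence kernel in the full lift, invoke Lemma~\ref{Borel 1} to get the adapted basis with $2^{r}\mid C_{i}$, use Lemma~\ref{lifting with kernel} to write $\overline{\rho}_{E,2^{r+5}}(G_{\QQ})$ explicitly, and then push the generators through $\phi$ via the map $\Psi$ on the image basis $(\phi([2^{r}]P_{2^{r+5}}),\phi(Q_{2^{r+5}}))$ of $E'[32]$.

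There are two small presentational differences worth noting. First, you begin by factoring off the odd part of $\operatorname{Ker}(\phi)$ and reducing to a $2^{r}$-isogeny; the paper instead works directly with the original $\phi$, using only that the $2$-primary part of $\operatorname{Ker}(\phi)$ is $\langle Q_{2^{r}}\rangle$ in the order computations for $\phi([2^{r}]P_{2^{5+r}})$ and $\phi(Q_{2^{5+r}})$. Your reduction is a clean way to avoid tracking the odd part. Second, your argument for $r\le m$ is phrased conjugacy-invariantly (the congruence kernel is normal and contains non-Borel elements, so it lies in no Borel), whereas the paper fixes the compatible bases from Remark~\ref{remark generators of Z/2^NZ} and exhibits the specific matrix $\left[\begin{smallmatrix}1&2^{m}\\0&1\end{smallmatrix}\right]$ violating lower-triangularity in that basis. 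Both arguments are short; yours saves the basis bookkeeping at this step, at the cost of the one-line observation that a normal subgroup contained in a Borel would lie in every Borel.
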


\begin{proof}
The fact that the level of $\rho_{E,2^{\infty}}(G_{\QQ})$ is a level of at most $32$ is directly from Theorem \ref{thm-rzb}. Now we prove that $r \leq m$. Suppose that $m = 0$. Then $\rho_{E,2^{\infty}}(G_{\QQ})$ is a group of level $1$. Then $\overline{\rho}_{E,1}(G_{\QQ})$ is the trivial subgroup of $\operatorname{GL}(2, \{1\})$ and for each positive integer $d$, $\overline{\rho}_{E,2^{d}}(G_{\QQ})$ is the full lift of the trivial group. This means that $\overline{\rho}_{E,2^{d}}(G_{\QQ})$ is conjugate to $\operatorname{GL}(2, \ZZ / 2^{d} \ZZ)$. If at the same time, $r$ is positive, then $E$ has a point of order $2$ defined over $\QQ$ and by Lemma \ref{Borel 2}, $\overline{\rho}_{E,2}(G_{\QQ})$ is conjugate to $\left\langle \begin{bmatrix} 1 & 0 \\ 1 & 1 \end{bmatrix} \right\rangle$. This contradicts the fact that $\overline{\rho}_{E,2}(G_{\QQ})$ is conjugate to $\operatorname{GL}(2, \ZZ/ 
2 \ZZ)$.

Suppose now that $m$ is a positive integer and assume by way of contradiction that $1 \leq m < m + 1 \leq r$. By Lemma \ref{Borel 2}, we can find a basis of $E[2^{m+1}]$ such that $\overline{\rho}_{E,2^{m+1}}(G_{\QQ})$ is conjugate to a subgroup of $\operatorname{GL}(2, \ZZ / 2^{m+1} \ZZ)$ of the form
$\left\langle \begin{bmatrix} A_{1} & 0 \\ B_{1} & D_{1} \end{bmatrix}, \ldots, \begin{bmatrix}
A_{s} & 0 \\ B_{s} & D_{s}
\end{bmatrix} \right\rangle \subseteq \operatorname{GL}(2, \ZZ / 2^{m+1} \ZZ)$. As $\overline{\rho}_{E,2^{m}}(G_{\QQ})$ is simply the reduction of $\overline{\rho}_{E,2^{m+1}}(G_{\QQ})$ modulo $2^{m}$, there is a basis of $E[2^{m}]$ such that $\overline{\rho}_{E,2^{m}}(G_{\QQ})$ is equal to $\left\langle \begin{bmatrix} A_{1} & 0 \\ B_{1} & D_{1} \end{bmatrix}, \ldots, \begin{bmatrix}
A_{s} & 0 \\ B_{s} & D_{s}
\end{bmatrix} \right\rangle \subseteq \operatorname{GL}(2, \ZZ / 2^{m} \ZZ)$. As the level of $\rho_{E,2^{\infty}}(G_{\QQ})$ is equal to $2^{m}$, we have that $\overline{\rho}_{E,2^{m+1}}(G_{\QQ})$ is equal to the full lift of $\overline{\rho}_{E,2^{m}}(G_{\QQ})$ to level $2^{m}$. By Lemma \ref{lifting with kernel}, $\overline{\rho}_{E,2^{m+1}}(G_{\QQ})$ is conjugate to one of the following two groups

\begin{center}
    $\left\langle \begin{bmatrix} A_{1} & 0 \\ B_{1} & D_{1} \end{bmatrix}, \ldots, \begin{bmatrix}
A_{s} & 0 \\ B_{s} & D_{s}
\end{bmatrix}, \begin{bmatrix}
1+2^{m} & 0 \\ 0 & 1
\end{bmatrix}, \begin{bmatrix}
1 & 0 \\ 0 & 1+2^{m}
\end{bmatrix}, \begin{bmatrix} 1 & 2^{m} \\ 0 & 1 \end{bmatrix}, \begin{bmatrix} 1 & 0 \\ 2^{m} & 1 \end{bmatrix}, \begin{bmatrix}
5 & 0 \\ 0 & 1
\end{bmatrix}, \begin{bmatrix}
1 & 0 \\ 0 & 5
\end{bmatrix} \right\rangle$ \end{center}

or

\begin{center}
$\left\langle \begin{bmatrix} A_{1} & 0 \\ B_{1} & D_{1} \end{bmatrix}, \ldots, \begin{bmatrix}
A_{s} & 0 \\ B_{s} & D_{s}
\end{bmatrix}, \begin{bmatrix}
1+2^{m} & 0 \\ 0 & 1
\end{bmatrix}, \begin{bmatrix}
1 & 0 \\ 0 & 1+2^{m}
\end{bmatrix}, \begin{bmatrix} 1 & 2^{m} \\ 0 & 1 \end{bmatrix}, \begin{bmatrix} 1 & 0 \\ 2^{m} & 1 \end{bmatrix} \right\rangle$
\end{center}
depending on whether $m = 1$ or $2 \leq m \leq 5$. But this is a contradiction due to the presence of the matrix $\begin{bmatrix}
1 & 2^{m} \\ 0 & 1
\end{bmatrix}$. Hence, $0 \leq r \leq m \leq 5$.

Now let $Q_{2^{r}}$ is a generator of the subgroup of $\operatorname{Ker}(\phi)$ of order $2^{r}$ and let $Q_{2^{5+r}}$ be an element of order $2^{5+r}$ such that $\left\langle Q_{2^{5+r}} \right\rangle$ contains $Q_{2^{r}}$. We prove that if $\left\{P_{2^{5+r}}, Q_{2^{5+r}}\right\}$ is a basis of $E[2^{5+r}]$, then $\left\{\phi([2^{r}]P_{2^{5+r}}), \phi(Q_{2^{5+r}})\right\}$ is a basis of $E'[32]$. First note that $[32]\phi([2^{r}]P_{2^{5+r}}) = \phi([2^{5+r}]P_{2^{5+r}}) = \mathcal{O}$ and $[16]\phi([2^{r}]P_{2^{5+r}}) = \phi([2^{4+r}]P_{2^{5+r}})$. Note that $[2^{4+r}]P_{2^{5+r}}$ is a point on $E$ of order $2$. If $[2^{4+r}]P_{2^{5+r}} \in \operatorname{Ker}(\phi)$, then $[2^{4+r}]P_{2^{5+r}}$ is the point in $\left\langle Q_{2^{r}} \right\rangle$ of order $2$, which is a contradiction as $\left\langle P_{2^{5+r}} \right\rangle \bigcap \left\langle Q_{2^{5+r}} \right\rangle = \left\{\mathcal{O} \right\}$. Next, $[32]\phi(Q_{2^{5+r}}) = \phi([32]Q_{2^{5+r}})$ and $[16]\phi(Q_{2^{5+r}}) = \phi([16]Q_{2^{5+r}})$. Note that $[16]Q_{2^{5+r}}$ is a point on $E$ of order $2^{r+1}$, hence, cannot be an element of $\operatorname{Ker}(\phi)$. Thus, the orders of both $\phi([2^{r}]P_{2^{5+r}})$ and $\phi(Q_{2^{5+r}})$ are equal to $32$. Assume that $\left\langle \phi([2^{r}]P_{2^{5+r}}) \right\rangle$ and $\left\langle \phi(Q_{2^{5+r}}) \right\rangle$ intersect non-trivially. As both are non-trivial, cyclic groups of equal, $2$-power order, they would share a point of order $2$. The element in $\left\langle \phi([2^{r}]P_{2^{5+r}}) \right\rangle$ of order $2$ is $\phi([2^{4+r}]P_{2^{5+r}})$ and the element of $\left\langle \phi(Q_{2^{5+r}}) \right\rangle$ of order $2$ is $\phi([16]Q_{2^{5+r}})$. If $\phi([2^{4+r}]P_{2^{5+r}}) = \phi([16]Q_{2^{5+r}})$, then $[2^{4+r}]P_{2^{5+r}} - [16]Q_{2^{5+r}} \in \operatorname{Ker}(\phi)$. In other words, $[2^{4+r}]P_{2^{5+r}} - [16]Q_{2^{5+r}} \in \left\langle Q_{2^{r}} \right\rangle \subseteq \left\langle Q_{2^{5+r}} \right\rangle$ and hence, $[2^{4+r}]P_{2^{5+r}} \in \left\langle Q_{2^{5+r}} \right\rangle$. But then $[2^{4+r}]P_{2^{5+r}}$ is an element of order $2$ in $\left\langle P_{2^{5+r}} \right\rangle \bigcap \left\langle Q_{2^{5+r}} \right\rangle = \left\{\mathcal{O}\right\}$, a contradiction.

By Lemma \ref{Borel 1}, there is a basis of $E[2^{m}]$ such that $\overline{\rho}_{E,2^{m}}(G_{\QQ})$ is conjugate to a subgroup of $\operatorname{GL}\left(2, \ZZ / 2^{m} \ZZ\right)$ of the form
$$\left\langle \begin{bmatrix}
A_{1} & C_{1} \\ B_{1} & D_{1}
\end{bmatrix}, \ldots, \begin{bmatrix}
A_{s} & C_{s} \\ B_{s} & D_{s}
\end{bmatrix} \right\rangle \subseteq \operatorname{GL}(2, \ZZ / 2^{m} \ZZ)$$
such that $2^{r}$ divides $C_{1}, \ldots, C_{s}$. By Theorem \ref{thm-rzb}, $5+r \geq m$ and so, $\overline{\rho}_{E,2^{5+r}}(G_{\QQ})$ is the full lift of $\overline{\rho}_{E,2^{m}}(G_{\QQ})$ to level $2^{5+r}$.

We now classify $\overline{\rho}_{E',32}(G_{\QQ})$, given $\overline{\rho}_{E,2^{r+5}}(G_{\QQ})$. Suppose that $\left\{P_{2^{r+5}}, Q_{2^{r+5}}\right\}$ is a basis of $E[2^{r+5}]$. Then $\left\{\phi([2^{r}]P_{2^{5+r}}), \phi(Q_{2^{5+r}})\right\}$ is a basis of $E'[32]$. Let $\sigma$ be a Galois automorphism of $\overline{\QQ}$ such that $\sigma(P_{2^{r+5}}) = [A]P_{2^{r+5}} + [B]Q_{2^{r+5}}$ and $\sigma(Q_{2^{r+5}}) = [C]P_{2^{r+5}} + [D]Q_{2^{r+5}}$ for some integers $A$, $B$, $C$, and $D$ such that $AD - BC$ is not equal to $0$. Moreover, note that we may say that $2^{r}$ divides $C$. Then
$$\sigma(\phi([2^{r}]P_{2^{5+r}})) = \phi(\sigma([2^{r}]P_{2^{5+r}})) = \phi([2^{r}]\sigma(P_{2^{5+r}})) = [2^{r}]\phi([A]P_{2^{5+r}} + [B]Q_{2^{5+r}}) =$$
$$[A]\phi([2^{r}]P_{2^{5+r}}) + [2^{r} \cdot B]\phi(Q_{2^{5+r}}).$$
Finally,
$$\sigma(\phi(Q_{2^{5+r}})) = \phi(\sigma(Q_{2^{5+r}})) = \phi(\sigma(Q_{2^{5+r}})) = \phi([C]P_{2^{5+r}} + [D]Q_{2^{5+r}}) = $$
$$\phi\left(\left[\frac{C}{2^{r}} \cdot 2^{r}\right] P_{2^{5+r}} + [D]Q_{2^{5+r}}\right) = \phi\left(\left[\frac{C}{2^{r}}\right] [2^{r}]P_{2^{5+r}} + [D]Q_{2^{5+r}}\right) =$$
$$\phi\left(\left[\frac{C}{2^{r}}\right][2^{r}]P_{2^{5+r}}\right) + \phi\left([D]Q_{2^{5+r}}\right) = \left[\frac{C}{2^{r}}\right]\phi([2^{r}]P_{2^{5+r}}) + [D]\phi(Q_{2^{5+r}}).$$
In other words, if $\begin{bmatrix} A & C \\ B & D \end{bmatrix}$ is a matrix representing the group action of a Galois automorphism $\sigma$ of $\overline{\QQ}$ on $E[2^{5+r}]$, then $\begin{bmatrix} A & \frac{C}{2^{r}} \\ 2^{r} \cdot B & D \end{bmatrix}$ represents the group of $\sigma$ on $E'[32]$. As all $\phi$ is surjective, the group action of $\sigma$ on $E'[32]$ is determined by the group action of $\sigma$ on $E[2^{5+r}]$.

We conclude the proof. If $m = 0$, then the level of $\rho_{E,2^{\infty}}(G_{\QQ})$ is equal to $1$ and $\overline{\rho}_{E,32}(G_{\QQ})$ is conjugate to $\operatorname{GL}(2, \ZZ / 32 \ZZ)$. Moreover $r = 0$ and $\left\{ \phi(P_{32}), \phi(Q_{32}) \right\}$ is a basis of $E'[32]$. For each matrix $M = \begin{bmatrix} A & C \\ B & D \end{bmatrix}$ of $\operatorname{GL}(2, \ZZ / 32 \ZZ)$, there is a Galois automorphism $\sigma$ of $\overline{\QQ}$ such that $\sigma(P_{32}) = [A]P_{32} + [B]Q_{32}$ and $\sigma(Q_{32}) = [C]P_{32} + [D]Q_{32}$. Then $\sigma(\phi(P_{32})) = \phi(\sigma(P_{32})) = \phi([A]P_{32} + [B]Q_{32}) = [A]\phi(P_{32}) + [B]\phi(Q_{32})$. Next, we have $\sigma(\phi(Q_{32})) = \phi(\sigma(Q_{32})) = \phi([C]P_{32} + [D]Q_{32}) = [C]\phi(P_{32}) + [D]\phi(Q_{32})$. Hence, $\sigma$ corresponds to the matrix $\begin{bmatrix}
A & C \\ B & D
\end{bmatrix}$ on $E'[32]$. Thus, $\overline{\rho}_{E',32}(G_{\QQ})$ is conjugate to $\operatorname{GL}(2, \ZZ / 32 \ZZ)$ and $\rho_{E',2^{\infty}}(G_{\QQ})$ is conjugate to $\operatorname{GL}(2, \ZZ_{2})$.

Now suppose that $m = 1$ and the level of $\rho_{E,2^{\infty}}(G_{\QQ})$ is equal to $2$. Moreover, $r = 0$ or $r = 1$. We have that $\overline{\rho}_{E,2^{5+r}}(G_{\QQ})$ is conjugate to the full lift of $\overline{\rho}_{E,2}(G_{\QQ})$ to level $2^{5+r}$. By Lemma \ref{Borel 1}, $\overline{\rho}_{E,2}(G_{\QQ})$ is conjugate to a subgroup of $\operatorname{GL}(2, \ZZ / 2 \ZZ)$ of the form
$$\left\langle \begin{bmatrix} A_{1} & C_{1} \\ B_{1} & D_{1} \end{bmatrix}, \ldots, \begin{bmatrix} A_{s} & C_{s} \\ B_{s} & D_{s} \end{bmatrix} \right\rangle \subseteq \operatorname{GL}(2, \ZZ / 2 \ZZ)$$
such that $2^{r}$ (which is equal to $1$ or $2$) divides $C_{1}, \ldots, C_{s}$. By Lemma \ref{lifting with kernel}, $\overline{\rho}_{E,2^{5+r}}(G_{\QQ})$ is equal to (up to base change)
$$\left\langle \begin{bmatrix} A_{1} & C_{1} \\ B_{1} & D_{1} \end{bmatrix}, \ldots, \begin{bmatrix} A_{s} & C_{s} \\ B_{s} & D_{s} \end{bmatrix}, \begin{bmatrix} 1 & 2 \\ 0 & 1 \end{bmatrix}, \begin{bmatrix} 1 & 0 \\ 2 & 1 \end{bmatrix}, \begin{bmatrix} 3 & 0 \\ 0 & 1 \end{bmatrix}, \begin{bmatrix} 5 & 0 \\ 0 & 1 \end{bmatrix}, \begin{bmatrix} 1 & 0 \\ 0 & 3 \end{bmatrix}, \begin{bmatrix} 1 & 0 \\ 0 & 5 \end{bmatrix} \right\rangle \subseteq \operatorname{GL}(2, \ZZ / 2^{5+r} \ZZ).$$
We have that $\left\{\phi([2^{r}]P_{2^{5+r}}), \phi(Q_{2^{5+r}})\right\}$ is a basis of $E'[32]$. Following the previous paragraph, we see that $\overline{\rho}_{E',32}(G_{\QQ})$ is conjugate to
$$\left\langle \begin{bmatrix} A_{1} & \frac{C_{1}}{2^{r}} \\ 2^{r} \cdot B_{1} & D_{1} \end{bmatrix}, \ldots, \begin{bmatrix} A_{s} & \frac{C_{s}}{2^{r}} \\ 2^{r} \cdot B_{s} & D_{s} \end{bmatrix}, \begin{bmatrix} 1 & 2^{1-r} \\ 0 & 1 \end{bmatrix}, \begin{bmatrix} 1 & 0 \\ 2^{1+r} & 1 \end{bmatrix}, \begin{bmatrix} 3 & 0 \\ 0 & 1 \end{bmatrix}, \begin{bmatrix} 5 & 0 \\ 0 & 1 \end{bmatrix}, \begin{bmatrix} 1 & 0 \\ 0 & 3 \end{bmatrix}, \begin{bmatrix} 1 & 0 \\ 0 & 5 \end{bmatrix} \right\rangle$$
modulo $32$. Finally, suppose that $2 \leq m \leq 5$ and the level of $\rho_{E,2^{\infty}}(G_{\QQ})$ is equal to $4 \leq 2^{m} \leq 32$. Moreover, $0 \leq r \leq m$. We say that $\overline{\rho}_{E,2^{5+r}}(G_{\QQ})$ is conjugate to the full lift of $\overline{\rho}_{E,2^{m}}(G_{\QQ})$ to level $2^{5+r}$. By Lemma \ref{Borel 1}, $\overline{\rho}_{E,2^{m}}(G_{\QQ})$ is conjugate to a subgroup of $\operatorname{GL}(2, \ZZ / 2^{m} \ZZ)$ of the form
$$\left\langle \begin{bmatrix}
A_{1} & C_{1} \\ B_{1} & D_{1}
\end{bmatrix}, \ldots, \begin{bmatrix}
A_{s} & C_{s} \\ B_{s} & D_{s}
\end{bmatrix} \right\rangle \subseteq \operatorname{GL}(2, \ZZ / 2^{m} \ZZ)$$
such that $2^{r}$ divides $C_{1}, \ldots, C_{s}$. By Lemma \ref{lifting with kernel}, $\overline{\rho}_{E,2^{5+r}}(G_{\QQ})$ is equal to (up to base change)
$$\left\langle \begin{bmatrix} A_{1} & C_{1} \\ B_{1} & D_{1} \end{bmatrix}, \ldots, \begin{bmatrix} A_{s} & C_{s} \\ B_{s} & D_{s} \end{bmatrix}, \begin{bmatrix} 1 & 2^{m} \\ 0 & 1 \end{bmatrix}, \begin{bmatrix} 1 & 0 \\ 2^{m} & 1 \end{bmatrix}, \begin{bmatrix} 1+2^{m} & 0 \\ 0 & 1 \end{bmatrix}, \begin{bmatrix} 1 & 0 \\ 0 & 1+2^{m} \end{bmatrix} \right\rangle \subseteq \operatorname{GL}(2, \ZZ / 2^{5+r} \ZZ).$$
We have that $\left\{\phi([2^{r}]P_{2^{5+r}}, \phi(Q_{2^{5+r}})\right\}$ is a basis of $E'[32]$. Following the previous paragraph, we have that $\overline{\rho}_{E',32}(G_{\QQ})$ is conjugate to
$$\left\langle \begin{bmatrix} A_{1} & \frac{C_{1}}{2^{r}} \\ 2^{r} \cdot B_{1} & D_{1} \end{bmatrix}, \ldots, \begin{bmatrix} A_{s} & \frac{C_{s}}{2^{r}} \\ 2^{r} \cdot B_{s} & D_{s} \end{bmatrix}, \begin{bmatrix} 1 & 2^{m-r} \\ 0 & 1 \end{bmatrix}, \begin{bmatrix} 1 & 0 \\ 2^{m+r} & 1 \end{bmatrix}, \begin{bmatrix} 1+2^{m} & 0 \\ 0 & 1 \end{bmatrix}, \begin{bmatrix} 1 & 0 \\ 0 & 1+2^{m} \end{bmatrix} \right\rangle.$$

\end{proof}

\begin{corollary}\label{contains -Id}
 
Let $E/\QQ$ and $E'/\QQ$ be elliptic curves. Suppose that $E$ is $\QQ$-isogenous to $E'$ by an isogeny $\phi$ with a finite, cyclic, $\QQ$-rational kernel. Then $\overline{\rho}_{E,32}(G_{\QQ})$ contains $\operatorname{-Id}$ if and only if $\overline{\rho}_{E',32}(G_{\QQ})$ contains $\operatorname{-Id}$.
    
\end{corollary}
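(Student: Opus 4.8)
The plan is to deduce the corollary from Lemma~\ref{2-adic Galois Images}, together with a symmetry argument through the dual isogeny. Observe first that $\operatorname{-Id}$ lies in the center of $\operatorname{GL}(2,\ZZ/2^{k}\ZZ)$ for every $k$, so the condition ``contains $\operatorname{-Id}$'' is invariant under conjugation, hence a genuine property of the images $\overline{\rho}_{E,2^{k}}(G_{\QQ})$ not depending on a choice of basis of $E[2^{k}]$. Next I would reduce everything to one level: set $r=v_{2}(\deg\phi)$ and let $2^{m}$ be the level of $\rho_{E,2^{\infty}}(G_{\QQ})$, so $m\le 5\le r+5$ by Theorem~\ref{thm-rzb}. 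Since $\rho_{E,2^{\infty}}(G_{\QQ})$ has level $2^{m}$, both $\overline{\rho}_{E,32}(G_{\QQ})$ and $\overline{\rho}_{E,2^{r+5}}(G_{\QQ})$ are the full preimages of $\overline{\rho}_{E,2^{m}}(G_{\QQ})$ under reduction modulo $2^{m}$, and $\operatorname{-Id}$ at any $2$-power level $\ge 2^{m}$ reduces to $\operatorname{-Id}$ modulo $2^{m}$; therefore
$$\operatorname{-Id}\in\overline{\rho}_{E,32}(G_{\QQ})\iff\operatorname{-Id}\in\overline{\rho}_{E,2^{m}}(G_{\QQ})\iff\operatorname{-Id}\in\overline{\rho}_{E,2^{r+5}}(G_{\QQ}).$$

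Then I would prove the implication $\operatorname{-Id}\in\overline{\rho}_{E,32}(G_{\QQ})\Rightarrow\operatorname{-Id}\in\overline{\rho}_{E',32}(G_{\QQ})$. By the previous step there is $\sigma\in G_{\QQ}$ acting on $E[2^{r+5}]$ as the matrix $\begin{bmatrix}-1&0\\0&-1\end{bmatrix}$ in the basis $\{P_{2^{r+5}},Q_{2^{r+5}}\}$ of Lemma~\ref{2-adic Galois Images}, for which $\{\phi([2^{r}]P_{2^{r+5}}),\phi(Q_{2^{r+5}})\}$ is a basis of $E'[32]$; its upper-right entry $0$ is divisible by $2^{r}$, so the transformation rule $\begin{bmatrix}A&C\\B&D\end{bmatrix}\mapsto\begin{bmatrix}A&C/2^{r}\\2^{r}B&D\end{bmatrix}$ derived in the proof of Lemma~\ref{2-adic Galois Images} applies and shows that $\sigma$ acts on $E'[32]$ by $\begin{bmatrix}-1&0\\0&-1\end{bmatrix}=\operatorname{-Id}$. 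Hence $\operatorname{-Id}\in\overline{\rho}_{E',32}(G_{\QQ})$.

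For the reverse implication I would invoke symmetry via the dual isogeny $\hat\phi\colon E'\to E$. It is defined over $\QQ$, and $\ker\hat\phi=\phi(E[\deg\phi])$ has order $\deg\phi$; this group is cyclic, because a cyclic subgroup of order $n$ inside $E[n]\cong(\ZZ/n\ZZ)^{2}$ is a direct summand (a primitive vector extends to a basis), so the quotient is cyclic of order $n$, and it is $\QQ$-rational, being the image under the $\QQ$-rational morphism $\phi$ of the $G_{\QQ}$-stable group $E[\deg\phi]$. Thus $\hat\phi$ satisfies the hypotheses of Lemma~\ref{2-adic Galois Images} with $v_{2}(\deg\hat\phi)=r$, and applying the implication already proven to $\hat\phi$ gives $\operatorname{-Id}\in\overline{\rho}_{E',32}(G_{\QQ})\Rightarrow\operatorname{-Id}\in\overline{\rho}_{E,32}(G_{\QQ})$, which finishes the proof.

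I expect the only delicate points to be the level bookkeeping in the first paragraph (keeping ``full preimage of $\overline{\rho}_{E,2^{m}}(G_{\QQ})$'' uniform across the regimes $m=0$, $m=1$, and $2\le m\le 5$, which is straightforward once phrased entirely via preimages rather than via the three explicit presentations in Lemma~\ref{2-adic Galois Images}) and the verification that $\hat\phi$ inherits the cyclic, $\QQ$-rational kernel hypothesis; neither is deep, but both should be stated with care.
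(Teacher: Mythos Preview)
Your proposal is correct and follows essentially the same approach as the paper: both invoke the centrality of $-\mathrm{Id}$ and then apply the transformation rule of Lemma~\ref{2-adic Galois Images} to the matrix $\begin{bmatrix}-1&0\\0&-1\end{bmatrix}$ at level $2^{r+5}$. The paper's proof is terser---it only writes out the forward implication and leaves the converse implicit---whereas you make the dual-isogeny argument for the reverse direction explicit, including the verification that $\ker\hat\phi$ is cyclic and $\QQ$-rational; this is a useful addition rather than a different method.
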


\begin{proof}

Suppose that $\operatorname{-Id}$ is an element of $\rho_{E,2^{\infty}}(G_{\QQ})$. Let $N$ be a non-negative integer. Then $\operatorname{-Id}$ is an element of $\overline{\rho}_{E,2^{N}}(G_{\QQ})$. Let $r$ be the non-negative integer such that $2^{r}$ is the greatest power of $2$ that divides the order of $\operatorname{Ker}(\phi)$. As $\operatorname{-Id}$ is in the center of $\operatorname{GL}\left(2, \ZZ / 2^{5+r} \ZZ\right)$, it does not matter what basis we use for $E\left[2^{5+r}\right]$. By Lemma \ref{2-adic Galois Images}, $\operatorname{-Id}$ is an element of $\overline{\rho}_{E',32}(G_{\QQ})$.
\end{proof}

\begin{corollary}\label{coprime isogeny-degree}

Let $E/\QQ$ and $E'/\QQ$ be elliptic curves. Suppose that $E$ is $\QQ$-isogenous to $E'$ by an isogeny of odd degree that is defined over $\QQ$ with a finite, cyclic kernel. Then $\overline{\rho}_{E,32}(G_{\QQ})$ is conjugate to $\overline{\rho}_{E',32}(G_{\QQ})$.
\end{corollary}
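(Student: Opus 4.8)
The plan is to deduce the corollary as the $r = 0$ special case of Lemma \ref{2-adic Galois Images}. Write $\phi \colon E \to E'$ for the given isogeny. Since $\deg\phi = |\Ker(\phi)|$ is odd, the largest power of $2$ dividing $|\Ker(\phi)|$ is $2^{0} = 1$, so in the notation of Lemma \ref{2-adic Galois Images} we have $r = 0$. Let $m$ be the non-negative integer such that $\rho_{E,2^{\infty}}(G_{\QQ})$ has level $2^{m}$; then $0 \le m \le 5$ by Theorem \ref{thm-rzb}, and the inequality $0 \le r \le m \le 5$ asserted by Lemma \ref{2-adic Galois Images} holds automatically here.

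Next I would simply substitute $r = 0$ into the conclusion of Lemma \ref{2-adic Galois Images}. When $r = 0$ the requirement that $2^{r}$ divide $C_{1}, \ldots, C_{s}$ is vacuous, so after a suitable change of basis we may take $\overline{\rho}_{E,32}(G_{\QQ})$ to be $\left\langle \smallmat{A_{1}}{C_{1}}{B_{1}}{D_{1}}, \ldots, \smallmat{A_{s}}{C_{s}}{B_{s}}{D_{s}} \right\rangle$ together with the extra generators dictated by $m$ (all of $\GL(2,\ZZ/32\ZZ)$ when $m = 0$; the six listed generators when $m = 1$; the four listed generators when $2 \le m \le 5$). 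Plugging $r = 0$ into the formula the lemma gives for $\overline{\rho}_{E',32}(G_{\QQ})$, each entry $\tfrac{C_{i}}{2^{r}}$ becomes $C_{i}$, each entry $2^{r}\cdot B_{i}$ becomes $B_{i}$, and each additional generator $\smallmat{1}{2^{m-r}}{0}{1}$, $\smallmat{1}{0}{2^{m+r}}{1}$ (respectively $\smallmat{1}{2^{1-r}}{0}{1}$, $\smallmat{1}{0}{2^{1+r}}{1}$ in the $m=1$ case) collapses to exactly the corresponding generator of $\overline{\rho}_{E,32}(G_{\QQ})$. Hence the two generating sets coincide, and $\overline{\rho}_{E,32}(G_{\QQ})$ is conjugate to $\overline{\rho}_{E',32}(G_{\QQ})$.

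There is essentially no obstacle here: the corollary is immediate once one observes $r = 0$. The only point worth a sentence of care is that the conjugating basis supplied by Lemma \ref{2-adic Galois Images} is built from $\phi$ and a chosen cyclic $\QQ$-rational $2$-power subgroup, but since $r = 0$ that subgroup is trivial and the statement is independent of these choices. As a cross-check I would also record the direct argument: the dual isogeny satisfies $\widehat{\phi}\circ\phi = [\deg\phi]$ on $E$, and $[\deg\phi]$ is an automorphism of $E[32]$ because $\deg\phi$ is odd; moreover $\Ker(\phi)$ has odd order, so $\phi$ is injective on $E[32]$ and therefore restricts to a $G_{\QQ}$-equivariant isomorphism $E[32] \xrightarrow{\ \sim\ } E'[32]$, whence the two mod-$32$ representations are isomorphic and their images conjugate.
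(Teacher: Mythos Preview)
Your proof is correct and takes essentially the same approach as the paper, which simply says ``Use Lemma \ref{2-adic Galois Images} with $r = 0$.'' You have spelled out the substitution in more detail and added a helpful independent cross-check via the dual isogeny, but the core argument is identical.
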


\begin{proof}

Use Lemma \ref{2-adic Galois Images} with $r = 0$.

\end{proof}

\begin{corollary}\label{2-adic Galois Images corollary 1}
Let $E/\QQ$ be an elliptic curve such that $E(\QQ)_{\text{tors}} = \left\langle Q_{2} \right\rangle \cong \ZZ / 2 \ZZ$ and $C_{2}(E) = 2$. Then $E$ is $2$-isogenous to $E / \left\langle Q_{2} \right\rangle$. The level of $\rho_{E,2^{\infty}}(G_{\QQ})$ is equal to $2^{m}$ where $1 \leq m \leq 5$. There is a basis $\left\{P_{2^{m}}, Q_{2^{m}}\right\}$ of $E[2^{m}]$ such that $\rho_{E,2^{\infty}}(G_{\QQ})$ is conjugate to the full lift of
$$\overline{\rho}_{E,2^{m}}(G_{\QQ}) = \left\langle
\begin{bmatrix}
A_{1} & C_{1} \\ B_{1} & D_{1}
\end{bmatrix},
\ldots,
\begin{bmatrix}
A_{s} & C_{s} \\ B_{s} & D_{s}
\end{bmatrix}
\right\rangle$$
where $C_{1}$, \ldots, $C_{s}$ are even.

\begin{enumerate}

\item If $m = 1$, then, $\overline{\rho}_{E,2}(G_{\QQ})$ is conjugate to $\left\langle \begin{bmatrix} 1 & 0 \\ 1 & 1 \end{bmatrix} \right\rangle$, and both $\overline{\rho}_{E,32}(G_{\QQ})$ and $\overline{\rho}_{E',32}(G_{\QQ})$ are conjugate to the full lift of $\left\langle \begin{bmatrix} 1 & 0 \\ 1 & 1 \end{bmatrix} \right\rangle$.
\item If $2 \leq m \leq 5$, then $\overline{\rho}_{E,64}(G_{\QQ})$ is conjugate to
$$\left\langle
\begin{bmatrix}
A_{1} & C_{1} \\ B_{1} & D_{1}
\end{bmatrix},
\ldots,
\begin{bmatrix}
A_{s} & C_{s} \\ B_{s} & D_{s}
\end{bmatrix},
\begin{bmatrix}
1+2^{m} & 0 \\ 0 & 1
\end{bmatrix},
\begin{bmatrix}
1 & 0 \\ 0 & 1+2^{m}
\end{bmatrix},
\begin{bmatrix}
1 & 2^{m} \\ 0 & 1
\end{bmatrix},
\begin{bmatrix}
1 & 0 \\ 2^{m} & 1
\end{bmatrix}
\right\rangle$$
and $\overline{\rho}_{E/\left\langle Q_{2} \right\rangle,32}(G_{\QQ})$ is conjugate to
$$\left\langle
\begin{bmatrix}
A_{1} & \frac{C_{1}}{2} \\ 2 \cdot B_{1} & D_{1}
\end{bmatrix},
\ldots,
\begin{bmatrix}
A_{s} & \frac{C_{s}}{2} \\ 2 \cdot B_{s} & D_{s}
\end{bmatrix},
\begin{bmatrix}
1+2^{m} & 0 \\ 0 & 1
\end{bmatrix},
\begin{bmatrix}
1 & 0 \\ 0 & 1+2^{m}
\end{bmatrix},
\begin{bmatrix}
1 & 2^{m-1} \\ 0 & 1
\end{bmatrix},
\begin{bmatrix}
1 & 0 \\ 2^{m+1} & 1
\end{bmatrix}
\right\rangle$$
\end{enumerate}
\end{corollary}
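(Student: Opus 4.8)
The plan is to derive this as a direct consequence of Lemma \ref{2-adic Galois Images} applied to the $2$-isogeny with kernel $\langle Q_2\rangle$. Since $Q_2\in E(\QQ)$ has order $2$, the subgroup $\langle Q_2\rangle$ is finite, cyclic and $\QQ$-rational of order $2$, so Lemma \ref{Q-rational} produces a unique elliptic curve $E/\langle Q_2\rangle$ and an isogeny $\phi\colon E\to E/\langle Q_2\rangle$ with kernel $\langle Q_2\rangle$; hence $E$ is $2$-isogenous to $E':=E/\langle Q_2\rangle$. Applying Lemma \ref{2-adic Galois Images} to $\phi$, the integer $r$ (the largest power of $2$ dividing $|\operatorname{Ker}(\phi)|=2$) equals $1$, so the lemma yields $0\le 1\le m\le 5$ and a basis of $E[2^m]$ in which $\overline{\rho}_{E,2^m}(G_\QQ)$ has the stated shape with $2\mid C_1,\dots,C_s$; together with the definition of the level this gives the full-lift description of $\rho_{E,2^\infty}(G_\QQ)$. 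To see $m\ge 1$ I would rule out $m=0$: that would force $\overline{\rho}_{E,2}(G_\QQ)$ to be all of $\operatorname{GL}(2,\ZZ/2\ZZ)$, impossible since the rational point $Q_2\in E(\QQ)[2]$ forces $\overline{\rho}_{E,2}(G_\QQ)$ to fix a nonzero vector. Thus $1\le m\le 5$.

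For the range $2\le m\le 5$ I would simply substitute $r=1$ into the third bullet of Lemma \ref{2-adic Galois Images} and use $2^{m-r}=2^{m-1}$, $2^{m+r}=2^{m+1}$, $C_i/2^r=C_i/2$, $2^rB_i=2B_i$. This gives verbatim the claimed group for $\overline{\rho}_{E,2^{r+5}}(G_\QQ)=\overline{\rho}_{E,64}(G_\QQ)$ and, since $E'=E/\langle Q_2\rangle$, the claimed group for $\overline{\rho}_{E/\langle Q_2\rangle,32}(G_\QQ)$.

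The case $m=1$ needs one extra step to obtain the clean formulation in the statement. Here the generators $\begin{bmatrix}A_i&C_i\\B_i&D_i\end{bmatrix}$ lie in $\operatorname{GL}(2,\ZZ/2\ZZ)$ and, because each $C_i$ is even, are lower triangular; hence $\overline{\rho}_{E,2}(G_\QQ)$ is a lower-triangular subgroup of $\operatorname{GL}(2,\FF_2)\cong S_3$. The hypothesis $E(\QQ)_{\text{tors}}\cong\ZZ/2\ZZ$ says $\overline{\rho}_{E,2}(G_\QQ)$ fixes exactly one nonzero vector, so it has order $2$, and the only lower-triangular order-$2$ subgroup is $\langle\begin{bmatrix}1&0\\1&1\end{bmatrix}\rangle$. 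Since $2^1$ is the level of $\rho_{E,2^\infty}(G_\QQ)$, $\overline{\rho}_{E,32}(G_\QQ)$ is the full lift of this group. For $E'$, I would plug $r=1$ and $(A_1,B_1,C_1,D_1)=(1,1,0,1)$ into the second bullet of Lemma \ref{2-adic Galois Images}, obtaining $\overline{\rho}_{E',32}(G_\QQ)$ conjugate to $\left\langle\begin{bmatrix}1&0\\2&1\end{bmatrix},\begin{bmatrix}1&1\\0&1\end{bmatrix},\begin{bmatrix}1&0\\4&1\end{bmatrix},\begin{bmatrix}3&0\\0&1\end{bmatrix},\begin{bmatrix}5&0\\0&1\end{bmatrix},\begin{bmatrix}1&0\\0&3\end{bmatrix},\begin{bmatrix}1&0\\0&5\end{bmatrix}\right\rangle$; since $\begin{bmatrix}1&0\\4&1\end{bmatrix}=\begin{bmatrix}1&0\\2&1\end{bmatrix}^2$ is redundant, this is precisely the full lift to level $32$ of $\langle\begin{bmatrix}1&1\\0&1\end{bmatrix}\rangle$ (via Lemma \ref{lifting with kernel}), which is conjugate by $\begin{bmatrix}0&1\\1&0\end{bmatrix}$ to the full lift of $\langle\begin{bmatrix}1&0\\1&1\end{bmatrix}\rangle$.

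Because all the structural work is carried by Lemma \ref{2-adic Galois Images}, there is no genuine obstacle here; the only delicate points are respecting the RZB right-action/transpose convention and checking that the $m=1$ output really does collapse, after discarding redundant generators, to the full lift of $\langle\begin{bmatrix}1&0\\1&1\end{bmatrix}\rangle$.
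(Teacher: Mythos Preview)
Your proposal is correct and follows essentially the same approach as the paper: both proofs specialize Lemma~\ref{2-adic Galois Images} with $r=1$, handle the $m=1$ case by identifying $\overline{\rho}_{E,2}(G_\QQ)$ from the torsion hypothesis and then verifying that the resulting mod-$32$ image for $E'$ is conjugate (via the swap matrix) to that of $E$, and read off the $2\le m\le 5$ case directly. The paper spells out the intermediate appeals to Lemmas~\ref{lifting with kernel}, \ref{kernel of pi 1}, \ref{kernel of pi 2}, and \ref{Borel 1} a bit more explicitly, and uses the conjugating matrix $\left[\begin{smallmatrix}0&-1\\-1&0\end{smallmatrix}\right]$ rather than $\left[\begin{smallmatrix}0&1\\1&0\end{smallmatrix}\right]$, but these are cosmetic differences.
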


\begin{proof}
In our given situation, the $2$-adic subgraph of the isogeny graph associated to the $\QQ$-isogeny class of $E$ is of $L_{2}(2)$ type:
\begin{center}
\begin{tikzcd}
E \arrow[r, no head, "2"] & E / \left\langle Q_{2} \right\rangle
\end{tikzcd}
\end{center}
where the edge represents a $\QQ$-isogeny of degree $2$. Moreover, $2$ divides the order of $\operatorname{Ker}(\phi)$ and $4$ does not divide the order of $\operatorname{Ker}(\phi)$. We break up the proof into cases:
\begin{enumerate}
    \item If $\rho_{E,2^{\infty}}(G_{\QQ})$ is a group of level $2$, then $\rho_{E,2^{\infty}}(G_{\QQ})$ is the full lift of $\overline{\rho}_{E,2}(G_{\QQ})$. As $E(\QQ)_{\text{tors}} \cong \ZZ / 2 \ZZ$, it is clear to see that $\overline{\rho}_{E,2}(G_{\QQ})$ is conjugate to $\left\langle \begin{bmatrix}
    1 & 0 \\ 1 & 1
    \end{bmatrix} \right\rangle$. By Lemma \ref{lifting with kernel} and Lemma \ref{kernel of pi 2}, $\overline{\rho}_{E,64}(G_{\QQ})$ is conjugate to
    $$\left\langle
    \begin{bmatrix} 1 & 0 \\ 1 & 1 \end{bmatrix},
    \begin{bmatrix} 1 & 2 \\ 0 & 1 \end{bmatrix},
    \begin{bmatrix} 1 & 0 \\ 2 & 1 \end{bmatrix},
    \begin{bmatrix} 3 & 0 \\ 0 & 1 \end{bmatrix},
    \begin{bmatrix} 1 & 0 \\ 0 & 3 \end{bmatrix},
    \begin{bmatrix} 5 & 0 \\ 0 & 1 \end{bmatrix},
    \begin{bmatrix} 1 & 0 \\ 0 & 5 \end{bmatrix}
    \right\rangle.$$
    By Lemma \ref{2-adic Galois Images}, $\overline{\rho}_{E/\left\langle Q_{2} \right\rangle,32}(G_{\QQ})$ is conjugate to
    $$\left\langle
    \begin{bmatrix} 1 & 0 \\ 2 & 1 \end{bmatrix},
    \begin{bmatrix} 1 & 1 \\ 0 & 1 \end{bmatrix},
    \begin{bmatrix} 1 & 0 \\ 4 & 1 \end{bmatrix},
    \begin{bmatrix} 3 & 0 \\ 0 & 1 \end{bmatrix},
    \begin{bmatrix} 1 & 0 \\ 0 & 3 \end{bmatrix},
    \begin{bmatrix} 5 & 0 \\ 0 & 1 \end{bmatrix},
    \begin{bmatrix} 1 & 0 \\ 0 & 5 \end{bmatrix}
    \right\rangle$$
    which is conjugate to $\overline{\rho}_{E,32}(G_{\QQ})$ over $\operatorname{GL}(2, \ZZ / 32 \ZZ)$ by $\begin{bmatrix}
    0 & -1 \\ -1 & 0
    \end{bmatrix}$.
    \item We now move on to the case that $2 \leq m \leq 5$. By Lemma \ref{Borel 1}, there is a basis of $E[2^{m}]$, such that $\overline{\rho}_{E,2^{m}}(G_{\QQ})$ is conjugate to
    $$\left\langle
    \begin{bmatrix}
    A_{1} & C_{1} \\ B_{1} & D_{1}
    \end{bmatrix},
    \ldots,
    \begin{bmatrix}
    A_{s} & C_{s} \\ B_{s} & D_{s}
    \end{bmatrix}
    \right\rangle$$
    such that $C_{1}$, \ldots, $C_{s}$ are even. By Lemma \ref{lifting with kernel} and Lemma \ref{kernel of pi 1}, $\overline{\rho}_{E,64}(G_{\QQ})$ is conjugate to
    $$\left\langle
    \begin{bmatrix}
    A_{1} & C_{1} \\ B_{1} & D_{1}
    \end{bmatrix},
    \ldots,
    \begin{bmatrix}
    A_{s} & C_{s} \\ B_{s} & D_{s}
    \end{bmatrix},
    \begin{bmatrix}
    1 + 2^{m} & 0 \\ 0 & 1
    \end{bmatrix},
    \begin{bmatrix}
    1 & 0 \\ 0 & 1 + 2^{m}
    \end{bmatrix},
    \begin{bmatrix}
    1 & 2^{m} \\ 0 & 1
    \end{bmatrix},
    \begin{bmatrix}
    1 & 0 \\ 2^{m} & 1
    \end{bmatrix}
    \right\rangle.$$
    By Lemma \ref{2-adic Galois Images}, $\overline{\rho}_{E/\left\langle Q_{2} \right\rangle,32}(G_{\QQ})$ is conjugate to
    $$\left\langle
    \begin{bmatrix}
    A_{1} & \frac{C_{1}}{2} \\ 2 \cdot B_{1} & D_{1}
    \end{bmatrix},
    \ldots,
    \begin{bmatrix}
    A_{s} & \frac{C_{s}}{2} \\ 2 \cdot B_{s} & D_{s}
    \end{bmatrix},
    \begin{bmatrix}
    1 + 2^{m} & 0 \\ 0 & 1
    \end{bmatrix},
    \begin{bmatrix}
    1 & 0 \\ 0 & 1 + 2^{m}
    \end{bmatrix},
    \begin{bmatrix}
    1 & 2^{m-1} \\ 0 & 1
    \end{bmatrix},
    \begin{bmatrix}
    1 & 0 \\ 2^{m+1} & 1
    \end{bmatrix}
    \right\rangle.$$
\end{enumerate}
\end{proof}

\begin{corollary}\label{2-adic Galois Images corollary 2}
Let $E/\QQ$ be an elliptic curve such that $E[2](\QQ)_{\text{tors}} = \left\langle P_{2}, Q_{2} \right\rangle \cong \ZZ / 2 \ZZ \times \ZZ / 2 \ZZ$ and $C_{2}(E) = 4$. Then $E$ is $2$-isogenous to the elliptic curves $E/\left\langle Q_{2} \right\rangle$, $E/\left\langle P_{2} \right\rangle$, and $E/\left\langle P_{2}+Q_{2} \right\rangle$. The level of $\rho_{E,2^{\infty}}(G_{\QQ})$ is equal to $2^{m}$ where $1 \leq m \leq 5$. There is a basis $\left\{P_{2^{m}}, Q_{2^{m}}\right\}$ of $E[2^{m}]$ such that $\rho_{E,2^{\infty}}(G_{\QQ})$ is conjugate to the full lift of
$$\overline{\rho}_{E,2^{m}}(G_{\QQ}) = \left\langle \begin{bmatrix} A_{1} & C_{1} \\ B_{1} & D_{1} \end{bmatrix}, \ldots, \begin{bmatrix} A_{s} & C_{s} \\ B_{s} & D_{s} \end{bmatrix} \right\rangle$$
such that $B_{1}$, \ldots, $B_{s}$, $C_{1}$, \ldots, $C_{s}$ are even.
\begin{enumerate}
    \item If $m = 1$, then $\rho_{E,2^{\infty}}(G_{\QQ})$ is the full lift of the trivial subgroup of $\operatorname{GL}(2, \ZZ / 2 \ZZ)$. Moreover, $\rho_{E/\left\langle Q_{2} \right\rangle, 2^{\infty}}(G_{\QQ})$, $\rho_{E/\left\langle P_{2} \right\rangle, 2^{\infty}}(G_{\QQ})$, and $\rho_{E/\left\langle P_{2}+Q_{2} \right\rangle, 2^{\infty}}(G_{\QQ})$ are all conjugate to the full lift of
    $$\left\{\begin{bmatrix} \ast & \ast \\ 0 & \ast \end{bmatrix} \right\} = \left\langle \begin{bmatrix} 3 & 0 \\ 0 & 1 \end{bmatrix}, \begin{bmatrix} 1 & 1 \\ 0 & 1 \end{bmatrix}, \operatorname{-Id} \right\rangle \subseteq \operatorname{GL}(2, \ZZ / 4 \ZZ)$$
    the ``generic'' group that generates a $4$-isogeny.
    \item If $2 \leq m \leq 5$, then $\overline{\rho}_{E,64}(G_{\QQ})$ is conjugate to
    $$\left\langle \begin{bmatrix} A_{1} & C_{1} \\ B_{1} & D_{1} \end{bmatrix}, \ldots, \begin{bmatrix} A_{s} & C_{s} \\ B_{s} & D_{s} \end{bmatrix},
    \begin{bmatrix} 1+2^{m} & 0 \\ 0 & 1 \end{bmatrix},
    \begin{bmatrix} 1 & 0 \\ 0 & 1+2^{m} \end{bmatrix},
    \begin{bmatrix} 1 & 2^{m} \\ 0 & 1 \end{bmatrix},
    \begin{bmatrix} 1 & 0 \\ 2^{m} & 1 \end{bmatrix} \right\rangle$$
    and moreover,
    \begin{itemize}
        \item $\overline{\rho}_{E/\left\langle Q_{2} \right\rangle, 32}(G_{\QQ})$ is conjugate to
        $$\left\langle \begin{bmatrix} A_{1} & \frac{C_{1}}{2} \\ 2 \cdot B_{1} & D_{1} \end{bmatrix}, \ldots, \begin{bmatrix} A_{s} & \frac{C_{s}}{2} \\ 2 \cdot B_{s} & D_{s} \end{bmatrix},
    \begin{bmatrix} 1+2^{m} & 0 \\ 0 & 1 \end{bmatrix},
    \begin{bmatrix} 1 & 0 \\ 0 & 1+2^{m} \end{bmatrix},
    \begin{bmatrix} 1 & 2^{m-1} \\ 0 & 1 \end{bmatrix},
    \begin{bmatrix} 1 & 0 \\ 2^{m+1} & 1 \end{bmatrix}
    \right\rangle$$
    
        \item $\overline{\rho}_{E/\left\langle P_{2} \right\rangle, 32}(G_{\QQ})$ is conjugate to
        $$\left\langle \begin{bmatrix} D_{1} & \frac{B_{1}}{2} \\ 2 \cdot C_{1} & A_{1} \end{bmatrix}, \ldots, \begin{bmatrix} D_{s} & \frac{B_{s}}{2} \\ 2 \cdot C_{s} & A_{s} \end{bmatrix},
    \begin{bmatrix} 1+2^{m} & 0 \\ 0 & 1 \end{bmatrix},
    \begin{bmatrix} 1 & 0 \\ 0 & 1+2^{m} \end{bmatrix},
    \begin{bmatrix} 1 & 2^{m-1} \\ 0 & 1 \end{bmatrix},
    \begin{bmatrix} 1 & 0 \\ 2^{m+1} & 1 \end{bmatrix}
    \right\rangle$$
    \item and $\overline{\rho}_{E/\left\langle P_{2}+Q_{2} \right\rangle, 32}(G_{\QQ})$ is conjugate to
    $$\left\langle \begin{bmatrix} A_{i}-B_{i} & \frac{A_{i}+C_{i}-B_{i}-D_{i}}{2} \\ 2 \cdot B_{i} & B_{i}+D_{i} \end{bmatrix},
    \begin{bmatrix} 1+2^{m} & 0 \\ 0 & 1 \end{bmatrix},
    \begin{bmatrix} 1 & 0 \\ 0 & 1+2^{m} \end{bmatrix},
    \begin{bmatrix} 1 & 2^{m-1} \\ 0 & 1 \end{bmatrix},
    \begin{bmatrix} 1 & 0 \\ 2^{m+1} & 1 \end{bmatrix}
    \right\rangle$$
    for $i \in \{1, \ldots, s\}$.
    \end{itemize}
\end{enumerate}
\end{corollary}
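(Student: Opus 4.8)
The plan is to follow the template of the proof of Corollary \ref{2-adic Galois Images corollary 1}, invoking Lemma \ref{2-adic Galois Images} once for each of the three $2$-isogenies out of $E$. First I would record the geometry. Since $P_{2},Q_{2}\in E(\QQ)$, each of $\left\langle Q_{2}\right\rangle$, $\left\langle P_{2}\right\rangle$, $\left\langle P_{2}+Q_{2}\right\rangle$ is a finite, cyclic, $\QQ$-rational subgroup, so Lemma \ref{Q-rational} produces the three quotient curves together with $2$-isogenies from $E$; the hypothesis $C_{2}(E)=4$ guarantees these are the only nontrivial cyclic $\QQ$-rational $2$-power subgroups of $E$ (in particular $E$ has no cyclic $\QQ$-rational subgroup of order $4$), so each of the three isogenies has $r=1$ in the notation of Lemma \ref{2-adic Galois Images}. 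Because $E[2]\subseteq E(\QQ)$, the representation $\overline{\rho}_{E,2}(G_{\QQ})$ is trivial; hence $\rho_{E,2^{\infty}}(G_{\QQ})$ cannot have level $1$ (that would force $\overline{\rho}_{E,2}(G_{\QQ})$ to be all of $\operatorname{GL}(2,\ZZ/2\ZZ)$), so its level is $2^{m}$ with $1\le m\le 5$ by Theorem \ref{thm-rzb}. Triviality of $\overline{\rho}_{E,2}(G_{\QQ})$ also shows that, in any basis $\{P_{2^{m}},Q_{2^{m}}\}$ of $E[2^{m}]$ lifting $\{P_{2},Q_{2}\}$ as in Remark \ref{remark generators of Z/2^NZ}, every generator $\begin{bmatrix}A_{i}&C_{i}\\B_{i}&D_{i}\end{bmatrix}$ of $\overline{\rho}_{E,2^{m}}(G_{\QQ})$ reduces to $\operatorname{Id}$ modulo $2$, i.e.\ $A_{i},D_{i}$ are odd and $B_{i},C_{i}$ are even, and by the definition of level $\rho_{E,2^{\infty}}(G_{\QQ})$ is the full lift of this group.

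For the case $m=1$, triviality of $\overline{\rho}_{E,2}(G_{\QQ})$ forces $\rho_{E,2^{\infty}}(G_{\QQ})$ to be the full lift of the trivial subgroup. For each isogeny I would apply Lemma \ref{2-adic Galois Images} with $m=1$, $r=1$ and an empty list of mod-$2$ generators; the lemma then outputs $\overline{\rho}_{E',32}(G_{\QQ})$ as the group generated by $\begin{bmatrix}1&1\\0&1\end{bmatrix},\begin{bmatrix}1&0\\4&1\end{bmatrix},\begin{bmatrix}3&0\\0&1\end{bmatrix},\begin{bmatrix}5&0\\0&1\end{bmatrix},\begin{bmatrix}1&0\\0&3\end{bmatrix},\begin{bmatrix}1&0\\0&5\end{bmatrix}$, and Lemma \ref{lifting with kernel} shows $\rho_{E',2^{\infty}}(G_{\QQ})$ is its full lift. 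A short computation (as in the $L_{2}(2)$ case) shows this group reduces modulo $4$ to the order-$16$ Borel $\left\langle\begin{bmatrix}3&0\\0&1\end{bmatrix},\begin{bmatrix}1&1\\0&1\end{bmatrix},\operatorname{-Id}\right\rangle$, and the output is the same for all three isogenies (the changes of basis among $\left\langle Q_{2}\right\rangle,\left\langle P_{2}\right\rangle,\left\langle P_{2}+Q_{2}\right\rangle$ only permute generators of a trivial mod-$2$ group), giving the asserted ``generic $4$-isogeny'' image.

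For $2\le m\le 5$, I would first obtain $\overline{\rho}_{E,64}(G_{\QQ})$ by feeding the normalized generators (with $B_{i},C_{i}$ even) into Lemma \ref{lifting with kernel} together with Lemma \ref{kernel of pi 1}, which yields the stated form. Then for each quotient I apply Lemma \ref{2-adic Galois Images} with the basis of $E[2^{m}]$ adapted to that isogeny: for $E/\left\langle Q_{2}\right\rangle$ use $\{P_{2^{m}},Q_{2^{m}}\}$ directly (the kernel is the span of the reduction of the second vector and $2\mid C_{i}$ automatically), obtaining generators $\begin{bmatrix}A_{i}&C_{i}/2\\2B_{i}&D_{i}\end{bmatrix}$; for $E/\left\langle P_{2}\right\rangle$ pass to the basis $\{Q_{2^{m}},P_{2^{m}}\}$, in which each generator becomes $\begin{bmatrix}D_{i}&B_{i}\\C_{i}&A_{i}\end{bmatrix}$ and the lemma outputs $\begin{bmatrix}D_{i}&B_{i}/2\\2C_{i}&A_{i}\end{bmatrix}$; for $E/\left\langle P_{2}+Q_{2}\right\rangle$ use the basis $\{P_{2^{m}},P_{2^{m}}+Q_{2^{m}}\}$, in which each generator becomes $\begin{bmatrix}A_{i}-B_{i}&A_{i}+C_{i}-B_{i}-D_{i}\\B_{i}&B_{i}+D_{i}\end{bmatrix}$ and the lemma outputs $\begin{bmatrix}A_{i}-B_{i}&\frac{A_{i}+C_{i}-B_{i}-D_{i}}{2}\\2B_{i}&B_{i}+D_{i}\end{bmatrix}$. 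In all three cases the four ``congruence'' matrices appended by Lemma \ref{2-adic Galois Images} are $\begin{bmatrix}1+2^{m}&0\\0&1\end{bmatrix},\begin{bmatrix}1&0\\0&1+2^{m}\end{bmatrix},\begin{bmatrix}1&2^{m-1}\\0&1\end{bmatrix},\begin{bmatrix}1&0\\2^{m+1}&1\end{bmatrix}$, since with $r=1$ these depend only on $m$ and not on the isogeny; this matches the statement. The two routine points are that all half-entries are integers (immediate: $A_{i},D_{i}$ odd and $B_{i},C_{i}$ even give $2\mid B_{i}$, $2\mid C_{i}$, and $2\mid A_{i}+C_{i}-B_{i}-D_{i}$) and that the listed generating sets are exactly the ones Lemma \ref{2-adic Galois Images} returns under each change of basis.

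The main obstacle is the bookkeeping for $E/\left\langle P_{2}+Q_{2}\right\rangle$: one must track the paper's transpose convention carefully when passing to the basis $\{P_{2^{m}},P_{2^{m}}+Q_{2^{m}}\}$ and then through the transformation $\begin{bmatrix}A&C\\B&D\end{bmatrix}\mapsto\begin{bmatrix}A&C/2\\2B&D\end{bmatrix}$ of Lemma \ref{2-adic Galois Images}, and one must confirm that the four appended congruence matrices really are independent of the chosen adapted basis when $r=1$. Everything else is a direct appeal to Lemma \ref{Q-rational}, Lemma \ref{lifting with kernel}, Lemma \ref{kernel of pi 1}, Lemma \ref{kernel of pi 2}, Lemma \ref{2-adic Galois Images}, and Theorem \ref{thm-rzb}.
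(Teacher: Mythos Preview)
Your proposal is correct and follows essentially the same approach as the paper: both proofs invoke Lemma \ref{2-adic Galois Images} once for each of the three $2$-isogenies, handle $m=1$ via Lemma \ref{kernel of pi 2} and the $m\ge 2$ cases via Lemma \ref{kernel of pi 1}, and use exactly the same three adapted bases $\{P_{2^{m}},Q_{2^{m}}\}$, $\{Q_{2^{m}},P_{2^{m}}\}$, and $\{P_{2^{m}},P_{2^{m}}+Q_{2^{m}}\}$ for the respective quotients. Your flagged bookkeeping point about the basis change for $E/\langle P_{2}+Q_{2}\rangle$ is precisely the one nontrivial computation the paper also carries out explicitly.
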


\begin{proof}
In our given situation, the $2$-adic subgraph of the isogeny graph associated to the $\QQ$-isogeny class of $E$ is of $T_{4}$ type
\begin{center}
\begin{tikzcd}
                                   & E/\left\langle P_{2} + Q_{2} \right\rangle        &                                    \\
                                   & E \arrow[u, no head, "2"] \arrow[ld, no head, "2"'] \arrow[rd, no head, "2"] &                                    \\
E/\left\langle P_{2} \right\rangle &                                                   & E/\left\langle Q_{2} \right\rangle
\end{tikzcd}
\end{center}
The order of $\operatorname{Ker}(\phi)$ is divisible by $2$ and not divisible by $4$. We break up the proof again into cases.

\begin{enumerate}
    \item If $\rho_{E,2^{\infty}}(G_{\QQ})$ is a group of level $2$, then $\overline{\rho}_{E,2}(G_{\QQ}) = \left\{\operatorname{Id}\right\}$ and $\overline{\rho}_{E,64}(G_{\QQ})$ is the full lift of $\overline{\rho}_{E,2}(G_{\QQ})$. By Lemma \ref{lifting with kernel} and Lemma \ref{kernel of pi 2}, $\overline{\rho}_{E,64}(G_{\QQ})$ is conjugate to
    $$\left\langle
    \begin{bmatrix} 3 & 0 \\ 0 & 1 \end{bmatrix},
    \begin{bmatrix} 5 & 0 \\ 0 & 1 \end{bmatrix},
    \begin{bmatrix} 1 & 0 \\ 0 & 3 \end{bmatrix},
    \begin{bmatrix} 1 & 0 \\ 0 & 5 \end{bmatrix},
    \begin{bmatrix} 1 & 2 \\ 0 & 1 \end{bmatrix},
    \begin{bmatrix} 1 & 0 \\ 2 & 1 \end{bmatrix}
    \right\rangle.$$
    By Lemma \ref{2-adic Galois Images}, $\overline{\rho}_{E/\left\langle P_{2} \right\rangle,32}(G_{\QQ})$, $\overline{\rho}_{E/\left\langle Q_{2} \right\rangle,32}(G_{\QQ})$, and $\overline{\rho}_{E/\left\langle P_{2} + Q_{2} \right\rangle,32}(G_{\QQ})$ are all conjugate to
    $$\left\langle
    \begin{bmatrix} 3 & 0 \\ 0 & 1 \end{bmatrix},
    \begin{bmatrix} 5 & 0 \\ 0 & 1 \end{bmatrix},
    \begin{bmatrix} 1 & 0 \\ 0 & 3 \end{bmatrix},
    \begin{bmatrix} 1 & 0 \\ 0 & 5 \end{bmatrix},
    \begin{bmatrix} 1 & 1 \\ 0 & 1 \end{bmatrix},
    \begin{bmatrix} 1 & 0 \\ 4 & 1 \end{bmatrix}
    \right\rangle$$
    which is conjugate to
    $$\left\langle
    \begin{bmatrix} 3 & 0 \\ 0 & 1 \end{bmatrix},
    \begin{bmatrix} 1 & 1 \\ 0 & 1 \end{bmatrix},
    \operatorname{-Id},
    \begin{bmatrix} 1 & 4 \\ 0 & 1 \end{bmatrix},
    \begin{bmatrix} 1 & 0 \\ 4 & 1 \end{bmatrix},
    \begin{bmatrix} 5 & 0 \\ 0 & 1 \end{bmatrix},
    \begin{bmatrix} 1 & 0 \\ 0 & 5 \end{bmatrix}
    \right\rangle,$$
    the full lift of $\left\langle
    \begin{bmatrix} 3 & 0 \\ 0 & 1 \end{bmatrix},
    \begin{bmatrix} 1 & 1 \\ 0 & 1 \end{bmatrix},
    \operatorname{-Id}
    \right\rangle \subseteq \operatorname{GL}(2, \ZZ / 4 \ZZ)$ to level $32$.
    
    \item Now suppose that $2 \leq m \leq 5$. 
    \begin{itemize}
    \item We use the basis $E[2] = \left\langle Q_{2}, P_{2} \right\rangle$ and $E[2^{m}] = \left\langle Q_{2^{m}}, P_{2^{m}} \right\rangle$. By Lemma \ref{Borel 1}, $\overline{\rho}_{E,2^{m}}(G_{\QQ})$ is conjugate to a subgroup of $\operatorname{GL}(2, \ZZ / 2^{m} \ZZ)$ of the form
    $$\left\langle
    \begin{bmatrix} A_{1} & C_{1} \\ B_{1} & D_{1} \end{bmatrix},
    \ldots,
    \begin{bmatrix} A_{s} & C_{s} \\ B_{s} & D_{s} \end{bmatrix}
    \right\rangle$$
    such that $C_{1}$, \ldots, $C_{s}$ are even. By Lemma \ref{lifting with kernel} and Lemma \ref{kernel of pi 1}, $\overline{\rho}_{E,64}(G_{\QQ})$ is conjugate to
    $$\left\langle
    \begin{bmatrix} A_{1} & C_{1} \\ B_{1} & D_{1} \end{bmatrix},
    \ldots,
    \begin{bmatrix} A_{s} & C_{s} \\ B_{s} & D_{s} \end{bmatrix},
    \begin{bmatrix} 1+2^{m} & 0 \\ 0 & 1 \end{bmatrix},
    \begin{bmatrix} 1 & 0 \\ 0 & 1+2^{m} \end{bmatrix},
    \begin{bmatrix} 1 & 2^{m} \\ 0 & 1 \end{bmatrix},
    \begin{bmatrix} 1 & 0 \\ 2^{m} & 1 \end{bmatrix}
    \right\rangle.$$
    By Lemma \ref{2-adic Galois Images}, $\overline{\rho}_{E/\left\langle Q_{2} \right\rangle, 32}(G_{\QQ})$ is conjugate to
    $$\left\langle
    \begin{bmatrix} A_{1} & \frac{C_{1}}{2} \\ 2 \cdot B_{1} & D_{1} \end{bmatrix},
    \ldots,
    \begin{bmatrix} A_{s} & \frac{C_{s}}{2} \\ 2 \cdot B_{s} & D_{s} \end{bmatrix},
    \begin{bmatrix} 1+2^{m} & 0 \\ 0 & 1 \end{bmatrix},
    \begin{bmatrix} 1 & 0 \\ 0 & 1+2^{m} \end{bmatrix},
    \begin{bmatrix} 1 & 2^{m-1} \\ 0 & 1 \end{bmatrix},
    \begin{bmatrix} 1 & 0 \\ 2^{m+1} & 1 \end{bmatrix}
    \right\rangle.$$
    \item Using the basis $E[2] = \left\langle Q_{2}, P_{2} \right\rangle$ and $E[2^{m}] = \left\langle Q_{2^{m}}, P_{2^{m}} \right\rangle$, $\overline{\rho}_{E,2^{m}}(G_{\QQ})$ is conjugate to
    $$\left\langle
    \begin{bmatrix} D_{1} & B_{1} \\ C_{1} & A_{1} \end{bmatrix},
    \ldots,
    \begin{bmatrix} D_{s} & B_{s} \\ C_{s} & A_{s} \end{bmatrix}
    \right\rangle$$
    and $B_{1}$, \ldots, $B_{s}$ are even. By Lemma \ref{lifting with kernel} and Lemma \ref{kernel of pi 1}, $\overline{\rho}_{E,64}(G_{\QQ})$ is conjugate to
    $$\left\langle
    \begin{bmatrix} D_{1} & B_{1} \\ C_{1} & A_{1} \end{bmatrix},
    \ldots,
    \begin{bmatrix} D_{s} & B_{s} \\ C_{s} & A_{s} \end{bmatrix},
    \begin{bmatrix} 1+2^{m} & 0 \\ 0 & 1 \end{bmatrix},
    \begin{bmatrix} 1 & 0 \\ 0 & 1+2^{m} \end{bmatrix},
    \begin{bmatrix} 1 & 2^{m} \\ 0 & 1 \end{bmatrix},
    \begin{bmatrix} 1 & 0 \\ 2^{m} & 1 \end{bmatrix}
    \right\rangle.$$
    By Lemma \ref{2-adic Galois Images}, $\overline{\rho}_{E/\left\langle P_{2} \right\rangle, 32}(G_{\QQ})$ is conjugate to
    $$\left\langle
    \begin{bmatrix} D_{1} & \frac{B_{1}}{2} \\ 2 \cdot C_{1} & A_{1} \end{bmatrix},
    \ldots,
    \begin{bmatrix} D_{s} & \frac{B_{s}}{2} \\ 2 \cdot C_{s} & A_{s} \end{bmatrix},
    \begin{bmatrix} 1+2^{m} & 0 \\ 0 & 1 \end{bmatrix},
    \begin{bmatrix} 1 & 0 \\ 0 & 1+2^{m} \end{bmatrix},
    \begin{bmatrix} 1 & 2^{m-1} \\ 0 & 1 \end{bmatrix},
    \begin{bmatrix} 1 & 0 \\ 2^{m+1} & 1 \end{bmatrix}
    \right\rangle.$$
    
    \item Using the basis $E[2] = \left\langle P_{2}, P_{2}+Q_{2} \right\rangle$ and $E[2^{m}] = \left\langle P_{2^{m}}, P_{2^{m}}+Q_{2^{m}} \right\rangle$, $\overline{\rho}_{E,2^{m}}(G_{\QQ})$ is conjugate to
    $$\left\langle
    \begin{bmatrix} A_{1}-B_{1} & A_{1}+C_{1}-B_{1}-D_{1} \\ B_{1} & B_{1}+D_{1} \end{bmatrix},
    \ldots,
    \begin{bmatrix} A_{s}-B_{s} & A_{s}+C_{s}-B_{s}-D_{s} \\ B_{s} & B_{s}+D_{s} \end{bmatrix}
    \right\rangle.$$
    As $B_{1}$, \ldots, $B_{s}$, $C_{1}$, \ldots, $C_{s}$ are all even, $A_{1}$, \ldots, $A_{s}$, $D_{1}$, \ldots, $D_{s}$ are all odd. By Lemma \ref{lifting with kernel} and Lemma \ref{kernel of pi 1}, $\overline{\rho}_{E,64}(G_{\QQ})$ is conjugate to
    $$\left\langle
    \begin{bmatrix} A_{i}-B_{i} & A_{i}+C_{i}-B_{i}-D_{i} \\ B_{i} & B_{i}+D_{i} \end{bmatrix},
    \begin{bmatrix} 1 + 2^{m} & 0 \\ 0 & 1 \end{bmatrix},
    \begin{bmatrix} 1 & 0 \\ 0 & 1+2^{m} \end{bmatrix},
    \begin{bmatrix} 1 & 2^{m} \\ 0 & 1 \end{bmatrix},
    \begin{bmatrix} 1 & 0 \\ 2^{m} & 1 \end{bmatrix}
    \right\rangle$$
    for $i \in \{1, \ldots, s\}$. By Lemma \ref{2-adic Galois Images}, $\overline{\rho}_{E/\left\langle P_{2}+Q_{2} \right\rangle, 32}(G_{\QQ})$ is conjugate to
    $$\left\langle
    \begin{bmatrix} A_{i}-B_{i} & \frac{A_{i}+C_{i}-B_{i}-D_{i}}{2} \\ 2 \cdot B_{i} & B_{i}+D_{i} \end{bmatrix},
    \begin{bmatrix} 1 + 2^{m} & 0 \\ 0 & 1 \end{bmatrix},
    \begin{bmatrix} 1 & 0 \\ 0 & 1+2^{m} \end{bmatrix},
    \begin{bmatrix} 1 & 2^{m-1} \\ 0 & 1 \end{bmatrix},
    \begin{bmatrix} 1 & 0 \\ 2^{m+1} & 1 \end{bmatrix}
    \right\rangle$$
    for $i \in \{1, \ldots, s\}$
    \end{itemize}
\end{enumerate}
\end{proof}

\begin{corollary}\label{2-adic Galois Images corollary 3}
Let $E/\QQ$ be an elliptic curve that has full two-torsion defined over $\QQ$, generated by $P_{2}$ and $Q_{2}$. Suppose that $E$ has a $\QQ$-rational subgroup generated by $Q_{4}$ and suppose that $C_{2}(E) = 6$. Then the cyclic, $\QQ$-rational subgroups of $E$ are the ones generated by $\mathcal{O}$, $Q_{2}$, $P_{2}$, $P_{2}+Q_{2}$, $Q_{4}$, and $P_{2}+Q_{4}$. Moreover, the level of $\rho_{E,2^{\infty}}(G_{\QQ})$ is equal to $2^{m}$ for some integer $m \geq 2$, and $\overline{\rho}_{E,2^{m}}(G_{\QQ})$ is conjugate to a subgroup of $\operatorname{GL}(2, \ZZ / 2^{m} \ZZ)$ of the form
$$\left\langle
\begin{bmatrix} A_{1} & C_{1} \\ B_{1} & D_{1} \end{bmatrix},
\ldots,
\begin{bmatrix} A_{s} & C_{s} \\ B_{s} & D_{s} \end{bmatrix}
\right\rangle$$
such that $B_{1}$, \ldots, $B_{s}$ are even and $C_{1}$, \ldots, $C_{s}$ are divisible by $4$. Moreover,
\begin{itemize}
    \item $\overline{\rho}_{E / \left\langle P_{2} \right\rangle, 32}(G_{\QQ})$ is conjugate to
    $$\left\langle
\begin{bmatrix} D_{1} & \frac{B_{1}}{2} \\ 2 \cdot C_{1} & A_{1} \end{bmatrix},
\ldots,
\begin{bmatrix} D_{s} & \frac{B_{s}}{2} \\ 2 \cdot C_{s} & A_{s} \end{bmatrix},
\begin{bmatrix} 1+2^{m} & 0 \\ 0 & 1 \end{bmatrix},
\begin{bmatrix} 1 & 0 \\ 0 & 1+2^{m} \end{bmatrix},
\begin{bmatrix} 1 & 2^{m-1} \\ 0 & 1 \end{bmatrix},
\begin{bmatrix} 1 & 0 \\ 2^{m+1} & 1 \end{bmatrix}
\right\rangle$$

\item $\overline{\rho}_{E/\left\langle P_{2}+Q_{2} \right\rangle, 32}(G_{\QQ})$ is conjugate to
$$\left\langle \begin{bmatrix} A_{i}-B_{i} & \frac{A_{i}+C_{i}-B_{i}-D_{i}}{2} \\ 2 \cdot B_{i} & B_{i}+D_{i} \end{bmatrix},
    \begin{bmatrix} 1+2^{m} & 0 \\ 0 & 1 \end{bmatrix},
    \begin{bmatrix} 1 & 0 \\ 0 & 1+2^{m} \end{bmatrix},
    \begin{bmatrix} 1 & 2^{m-1} \\ 0 & 1 \end{bmatrix},
    \begin{bmatrix} 1 & 0 \\ 2^{m+1} & 1 \end{bmatrix}
    \right\rangle$$
    for $i \in \{1, \ldots, s\}$.
\item $\overline{\rho}_{E/\left\langle Q_{2} \right\rangle, 32}(G_{\QQ})$ is conjugate to
$$\left\langle
\begin{bmatrix} A_{1} & \frac{C_{1}}{2} \\ 2 \cdot B_{1} & D_{1} \end{bmatrix},
\ldots,
\begin{bmatrix} A_{s} & \frac{C_{s}}{2} \\ 2 \cdot B_{s} & D_{s} \end{bmatrix},
\begin{bmatrix} 1+2^{m} & 0 \\ 0 & 1 \end{bmatrix},
\begin{bmatrix} 1 & 0 \\ 0 & 1+2^{m} \end{bmatrix},
\begin{bmatrix} 1 & 2^{m-1} \\ 0 & 1 \end{bmatrix},
\begin{bmatrix} 1 & 0 \\ 2^{m+1} & 1 \end{bmatrix}
\right\rangle.$$
\item $\overline{\rho}_{E/\left\langle Q_{4} \right\rangle, 32}(G_{\QQ})$ is conjugate to
$$\left\langle
\begin{bmatrix} A_{1} & \frac{C_{1}}{4} \\ 4 \cdot B_{1} & D_{1} \end{bmatrix},
\ldots,
\begin{bmatrix} A_{s} & \frac{C_{s}}{4} \\ 4 \cdot B_{s} & D_{s} \end{bmatrix},
\begin{bmatrix} 1+2^{m} & 0 \\ 0 & 1 \end{bmatrix},
\begin{bmatrix} 1 & 0 \\ 0 & 1+2^{m} \end{bmatrix},
\begin{bmatrix} 1 & 2^{m-2} \\ 0 & 1 \end{bmatrix},
\begin{bmatrix} 1 & 0 \\ 2^{m+2} & 1 \end{bmatrix}
\right\rangle.$$
\item $\overline{\rho}_{E/\left\langle P_{2}+Q_{4} \right\rangle, 32}(G_{\QQ})$ is conjugate to
$$\left\langle
\begin{bmatrix} A_{i}-2B_{i} & \frac{2A_{i}+C_{i}-4B_{i}-2D_{i}}{4} \\ 4 \cdot B_{i} & 2B_{i}+D_{i} \end{bmatrix},
\begin{bmatrix} 1+2^{m} & 0 \\ 0 & 1 \end{bmatrix},
\begin{bmatrix} 1 & 0 \\ 0 & 1+2^{m} \end{bmatrix},
\begin{bmatrix} 1 & 2^{m-2} \\ 0 & 1 \end{bmatrix},
\begin{bmatrix} 1 & 0 \\ 2^{m+2} & 1 \end{bmatrix}
\right\rangle$$
for $i \in \{1, \ldots, s\}$.
    \end{itemize}
\end{corollary}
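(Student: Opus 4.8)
The plan is to run the argument of Corollary~\ref{2-adic Galois Images corollary 2}, the only new feature being the bookkeeping caused by the two $\QQ$-rational cyclic subgroups of order $4$. First I would determine all $\QQ$-rational cyclic subgroups of $E$. Full two-torsion over $\QQ$ gives the trivial subgroup together with $\langle Q_{2}\rangle$, $\langle P_{2}\rangle$, $\langle P_{2}+Q_{2}\rangle$; the hypothesis gives $\langle Q_{4}\rangle$; and since $\langle Q_{4}\rangle$ is $\QQ$-rational while $Q_{4}$ has order $4$, every $\sigma\in G_{\QQ}$ sends $Q_{4}$ to $\pm Q_{4}$, so $\sigma(P_{2}+Q_{4})=P_{2}\pm Q_{4}\in\langle P_{2}+Q_{4}\rangle$ (using $3(P_{2}+Q_{4})=P_{2}-Q_{4}$), i.e.\ $\langle P_{2}+Q_{4}\rangle$ is $\QQ$-rational of order $4$. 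These six subgroups are pairwise distinct, so $C_{2}(E)=6$ forces them to be all of the $\QQ$-rational cyclic subgroups of $E$. Hence $E$ is $2$-isogenous to $E/\langle Q_{2}\rangle$, $E/\langle P_{2}\rangle$, $E/\langle P_{2}+Q_{2}\rangle$ and, factoring through $E/\langle Q_{2}\rangle$, is $4$-isogenous to $E/\langle Q_{4}\rangle$ and $E/\langle P_{2}+Q_{4}\rangle$; these six curves form the $2$-primary part of the isogeny graph of $E$.

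Next I would pin down the level and the normal form. Applying Lemma~\ref{2-adic Galois Images} to $\phi\colon E\to E/\langle Q_{4}\rangle$, whose kernel is cyclic and $\QQ$-rational of order $4=2^{2}$ (so its parameter is $r=2$), gives $2\le r\le m\le 5$. Fix a basis $\{P_{2^{m}},Q_{2^{m}}\}$ of $E[2^{m}]$ with $[2^{m-1}]P_{2^{m}}=P_{2}$, $[2^{m-1}]Q_{2^{m}}=Q_{2}$, $[2^{m-2}]Q_{2^{m}}=Q_{4}$. Then Lemma~\ref{Borel 1} with $r=2$ shows that $\overline{\rho}_{E,2^{m}}(G_{\QQ})$ is conjugate to $\langle\smallmat{A_{1}}{C_{1}}{B_{1}}{D_{1}},\dots,\smallmat{A_{s}}{C_{s}}{B_{s}}{D_{s}}\rangle$ with $4\mid C_{i}$, and since $\overline{\rho}_{E,2}(G_{\QQ})$ is trivial, each generator reduces to $\operatorname{Id}$ modulo $2$, so the $A_{i},D_{i}$ are odd and the $B_{i}$ are even; this is the asserted normal form.

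Finally, for each of the five quotients I would pass to the basis of $E[2^{m}]$ adapted to the relevant kernel and invoke Lemma~\ref{2-adic Galois Images}, tracking the entries. For $E/\langle Q_{2}\rangle$ and $E/\langle Q_{4}\rangle$ I keep the chosen basis and apply the lemma with $r=1$ and $r=2$ ($2\mid C_{i}$, resp.\ $4\mid C_{i}$), yielding $\smallmat{A_{i}}{C_{i}/2}{2B_{i}}{D_{i}}$, resp.\ $\smallmat{A_{i}}{C_{i}/4}{4B_{i}}{D_{i}}$, with the displayed kernel generators. For $E/\langle P_{2}\rangle$ I swap to $\{Q_{2^{m}},P_{2^{m}}\}$, turning each generator into $\smallmat{D_{i}}{B_{i}}{C_{i}}{A_{i}}$ ($2\mid B_{i}$), then apply the lemma with $r=1$. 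For $E/\langle P_{2}+Q_{2}\rangle$ I swap to $\{P_{2^{m}},P_{2^{m}}+Q_{2^{m}}\}$, obtaining generators $\smallmat{A_{i}-B_{i}}{A_{i}+C_{i}-B_{i}-D_{i}}{B_{i}}{B_{i}+D_{i}}$ with even upper-right entry, then apply the lemma with $r=1$. For $E/\langle P_{2}+Q_{4}\rangle$ I swap to $\{P_{2^{m}},\,2P_{2^{m}}+Q_{2^{m}}\}$, noting $[2^{m-2}](2P_{2^{m}}+Q_{2^{m}})=P_{2}+Q_{4}$; the change of basis gives generators $\smallmat{A_{i}-2B_{i}}{2A_{i}+C_{i}-4B_{i}-2D_{i}}{B_{i}}{2B_{i}+D_{i}}$, and then the lemma with $r=2$ produces the stated group.

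The one delicate point is this last case: one has to choose a basis of $E[2^{m}]$ that simultaneously exhibits $\langle P_{2}+Q_{4}\rangle$ as the kernel of a cyclic $4$-isogeny and keeps all upper-right matrix entries divisible by $4$, and then check that $2A_{i}+C_{i}-4B_{i}-2D_{i}$ is divisible by $4$ (which holds because $4\mid C_{i}$, $4\mid 4B_{i}$, and $2(A_{i}-D_{i})$ is divisible by $4$ since $A_{i}-D_{i}$ is even) so that $\tfrac{2A_{i}+C_{i}-4B_{i}-2D_{i}}{4}$ is integral. One should also keep the RZB transpose convention in mind when reading off the final images. Everything else is a routine transcription of basis changes into Lemma~\ref{2-adic Galois Images}, together with Lemma~\ref{Borel 1}, Lemma~\ref{lifting with kernel} and Lemma~\ref{kernel of pi 1}.
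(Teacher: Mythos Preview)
Your proposal is correct and follows essentially the same route as the paper's proof: both defer the three $2$-isogeny cases to Corollary~\ref{2-adic Galois Images corollary 2}, and for the two $4$-isogeny cases use the same change-of-basis (in particular the paper's basis $\{P_{2^{m}},P_{2^{m-1}}+Q_{2^{m}}\}$ is exactly your $\{P_{2^{m}},2P_{2^{m}}+Q_{2^{m}}\}$) before applying Lemma~\ref{2-adic Galois Images}. If anything, you supply slightly more detail than the paper does---the explicit argument that $\langle P_{2}+Q_{4}\rangle$ is $\QQ$-rational, the bound $m\ge 2$ from Lemma~\ref{2-adic Galois Images} with $r=2$, the reason the $B_{i}$ are even, and the integrality check for $\tfrac{2A_{i}+C_{i}-4B_{i}-2D_{i}}{4}$---all of which the paper either asserts or leaves implicit.
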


\begin{proof}
In our given situation, the isogeny graph associated to the $\QQ$-isogeny class of $E$ is of $T_{6}$ type.
\begin{center}
    \begin{tikzcd}
E/\left\langle P_{2} + Q_{2} \right\rangle &                                                     &  &                                                                    & E/\left\langle Q_{4} \right\rangle       \\
                                           & E \arrow[lu, no head, "2"'] \arrow[ld, no head, "2"'] \arrow[rr, no head, "2"] &  & E/\left\langle Q_{2} \right\rangle \arrow[ru, no head, "2"] \arrow[rd, no head, "2"] &                                          \\
E/\left\langle P_{2} \right\rangle         &                                                     &  &                                                                    & E/\left\langle P_{2}+Q_{4} \right\rangle
\end{tikzcd}
\end{center}
Note that $E$ has full two-torsion defined over $\QQ$ and a cyclic, $\QQ$-rational subgroup of order $4$. With the basis $E[2^{m}] = \left\langle P_{2^{m}}, Q_{2^{m}} \right\rangle$, by Lemma \ref{Borel 1}, $\overline{\rho}_{E,2^{m}}(G_{\QQ})$ is conjugate to a subgroup of $\operatorname{GL}(2, \ZZ / 2^{m} \ZZ)$ of the form
$$\left\langle
\begin{bmatrix} A_{1} & C_{1} \\ B_{1} & D_{1} \end{bmatrix},
\ldots,
\begin{bmatrix} A_{s} & C_{s} \\ B_{s} & D_{s} \end{bmatrix}
\right\rangle$$
such that $B_{1}$, \ldots, $B_{s}$ are even and $C_{1}$, \ldots, $C_{s}$ are divisible by $4$. The determination of $\overline{\rho}_{E/\left\langle P_{2} \right\rangle, 32}(G_{\QQ})$, $\overline{\rho}_{E/\left\langle P_{2} + Q_{2} \right\rangle, 32}(G_{\QQ})$, and $\overline{\rho}_{E/\left\langle Q_{2} \right\rangle, 32}(G_{\QQ})$, are identical to the cases dealt with in Corollary \ref{2-adic Galois Images corollary 2}. We move on to the case of $\overline{\rho}_{E / \left\langle Q_{4} \right\rangle, 32}(G_{\QQ})$ and $\overline{\rho}_{E / \left\langle P_{2}+Q_{4} \right\rangle, 32}(G_{\QQ})$.

\begin{itemize}
    \item Using the basis $E[2^{m}] = \left\langle P_{2^{m}}, Q_{2^{m}} \right\rangle$, by Lemma \ref{lifting with kernel} and Lemma \ref{kernel of pi 1}, $\overline{\rho}_{E,128}(G_{\QQ})$ is conjugate to
    $$\left\langle
    \begin{bmatrix} A_{1} & C_{1} \\ B_{1} & D_{1} \end{bmatrix},
    \ldots
    \begin{bmatrix} A_{s} & C_{s} \\ B_{s} & D_{s} \end{bmatrix},
    \begin{bmatrix} 1 + 2^{m} & 0 \\ 0 & 1 \end{bmatrix},
    \begin{bmatrix} 1 & 0 \\ 0 & 1 + 2^{m} \end{bmatrix},
    \begin{bmatrix} 1 & 2^{m} \\ 0 & 1 \end{bmatrix},
    \begin{bmatrix} 1 & 0 \\ 2^{m} & 1 \end{bmatrix}
    \right\rangle.$$
    By Lemma \ref{2-adic Galois Images}, $\overline{\rho}_{E/\left\langle Q_{4} \right\rangle, 32}(G_{\QQ})$ is conjugate to
    $$\left\langle
    \begin{bmatrix} A_{1} & \frac{C_{1}}{4} \\ 4 \cdot B_{1} & D_{1} \end{bmatrix},
    \ldots
    \begin{bmatrix} A_{s} & \frac{C_{s}}{4} \\ 4 \cdot B_{s} & D_{s} \end{bmatrix},
    \begin{bmatrix} 1 + 2^{m} & 0 \\ 0 & 1 \end{bmatrix},
    \begin{bmatrix} 1 & 0 \\ 0 & 1 + 2^{m} \end{bmatrix},
    \begin{bmatrix} 1 & 2^{m-2} \\ 0 & 1 \end{bmatrix},
    \begin{bmatrix} 1 & 0 \\ 2^{m+2} & 1 \end{bmatrix}
    \right\rangle.$$
    
    \item Using the basis $E[2^{m}] = \left\langle P_{2^{m}}, P_{2^{m-1}}+Q_{2^{m}} \right\rangle$, $\overline{\rho}_{E,2^{m}}(G_{\QQ})$ is conjugate to
    $$\left\langle
    \begin{bmatrix}
    A_{1}-2B_{1} & 2A_{1}+C_{1}-4B_{1}-2D_{1} \\ B_{1} & 2B_{1} + D_{1}
    \end{bmatrix},
    \ldots,
    \begin{bmatrix}
    A_{s}-2B_{s} & 2A_{s}+C_{s}-4B_{s}-2D_{s} \\ B_{s} & 2B_{s} + D_{s}
    \end{bmatrix}
    \right\rangle.$$
    By Lemma \ref{lifting with kernel} and Lemma \ref{kernel of pi 1}, $\overline{\rho}_{E,128}(G_{\QQ})$ is conjugate to
    $$\left\langle
    \begin{bmatrix}
    A_{i}-2B_{i} & 2A_{i}+C_{i}-4B_{i}-2D_{i} \\ B_{i} & 2B_{i} + D_{i}
    \end{bmatrix},
    \begin{bmatrix}
    1 + 2^{m} & 0 \\ 0 & 1
    \end{bmatrix},
    \begin{bmatrix}
    1 & 0 \\ 0 & 1+ 2^{m}
    \end{bmatrix},
    \begin{bmatrix}
    1 & 2^{m} \\ 0 & 1
    \end{bmatrix},
    \begin{bmatrix}
    1 & 0 \\ 2^{m} & 1
    \end{bmatrix}
    \right\rangle$$
    for $i \in \{1, \ldots, s \}$. Note that for $i \in \{1, \ldots, s\}$, $B_{i}$ is even and $C_{i}$ is divisible by $4$. This forces $A_{i}$ and $D_{i}$ to be odd and $2A_{i}+C_{i}-4B_{i}-2D_{i}$ to be divisible by $4$. By Lemma \ref{2-adic Galois Images}, $\overline{\rho}_{E/\left\langle P_{2}+Q_{4} \right\rangle,32}(G_{\QQ})$ is conjugate to
    $$\left\langle
    \begin{bmatrix}
    A_{i}-2B_{i} & \frac{2A_{i}+C_{i}-4B_{i}-2D_{i}}{4} \\ 4 \cdot B_{i} & 2B_{i} + D_{i}
    \end{bmatrix},
    \begin{bmatrix}
    1 + 2^{m} & 0 \\ 0 & 1
    \end{bmatrix},
    \begin{bmatrix}
    1 & 0 \\ 0 & 1+ 2^{m}
    \end{bmatrix},
    \begin{bmatrix}
    1 & 2^{m-2} \\ 0 & 1
    \end{bmatrix},
    \begin{bmatrix}
    1 & 0 \\ 2^{m+2} & 1
    \end{bmatrix}
    \right\rangle$$
    for $i \in \{1, \ldots, s\}$.
\end{itemize}

\end{proof}

\begin{corollary}\label{2-adic Galois Images corollary 4}
Let $E/\QQ$ be an elliptic curve such that $P_{4}$ and $Q_{4}$ are points on $E$ of order $4$ that generate $\QQ$-rational groups. Moreover, suppose that $\left\langle P_{4} \right\rangle \cap \left\langle Q_{4} \right\rangle = \{\mathcal{O}\}$. Then $E$ has full two-torsion defined over $\QQ$ and the cyclic, $\QQ$-rational subgroups of $E$ are generated by $\mathcal{O}$, $P_{2}$, $Q_{2}$, $P_{2}+Q_{2}$, $Q_{4}$, $P_{2}+Q_{4}$, $P_{4}$, and $P_{4}+Q_{2}$. The level of $\rho_{E,2^{\infty}}(G_{\QQ})$ is equal to $2^{m}$ for some integer $m \geq 2$ and $\overline{\rho}_{E,2^{m}}(G_{\QQ})$ is conjugate to a subgroup of $\operatorname{GL}(2, \ZZ / 2^{m} \ZZ)$ of the form
$$\left\langle
\begin{bmatrix} A_{1} & C_{1} \\ B_{1} & D_{1} \end{bmatrix},
\ldots,
\begin{bmatrix} A_{s} & C_{s} \\ B_{s} & D_{s} \end{bmatrix}
\right\rangle$$
such that $A_{1}$, \ldots, $A_{s}$, $D_{1}$, \ldots, $D_{s}$ are odd and $B_{1}$, \ldots, $B_{s}$, $C_{1}$, \ldots, $C_{s}$ are divisible by $4$. Finally, for $i \in \{1, \ldots, s\}$
\begin{itemize}
    \item $\overline{\rho}_{E / \left\langle P_{2} \right\rangle, 32}(G_{\QQ})$ is conjugate to
    $$\left\langle
\begin{bmatrix} D_{i} & \frac{B_{i}}{2} \\ 2 \cdot C_{i} & A_{i} \end{bmatrix},
\begin{bmatrix} 1+2^{m} & 0 \\ 0 & 1 \end{bmatrix},
\begin{bmatrix} 1 & 0 \\ 0 & 1+2^{m} \end{bmatrix},
\begin{bmatrix} 1 & 2^{m-1} \\ 0 & 1 \end{bmatrix},
\begin{bmatrix} 1 & 0 \\ 2^{m+1} & 1 \end{bmatrix}
\right\rangle$$
for $i \in \left\{1, \ldots, s\right\}$.
\item $\overline{\rho}_{E/\left\langle P_{2}+Q_{2} \right\rangle, 32}(G_{\QQ})$ is conjugate to
$$\left\langle \begin{bmatrix} A_{i}-B_{i} & \frac{A_{i}+C_{i}-B_{i}-D_{i}}{2} \\ 2 \cdot B_{i} & B_{i}+D_{i} \end{bmatrix},
    \begin{bmatrix} 1+2^{m} & 0 \\ 0 & 1 \end{bmatrix},
    \begin{bmatrix} 1 & 0 \\ 0 & 1+2^{m} \end{bmatrix},
    \begin{bmatrix} 1 & 2^{m-1} \\ 0 & 1 \end{bmatrix},
    \begin{bmatrix} 1 & 0 \\ 2^{m+1} & 1 \end{bmatrix}
    \right\rangle$$
    for $i \in \left\{1, \ldots, s\right\}$.
\item $\overline{\rho}_{E/\left\langle Q_{2} \right\rangle, 32}(G_{\QQ})$ is conjugate to
$$\left\langle
\begin{bmatrix} A_{i} & \frac{C_{i}}{2} \\ 2 \cdot B_{i} & D_{i} \end{bmatrix},
\begin{bmatrix} 1+2^{m} & 0 \\ 0 & 1 \end{bmatrix},
\begin{bmatrix} 1 & 0 \\ 0 & 1+2^{m} \end{bmatrix},
\begin{bmatrix} 1 & 2^{m-1} \\ 0 & 1 \end{bmatrix},
\begin{bmatrix} 1 & 0 \\ 2^{m+1} & 1 \end{bmatrix}
\right\rangle$$
for $i \in \left\{1, \ldots, s\right\}$.
\item $\overline{\rho}_{E/\left\langle Q_{4} \right\rangle, 32}(G_{\QQ})$ is conjugate to
$$\left\langle
\begin{bmatrix} A_{i} & \frac{C_{i}}{4} \\ 4 \cdot B_{i} & D_{i} \end{bmatrix},
\begin{bmatrix} 1+2^{m} & 0 \\ 0 & 1 \end{bmatrix},
\begin{bmatrix} 1 & 0 \\ 0 & 1+2^{m} \end{bmatrix},
\begin{bmatrix} 1 & 2^{m-2} \\ 0 & 1 \end{bmatrix},
\begin{bmatrix} 1 & 0 \\ 2^{m+2} & 1 \end{bmatrix}
\right\rangle$$
for $i \in \left\{1, \ldots, s\right\}$.
\item $\overline{\rho}_{E/\left\langle P_{2}+Q_{4} \right\rangle, 32}(G_{\QQ})$ is conjugate to
$$\left\langle
\begin{bmatrix} A_{i}-2B_{i} & \frac{2A_{i}+C_{i}-4B_{i}-2D_{i}}{4} \\ 4 \cdot B_{i} & 2B_{i}+D_{i} \end{bmatrix},
\begin{bmatrix} 1+2^{m} & 0 \\ 0 & 1 \end{bmatrix},
\begin{bmatrix} 1 & 0 \\ 0 & 1+2^{m} \end{bmatrix},
\begin{bmatrix} 1 & 2^{m-2} \\ 0 & 1 \end{bmatrix},
\begin{bmatrix} 1 & 0 \\ 2^{m+2} & 1 \end{bmatrix}
\right\rangle$$
for $i \in \left\{1, \ldots, s\right\}$.
\item $\overline{\rho}_{E/\left\langle P_{4} \right\rangle, 32}(G_{\QQ})$ is conjugate to
$$\left\langle
\begin{bmatrix} D_{i} & \frac{B_{i}}{4} \\ 4 \cdot C_{i} & A_{i} \end{bmatrix},
\begin{bmatrix} 1+2^{m} & 0 \\ 0 & 1 \end{bmatrix},
\begin{bmatrix} 1 & 0 \\ 0 & 1+2^{m} \end{bmatrix},
\begin{bmatrix} 1 & 2^{m-2} \\ 0 & 1 \end{bmatrix},
\begin{bmatrix} 1 & 0 \\ 2^{m+2} & 1 \end{bmatrix}
\right\rangle$$
for $i \in \left\{1, \ldots, s\right\}$.
\item $\overline{\rho}_{E/\left\langle P_{4}+Q_{2} \right\rangle, 32}(G_{\QQ})$ is conjugate to
$$\left\langle
\begin{bmatrix} 2B_{i} + D_{i} & \frac{B_{i}}{4} \\ 4 \cdot (2A_{i} + C_{i} - 4B_{i} - 2D_{i}) & A_{i}-2B_{i} \end{bmatrix},
\begin{bmatrix} 1+2^{m} & 0 \\ 0 & 1 \end{bmatrix},
\begin{bmatrix} 1 & 0 \\ 0 & 1+2^{m} \end{bmatrix},
\begin{bmatrix} 1 & 2^{m-2} \\ 0 & 1 \end{bmatrix},
\begin{bmatrix} 1 & 0 \\ 2^{m+2} & 1 \end{bmatrix}
\right\rangle$$
for $i \in \left\{1, \ldots, s\right\}$.
\end{itemize}
\end{corollary}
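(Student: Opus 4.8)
The plan is to treat this statement as the ``$T_8$ analogue'' of Corollaries \ref{2-adic Galois Images corollary 2} and \ref{2-adic Galois Images corollary 3}, from which most of it is literally inherited, and to supply only the preliminary structural claims together with the two genuinely new quotient computations.

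First I would establish the structure of the isogeny graph. Since $\langle P_4\rangle$ and $\langle Q_4\rangle$ are $\QQ$-rational and every $\sigma\in G_\QQ$ preserves orders, $\sigma$ fixes the unique order-$2$ points $P_2:=[2]P_4$ and $Q_2:=[2]Q_4$; as $\langle P_4\rangle\cap\langle Q_4\rangle=\{\mathcal{O}\}$ these are distinct, so $E[2]\subseteq E(\QQ)$ and $\{P_4,Q_4\}$ is a basis of $E[4]$. Then all three order-$2$ subgroups are $\QQ$-rational. Because $\sigma$ acts on $P_4$ and on $Q_4$ by $\pm 1$, one checks directly that $\langle P_2+Q_4\rangle$ and $\langle P_4+Q_2\rangle$ are $\QQ$-rational, whereas $\langle P_4+Q_4\rangle$ is $\QQ$-rational if and only if the two signs always coincide, which is exactly the condition for $\langle P_4-Q_4\rangle$ to be $\QQ$-rational as well; if that happened, $E$ would have $1+3+6=10$ distinct $\QQ$-rational cyclic $2$-power subgroups, contradicting $C_2(E)\le 8$ in Theorem \ref{thm-kenku}. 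Hence the $\QQ$-rational cyclic subgroups of $E$ are exactly the eight listed, $C_2(E)=8$, there is no $\QQ$-rational cyclic subgroup of order $8$, and the isogeny graph is of $T_8$ type.

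Next I would pin down $\overline{\rho}_{E,2^m}(G_\QQ)$. Using Lemma \ref{generators of Z/2^NZ}, extend $\{P_4,Q_4\}$ to a basis $\{P_{2^m},Q_{2^m}\}$ of $E[2^m]$ with $[2^{m-2}]P_{2^m}=P_4$ and $[2^{m-2}]Q_{2^m}=Q_4$. Running the argument of Lemma \ref{Borel 1} in this basis once with $\langle Q_4\rangle$ and once with $\langle P_4\rangle$ forces $4\mid C_i$ and $4\mid B_i$ for all generators, while $E[2]\subseteq E(\QQ)$ forces $A_i,D_i$ odd. By Theorem \ref{thm-rzb} the level $2^m$ satisfies $m\le 5$ and $\rho_{E,2^\infty}(G_\QQ)$ is the full lift of $\overline{\rho}_{E,2^m}(G_\QQ)$; and $m\ge 2$ because if $m\le 1$ then (the mod-$2$ image being trivial) $\overline{\rho}_{E,4}(G_\QQ)$ would be the whole kernel of $\GL(2,\ZZ/4\ZZ)\to\GL(2,\ZZ/2\ZZ)$, which contains $\smallmat{1}{0}{2}{1}$ and violates $4\mid B_i$.

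Finally I would compute the image attached to each of the seven codomains by the same recipe: choose a basis $\{P',Q'\}$ of $E[2^m]$ such that $[2^{m-r}]Q'$ generates the kernel $H$ (with $r=1$ for the four order-$2$ kernels, $r=2$ for the three order-$4$ kernels $\langle Q_4\rangle,\langle P_2+Q_4\rangle,\langle P_4+Q_2\rangle$; for the order-$4$ cases one may equivalently iterate Lemma \ref{2-adic Galois Images} along the two edges of the $T_8$ graph joining $E$ to $E/H$), rewrite the generators $\smallmat{A_i}{C_i}{B_i}{D_i}$ in this basis by a $\ZZ$-linear change of variables, check that the new upper-right entries are divisible by $2^r$ (this uses $A_i,D_i$ odd and $4\mid B_i,C_i$), and apply the $2\le m\le 5$ bullet of Lemma \ref{2-adic Galois Images}. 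For $E/\langle Q_2\rangle,E/\langle P_2\rangle,E/\langle P_2+Q_2\rangle,E/\langle Q_4\rangle,E/\langle P_2+Q_4\rangle$ the bases (respectively $\{P_{2^m},Q_{2^m}\}$, $\{Q_{2^m},P_{2^m}\}$, $\{P_{2^m},P_{2^m}+Q_{2^m}\}$, $\{Q_{2^m},P_{2^m}\}$, $\{P_{2^m},[2]P_{2^m}+Q_{2^m}\}$) and the resulting matrices are exactly those already obtained in Corollaries \ref{2-adic Galois Images corollary 2} and \ref{2-adic Galois Images corollary 3}, so nothing new is required. The two new cases $E/\langle P_4\rangle$ and $E/\langle P_4+Q_2\rangle$ are handled by the mirror computations obtained from $E/\langle Q_4\rangle$ and $E/\langle P_2+Q_4\rangle$ by interchanging the roles of $P_{2^m}$ and $Q_{2^m}$ and correspondingly permuting $(A_i,B_i,C_i,D_i)$. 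I expect the main obstacle to be purely bookkeeping: keeping every divisibility hypothesis straight through each basis change so that the entries $C_i/2^r$ in Lemma \ref{2-adic Galois Images} remain well defined modulo $32$ and the stated generating sets come out exactly as written; there is no new idea beyond Lemma \ref{2-adic Galois Images}, Lemma \ref{Borel 1}, and linear algebra over $\ZZ/2^k\ZZ$.
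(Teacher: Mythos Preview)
Your proposal is correct and follows essentially the same strategy as the paper: establish that $\overline{\rho}_{E,2^m}(G_\QQ)$ has the stated divisibility constraints by applying Lemma~\ref{Borel 1} in both orientations of the basis $\{P_{2^m},Q_{2^m}\}$, inherit the computations for $E/\langle P_2\rangle$, $E/\langle Q_2\rangle$, $E/\langle P_2+Q_2\rangle$, $E/\langle Q_4\rangle$, $E/\langle P_2+Q_4\rangle$ directly from Corollaries~\ref{2-adic Galois Images corollary 2} and~\ref{2-adic Galois Images corollary 3}, and handle the two genuinely new quotients via Lemma~\ref{2-adic Galois Images} after a basis change. Your justification that the eight listed subgroups are exactly the cyclic $\QQ$-rational ones (ruling out $\langle P_4\pm Q_4\rangle$ via Theorem~\ref{thm-kenku}) is more explicit than the paper, which simply asserts the $T_8$ shape; and your argument for $m\ge 2$ is likewise spelled out where the paper is terse.

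The only point of divergence is the basis chosen for $E/\langle P_4+Q_2\rangle$: the paper works with $\{P_{2^m}+Q_{2^{m-1}},\,Q_{2^m}\}$ directly, whereas you obtain it as the mirror of the $\langle P_2+Q_4\rangle$ computation under $P\leftrightarrow Q$. Both are valid applications of Lemma~\ref{2-adic Galois Images}; be aware that your mirror will produce a generating set that is conjugate to, but not literally identical with, the displayed formula in the statement, so if you want the formulas ``exactly as written'' you will need one further conjugation (or simply adopt the paper's basis for that single case). Also, your parenthetical list of ``the three order-$4$ kernels'' omits $\langle P_4\rangle$, though you do treat it immediately afterwards; you should correct that count to four.
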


\begin{proof}
In our case, the isogeny graph associated to the $\QQ$-isogeny class of $E$ is of $T_{8}$ type
\begin{center}
    \begin{tikzcd}
                                             & E / \left\langle P_{4} \right\rangle                               &                                                   & E / \left\langle Q_{4} \right\rangle                              &                                              \\
                                             & E/\left\langle P_{2} \right\rangle \arrow[u, no head, "2"] \arrow[ld, no head, "2"'] &                                                   & E/\left\langle Q_{2} \right\rangle \arrow[u, no head, "2"] \arrow[rd, no head, "2"] &                                              \\
E / \left\langle P_{4} + Q_{2} \right\rangle &                                                                    & E \arrow[lu, no head, "2"'] \arrow[d, no head, "2"] \arrow[ru, no head, "2"] &                                                                   & E / \left\langle Q_{4} + P_{2} \right\rangle \\
                                             &                                                                    & E/\left\langle Q_{2} \right\rangle                &                                                                   &                                             
\end{tikzcd}
\end{center}The elliptic curve $E$ has two independent, cyclic, $\QQ$-rational subgroups of order $4$. Then the level of $\rho_{E,2^{\infty}}(G_{\QQ})$ is equal to $2^{m}$ where $m$ is an integer greater than or equal to $2$. With the basis $E[2^{m}] = \left\langle P_{2^{m}}, Q_{2^{m}} \right\rangle$, by Lemma \ref{Borel 1}, $\overline{\rho}_{E,2^{m}}(G_{\QQ})$ is conjugate to a subgroup of $\operatorname{GL}(2, \ZZ / 2^{m} \ZZ)$ of the form
$$\left\langle
\begin{bmatrix}
A_{1} & C_{1} \\ B_{1} & D_{1}
\end{bmatrix},
\ldots,
\begin{bmatrix}
A_{s} & C_{s} \\ B_{s} & D_{s}
\end{bmatrix}
\right\rangle$$
such that $C_{1}$, \ldots, $C_{s}$ are divisible by $4$. With the basis $E[4] = \left\langle Q_{2^{m}}, P_{2^{m}} \right\rangle$, $\overline{\rho}_{E,2^{m}}(G_{\QQ})$ is conjugate to
$$\left\langle
\begin{bmatrix}
D_{1} & B_{1} \\ C_{1} & A_{1}
\end{bmatrix},
\ldots,
\begin{bmatrix}
D_{s} & B_{s} \\ C_{s} & A_{s}
\end{bmatrix}
\right\rangle$$
and by Lemma \ref{Borel 1}, $B_{1}$, \ldots, $B_{s}$ are divisible by $4$. Moreover, $A_{1}, \ldots, A_{s}, D_{1}, \ldots, D_{s}$ are odd. The proofs to classify the remaining groups, $\overline{\rho}_{E/\left\langle g \right\rangle,32}(G_{\QQ})$, where $g \in \left\{P_{2}, Q_{2}, P_{2}+Q_{2}, Q_{4}, P_{2}+Q_{4}\right\}$, are similar to the proofs in Corollary \ref{2-adic Galois Images corollary 3}.

We move on to the case of $\overline{\rho}_{E/\left\langle P_{4} \right\rangle,32}(G_{\QQ})$ and $\overline{\rho}_{E/\left\langle P_{4}+Q_{2} \right\rangle,32}(G_{\QQ})$. Note that with the basis $E[2^{m}] = \left\langle P_{2^{m}}, Q_{2^{m}} \right\rangle$, $\overline{\rho}_{E,2^{m}}(G_{\QQ})$ is conjugate to
$$\left\langle
\begin{bmatrix}
A_{1} & C_{1} \\ B_{1} & D_{1}
\end{bmatrix},
\ldots,
\begin{bmatrix}
A_{s} & C_{s} \\ B_{s} & D_{s}
\end{bmatrix}
\right\rangle.$$
Then switching the basis to $E[2^{m}] = \left\langle Q_{2^{m}}, P_{2^{m}} \right\rangle$, we have $\overline{\rho}_{E,2^{m}}(G_{\QQ})$ is conjugate to
$$\left\langle
\begin{bmatrix}
D_{1} & B_{1} \\ C_{1} & A_{1}
\end{bmatrix},
\ldots,
\begin{bmatrix}
D_{s} & B_{s} \\ C_{s} & A_{s}
\end{bmatrix}
\right\rangle.$$
By Lemma \ref{lifting with kernel} and Lemma \ref{kernel of pi 1}, $\overline{\rho}_{E,128}(G_{\QQ})$ is conjugate to
$$\left\langle
\begin{bmatrix}
D_{i} & B_{i} \\ C_{i} & A_{i}
\end{bmatrix},
\begin{bmatrix}
1+2^{m} & 0 \\ 0 & 1
\end{bmatrix},
\begin{bmatrix}
1 & 0 \\ 0 & 1+2^{m}
\end{bmatrix},
\begin{bmatrix}
1 & 2^{m} \\ 0 & 1
\end{bmatrix},
\begin{bmatrix}
1 & 0 \\ 2^{m} & 1
\end{bmatrix}
\right\rangle$$
for $i \in \{1, \ldots, s \}$. By Lemma \ref{2-adic Galois Images}, $\rho_{E / \left\langle P_{4} \right\rangle , 32}(G_{\QQ})$ is conjugate to
$$\left\langle
\begin{bmatrix}
D_{i} & \frac{B_{i}}{4} \\ 4 \cdot C_{i} & A_{i}
\end{bmatrix},
\begin{bmatrix}
1+2^{m} & 0 \\ 0 & 1
\end{bmatrix},
\begin{bmatrix}
1 & 0 \\ 0 & 1+2^{m}
\end{bmatrix},
\begin{bmatrix}
1 & 2^{m-2} \\ 0 & 1
\end{bmatrix},
\begin{bmatrix}
1 & 0 \\ 2^{m+2} & 1
\end{bmatrix}
\right\rangle$$
for $i \in \{1, \ldots, s\}$.

Finally, with the basis $E[2^{m}] = \left\langle P_{2^{m}}+Q_{2^{m-1}}, Q_{2^{m}} \right\rangle$, $\overline{\rho}_{E,2^{m}}(G_{\QQ})$ is conjugate to
$$\left\langle
\begin{bmatrix}
A_{1}+2C_{1} & C_{1} \\ B_{1}+2D_{1}-2A_{1}-4C_{1} & D_{1}-2C_{1}
\end{bmatrix},
\ldots,
\begin{bmatrix}
A_{s}+2C_{s} & C_{s} \\ B_{s}+2D_{s}-2A_{s}-4C_{s} & D_{s}-2C_{s}
\end{bmatrix}
\right\rangle.$$
By Lemma \ref{lifting with kernel} and Lemma \ref{kernel of pi 1}, $\overline{\rho}_{E,128}(G_{\QQ})$ is conjugate to
$$\left\langle
\begin{bmatrix}
A_{i}+2C_{i} & C_{i} \\ B_{i}+2D_{i}-2A_{i}-4C_{i} & D_{i}-2C_{i}
\end{bmatrix},
\begin{bmatrix}
1+2^{m} & 0 \\ 0 & 1
\end{bmatrix},
\begin{bmatrix}
1 & 0 \\ 0 & 1+2^{m}
\end{bmatrix},
\begin{bmatrix}
1 & 2^{m} \\ 0 & 1
\end{bmatrix},
\begin{bmatrix}
1 & 0 \\ 2^{m} & 1
\end{bmatrix}
\right\rangle$$
for $i \in \{1, \ldots, s\}$.
By Lemma \ref{2-adic Galois Images}, $\overline{\rho}_{E/\left\langle P_{4}+Q_{2} \right\rangle,32}(G_{\QQ})$ is conjugate to
$$\left\langle
\begin{bmatrix}
A_{i}+2C_{i} & \frac{C_{i}}{4} \\ 4 \cdot (B_{i}+2D_{i}-2A_{i}-4C_{i}) & D_{i}-2C_{i}
\end{bmatrix},
\begin{bmatrix}
1+2^{m} & 0 \\ 0 & 1
\end{bmatrix},
\begin{bmatrix}
1 & 0 \\ 0 & 1+2^{m}
\end{bmatrix},
\begin{bmatrix}
1 & 2^{m-2} \\ 0 & 1
\end{bmatrix},
\begin{bmatrix}
1 & 0 \\ 2^{m+2} & 1
\end{bmatrix}
\right\rangle$$
for $i \in \{1, \ldots, s\}$.
\end{proof}
\begin{remark}
Let $\mathcal{G}$ be an isogeny graph associated to the $\QQ$-isogeny class of elliptic curves defined over $\QQ$ and let $\mathcal{G}_{2^{\infty}}$ be the $2$-adic subgraph of $\mathcal{G}$; the subgraph of $\mathcal{G}$ where all the edges are $2$-power degree. Suppose that $\rho_{E,2^{\infty}}(G_{\QQ})$ is given where $E$ is one of the ideal elliptic curves in $\mathcal{G}$; in the case where $\mathcal{G}_{2^{\infty}}$ is of $L_{1}(1)$ or $L_{2}(2)$ type, $E$ is any elliptic curve, in the case where $\mathcal{G}_{2^{\infty}}$ is of $T_{4}$ or $T_{6}$ type, then $E$ is any elliptic curve with full two-torsion defined over $\QQ$ and in the case that $\mathcal{G}_{2^{\infty}}$ is of $T_{8}$ type, then $E$ is the elliptic curve with two independent, cyclic, $\QQ$-rational subgroups of order $4$.

Using Corollary \ref{2-adic Galois Images corollary 1} - Corollary \ref{2-adic Galois Images corollary 4}, one can compute the $2$-adic Galois Image of all elliptic curves over $\QQ$ in $\mathcal{G}_{2^{\infty}}$ and then use Corollary \ref{coprime isogeny-degree} to compute the $2$-adic Galois Image of all elliptic curves over $\QQ$ in $\mathcal{G}$. One does have to be careful that the RZB database determines the $2$-adic Galois Image using right actions. In other words, if $\rho_{E,2^{\infty}}(G_{\QQ})$ is said to be conjugate to a group $\operatorname{H}$ of level $2^{m}$ in the RZB database, then actually, $\rho_{E,2^{\infty}}(G_{\QQ})$ is conjugate to the transpose of $H$ using left actions.

Many, but not all examples of $2$-adic Galois Images appear in the LMFDB. The ones that do not appear in the LMFDB were computed by ``hand''; the image of the mod-$32$ Galois representation was computed for each vertex and then cross referenced in the RZB database.
\end{remark}

\section{Product groups}

A product group is a subgroup of $\operatorname{GL}(2, \ZZ / MN \ZZ) \cong \operatorname{GL}(2, \ZZ / M \ZZ) \times \operatorname{GL}(2, \ZZ / N \ZZ)$ for some integers $M , N \geq 2$ such that $\operatorname{gcd}(M,N) = 1$.

\begin{lemma}\label{product groups}

Let $M$ be a positive integer and let $N$ be an odd integer greater than or equal to $3$. Let $A$ be a subgroup of $\operatorname{GL}\left(2, \ZZ / N \ZZ\right)$ and let $B$ be a subgroup of $\operatorname{GL}\left(2, \ZZ / 2^{M} \ZZ\right)$. Denote the respective natural canonical maps as $\pi_{1} \colon \operatorname{GL}\left(2, \ZZ / 2^{M}N \ZZ\right) \to \operatorname{GL}(2, \ZZ / N \ZZ)$ and $\pi_{2} \colon \operatorname{GL}(2, \ZZ / 2^{M}N \ZZ) \to \operatorname{GL}(\ZZ / 2^{M} \ZZ)$. Then the group of matrices in $\operatorname{GL}(2, \ZZ / 2^{M}N\ZZ)$ that reduces modulo $N$ to $A$ and reduces modulo $2^{M}$ to $B$ is conjugate to $\left\langle \widehat{A}, \operatorname{Ker}(\pi_{1}) \right\rangle \bigcap \left\langle \widehat{B}, \operatorname{Ker}(\pi_{2}) \right\rangle$, where $\widehat{A}$ is any subgroup of $\operatorname{GL}(2, \ZZ / 2^{M} N \ZZ)$ that reduces modulo $N$ to $A$ and $\widehat{B}$ is any subgroup of $\operatorname{GL}(2, \ZZ / 2^{M} N \ZZ)$ that reduces modulo $2^{M}$ to $B$.

\end{lemma}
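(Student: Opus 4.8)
The plan is to reduce everything to the Chinese Remainder Theorem decomposition $\operatorname{GL}(2, \ZZ/2^M N\ZZ) \cong \operatorname{GL}(2, \ZZ/2^M\ZZ) \times \operatorname{GL}(2, \ZZ/N\ZZ)$, which is available precisely because $N$ is odd, so $\gcd(2^M, N) = 1$. Under this isomorphism the reduction map $\pi_1$ becomes projection onto the second coordinate and $\pi_2$ becomes projection onto the first; hence $\operatorname{Ker}(\pi_1)$ is identified with $\operatorname{GL}(2, \ZZ/2^M\ZZ) \times \{\operatorname{Id}\}$ and $\operatorname{Ker}(\pi_2)$ with $\{\operatorname{Id}\} \times \operatorname{GL}(2, \ZZ/N\ZZ)$. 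In particular both $\operatorname{Ker}(\pi_1)$ and $\operatorname{Ker}(\pi_2)$ are normal in $\operatorname{GL}(2, \ZZ/2^M N\ZZ)$, being kernels of homomorphisms.

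First I would record the elementary group-theoretic fact that if $K \trianglelefteq G$ and $H \leq G$, then $\langle H, K\rangle = HK$ is a subgroup. Applying this with $H = \widehat{A}$ and $K = \operatorname{Ker}(\pi_1)$, the group $\langle \widehat{A}, \operatorname{Ker}(\pi_1)\rangle$ is a subgroup of $\operatorname{GL}(2, \ZZ/2^M N\ZZ)$ that contains $\operatorname{Ker}(\pi_1)$ and whose image under $\pi_1$ is $\pi_1(\widehat{A}) = A$, since the kernel contributes nothing to the image. A subgroup of $\operatorname{GL}(2, \ZZ/2^M N\ZZ)$ containing $\operatorname{Ker}(\pi_1)$ equals the full preimage of its own $\pi_1$-image, so $\langle \widehat{A}, \operatorname{Ker}(\pi_1)\rangle = \pi_1^{-1}(A)$, which in CRT coordinates is $\operatorname{GL}(2, \ZZ/2^M\ZZ) \times A$. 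The identical argument gives $\langle \widehat{B}, \operatorname{Ker}(\pi_2)\rangle = \pi_2^{-1}(B) = B \times \operatorname{GL}(2, \ZZ/N\ZZ)$. Note this step already shows the outcome is independent of the particular choices of lifts $\widehat{A}$ and $\widehat{B}$ — adjoining the whole kernel forgets which lift one started with.

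Then I would take the intersection: $\pi_1^{-1}(A) \cap \pi_2^{-1}(B) = \bigl(\operatorname{GL}(2, \ZZ/2^M\ZZ) \times A\bigr) \cap \bigl(B \times \operatorname{GL}(2, \ZZ/N\ZZ)\bigr) = B \times A$, and under CRT a matrix of $\operatorname{GL}(2, \ZZ/2^M N\ZZ)$ lies in $B \times A$ if and only if its reduction modulo $2^M$ lies in $B$ and its reduction modulo $N$ lies in $A$. That is exactly the group described in the statement, so in fact one obtains a genuine equality (the conjugacy in the statement being harmless here).

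I do not anticipate a real obstacle: the content is bookkeeping with the CRT isomorphism. The only points that warrant care are verifying that $\langle \widehat{A}, \operatorname{Ker}(\pi_1)\rangle$ really is all of $\pi_1^{-1}(A)$ rather than something smaller — this is where normality of $\operatorname{Ker}(\pi_1)$ is used — and keeping straight which of $\pi_1, \pi_2$ annihilates which CRT factor, since the paper's convention pairs $\pi_1$ with the odd modulus $N$ and $\pi_2$ with the $2$-power modulus $2^M$.
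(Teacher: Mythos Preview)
Your proposal is correct and follows essentially the same approach as the paper: both arguments identify $\langle \widehat{A}, \operatorname{Ker}(\pi_1)\rangle$ with the full preimage $\pi_1^{-1}(A)$ (and similarly for $B$), then intersect. The paper does this by invoking the explicit lifting argument of Lemma~\ref{lifting with kernel}, whereas you use the cleaner observation that $\operatorname{Ker}(\pi_1)$ is normal so $\langle \widehat{A}, \operatorname{Ker}(\pi_1)\rangle = \widehat{A}\cdot\operatorname{Ker}(\pi_1) = \pi_1^{-1}(A)$; this is a minor stylistic difference, and your remark that one in fact obtains equality (not merely conjugacy) is a welcome clarification.
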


\begin{proof}
Let $H$ be a subgroup of $\operatorname{GL}\left(2, 2^{M}N \ZZ\right)$ that reduces modulo $N$ to $A$. Using a similar proof to Lemma \ref{lifting with kernel}, we see that $H$ is conjugate to a subgroup of $\left\langle \widehat{A}, \operatorname{Ker}(\pi_{1}) \right\rangle$ where $\widehat{A}$ is any lift of $A$ to level $2^{M}N$. Also, $H$ reduces modulo $2^{M}$ to $B$. Similarly, we see that $H$ is conjugate to a subgroup of $\left\langle \widehat{B}, \operatorname{Ker}(\pi_{2}) \right\rangle$ where $\widehat{B}$ is any lift of $B$ to level $2^{M}N$. Hence, $H$ is conjugate to a subgroup of $\left\langle \widehat{A}, \operatorname{Ker}(\pi_{1}) \right\rangle \bigcap \left\langle \widehat{B}, \operatorname{Ker}(\pi_{2}) \right\rangle$. Conversely, a matrix in $\left\langle \widehat{A}, \operatorname{Ker}(\pi) \right\rangle \bigcap \left\langle \widehat{B}, \operatorname{Ker}(\pi_{2}) \right\rangle$ reduces modulo $N$ to a matrix in $A$ and simultaneously reduces modulo $2^{M}$ to a matrix in $B$.
\end{proof}

\section{Galois images and \textit{j}-invariants}

Let $N$ be a positive integer and let $\operatorname{H}$ be a subgroup of $\operatorname{GL}(2, \ZZ / N \ZZ)$ that contains $\operatorname{-Id}$ and such that $\operatorname{det}(\operatorname{H}) = \left(\ZZ / N \ZZ\right)^{\times}$. Then there is a modular curve $\operatorname{X}_{\operatorname{H}}$ defined over $\QQ$, generated by $\operatorname{H}$. There is a non-constant morphism $\pi_{\operatorname{H}} \colon \operatorname{X}_{\operatorname{H}} \to \mathbb{P}^{1}(\QQ)$ of degree $\left[\operatorname{GL}(\ZZ / N \ZZ) : \operatorname{H}\right]$. Let $E/\QQ$ be an elliptic curve. Then $\overline{\rho}_{E,N}(G_{\QQ})$ is conjugate to a subgroup of $\operatorname{H}$ if and only if $\textit{j}_{E} \in \pi_{\operatorname{H}}(\operatorname{X}_{\operatorname{H}}(\QQ))$. Suppose that $H'$ is a subgroup of $\operatorname{GL}\left(2, \ZZ / N \ZZ\right)$ that contains $H$. Then there is a rational morphism $\phi \colon \operatorname{X}_{\operatorname{H}} \to \operatorname{X}_{\operatorname{H}'}$ of degree $\left[\operatorname{H}' : \operatorname{H} \right]$ that fits into the following commutative diagram
$$\begin{tikzcd}
\operatorname{X}_{\operatorname{H}} \arrow[rrd, "\operatorname{\pi}_{\operatorname{H}}"'] \arrow[rr, "\phi"] &  & \operatorname{X}_{\operatorname{H'}} \arrow[d, "\pi_{\operatorname{H'}}"] \\
                                                                                                             &  & \mathbb{P}^{1}(\mathbb{Q})                                               
\end{tikzcd}.$$
A point $P$ on $\operatorname{X}_{\operatorname{H}}$ is a cusp if $\pi_{\operatorname{H}}(P) = \pi_{\operatorname{H}'}(\phi(P))$ is the point at infinity. For more information, see Section 2 of \cite{SZ}.

\begin{lemma}\label{Galois images}

Let $N$ be a positive integer and let $\operatorname{H}$ be a subgroup of $\operatorname{GL}(2, \ZZ / N \ZZ)$ such that $\operatorname{H}$ contains $\operatorname{-Id}$ and $\operatorname{det}(\operatorname{H}) = \left(\ZZ / N \ZZ\right)^{\times}$. Let $\operatorname{X}_{\operatorname{H}}$ be the modular curve generated by $\operatorname{H}$. Let $\operatorname{H'}$ be a subgroup of $\operatorname{GL}(2, \ZZ / N \ZZ)$ that contains $\operatorname{H}$ and let $\operatorname{X}_{\operatorname{H}'}$ be the modular curve generated by $\operatorname{H}'$. If all of the rational points on $X_{\operatorname{H}'}$ are cusps or CM points, then all of the rational points on $X_{\operatorname{H}}$ are cusps or CM points.

\end{lemma}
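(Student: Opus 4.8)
The plan is to run everything through the commutative triangle recalled immediately before the statement. Since $\operatorname{H}\leq \operatorname{H}'$, there is a $\QQ$-rational morphism $\phi\colon \operatorname{X}_{\operatorname{H}}\to \operatorname{X}_{\operatorname{H}'}$ with $\pi_{\operatorname{H}}=\pi_{\operatorname{H}'}\circ\phi$, and because $\phi$ is defined over $\QQ$ it sends $\QQ$-rational points to $\QQ$-rational points. So the first step is simply: given a rational point $P$ on $\operatorname{X}_{\operatorname{H}}$, form $P'=\phi(P)\in \operatorname{X}_{\operatorname{H}'}(\QQ)$, and invoke the hypothesis to conclude that $P'$ is a cusp or a CM point.

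The second step is to transfer this conclusion back down to $P$ using the identity $\pi_{\operatorname{H}}(P)=\pi_{\operatorname{H}'}(P')$. If $P'$ is a cusp, then $\pi_{\operatorname{H}'}(P')$ is the point at infinity on the $j$-line, hence so is $\pi_{\operatorname{H}}(P)$, and therefore $P$ is a cusp by the definition recalled above. If instead $P'$ is a CM point, then $\pi_{\operatorname{H}'}(P')=j_{E}$ for an elliptic curve $E/\QQ$ with complex multiplication, i.e. $\pi_{\operatorname{H}}(P)=j_{E}$ is one of the finitely many CM $j$-invariants; since $P$ is then either a cusp (if $\pi_{\operatorname{H}}(P)$ happened to be at infinity, which it is not here) or a non-cuspidal point whose image on the $j$-line is a CM $j$-invariant, $P$ is a CM point. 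In both cases $P$ is a cusp or a CM point, which is exactly the claim.

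There is no real obstacle here: the content is entirely that $\phi$ is a $\QQ$-morphism fitting into the triangle, so rationality and the ``cusp or CM'' dichotomy are both detected already on $\operatorname{X}_{\operatorname{H}'}$. The only point requiring mild care is the bookkeeping around the definition of a CM point of $\operatorname{X}_{\operatorname{H}}$ — namely that a non-cuspidal rational point is CM precisely when its image under $\pi_{\operatorname{H}}$ is a CM $j$-invariant — so that CM-ness of $\pi_{\operatorname{H}}(P)$ can be read off from $\pi_{\operatorname{H}'}(\phi(P))$. This is also the reason the analogous assertion on the Galois-image side (``$\overline{\rho}_{E,N}(G_{\QQ})$ not conjugate into $\operatorname{H}'$ implies it is not conjugate into $\operatorname{H}$'') holds: it is the contrapositive mirror of the same diagram, as noted in the introductory discussion.
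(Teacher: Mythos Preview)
Your proof is correct and follows essentially the same approach as the paper: use the commutative triangle $\pi_{\operatorname{H}}=\pi_{\operatorname{H}'}\circ\phi$, push a rational point $P\in\operatorname{X}_{\operatorname{H}}(\QQ)$ forward to $\phi(P)\in\operatorname{X}_{\operatorname{H}'}(\QQ)$, apply the hypothesis, and read off that $\pi_{\operatorname{H}}(P)=\pi_{\operatorname{H}'}(\phi(P))$ is either $\infty$ or a CM $j$-invariant. The paper's proof is terser but otherwise identical.
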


\begin{proof}
Let $\phi \colon \operatorname{X}_{\operatorname{H}} \to \operatorname{X}_{\operatorname{H}'}$ be the rational morphism such that
$$\begin{tikzcd}
\operatorname{X}_{\operatorname{H}} \arrow[rrd, "\operatorname{\pi}_{\operatorname{H}}"'] \arrow[rr, "\phi"] &  & \operatorname{X}_{\operatorname{H'}} \arrow[d, "\pi_{\operatorname{H'}}"] \\
                                                                                                             &  & \mathbb{P}^{1}(\mathbb{Q})                                               
\end{tikzcd}$$
is commutative. Let $P$ be a rational point on $\operatorname{X}_{\operatorname{H}}$. Then $\phi(P)$ is a rational point on $\operatorname{X}_{\operatorname{H}'}$. Hence, either $\pi_{\operatorname{H}'}(\phi(P)) = \pi_{\operatorname{H}}(P) = \infty$ or is a CM \textit{j}-invariant.
\end{proof}

\section{Classification of $2$-adic Galois Images of isogeny-torsion graphs}\label{Determination of 2-adic Galois Images}

Here we classify the $2$-adic Galois Images of some isogeny-torsion graphs over $\QQ$ while at times leaving the full proof of the classification of other isogeny-torsion graphs to Sections \ref{elliptic curves} and \ref{hyperelliptic curves}.

\subsection{Isogeny-torsion graphs of $L_{1}$, $L_{2}(2)$, $T_{4}$, $T_{6}$, and $T_{8}$ type}
\begin{proposition}
Let $\mathcal{G}$ be an isogeny-torsion graph associated to a $\QQ$-isogeny class of elliptic curves over $\QQ$ without CM.
\begin{itemize}
    \item If $\mathcal{G}$ is of $L_{1}$ type, then the $2$-adic Galois image of $\mathcal{G}$ is one of the $22$ entries in Table \ref{2-adic Galois Images of L1 Graphs},
    
    \item if $\mathcal{G}$ is of $L_{2}(2)$ type, then the $2$-adic Galois image of $\mathcal{G}$ is one of the $80$ entries in Table \ref{2-adic Galois Images of L22 Graphs},
    
    \item if $\mathcal{G}$ is of $T_{4}$ type, then the $2$-adic Galois image of $\mathcal{G}$ is one of the $60$ entries in Table \ref{2-adic Galois Images of T4 Graphs},
    
    \item if $\mathcal{G}$ is of $T_{6}$ type, then the $2$-adic Galois image of $\mathcal{G}$ is one of the $81$ entries in Table \ref{2-adic Galois Images of T6 Graphs},
    
    \item if $\mathcal{G}$ is of $T_{8}$ type, then the $2$-adic Galois image of $\mathcal{G}$ is one of the $53$ entries in Table \ref{2-adic Galois Images of T8 Graphs}.
\end{itemize}
\end{proposition}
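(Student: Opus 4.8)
The plan is to treat each of the five graph types separately and, in every case, to reduce the problem to knowing the $2$-adic image of a single distinguished ("ideal") vertex $E$, exactly as described in the Remark preceding the product-group section: for $L_1$ and $L_2(2)$ any vertex, for $T_4$ and $T_6$ the vertex with full two-torsion over $\QQ$, and for $T_8$ the vertex carrying two independent $\QQ$-rational cyclic subgroups of order $4$. The first step in each case is to convert the combinatorial type of $\mathcal{G}$, together with the torsion labels read off Table \ref{L_k graphs} and Table \ref{T_k graphs}, into a purely group-theoretic constraint on $\rho_{E,2^{\infty}}(G_{\QQ})$. Concretely: an $L_1$ graph forces $C_2(E)=1$, i.e. $\overline{\rho}_{E,2}(G_{\QQ})$ has no nonzero fixed vector, so its mod-$2$ image is either all of $\operatorname{GL}(2,\ZZ/2\ZZ)$ or cyclic of order $3$; an $L_2(2)$ graph forces $E(\QQ)_{\tor}\cong\ZZ/2\ZZ$ and $C_2(E)=2$ (the hypotheses of Corollary \ref{2-adic Galois Images corollary 1}); a $T_4$ graph forces full rational two-torsion, hence trivial mod-$2$ image, and $C_2(E)=4$ (Corollary \ref{2-adic Galois Images corollary 2}); a $T_6$ graph forces full rational two-torsion together with a $\QQ$-rational cyclic subgroup of order $4$ and $C_2(E)=6$ (Corollary \ref{2-adic Galois Images corollary 3}); and a $T_8$ graph forces two independent $\QQ$-rational cyclic subgroups of order $4$ and $C_2(E)=8$ (Corollary \ref{2-adic Galois Images corollary 4}). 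In each case the constraint beyond level $2$ (the value of $C_2(E)$) is read directly off the structure of the image at level $4$.

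The second step is a finite search. By Theorem \ref{thm-rzb} the group $\rho_{E,2^{\infty}}(G_{\QQ})$ is conjugate to one of the $1208$ groups in the RZB database, each the full lift of a group of level dividing $32$. For each graph type I would list exactly those RZB groups that have full determinant, contain an element representing complex conjugation, and satisfy the level-$2$ (or level-$4$) constraint isolated above; this is a straightforward walk through the database, organized by level. One must be careful about quadratic twisting: as recalled in Section \ref{section work by rouse and zureick-brown}, a given $j$-invariant can be realized by a curve whose image is a group $\operatorname{H}_{n}$ containing $\operatorname{-Id}$ or by one of its $\operatorname{-Id}$-free twists $\operatorname{H}_{n\alpha}$, and it is the torsion subgroup at each vertex, together with Corollary \ref{contains -Id} (which shows that containing $\operatorname{-Id}$ is a property of the whole isogeny class), that pins down the correct representative.

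The third step is to propagate. For each surviving candidate base group $H$ I would feed it through the appropriate corollary among Corollaries \ref{2-adic Galois Images corollary 1}--\ref{2-adic Galois Images corollary 4} (which are written in the RZB right-action convention) to compute $\overline{\rho}_{E',32}(G_{\QQ})$ at every other vertex $E'$, and hence $\rho_{E',2^{\infty}}(G_{\QQ})$ by taking the full lift; re-identifying each output group in the RZB list up to conjugacy, and transpose where needed, produces one arrangement assigned to $\mathcal{G}$. Completeness of the tables is exactly the statement that this finite procedure, run over all candidate $H$, produces only the arrangements listed. Realizability — that each listed arrangement is genuinely attained — follows from the example curves: as noted in Section \ref{philosophy}, all but twelve of the RZB groups occur in the LMFDB with an explicit curve, the remaining twelve are checked by computing the mod-$32$ image at each vertex by hand, and tracing the $\QQ$-isogeny class of such a reference curve (chosen generically enough that its odd part is trivial, so the $2$-power subgraph is all of $\mathcal{G}$) exhibits the arrangement and confirms its torsion labels. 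Tallying the arrangements produced in this way gives $22$, $80$, $60$, $81$, and $53$, respectively.

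The main obstacle is not conceptual but one of exhaustive, error-free bookkeeping concentrated in the second and third steps: correctly enumerating the RZB groups with the prescribed Borel (or trivial) structure at level $2$ and the prescribed $C_2$-value at level $4$; consistently managing the RZB transpose convention and the quadratic-twist ambiguity so that the torsion labels at \emph{all} vertices match simultaneously; and re-identifying each group produced by Corollaries \ref{2-adic Galois Images corollary 1}--\ref{2-adic Galois Images corollary 4} with its database label up to conjugacy. A single slip in any of these propagates into a missing or spurious table entry, so the classification ultimately rests on carrying out and cross-checking this computation rather than on any further theoretical input.
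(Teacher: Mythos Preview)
Your proposal is correct and follows essentially the same approach as the paper: identify the group-theoretic constraint on the distinguished vertex for each graph type, enumerate the RZB groups meeting that constraint, and propagate to the remaining vertices via Corollaries \ref{2-adic Galois Images corollary 1}--\ref{2-adic Galois Images corollary 4}. The paper's own proof is considerably terser---it simply points to the relevant corollary for each type and then states that ``the rest of the proof requires going through all entries in the RZB database (which the author insists he has done at least three times)''---so your write-up is in fact a more explicit account of the same bookkeeping procedure.
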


\begin{proof}
If $\mathcal{G}$ is of $L_{1}$ type, all that remains is to look up the groups in the RZB database that reduce modulo $2$ to a group of order $3$ or $6$. If $\mathcal{G}$ is of $L_{2}(2)$ type, we can apply Corollary \ref{2-adic Galois Images corollary 1}, if $\mathcal{G}$ is of $T_{4}$ type, we can apply Corollary \ref{2-adic Galois Images corollary 2}, if $\mathcal{G}$ is of $T_{6}$ type, we can apply Corollary \ref{2-adic Galois Images corollary 3}, and if $\mathcal{G}$ is of $T_{8}$ type, we can apply Corollary \ref{2-adic Galois Images corollary 4}. The rest of the proof requires going through all entries in the RZB database (which the author insists he has done at least three times).
\end{proof}

\subsection{Isogeny-torsion graphs of $R_{4}$ type}

\begin{proposition}
Let $E/\QQ$ be an elliptic curve such that the isogeny-torsion graph associated to the $\QQ$-isogeny class of $E$ is of $R_{4}(15)$ type or $R_{4}(21)$ type. Then $\rho_{E,2^{\infty}}(G_{\QQ})$ is conjugate to the full lift of $\operatorname{H}_{4}$.
\end{proposition}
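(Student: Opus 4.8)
The plan is to reduce to a finite list of $j$-invariants using the finiteness of rational points on $X_0(15)$ and $X_0(21)$, compute the $2$-adic image for the finitely many curves that arise, and then propagate to all quadratic twists, so that no fiber-product analysis (of the kind needed for, say, $L_2(3)$ graphs) is required.

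First suppose $\mathcal{G}$ is of $R_4(15)$ type with vertices $E_1,E_2,E_3,E_4$, where $E=E_1$ and (say) $E_1\to E_2$ has degree $3$, $E_1\to E_3$ has degree $5$, $E_2\to E_4$ has degree $5$, and $E_3\to E_4$ has degree $3$. Composing $E_1\to E_2\to E_4$ gives a $\QQ$-isogeny of degree $15$, whose kernel is a $\QQ$-rational subgroup of order $15$; since $\gcd(3,5)=1$ this kernel is cyclic. Hence $j_E$ is a non-cuspidal $\QQ$-rational point of $X_0(15)$. By Theorem \ref{thm-ratnoncusps}, $X_0(15)$ has genus $1$ with $X_0(15)(\QQ)$ finite, and its non-cuspidal rational points are classically known; moreover, a curve with a $15$-isogeny has $C_3(E)=C_5(E)=2$ (a larger value of $C_3$ or $C_5$ would force $C(E)=4$ or $C(E)=3$ by Theorem \ref{thm-kenku}, contradicting the other subgroup), whence $C(E)=4$ and $C_2(E)=1$ again by Theorem \ref{thm-kenku}. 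The same argument with $X_0(21)$ (also genus $1$ by Theorem \ref{thm-ratnoncusps}, with $C_7(E)=C_3(E)=2$, $C(E)=4$, $C_2(E)=1$ forced by Theorem \ref{thm-kenku}) handles the $R_4(21)$ case; in particular no vertex of $\mathcal{G}$ admits a $2$-isogeny, and so $j_E$ lies in an explicitly known finite set in either case.

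Next, by Corollary \ref{coprime isogeny-degree} all four vertices of $\mathcal{G}$ have conjugate mod-$32$ images (the connecting isogenies have odd degree), hence conjugate $2$-adic images by Theorem \ref{thm-rzb}; so it suffices to determine $\rho_{E,2^\infty}(G_{\QQ})$ for one vertex of one $R_4(15)$ isogeny class and one vertex of one $R_4(21)$ isogeny class. Taking the representatives listed in Table \ref{R_k graphs} (for instance isogeny class $50.a$ for $R_4(15)$ and $162.b$ for $R_4(21)$) and reading off $\overline{\rho}_{E,32}(G_{\QQ})$ from the RZB database (cross-checked against the LMFDB), one finds in each case that $\overline{\rho}_{E,32}(G_{\QQ})$ is conjugate to the full lift of $\operatorname{H}_4$ to level $32$; by Theorem \ref{thm-rzb}, $\rho_{E,2^\infty}(G_{\QQ})$ is conjugate to the full lift of $\operatorname{H}_4$.

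It remains to pass from these base curves to an arbitrary $E$ of $R_4(15)$ or $R_4(21)$ type. By the preceding paragraphs $j_E$ lies in the finite set just described, so $E$ is a (possibly trivial) quadratic twist of one of the base curves, and therefore $\rho_{E,2^\infty}(G_{\QQ})$ is a quadratic twist of the full lift of $\operatorname{H}_4$. Since $\operatorname{H}_4$ has no nontrivial quadratic twist in the RZB database — equivalently, every index-$2$ subgroup of the full lift of $\operatorname{H}_4$ to level $32$ contains $\operatorname{-Id}$, so Corollary \ref{subgroups of index 2 contain -Id} applies — we conclude that $\rho_{E,2^\infty}(G_{\QQ})$ is conjugate to the full lift of $\operatorname{H}_4$ in every case. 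The only genuine computation is the mod-$32$ image of the handful of explicit curves above, together with the small group-theoretic check that $\operatorname{H}_4$ is twist-rigid; I expect this bookkeeping — and making sure the finitely many $j$-invariants and twist classes are correctly enumerated — to be the main (entirely finite) obstacle, with the reduction to finitely many $j$-invariants being immediate from the known rational points on $X_0(15)$ and $X_0(21)$.
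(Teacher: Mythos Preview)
Your proposal is correct and follows essentially the same approach as the paper: reduce to finitely many $j$-invariants via the known rational points on $X_0(15)$ and $X_0(21)$, verify on explicit LMFDB representatives that the $2$-adic image is $\operatorname{H}_4$, and then use twist-rigidity of $\operatorname{H}_4$ to cover all quadratic twists. The paper's proof is slightly more explicit in one place you leave as bookkeeping: it lists the eight $j$-invariants and observes that none equals $0$ or $1728$, which is what justifies the claim that every such $E$ is a \emph{quadratic} (rather than quartic or sextic) twist of a base curve---you should make that check explicit when you fill in the enumeration.
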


\begin{proof}
Let $\mathcal{E}_{21}$ be the $\QQ$-isogeny class with LMFDB notation \texttt{162.b}. Then the isogeny graph associated to $\mathcal{E}_{21}$ is of $R_{4}(21)$ type. The isogeny graph along with the respective \textit{j}-invariants are below
\begin{center} \begin{tikzcd}
{E_{1}, \frac{-1159088625}{2097152}} \arrow[rr, no head, "3"]       &  & {E_{2}, \frac{-189613868625}{128}} \arrow[d, no head, "7"] \\
{E_{3}, \frac{-140625}{8}} \arrow[u, no head, "7"] \arrow[rr, no head, "3"'] &  & {E_{4}, \frac{3375}{2}}                          
\end{tikzcd} \end{center}
Let $\mathcal{E}_{15}$ be the $\QQ$-isogeny class with LMFDB notation \texttt{50.a}. Then the isogeny graph associated to $\mathcal{E}_{15}$ is of $R_{4}(15)$ type. The isogeny graph along with the respective \textit{j}-invariants are below
\begin{center} \begin{tikzcd}
{E_{1}, \frac{-25}{2}} \arrow[rr, no head, "3"]       &  & {E_{2}, \frac{-349938025}{8}} \arrow[d, no head, "5"] \\
{E_{3}, \frac{-121945}{32}} \arrow[u, no head, "5"] \arrow[rr, no head, "3"'] &  & {E_{4}, \frac{46969655}{32768}}                          
\end{tikzcd} \end{center}
The elliptic curve \texttt{162.b1} is in the $\QQ$-isogeny class \texttt{162.b} and the elliptic curve \texttt{50.a1} is in the $\QQ$-isogeny class \texttt{50.a}. The $2$-adic Galois Image attached to both \texttt{162.b1} and \texttt{50.a1} is conjugate to the full lift of $\operatorname{H}_{4}$.

Those eight \textit{j}-invariants are the only \textit{j}-invariants associated to isogeny graphs of $R_{4}(21)$ and $R_{4}(15)$ type. As none of them equal $0$ or $1728$, all isogeny-torsion graphs of type $R_{4}(21)$ or $R_{4}(15)$ type are a quadratic twist of \texttt{162.b} and \texttt{50.a}. As $\operatorname{H}_{4}$ has no non-trivial quadratic twists, the proof is finished.
\end{proof}

\begin{proposition}\label{R4 Graphs Proposition}
Let $\mathcal{G}$ be an isogeny-torsion graph of $R_{4}(6)$ or $R_{4}(10)$ type. Then the $2$-adic configuration of $\mathcal{G}$ is one of the entries in Table \ref{2-adic Galois Images of R4 Graphs Table}.
\end{proposition}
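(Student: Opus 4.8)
The plan is to reduce the statement to the $L_{2}(2)$ classification already established, exploiting the fact that the $2$-adic image is insensitive to isogenies of odd degree. Write the four vertices of $\mathcal{G}$ as $E_{1},E_{2},E_{3},E_{4}$, arranged so that $E_{1}\to E_{2}$ and $E_{3}\to E_{4}$ are the $\QQ$-isogenies of degree $2$, while $E_{1}\to E_{3}$ and $E_{2}\to E_{4}$ are the $\QQ$-isogenies of degree $3$ (in the $R_{4}(6)$ case) or degree $5$ (in the $R_{4}(10)$ case); all four have cyclic kernel. By Corollary \ref{coprime isogeny-degree}, $\overline{\rho}_{E_{1},32}(G_{\QQ})$ is conjugate to $\overline{\rho}_{E_{3},32}(G_{\QQ})$ and $\overline{\rho}_{E_{2},32}(G_{\QQ})$ is conjugate to $\overline{\rho}_{E_{4},32}(G_{\QQ})$; since by Theorem \ref{thm-rzb} every $2$-adic image of a non-CM curve over $\QQ$ is the full lift of its mod-$32$ reduction, it follows that $\rho_{E_{1},2^{\infty}}(G_{\QQ})$ and $\rho_{E_{3},2^{\infty}}(G_{\QQ})$ are conjugate, and likewise $\rho_{E_{2},2^{\infty}}(G_{\QQ})$ and $\rho_{E_{4},2^{\infty}}(G_{\QQ})$. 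Hence the $2$-adic configuration of $\mathcal{G}$ has the shape $(H,H',H,H')$, and it suffices to determine the admissible pairs $(H,H')=\bigl(\rho_{E_{1},2^{\infty}}(G_{\QQ}),\rho_{E_{2},2^{\infty}}(G_{\QQ})\bigr)$.

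Each vertex of an $R_{4}$ graph lies on a unique edge of $2$-power degree, so $C_{2}(E_{1})=2$; moreover $E_{1}(\QQ)$ has a point of order $2$ (every torsion configuration in Table \ref{R_k graphs} has a factor $\ZZ/2\ZZ$ at each vertex), so $E_{1}(\QQ)[2]\cong\ZZ/2\ZZ$ and the isogeny $E_{1}\to E_{2}$ has kernel the rational point of order $2$. Consequently the pair $(H,H')$ is related exactly as in the $L_{2}(2)$ setting: by Lemma \ref{2-adic Galois Images} (equivalently Corollary \ref{2-adic Galois Images corollary 1}), $H$ is the full lift of $\overline{\rho}_{E_{1},2^{m}}(G_{\QQ})$ in a basis in which the upper-right entries are even, $H'$ is obtained from $H$ by the explicit conjugation described there, and in particular $(H,H')$ occurs among the $80$ pairs recorded in Table \ref{2-adic Galois Images of L22 Graphs}. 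The substantive remaining task is to decide which of these pairs are actually carried by a curve $E_{1}$ that in addition admits a $\QQ$-rational cyclic subgroup of order $3$ (respectively $5$).

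For this I would pass to modular curves. Since $C_{2}(E_{1})=2$ and, by the shape of the graph (and Theorem \ref{thm-kenku}), $C_{3}(E_{1})=2$ in the $R_{4}(6)$ case and $C_{5}(E_{1})=2$ in the $R_{4}(10)$ case, the $j$-invariant $j_{E_{1}}$ lies on $\operatorname{X}_{0}(6)$, respectively $\operatorname{X}_{0}(10)$, which by Theorem \ref{thm-ratnoncusps} is a curve of genus $0$ with a $1$-parameter family of $\QQ$-rational points; allowing $E_{1}$ to range over quadratic twists changes $H$ only among the twist classes already present in Table \ref{2-adic Galois Images of L22 Graphs}, by the Remark following Corollary \ref{subgroups of index 2 contain -Id}. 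For each candidate group $\operatorname{H}$ appearing in the $L_{2}(2)$ list, realizability with the extra odd-degree isogeny is equivalent to the fiber product of the RZB modular curve $\operatorname{X}_{\operatorname{H}}$ with $\operatorname{X}_{0}(3)$ (respectively $\operatorname{X}_{0}(5)$) --- a modular curve of level dividing $2^{5}\cdot 3$ (respectively $2^{5}\cdot 5$) --- possessing a non-cuspidal, non-CM $\QQ$-rational point; Lemma \ref{Galois images} and the interplay of reduction with group containment cut this down to finitely many fiber products to inspect. The generic point of $\operatorname{X}_{0}(6)(\QQ)$, respectively $\operatorname{X}_{0}(10)(\QQ)$, pins down the largest pair $(H,H')$, and the finitely many specializations at which the image drops are located by these fiber-product computations: the genus-$0$ cases are read off from the $j$-invariant tables of the RZB and SZ databases, and the genus-$1$ cases are left to Section \ref{elliptic curves}. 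Assembling the realizable pairs, and matching them against the known classes $\texttt{80.b}$, $\texttt{20.a}$, $\texttt{150.a}$, $\texttt{66.c}$ of Table \ref{R_k graphs} together with their quadratic twists, produces exactly the entries of Table \ref{2-adic Galois Images of R4 Graphs Table}.

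The hard part will be this last step. Although $\operatorname{X}_{0}(6)$ and $\operatorname{X}_{0}(10)$ are themselves rational, their fiber products with the smaller $2$-adic RZB groups need not be, and confirming or excluding a non-cuspidal non-CM rational point on the genus-$1$ (and any higher-genus) fiber products requires the explicit Mordell--Weil and rational-point determinations that the paper defers to Section \ref{elliptic curves}.
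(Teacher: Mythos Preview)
Your overall strategy matches the paper's: reduce to the $L_{2}(2)$ classification via Corollary \ref{coprime isogeny-degree}, then prune the resulting list of pairs $(H,H')$ by analyzing fiber products with the appropriate odd-level modular curves, deferring the harder point-counting to later sections. The paper organizes this via Table \ref{Reductions of L2 Graphs}, listing exactly which groups ($\operatorname{H}_{9}$, $\operatorname{H}_{11}$, $\operatorname{H}_{12}$, $\operatorname{H}_{16}$, $\operatorname{H}_{18}$ for $p=5$; $\operatorname{H}_{11}$, $\operatorname{H}_{12}$, $\operatorname{H}_{14}$, $\operatorname{H}_{18}$ for $p=3$) must be excluded, and defers not only to Section \ref{elliptic curves} but also to Section \ref{hyperelliptic curves} for the genus-$2$ cases.

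There is one genuine gap in your outline. You phrase the realizability condition entirely in terms of fiber products with $\operatorname{X}_{0}(3)$ or $\operatorname{X}_{0}(5)$, and then propose to recover the torsion labels by ``matching against the known classes together with their quadratic twists.'' But the proposition concerns isogeny-\emph{torsion} graphs, so for each admissible $2$-adic pair you must also decide which of the two torsion configurations $([2p],[2p],[2],[2])$ and $([2],[2],[2],[2])$ it can carry. Having a rational point of order $p$ is not preserved by quadratic twist, so this cannot be read off from $\operatorname{X}_{0}(p)$ data plus twisting; it requires separate fiber products with $\operatorname{X}_{1}(p)$-type groups. The paper makes this explicit: it shows inline (via the genus-$2$ curve \texttt{40.36.2.1} and \texttt{Chabauty0}) that $E(\QQ)_{\tor}\cong\ZZ/10\ZZ$ forces $\rho_{E,2^{\infty}}(G_{\QQ})\not\cong\operatorname{H}_{14}$, and defers to Section \ref{hyperelliptic curves} the analogous exclusions of $\operatorname{H}_{9a}$, $\operatorname{H}_{9b}$ when $E(\QQ)_{\tor}\cong\ZZ/6\ZZ$. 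Without these $\operatorname{X}_{1}(p)$-level computations your argument would not rule out, e.g., the configuration $(\operatorname{H}_{14},\operatorname{H}_{14},\operatorname{H}_{14},\operatorname{H}_{14})$ with torsion $([10],[10],[2],[2])$, which is absent from Table \ref{2-adic Galois Images of R4 Graphs Table}.
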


\begin{proof}

Let $\mathcal{E}$ be a $\QQ$-isogeny class of elliptic curves defined over $\QQ$ and let $\mathcal{G}$ be the isogeny-torsion graph associated to $\mathcal{E}$. Let $\mathcal{G}_{2^{\infty}}$ be the $2$-adic subgraph of $\mathcal{G}$ and suppose that $\mathcal{G}_{2^{\infty}}$ is of $L_{2}(2)$ type. Suppose that the $2$-adic Galois image attached to all elliptic curves in $\mathcal{E}$ contain $\operatorname{-Id}$. Then the $2$-adic Galois image attached to $\mathcal{G}$ is one of the fifty five arrangements in the third column of Table \ref{Reductions of L2 Graphs}. Each arrangement in the third column of Table \ref{Reductions of L2 Graphs} reduces to one of the nineteen arrangements in the second column of Table \ref{Reductions of L2 Graphs}.

Let $E/\QQ$ be an elliptic curve. To prove Proposition \ref{R4 Graphs Proposition}, we need to prove that if $E$ has a $\QQ$-rational subgroup of order $5$, then $\rho_{E,2^{\infty}}(G_{\QQ})$ is not conjugate to $\operatorname{H}_{9}$, $\operatorname{H}_{11}$, $\operatorname{H}_{12}$, $\operatorname{H}_{16}$, $\operatorname{H}_{18}$, $\operatorname{H}_{23}$, or $\operatorname{H}_{26}$. Note that $\operatorname{H}_{23}$ and $\operatorname{H}_{26}$ are proper subgroups of $\operatorname{H}_{11}$ so the final list is $\operatorname{H}_{9}$, $\operatorname{H}_{11}$, $\operatorname{H}_{12}$, $\operatorname{H}_{16}$, and $\operatorname{H}_{18}$. Moreover, we need to prove that if $E(\QQ)_{\text{tors}} \cong \ZZ / 10 \ZZ$, then $\rho_{E,2^{\infty}}(G_{\QQ})$ is not conjugate to $\operatorname{H}_{14}$. We do this last case now. The fiber product of $\operatorname{H}_{14}$ and the group $\left\{\begin{bmatrix} \pm 1 & \ast \\ 0 & \ast \end{bmatrix}\right\}$ is a modular curve on the LMFDB with label \texttt{40.72.3.27}. The modular curve \texttt{40.72.3.27} is generated by a group $\operatorname{H}$ that contains the group $\operatorname{H}'$ such that $\operatorname{H}'$ generates the modular curve with LMFDB label \texttt{40.36.2.1}. The modular curve with LMFDB label \texttt{40.36.2.1} is a curve of genus $2$ and rank $0$ defined by the equation $y^{2} = -4x^{5} - 6x^{4} - 6x^{2} + 4x$.
We may use the command \texttt{Chabauty0} on Jacobian variety of the hyperelliptic curve defined by $y^{2} = -4 x^{5} - 6 x^{4} - 6 x^{2} + 4 x$ to get four rational points, all of which are cusps.

Additionally, we have to prove that if $E$ has a $\QQ$-rational subgroup of order $3$, then $\rho_{E,2^{\infty}}(G_{\QQ})$ is not conjugate to $\operatorname{H}_{11}$, $\operatorname{H}_{12}$, $\operatorname{H}_{14}$, $\operatorname{H}_{18}$, $\operatorname{H}_{23}$, $\operatorname{H}_{26}$, or $\operatorname{H}_{37}$. Note that the group $\operatorname{H}_{37}$ is a proper subgroup of $\operatorname{H}_{18}$ so the final list is $\operatorname{H}_{11}$, $\operatorname{H}_{12}$, $\operatorname{H}_{14}$, and $\operatorname{H}_{18}$. Moreover, we need to prove that if $E(\QQ)_{\text{tors}} \cong \ZZ / 6 \ZZ$, then $\rho_{E,2^{\infty}}(G_{\QQ})$ is not conjugate to $\operatorname{H}_{9a}$ or $\operatorname{H}_{9b}$. The proof of Proposition \ref{R4 Graphs Proposition} will be completed in Section \ref{elliptic curves} and Section \ref{hyperelliptic curves}.

\end{proof}

\newpage

\begin{center}
\begin{table}[h!]
\renewcommand{\arraystretch}{1.3}
\scalebox{0.57}{
    \begin{tabular}{|c|c|c|}

\hline

Isogeny-Torsion Graph & Reductive $2$-adic classification & $2$-adic classification \\
    \hline
\multirow{54}*{\includegraphics[scale=0.1]{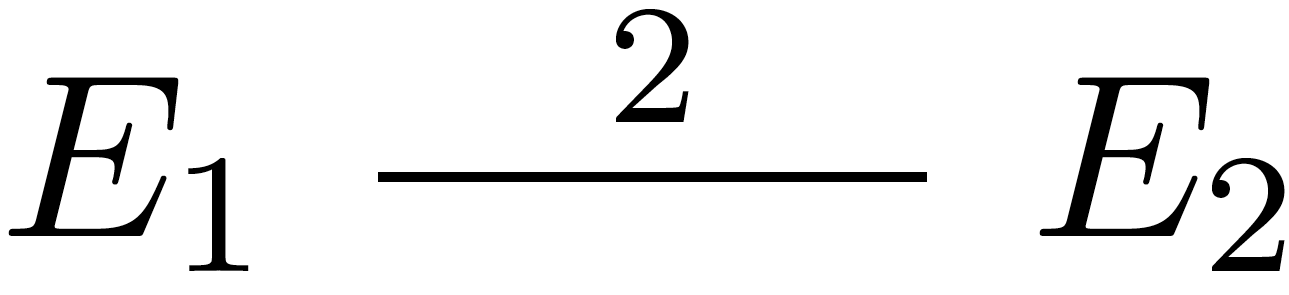}} & \multirow{10}*{$(\operatorname{H}_{6}, \operatorname{H}_{6})$} & $(\operatorname{H}_{6}, \operatorname{H}_{6})$ \\
\cline{3-3}
& & $(\operatorname{H}_{9}, \operatorname{H}_{10})$ \\
\cline{3-3}
& & $(\operatorname{H}_{11}, \operatorname{H}_{11})$ \\
\cline{3-3}
& & $(\operatorname{H}_{12}, \operatorname{H}_{12})$ \\
\cline{3-3}
& & $(\operatorname{H}_{14}, \operatorname{H}_{14})$ \\
\cline{3-3}
& & $(\operatorname{H}_{15}, \operatorname{H}_{19})$ \\
\cline{3-3}
& & $(\operatorname{H}_{16}, \operatorname{H}_{17})$ \\
\cline{3-3}
& & $(\operatorname{H}_{18}, \operatorname{H}_{18})$ \\
\cline{3-3}
& & $(\operatorname{H}_{23}, \operatorname{H}_{43})$ \\
\cline{3-3}
& & $(\operatorname{H}_{26}, \operatorname{H}_{28})$ \\
\cline{2-3}
& $(\operatorname{H}_{9}, \operatorname{H}_{10})$ & $(\operatorname{H}_{37}, \operatorname{H}_{42})$ \\
\cline{2-3}
& \multirow{10}*{$(\operatorname{H}_{11}, \operatorname{H}_{11})$} & $(\operatorname{H}_{29}, \operatorname{H}_{41})$ \\
\cline{3-3}
& & $(\operatorname{H}_{35}, \operatorname{H}_{49})$ \\
\cline{3-3}
& & $(\operatorname{H}_{39}, \operatorname{H}_{45})$ \\
\cline{3-3}
& & $(\operatorname{H}_{47}, \operatorname{H}_{47})$ \\
\cline{3-3}
& & $(\operatorname{H}_{50}, \operatorname{H}_{50})$ \\
\cline{3-3}
& & $(\operatorname{H}_{63}, \operatorname{H}_{70})$ \\
\cline{3-3}
& & $(\operatorname{H}_{73}, \operatorname{H}_{90})$ \\
\cline{3-3}
& & $(\operatorname{H}_{77}, \operatorname{H}_{80})$ \\
\cline{3-3}
& & $(\operatorname{H}_{81}, \operatorname{H}_{83})$ \\
\cline{3-3}
& & $(\operatorname{H}_{97}, \operatorname{H}_{97})$ \\
\cline{2-3}
& \multirow{4}*{$(\operatorname{H}_{12}, \operatorname{H}_{12})$} & $(\operatorname{H}_{30}, \operatorname{H}_{30})$ \\
\cline{3-3}
& & $(\operatorname{H}_{31}, \operatorname{H}_{31})$ \\
\cline{3-3}
& & $(\operatorname{H}_{40}, \operatorname{H}_{40})$ \\
\cline{3-3}
& & $(\operatorname{H}_{89}, \operatorname{H}_{93})$ \\
\cline{2-3}
& \multirow{6}*{$(\operatorname{H}_{23}, \operatorname{H}_{43})$} & $(\operatorname{H}_{64}, \operatorname{H}_{64})$ \\
\cline{3-3}
& & $(\operatorname{H}_{65}, \operatorname{H}_{71})$ \\
\cline{3-3}
& & $(\operatorname{H}_{68}, \operatorname{H}_{74})$ \\
\cline{3-3}
& & $(\operatorname{H}_{69}, \operatorname{H}_{91})$ \\
\cline{3-3}
& & $(\operatorname{H}_{76}, \operatorname{H}_{82})$ \\
\cline{3-3}
& & $(\operatorname{H}_{191}, \operatorname{H}_{196})$ \\
\cline{2-3}
& $(\operatorname{H}_{30}, \operatorname{H}_{30})$ & $(\operatorname{H}_{103}, \operatorname{H}_{104})$ \\
\cline{2-3}
& \multirow{4}*{$(\operatorname{H}_{39}, \operatorname{H}_{45})$} & $(\operatorname{H}_{105}, \operatorname{H}_{112})$ \\
\cline{3-3}
& & $(\operatorname{H}_{106}, \operatorname{H}_{111})$ \\
\cline{3-3}
& & $(\operatorname{H}_{107}, \operatorname{H}_{110})$ \\
\cline{3-3}
& & $(\operatorname{H}_{109}, \operatorname{H}_{124})$ \\
\cline{2-3}
& $(\operatorname{H}_{37}, \operatorname{H}_{42})$ & $(\operatorname{H}_{108}, \operatorname{H}_{123})$ \\
\cline{2-3}
& $(\operatorname{H}_{50}, \operatorname{H}_{50})$ & $(\operatorname{H}_{113}, \operatorname{H}_{114})$ \\
\cline{2-3}
& $(\operatorname{H}_{63}, \operatorname{H}_{70})$ & $(\operatorname{H}_{302}, \operatorname{H}_{326})$ \\
\cline{2-3}
& $(\operatorname{H}_{65}, \operatorname{H}_{71})$ & $(\operatorname{H}_{318}, \operatorname{H}_{328})$ \\
\cline{2-3}
& \multirow{3}*{$(\operatorname{H}_{69}, \operatorname{H}_{91})$} & $(\operatorname{H}_{288}, \operatorname{H}_{323})$ \\
\cline{3-3}
& & $(\operatorname{H}_{289}, \operatorname{H}_{320})$ \\
\cline{3-3}
& & $(\operatorname{H}_{291}, \operatorname{H}_{324})$ \\
\cline{2-3}
& \multirow{2}*{$(\operatorname{H}_{73}, \operatorname{H}_{90})$} & $(\operatorname{H}_{281}, \operatorname{H}_{349})$ \\
\cline{3-3}
& & $(\operatorname{H}_{284}, \operatorname{H}_{350})$ \\
\cline{2-3}
& $(\operatorname{H}_{77}, \operatorname{H}_{80})$ & $(\operatorname{H}_{216}, \operatorname{H}_{224})$ \\
\cline{2-3}
& \multirow{2}*{$(\operatorname{H}_{81}, \operatorname{H}_{83})$} & $(\operatorname{H}_{218}, \operatorname{H}_{237})$ \\
\cline{3-3}
& & $(\operatorname{H}_{220}, \operatorname{H}_{232})$ \\
\cline{2-3}
& \multirow{3}*{$(\operatorname{H}_{89}, \operatorname{H}_{93})$} & $(\operatorname{H}_{295}, \operatorname{H}_{297})$ \\
\cline{3-3}
& & $(\operatorname{H}_{556}, \operatorname{H}_{563})$ \\
\cline{3-3}
& & $(\operatorname{H}_{558}, \operatorname{H}_{566})$ \\
\cline{2-3}
& \multirow{2}*{$(\operatorname{H}_{97}, \operatorname{H}_{97})$} & $(\operatorname{H}_{304}, \operatorname{H}_{309})$ \\
\cline{3-3}
& & $(\operatorname{H}_{308}, \operatorname{H}_{312})$ \\
\cline{2-3}
& $(\operatorname{H}_{108}, \operatorname{H}_{123})$ & $(\operatorname{H}_{238}, \operatorname{H}_{239})$ \\
\cline{2-3}
& $(\operatorname{H}_{288}, \operatorname{H}_{323})$ & $(\operatorname{H}_{619}, \operatorname{H}_{649})$ \\
\hline
\end{tabular}}
\caption{Reductions of $L_{2}$ Graphs}
\label{Reductions of L2 Graphs}
\end{table}
\end{center}

\subsection{Isogeny-torsion graphs of $R_{6}$ type}

\begin{proposition}\label{R6 graphs proposition}
Let $\mathcal{G}$ be an isogeny-torsion graph of $R_{6}$ type. Then the $2$-adic configuration of $\mathcal{G}$ is one of the two configurations in Table \ref{2-adic Galois Images of R6 Graphs Table}.
\end{proposition}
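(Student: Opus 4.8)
The plan is to reduce the statement to the already-treated $L_{2}(2)$ case together with a fiber-product analysis for the $3$-isogenies that an $R_{6}$ graph carries. First I would record the shape of an $R_{6}$ graph. By the classification of isogeny graphs (Theorem \ref{thm-mainisogenygraphs} and Table \ref{R_k graphs}) together with Theorem \ref{thm-kenku}, a six-vertex $\QQ$-isogeny class containing a $2$-isogeny must have $C_{2}(E)=2$ and $C_{3}(E)=3$ (the alternative $C_{5}(E)=3$ forces $C(E)=3$, and $C_{7}(E)=2$ forces $C(E)\le 4$). Hence the $2$-adic subgraph $\mathcal{G}_{2^{\infty}}$ is of $L_{2}(2)$ type, the edges outside $\mathcal{G}_{2^{\infty}}$ are $3$-isogenies, the $3$-adic subgraph is of $L_{3}(9)$ type, and in particular every vertex carries a $\QQ$-rational cyclic subgroup of order $9$. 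Consequently, by Corollary \ref{2-adic Galois Images corollary 1} the $2$-adic images of the two curves of $\mathcal{G}_{2^{\infty}}$ are determined by $\rho_{E,2^{\infty}}(G_{\QQ})$ for a single curve $E$, and by Corollary \ref{coprime isogeny-degree} the $3$- and $9$-isogenies preserve the $2$-adic image along each of the two $3$-isogeny chains; so the whole $2$-adic configuration of $\mathcal{G}$ is pinned down by $\rho_{E,2^{\infty}}(G_{\QQ})$ and equals the arrangement that Corollary \ref{2-adic Galois Images corollary 1} attaches to the $L_{2}(2)$ subgraph.

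Next I would cut down the list of possibilities for $\operatorname{H}:=\rho_{E,2^{\infty}}(G_{\QQ})$. Since $\mathcal{G}_{2^{\infty}}$ has torsion configuration $([2],[2])$, the mod-$2$ image of $E$ is conjugate to $\left\langle\smallmat{1}{0}{1}{1}\right\rangle$, so $\operatorname{H}$ already lies in the restricted sublist of $L_{2}(2)$ entries of Table \ref{2-adic Galois Images of L22 Graphs}; assuming, as in the proof of Proposition \ref{R4 Graphs Proposition}, that $\operatorname{-Id}\in\operatorname{H}$, the configuration is one of the nineteen reductive arrangements of Table \ref{Reductions of L2 Graphs}, and by the reduction-and-containment principle of Section \ref{philosophy} it suffices to treat a handful of maximal candidates. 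The new constraint is the $9$-isogeny: by Lemma \ref{Galois images} and the commutative-diagram argument of Section \ref{philosophy}, $\operatorname{H}$ can occur only if the fiber product of $\operatorname{H}$ (reduced to the appropriate level) with $\operatorname{X}_{0}(9)$ has a non-cuspidal, non-CM rational point, and for the torsion configuration $([6],[6],[6],[6],[2],[2])$ one moreover needs rational points on the fiber product with $\operatorname{X}_{1}(3)$ at four of the vertices. For each surviving candidate $\operatorname{H}$ I would compute this fiber product, note its genus, and dispose of it: genus $0$ fiber products either produce a one-parameter family (the configuration survives) or have no rational points; genus $1$ fiber products are handled by elliptic Chabauty and deferred to Section \ref{elliptic curves}; any genus $\ge 2$ fiber products are deferred to Section \ref{hyperelliptic curves}.

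Finally I would exhibit the two surviving configurations. The $\QQ$-isogeny class \texttt{98.a} realizes the $R_{6}$ graph with torsion $([2],[2],[2],[2],[2],[2])$ and \texttt{14.a} realizes the one with torsion $([6],[6],[6],[6],[2],[2])$; reading the $2$-adic images off the LMFDB, or computing $\overline{\rho}_{\bullet,32}(G_{\QQ})$ at each vertex and matching against the RZB database, yields the two rows of Table \ref{2-adic Galois Images of R6 Graphs Table}. Since none of the eight $j$-invariants occurring on $R_{6}$ graphs equals $0$ or $1728$, every $R_{6}$ graph is a quadratic twist of one of these two classes; as the two groups that appear have no nontrivial quadratic twists (Corollary \ref{subgroups of index 2 contain -Id}, applied to their level-$32$ lifts), the $2$-adic configuration is twist-invariant, and the classification is complete.

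The main obstacle is the fiber-product step: showing that each $L_{2}(2)$-admissible $2$-adic image other than the two that occur is genuinely incompatible with a $\QQ$-rational $9$-isogeny, i.e., that the corresponding fiber products of modular curves have only cusps and CM points among their rational points. The genus-$0$ bookkeeping is routine, but the genus-$1$ and genus-$\ge 2$ fiber products require the explicit (elliptic-)Chabauty computations carried out in Sections \ref{elliptic curves} and \ref{hyperelliptic curves}; isolating the correct short list of maximal candidate groups, so that only finitely many such computations are needed, is where the reduction tables do the real work.
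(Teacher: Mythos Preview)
Your proposal contains two genuine errors that break the argument.

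First, the structural claim ``every vertex carries a $\QQ$-rational cyclic subgroup of order $9$'' is false. In an $R_{6}$ graph the $3$-adic subgraph is a chain $E_{1}\,\myisot\,E_{2}\,\myisot\,E_{3}$; the \emph{middle} curve $E_{2}$ has two independent $\QQ$-rational subgroups of order $3$ (so $\overline{\rho}_{E_{2},3}(G_{\QQ})$ lies in the split Cartan $\mathcal{C}_{sp}(3)$) but has \emph{no} cyclic $\QQ$-rational subgroup of order $9$, since that would force $C_{3}(E_{2})\ge 4$. Only the endpoints carry a $9$-isogeny. Consequently the relevant fiber products are with $\mathcal{C}_{sp}(3)$, not with $\operatorname{X}_{0}(9)$, and the computations you defer to Sections \ref{elliptic curves} and \ref{hyperelliptic curves} are not the ones actually carried out there (those sections treat $\operatorname{H}_{9}\times\mathcal{C}_{sp}(3)$ and $\operatorname{H}_{15}\times\mathcal{C}_{sp}(3)$; see Table \ref{Modular curves of genus 1}, rows $10$--$11$).

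Second, and more seriously, your final paragraph is wrong: $R_{6}$ graphs do \emph{not} come from finitely many $j$-invariants. The relevant modular curve has genus $0$ (the $j$-map in Table \ref{Modular curves of genus 1} is the one-parameter family $s\mapsto \frac{s^{3}(s+6)^{3}(s^{2}-6s+36)^{3}}{(s-3)^{3}(s^{2}+3s+9)^{3}}$), so the quadratic-twist argument you borrow from the $R_{4}(15)$/$R_{4}(21)$ case does not apply. You cannot conclude by exhibiting \texttt{14.a} and \texttt{98.a} and twisting.

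The paper's route is shorter and avoids both issues. An $R_{6}$ graph contains an $R_{4}(6)$ subgraph, so Proposition \ref{R4 Graphs Proposition} already restricts the $L_{2}(2)$ configuration to the $R_{4}(6)$ list in Table \ref{2-adic Galois Images of R4 Graphs Table}, namely $(\operatorname{H}_{6},\operatorname{H}_{6})$, $(\operatorname{H}_{9},\operatorname{H}_{10})$ (and twists), $(\operatorname{H}_{15},\operatorname{H}_{19})$, $(\operatorname{H}_{16},\operatorname{H}_{17})$. It then remains only to rule out $\operatorname{H}_{9}$ and $\operatorname{H}_{15}$ under the additional split-Cartan constraint at the middle vertex; this is exactly the two genus-$1$ fiber products handled in Section \ref{elliptic curves}. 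No finiteness-of-$j$ argument is needed: the exclusion is via rational points on modular curves, and the surviving configurations $(\operatorname{H}_{6},\operatorname{H}_{6})$ and $(\operatorname{H}_{16},\operatorname{H}_{17})$ are realized by the one-parameter families witnessed by \texttt{98.a}, \texttt{14.a}, etc.
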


\begin{proof}\label{R6 graphs proposition remark}
We proceed similarly to the proof in Proposition \ref{R4 Graphs Proposition} but with less detail. Once we fully prove Proposition \ref{R4 Graphs Proposition}, then we only need to check if any of the possible arrangements of $2$-adic Galois images from Table \ref{2-adic Galois Images of R4 Graphs Table} are possible for isogeny-torsion graphs of type $R_{6}$. In other words, we must prove that if $E/\QQ$ is an elliptic curve with two $\QQ$-rational subgroups of order $3$, then $\rho_{E,2^{\infty}}(G_{\QQ})$ is not conjugate to $\operatorname{H}_{9}$ or $\operatorname{H}_{15}$. The proof of Proposition \ref{R6 graphs proposition} will be completed in Section \ref{elliptic curves}.

\end{proof}

\subsection{Isogeny-torsion graphs of $S$ type}

\begin{proposition}\label{S graphs proposition}
Let $\mathcal{G}$ be an isogeny-torsion graph of $S$ type. Then the $2$-adic configuration of $\mathcal{G}$ is one of the entries in Table \ref{2-adic Galois Images of S Graphs Table}.
\end{proposition}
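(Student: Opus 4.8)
The strategy parallels the proofs of Propositions \ref{R4 Graphs Proposition} and \ref{R6 graphs proposition} and takes the already-completed $T_{4}$ classification as input. Recall from Theorem \ref{thm-mainisogenygraphs} that the $S$ graph has a single shape; inspecting it --- equivalently, inspecting the torsion configurations in Table \ref{S_k graphs} --- shows that the $2$-power subgraph $\mathcal{G}_{2^{\infty}}$ of an $S$ graph is a disjoint union of two $T_{4}$ graphs, and that $\mathcal{G}$ is recovered from $\mathcal{G}_{2^{\infty}}$ by joining the two copies of $T_{4}$ with four $\mathbb{Q}$-isogenies of degree $3$ linking corresponding vertices (center to center, leaf to leaf). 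Fix the ideal curve $E$ to be the center of one of the two $T_{4}$ components; then $E$ has full two-torsion over $\mathbb{Q}$, has $C_{2}(E)=4$, and (since $C_{3}(E)=2$, as $E$ is $3$-isogenous to the center of the other component) admits a $\mathbb{Q}$-rational subgroup of order $3$. By Corollary \ref{2-adic Galois Images corollary 2}, $H:=\rho_{E,2^{\infty}}(G_{\mathbb{Q}})$ determines the $2$-adic image at the other three vertices of that $T_{4}$, and by Corollary \ref{coprime isogeny-degree} the four degree-$3$ isogenies carry these four images unchanged onto the second $T_{4}$. Hence the $2$-adic configuration of $\mathcal{G}$ is a ``doubled'' $T_{4}$ configuration, completely determined by the single group $H$.

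It therefore remains to pin down the admissible $H$, which must simultaneously (i) occur as the $2$-adic image of the full-two-torsion curve of some $T_{4}$ graph --- so $H$ is one of the groups playing that role in Table \ref{2-adic Galois Images of T4 Graphs} --- and (ii) be realized by some non-CM $E/\mathbb{Q}$ that also carries a $\mathbb{Q}$-rational subgroup of order $3$, i.e.\ the fiber product of $\operatorname{H}$ and $\operatorname{X}_{0}(3)$ has a non-cuspidal, non-CM rational point. For the positive direction I would read off the $2$-adic configurations attached to the four isogeny classes \texttt{240.b}, \texttt{150.b}, \texttt{30.a}, \texttt{90.c} of Table \ref{S_k graphs} (taking care with the right-action/transpose convention of the RZB database), and then account for the remaining entry of Table \ref{2-adic Galois Images of S Graphs Table} by a quadratic twist of one of these, identifying the twisted group via Lemma \ref{index of quadratic twists} and Corollary \ref{subgroups of index 2 contain -Id}. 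For the negative direction, for each remaining candidate $H$ from the $T_{4}$ list I would show that the corresponding fiber product modular curve has only cusps and CM points among its rational points: first replace $H$ by the largest group with the same property using the containment and reduction principles of Lemma \ref{Galois images}, then compute genera and $j$-invariants, dispatch the genus-$0$ fiber products by explicit parametrization, and --- exactly as in Propositions \ref{R4 Graphs Proposition} and \ref{R6 graphs proposition} --- defer the genus-$1$ fiber products to Section \ref{elliptic curves} and the genus-$\geq 2$ ones, handled by \texttt{Chabauty0} on their rank-$0$ Jacobians, to Section \ref{hyperelliptic curves}.

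The main obstacle, as in the $R_{4}$ and $R_{6}$ cases, is controlling the rational points on the higher-genus fiber-product modular curves $\operatorname{X}_{\operatorname{H}}\times\operatorname{X}_{0}(3)$; but the double constraint $C_{2}(E)=4$ and $C_{3}(E)=2$ is so restrictive that after the reduction step only a short list of such curves survives, and each is either dominated by a modular curve already treated in the $R_{4}$ analysis or has a rank-$0$ Jacobian so that the Chabauty method applies. A secondary point to handle carefully is bookkeeping: $H$ determines only the $2$-primary part of the torsion groups at the eight vertices, not their $3$-primary part, so a single $2$-adic configuration may attach to two distinct $S$-type isogeny-torsion graphs that differ only in whether the $\mathbb{Q}$-rational $3$-subgroup is generated by a $\mathbb{Q}$-rational point; the correct row of Table \ref{2-adic Galois Images of S Graphs Table} is then selected by additionally recording whether $\overline{\rho}_{E,3}(G_{\mathbb{Q}})$ fixes a line of $E[3]$ pointwise, which is read off directly from the torsion labels in Table \ref{S_k graphs}.
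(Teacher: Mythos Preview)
Your proposal is correct and follows essentially the same approach as the paper: identify $\mathcal{G}_{2^{\infty}}$ as two $T_{4}$ components linked by $3$-isogenies, use Corollaries \ref{2-adic Galois Images corollary 2} and \ref{coprime isogeny-degree} to reduce the whole configuration to the single group $H$ at a full-two-torsion vertex, and then eliminate the inadmissible $H$ from the $T_{4}$ list by controlling rational points on fiber products with $\operatorname{X}_{0}(3)$, deferring the genus-$1$ and genus-$2$ curves to Sections \ref{elliptic curves} and \ref{hyperelliptic curves}.

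The only place the paper is sharper is in the reduction step. Rather than treating each surviving $T_{4}$ candidate separately, the paper observes that the three nontrivial ``with $-\operatorname{Id}$'' candidates $\operatorname{H}_{24}$, $\operatorname{H}_{38}$, $\operatorname{H}_{46}$ sit inside $\operatorname{H}_{11}$, $\operatorname{H}_{14}$, $\operatorname{H}_{18}$ respectively, and those three were already ruled out with a $3$-Borel in the $R_{4}(6)$ analysis of Proposition \ref{R4 Graphs Proposition}; so Lemma \ref{Galois images} dispatches them for free. The only genuinely new computations for the $S$ case are the two genus-$2$ fiber products attached to $\operatorname{H}_{8a}$ and $\operatorname{H}_{8b}$ together with $\operatorname{B}_{1}(3)$, which are the entries \texttt{24.96.2.3} and \texttt{12.96.2.1} in Table \ref{Modular curves of genus 2}. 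Your outline would arrive at the same place, but building in this observation up front makes the $S$ case almost entirely parasitic on the $R_{4}$ work.
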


\begin{proof}
\phantom{h}

\begin{center}
\begin{table}[h!]
\renewcommand{\arraystretch}{1.3}
\scalebox{0.9}{
    \begin{tabular}{|c|c|c|}

\hline

Isogeny-Torsion Graph & Reductive $2$-adic classification & $2$-adic classification \\
    \hline
\multirow{10}*{\includegraphics[scale=0.1]{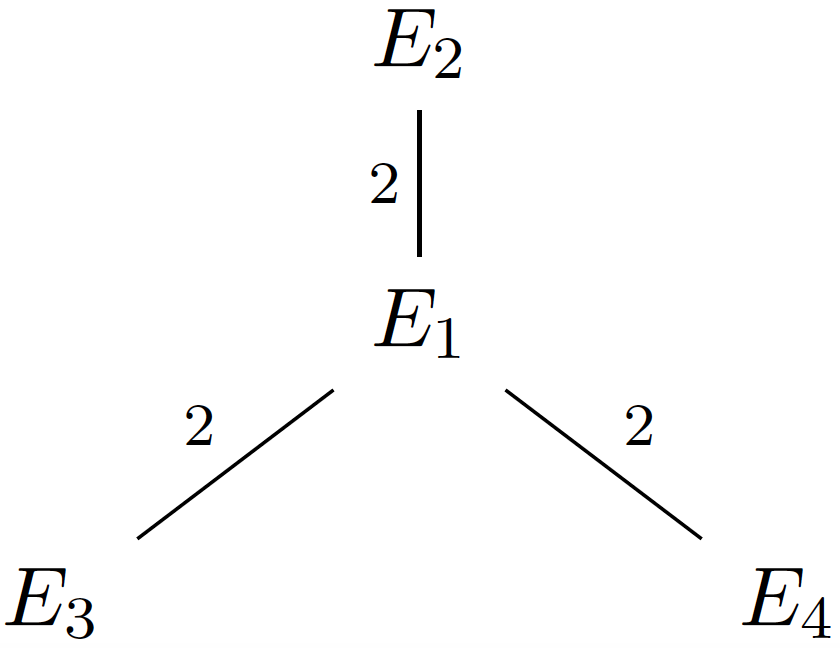}} & \multirow{4}*{$(\operatorname{H}_{8}, \operatorname{H}_{13}, \operatorname{H}_{13}, \operatorname{H}_{13})$} & $(\operatorname{H}_{8}, \operatorname{H}_{13}, \operatorname{H}_{13}, \operatorname{H}_{13})$ \\
\cline{3-3} & & $(\operatorname{H}_{24}, \operatorname{H}_{27}, \operatorname{H}_{32}, \operatorname{H}_{32})$ \\
\cline{3-3}
& & $(\operatorname{H}_{38}, \operatorname{H}_{34}, \operatorname{H}_{34}, \operatorname{H}_{44})$ \\
\cline{3-3}
& & $(\operatorname{H}_{46}, \operatorname{H}_{33}, \operatorname{H}_{33}, \operatorname{H}_{48})$ \\
\cline{2-3}
& \multirow{3}*{$(\operatorname{H}_{24}, \operatorname{H}_{27}, \operatorname{H}_{32}, \operatorname{H}_{32})$} & $(\operatorname{H}_{61}, \operatorname{H}_{95}, \operatorname{H}_{116}, \operatorname{H}_{116})$ \\
\cline{3-3}
& & $(\operatorname{H}_{66}, \operatorname{H}_{79}, \operatorname{H}_{79}, \operatorname{H}_{94})$ \\
\cline{3-3}
& & $(\operatorname{H}_{67}, \operatorname{H}_{60}, \operatorname{H}_{115}, \operatorname{H}_{115})$ \\
\cline{2-3}
& \multirow{2}*{$(\operatorname{H}_{61}, \operatorname{H}_{95}, \operatorname{H}_{116}, \operatorname{H}_{116})$} & $(\operatorname{H}_{209}, \operatorname{H}_{222}, \operatorname{H}_{241}, \operatorname{H}_{241})$ \\
\cline{3-3}
& & $(\operatorname{H}_{210}, \operatorname{H}_{221}, \operatorname{H}_{242}, \operatorname{H}_{242})$ \\
\cline{2-3}
& $(\operatorname{H}_{66}, \operatorname{H}_{79}, \operatorname{H}_{79}, \operatorname{H}_{94})$ & $(\operatorname{H}_{214}, \operatorname{H}_{226}, \operatorname{H}_{226}, \operatorname{H}_{231})$ \\
\hline
\end{tabular}}
\caption{Reductions of $T_{4}$ Graphs}
\label{Reductions of T4 Graphs}
\end{table}
\end{center}

Let $\mathcal{E}$ be a $\QQ$-isogeny class of elliptic curves defined over $\QQ$ and let $\mathcal{G}$ be the isogeny-torsion graph associated to $\mathcal{E}$. Let $\mathcal{G}_{2^{\infty}}$ be the $2$-adic subgraph of $\mathcal{G}$. Suppose that $\mathcal{G}_{2^{\infty}}$ is of $T_{4}$ type and that the $2$-adic Galois Image attached to each elliptic curve in $\mathcal{E}$ contains $\operatorname{-Id}$. Then the $2$-adic Galois image attached to $\mathcal{G}$ is one of the ten arrangements in the third column of Table \ref{Reductions of T4 Graphs}. Each arrangement in the third column of Table \ref{Reductions of T4 Graphs} reduces to one of the four arrangements in the second column of Table \ref{Reductions of T4 Graphs}.

Hence, to prove Proposition \ref{S graphs proposition}, we need to prove that if $E/\QQ$ is an elliptic curve with a $\QQ$-rational subgroup of order $3$, then $\rho_{E,2^{\infty}}(G_{\QQ})$ is not conjugate to $\operatorname{H}_{24}$, $\operatorname{H}_{38}$, or $\operatorname{H}_{46}$, and if $E(\QQ)_{\texttt{tors}} \cong \ZZ / 2 \ZZ \times \ZZ / 6 \ZZ$, then $\rho_{E,2^{\infty}}(G_{\QQ})$ is not conjugate to $\operatorname{H}_{8a}$ or $\operatorname{H}_{8b}$.

Note that $\operatorname{H}_{24}$ is a proper subgroup of $\operatorname{H}_{11}$, $\operatorname{H}_{38}$ is a proper subgroup of $\operatorname{H}_{14}$, and $\operatorname{H}_{46}$ is a proper subgroup of $\operatorname{H}_{18}$. By Lemma \ref{Galois images}, it suffices to prove that if $\overline{\rho}_{E,3}(G_{\QQ})$ is conjugate to a subgroup of $\left\{\begin{bmatrix} \ast & \ast \\ 0 & \ast \end{bmatrix} \right\} \subseteq \operatorname{GL}(2, \ZZ / 3 \ZZ)$, then $\rho_{E,2^{\infty}}(G_{\QQ})$ is not conjugate to a subgroup of $\operatorname{H}_{11}$, $\operatorname{H}_{14}$, or $\operatorname{H}_{18}$ but this is already taken care of in Proposition \ref{R4 Graphs Proposition}. Thus, the proof of Proposition \ref{S graphs proposition} will be completed in Section \ref{elliptic curves} and \ref{hyperelliptic curves}.

\end{proof}

\subsection{Isogeny-torsion graphs of $L_{2}(p)$ type}

\begin{lemma}\label{H3 11-isogeny}
Let $E/\QQ$ be an elliptic curve without CM such that the isogeny graph associated to the $\QQ$-isogeny class of $E$ is of $L_{2}(11)$ type. Then $\rho_{E,2^{\infty}}(G_{\QQ})$ is conjugate to the full lift of $\operatorname{H}_{3}$, $\operatorname{H}_{3a}$, or $\operatorname{H}_{3b}$.
\end{lemma}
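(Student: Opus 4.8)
The plan is to reduce, via Kenku's theorem and the classical description of $X_0(11)(\QQ)$, to a single $\QQ$-isogeny class up to quadratic twist, read off the $2$-adic image from the RZB database for one representative, and then propagate it using Corollary~\ref{coprime isogeny-degree} and a short quadratic-twist argument.

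\emph{Reduction to finitely many $j$-invariants.} Since the isogeny graph of $\mathcal{E}$ is of $L_2(11)$ type, $C_{11}(E)=2$, and because $11>7$, part~(1) of Theorem~\ref{thm-kenku} forces $C_q(E)=1$ for every prime $q\neq 11$. In particular $C_2(E)=1$, so $E$ has no $\QQ$-rational subgroup of order $2$, while $E$ does have a $\QQ$-rational cyclic subgroup of order $11$; hence $j_E$ is a non-cuspidal rational point of $X_0(11)$. Now $X_0(11)$ is an elliptic curve of rank $0$ with $X_0(11)(\QQ)\cong\ZZ/5\ZZ$, and, having prime level, it has exactly two cusps, both rational, so there are exactly three non-cuspidal rational $j$-invariants on it. One of these, $j=-2^{15}$, is the CM $j$-invariant of discriminant $-11$ (its curve carries a $\QQ$-rational $11$-isogeny because $11$ ramifies in $\QQ(\sqrt{-11})$), so it is excluded; the remaining two, $j=-11^2$ and $j=-11\cdot 131^3$, are interchanged by the Atkin--Lehner involution, which means the two vertices of any non-CM $L_2(11)$ graph have exactly these two $j$-invariants. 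Neither equals $0$ or $1728$, so every such vertex is a quadratic twist of one of the two curves of the $\QQ$-isogeny class $\texttt{121.a}$ in Table~\ref{L_k graphs}.

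\emph{One representative, then propagation.} Write $E_1,E_2$ for the two curves of $\texttt{121.a}$. The RZB database (via the LMFDB) gives that $\rho_{E_1,2^\infty}(G_\QQ)$ is conjugate to the full lift of $\operatorname{H}_3$, in accordance with $C_2(E_1)=1$ (so $\overline{\rho}_{E_1,2}(G_\QQ)$ has no fixed vector). By Theorem~\ref{thm-rzb} this image is the full lift of $\overline{\rho}_{E_1,32}(G_\QQ)$, and since $E_1$ and $E_2$ are joined by an isogeny of odd degree $11$, Corollary~\ref{coprime isogeny-degree} gives $\overline{\rho}_{E_2,32}(G_\QQ)$ conjugate to $\overline{\rho}_{E_1,32}(G_\QQ)$, whence $\rho_{E_2,2^\infty}(G_\QQ)$ is also conjugate to the full lift of $\operatorname{H}_3$. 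For an arbitrary non-CM $E$ with $L_2(11)$ graph we have $E\cong E_i^{(d)}$ for some $i\in\{1,2\}$ and squarefree $d$, so $\rho_{E,2^\infty}(G_\QQ)$ is a quadratic twist (in the sense of the definition following Lemma~\ref{group quadratic twists}) of the full lift of $\operatorname{H}_3$. Since the quadratic twists of $\operatorname{H}_3$ appearing in the RZB database are exactly $\operatorname{H}_3,\operatorname{H}_{3a},\operatorname{H}_{3b}$ --- equivalently, the full lift of $\operatorname{H}_3$ to level $32$ has precisely two index-$2$ subgroups not containing $\operatorname{-Id}$ --- we conclude that $\rho_{E,2^\infty}(G_\QQ)$ is conjugate to the full lift of $\operatorname{H}_3$, $\operatorname{H}_{3a}$, or $\operatorname{H}_{3b}$.

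\emph{Main obstacle.} The structural steps (Kenku's reduction, the rational points of $X_0(11)$, and the twist propagation) are routine; the genuine content is the explicit verification of $\overline{\rho}_{E_1,32}(G_\QQ)$ against the RZB database together with the claim that $\operatorname{H}_3$ has exactly the two nontrivial quadratic twists $\operatorname{H}_{3a},\operatorname{H}_{3b}$. One must also take care to correctly isolate and discard the CM point $j=-2^{15}$ on $X_0(11)$, so that no extra $j$-invariant is admitted into the argument.
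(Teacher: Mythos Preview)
Your proof is correct and follows essentially the same approach as the paper: reduce to the two non-CM $j$-invariants on $X_0(11)$, check the $2$-adic image for the class \texttt{121.a} via the RZB database, propagate across the $11$-isogeny via Corollary~\ref{coprime isogeny-degree}, and finish with the quadratic-twist classification of $\operatorname{H}_3$. You supply slightly more detail than the paper (explicitly analyzing $X_0(11)(\QQ)$ and discarding the CM point $j=-2^{15}$), but the argument is the same.
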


\begin{proof}
Let $E/\QQ$ be a non-CM elliptic curve that has a $\QQ$-rational subgroup of order $11$. Then the isogeny graph associated to the $\QQ$-isogeny class of $E$ is below with the respective \textit{j}-invariants
\begin{center}
    \begin{tikzcd}
{E_{1}, \textit{j}_{E_{1}}=-121} \arrow[rr, no head, "11"] &  & {E_{2}, \textit{j}_{E_{2}}=-24729001}
\end{tikzcd}
\end{center}
By Corollary \ref{coprime isogeny-degree}, $\rho_{E_{1},2^{\infty}}(G_{\QQ})$ is conjugate to $\rho_{E_{2},2^{\infty}}(G_{\QQ})$. Let $E$ be the elliptic curve with LMFDB label \texttt{121.a1} and let $E'$ be the elliptic curve with LMFDB label \texttt{121.a2}. Then $\textit{j}_{E} = -121$ and $\textit{j}_{E'} = -24729001$. The \textit{j}-invariants $-121$ and $-24729001$ are the only two \textit{j}-invariants associated to non-CM elliptic curves over $\QQ$ with a $\QQ$-rational subgroup of order $11$. As none of those \textit{j}-invariants are equal to $0$ or $1728$, all non-CM elliptic curves over $\QQ$ with a $\QQ$-rational subgroup of order $11$ are quadratic twists of $E$ or $E'$. Both $\rho_{E,2^{\infty}}(G_{\QQ})$ and $\rho_{E',2^{\infty}}(G_{\QQ})$ are conjugate to $\operatorname{H}_{3} = \left\langle \begin{bmatrix} 3 & 3 \\ 0 & 1 \end{bmatrix}, \begin{bmatrix} 0 & 1 \\ 3 & 1 \end{bmatrix} \right\rangle$. And the only quadratic twists of $\operatorname{H}_{3}$ are $\operatorname{H}_{3a}$, $\operatorname{H}_{3b}$, and of course, $\operatorname{H}_{3}$ itself.

\end{proof}

\begin{lemma}\label{H1 37 isogeny and 17 isogeny}
Let $E/\QQ$ be an elliptic curve such that the isogeny graph associated to the $\QQ$-isogeny class of $E$ is of $L_{2}(17)$ or $L_{2}(37)$ type. Then $\rho_{E,2^{\infty}}$ is surjective.
\end{lemma}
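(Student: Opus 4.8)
The plan is to mimic the structure of Lemma~\ref{H3 11-isogeny}: pass to a finite list of $j$-invariants, reduce to a single representative up to quadratic twist, check the $2$-adic image of the representative, and finish by a twisting argument. First I would recall from Theorem~\ref{thm-ratnoncusps} (and the Kenku bound $C_{17}\le 2$, $C_{37}\le 2$ from Theorem~\ref{thm-kenku}) that $X_0(17)$ and $X_0(37)$ are genus-$1$ curves with finitely many rational points, all known explicitly; in each case the non-cuspidal rational points give exactly \emph{one} pair of $\QQ$-isogenous $j$-invariants, neither of which is $0$ or $1728$, and these curves are non-CM. For $N=17$ the relevant isogeny class is \texttt{14450.b} (from Table~\ref{L_k graphs}) and for $N=37$ it is \texttt{1225.b}. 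By Corollary~\ref{coprime isogeny-degree}, the two curves in each class have conjugate $\rho_{E,2^\infty}(G_\QQ)$, so it suffices to treat one curve per class.

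Next I would invoke the twisting principle: since the only $j$-invariants attached to $L_2(17)$ (resp.\ $L_2(37)$) isogeny graphs are these finitely many values, and none equals $0$ or $1728$, every elliptic curve over $\QQ$ whose isogeny graph is of $L_2(17)$ or $L_2(37)$ type is a quadratic twist of the named representative (the converse in the Remark following Corollary~\ref{subgroups of index 2 contain -Id}). Now I would look up in the RZB database the $2$-adic image of \texttt{14450.b1} and of \texttt{1225.b1} (or whichever curve in each class is in the LMFDB), and the point is that in both cases $\rho_{E,2^\infty}(G_\QQ)=\operatorname{GL}(2,\ZZ_2)$, i.e.\ the image is $\operatorname{H}_1$. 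Then by the remark after Theorem~\ref{thm-rzb} and Corollary~\ref{subgroups of index 2 contain -Id} (together with $\smallmat{0}{1}{-1}{0}^2=\operatorname{-Id}$, so every index-$2$ subgroup of $\operatorname{GL}(2,\ZZ/32\ZZ)$ contains $\operatorname{-Id}$), $\operatorname{H}_1$ has no non-trivial quadratic twists, so every quadratic twist of the representative also has full $2$-adic image. This proves $\rho_{E,2^\infty}$ is surjective for all such $E$.

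The main obstacle — really the only non-bookkeeping point — is verifying that the $2$-adic image of the representative curves is genuinely all of $\operatorname{GL}(2,\ZZ_2)$, i.e.\ that the mod-$32$ image is all of $\operatorname{GL}(2,\ZZ/32\ZZ)$; this is a database lookup in the RZB tables (or equivalently checking that the relevant modular curve for each proper subgroup of $\operatorname{GL}(2,\ZZ/32\ZZ)$ containing complex conjugation does not contain the point $j=-121$ analog, i.e.\ does not contain our $j$-invariants), but conceptually it is immediate once one notes that a $17$- or $37$-isogeny imposes no constraint at the prime $2$ and that these particular $j$-invariants happen not to lie on any of the exceptional $2$-adic modular curves. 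Since the whole argument is parallel to Lemma~\ref{H3 11-isogeny}, the write-up should be short: state the two isogeny classes, cite Corollary~\ref{coprime isogeny-degree} and the RZB lookup giving $\operatorname{H}_1$ at one vertex hence at both, and close with the ``no non-trivial twist'' remark for $\operatorname{H}_1$.
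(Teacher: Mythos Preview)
Your proposal is correct and follows essentially the same approach as the paper: identify the finitely many $j$-invariants, note they are not $0$ or $1728$ so every such curve is a quadratic twist of a fixed representative, look up the representative in the RZB database to find $\operatorname{H}_1$, then use that $\operatorname{H}_1$ has no non-trivial quadratic twists. One small factual slip: $X_0(37)$ has genus $2$, not genus $1$ (it falls under case (3) of Theorem~\ref{thm-ratnoncusps}, not case (2)), but this does not affect your argument since in both cases the non-cuspidal rational points are finite and completely known.
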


\begin{proof}
The isogeny graph associated to the $\QQ$-isogeny class of $E$ is one of the two graphs below with respective \textit{j}-invariants
\begin{center} \begin{tikzcd}
{E_{1}, \frac{-882216989}{131072}} \arrow[rr, no head, "17"] &  & {E_{2}, \frac{-297756989}{2}}
\end{tikzcd}

\begin{tikzcd}
{E'_{1}, -162677523113838677} \arrow[rrr, no head, "37"] &  &  & {E'_{2}, -9317}
\end{tikzcd}
\end{center}
The \textit{j}-invariants $\frac{-882216989}{131072}$ and $\frac{-297756989}{2}$ are the only two \textit{j}-invariants associated to non-CM elliptic curves over $\QQ$ with a $\QQ$-rational subgroup of order $17$ and $-162677523113838677$ and $-9317$ are the only two \textit{j}-invariants associated to non-CM elliptic curves over $\QQ$ with a $\QQ$-rational subgroup of order $37$. By Corollary \ref{coprime isogeny-degree}, $\rho_{E_{1},2^{\infty}}(G_{\QQ})$ is conjugate to $\rho_{E_{2},2^{\infty}}(G_{\QQ})$ and $\rho_{E'_{1},2^{\infty}}(G_{\QQ})$ is conjugate to $\rho_{E'_{2},2^{\infty}}(G_{\QQ})$. By the fact that none of the \textit{j}-invariants are equal to $0$ or $1728$, each non-CM elliptic curve over $\QQ$ with a $\QQ$-rational subgroup of order $17$ is a quadratic twist of $E_{1}$ or $E_{2}$ and each elliptic curve over $\QQ$ with a $\QQ$-rational subgroup of order $37$ is a quadratic twist of $E_{1}'$ or $E_{2}'$.

Let $\widehat{E}$ be the elliptic curve with LMFDB label \texttt{1225.b2} and let $\widetilde{E}$ be the elliptic curve with LMFDB label \texttt{14450.b2}. Then $\textit{j}_{\widehat{E}} = -9317$ and $\textit{j}_{\widetilde{E}} = \frac{-297756989}{2}$. Moreover, $\rho_{\widehat{E},2^{\infty}}(G_{\QQ})$ and $\rho_{\widetilde{E},2^{\infty}}(G_{\QQ})$ are both conjugate to $\operatorname{H}_{1} = \operatorname{GL}(2, \ZZ_{2})$. Finally, the only quadratic twist of $\operatorname{H}_{1}$ is $\operatorname{H}_{1}$ itself.
\end{proof}

\begin{proposition}\label{L2p Graphs Proposition}
Let $\mathcal{G}$ be an isogeny-torsion graph of $L_{2}(p)$ type where $p = 3$, $5$, $7$, or $13$. Then the $2$-adic configuration of $\mathcal{G}$ is one of the entries in Table \ref{2-adic Galois Images of L2 Graphs Table Odd}.
\end{proposition}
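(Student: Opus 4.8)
The plan is to reduce the classification of $L_2(p)$-type isogeny-torsion graphs for $p \in \{3,5,7,13\}$ to a finite, explicit computation combining Corollary \ref{coprime isogeny-degree} with the RZB database and a (finite) list of $j$-invariants. The key observation is that for $p=13$ there are only finitely many non-CM $j$-invariants admitting a $\QQ$-rational $13$-isogeny (these correspond to the finitely many non-cuspidal rational points on $\operatorname{X}_0(13)$, a genus-$0$ curve with an infinite family — so here the family is infinite and one argues differently), whereas for $p=3,5,7$ the curve $\operatorname{X}_0(p)$ has genus $0$ and the $2$-adic image can vary within the family. So the structure of the proof splits by $p$, but in every case the engine is: an isogeny of odd degree $p$ does not change the $2$-adic image (Corollary \ref{coprime isogeny-degree}), so within an $L_2(p)$ graph both vertices carry conjugate $2$-adic images; hence the $2$-adic configuration of $\mathcal{G}$ is determined by $\rho_{E,2^\infty}(G_\QQ)$ for a single vertex $E$, together with the torsion configuration of $\mathcal{G}$.

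First I would fix notation: an $L_2(p)$ graph with $p$ odd has torsion configuration either $([p],[1])$ or $([1],[1])$ according to whether the isogeny $p$-kernel is generated by a rational point or not (this is recorded in Table \ref{L_k graphs}). Since Corollary \ref{coprime isogeny-degree} gives $\overline{\rho}_{E_1,32}(G_\QQ)$ conjugate to $\overline{\rho}_{E_2,32}(G_\QQ)$, and since by Theorem \ref{thm-rzb} the $2$-adic image is the full lift of the mod-$32$ image, the $2$-adic image is literally the same group (up to conjugacy) at both vertices. Therefore the problem becomes: determine which RZB groups $\operatorname{H}$ can occur as $\rho_{E,2^\infty}(G_\QQ)$ for a non-CM $E/\QQ$ that additionally admits a $\QQ$-rational cyclic subgroup of order $p$ (equivalently, a rational point on $\operatorname{X}_0(p)$), subject to the constraint that $E(\QQ)_{\tor}$ realizes the prescribed torsion configuration — in particular, if the configuration is $([p],[1])$ then $\operatorname{H}$ must be compatible with a rational $2^k$-torsion point only in the ways permitted, and if $([1],[1])$ then no extra $2$-power torsion is forced. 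The admissible list of $\operatorname{H}$ is cut down using Lemma \ref{Galois images}: if $E$ has a rational $p$-isogeny, then $\overline{\rho}_{E,3}(G_\QQ)$ (resp. mod $5$, mod $7$, mod $13$) lies in a Borel, and for each candidate $\operatorname{H}$ one forms the fiber product of $\operatorname{H}$ (lifted to level $32$) with the Borel $\left\{\begin{bmatrix}\ast & \ast \\ 0 & \ast\end{bmatrix}\right\} \subseteq \operatorname{GL}(2,\ZZ/p\ZZ)$ via Lemma \ref{product groups}, and checks whether the resulting modular curve has a non-CM non-cuspidal rational point.

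The bulk of the work — and the main obstacle — is this last check, i.e. classifying rational points on the relevant fiber products. For most candidate groups the fiber product has genus $0$ or $1$, or contains a subgroup generating a higher-genus modular curve already known (from the SZ database or the LMFDB) to have only cusps/CM points as rational points, in which case Lemma \ref{Galois images} and the reduction argument of Section \ref{philosophy} eliminate $\operatorname{H}$ immediately. The genuinely hard cases are the genus-$1$ fiber products of positive rank, or genus-$\geq 2$ curves not already handled in the literature; as flagged in the introduction, those of genus $1$ are deferred to Section \ref{elliptic curves} and those of genus $\geq 2$ (hyperelliptic) to Section \ref{hyperelliptic curves}, where Mordell-Weil sieving or Chabauty (the \texttt{Chabauty0} command, as used in Proposition \ref{R4 Graphs Proposition}) finishes them. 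For $p=13$ one must also rule out spurious large-index images: since $\operatorname{X}_0(13)$'s family is one-parameter, one parametrizes $j = j_{13}(t)$ and intersects with the (genus-$0$) modular curves of the surviving $2$-adic groups to see exactly which $2$-adic images are forced; combined with the quadratic-twist bookkeeping of Lemma \ref{index of quadratic twists} and Corollary \ref{subgroups of index 2 contain -Id}, this pins down the final list. I would then assemble the surviving $(\operatorname{H},\operatorname{H})$ pairs, pair them with the two possible torsion configurations, and verify the count matches the $34$ arrangements of Table \ref{2-adic Galois Images of L2 Graphs Table Odd}, with the residual genus-$1$ and hyperelliptic verifications explicitly cited forward to Sections \ref{elliptic curves} and \ref{hyperelliptic curves}.
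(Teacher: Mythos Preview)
Your overall strategy matches the paper's: use Corollary \ref{coprime isogeny-degree} to reduce to a single vertex, form fiber products of the candidate RZB groups with the Borel in $\operatorname{GL}(2,\ZZ/p\ZZ)$, and classify rational points on the resulting modular curves, deferring the genus-$1$ cases to Section \ref{elliptic curves} and the hyperelliptic genus-$2$ cases to Section \ref{hyperelliptic curves}. That much is correct.

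However, there is a genuine gap. You assume that every fiber product that is not genus $0$ or $1$ is hyperelliptic and hence amenable to \texttt{Chabauty0}. The paper's proof shows this is false: the fiber product of $\operatorname{H}_7$ with the Borel $\operatorname{B}_0(13)$ is a \emph{non-hyperelliptic} curve of genus $3$ (LMFDB label \texttt{52.56.3.1}). The paper handles this by exhibiting an involution $\iota$ on the canonical model, quotienting to an elliptic curve of rank $1$, decomposing the Jacobian up to isogeny as $E\times A$ with $A$ an abelian surface of analytic (hence algebraic) rank $0$, proving injectivity of the twisted Abel--Jacobi map $P\mapsto P-\iota(P)$ into $\operatorname{J}_{\tor}(\QQ)\cong\ZZ/4\ZZ$, and then sieving modulo $3$ to exclude the remaining torsion class. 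None of this is covered by your proposed toolkit, and without it the case $p=13$ cannot be completed.

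A second, smaller gap: for $p=3$ you must rule out $\operatorname{H}_{20a}$ when $E(\QQ)_{\tor}\cong\ZZ/3\ZZ$. The paper does this not by Chabauty but by a local argument: the relevant modular curve (LMFDB \texttt{24.64.1.23}) has an embedded model in $\mathbb{P}^3$ with no $\QQ_2$-points. Your proposal does not anticipate a local-solubility obstruction. Both of these cases are the actual substantive content of the paper's proof of this proposition; the rest is, as you say, deferred.
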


\begin{proof}\label{L2p Graphs Proposition Remark}
The right column of Table \ref{Reductions of L1 Graphs} shows arrangements of subgroups of $\operatorname{GL}(2, \ZZ_{2})$ that reduce to the groups in the middle column of Table \ref{Reductions of L1 Graphs}.

\begin{center}
\begin{table}[h!]
\renewcommand{\arraystretch}{1.3}
\scalebox{0.55}{
    \begin{tabular}{|c|c|c|}

\hline

Isogeny-Torsion Graph & Reductive $2$-adic classification & $2$-adic classification \\
    \hline
\multirow{14}*{$([1])$} & \multirow{7}*{$\operatorname{H}_{1}$} & $\operatorname{H}_{1}$ \\
\cline{3-3}
& & $\operatorname{H}_{2}$ \\
\cline{3-3}
& & $\operatorname{H}_{3}$ \\
\cline{3-3}
& & $\operatorname{H}_{4}$ \\
\cline{3-3}
& & $\operatorname{H}_{5}$ \\
\cline{3-3}
& & $\operatorname{H}_{7}$ \\
\cline{3-3}
& & $\operatorname{H}_{20}$ \\
\cline{2-3}
& \multirow{2}*{$\operatorname{H}_{7}$} & $\operatorname{H}_{22}$ \\
\cline{3-3}
& & $\operatorname{H}_{55}$ \\
\cline{2-3}
& \multirow{2}*{$\operatorname{H}_{22}$} & $\operatorname{H}_{56}$ \\
\cline{3-3}
& & $\operatorname{H}_{57}$ \\
\cline{2-3}
& $\operatorname{H}_{55}$ & $\operatorname{H}_{441}$ \\
\cline{2-3}
& \multirow{2}*{$\operatorname{H}_{56}$} & $\operatorname{H}_{177}$ \\
\cline{3-3}
& & $\operatorname{H}_{178}$ \\
\hline
\end{tabular}}
\caption{Reductions of $L_{1}$ Graphs}
\label{Reductions of L1 Graphs}
\end{table}
\end{center}

Let $\mathcal{G}_{2^{\infty}}$ be the $2$-adic subgraph of $\mathcal{G}$. Then $\mathcal{G}_{2^{\infty}}$ is an isogeny-torsion graph of type $L_{1}$. Note that $\operatorname{H}_{20}$ is a proper subgroup of $\operatorname{H}_{7}$. Let $E/\QQ$ be an elliptic curve.

We need to prove that if $E$ contains a $\QQ$-rational subgroup of order $13$, then $\rho_{E,2^{\infty}}(G_{\QQ})$ is not conjugate to a subgroup of $\operatorname{H}_{2}, \operatorname{H}_{3}, \operatorname{H}_{5}$, or $\operatorname{H}_{7}$. We need to investigate $\rho_{E,2^{\infty}}(G_{\QQ})$ when $E$ contains a $\QQ$-rational subgroup of order $7$; in particular, prove that $\rho_{E,2^{\infty}}(G_{\QQ})$ is not conjugate to $\operatorname{H}_{7}$ exactly. Moreover, we need to prove that if $E(\QQ)_{\text{tors}} \cong \ZZ / 7 \ZZ$, then $\rho_{E,2^{\infty}}(G_{\QQ})$ is not conjugate to a subgroup of $\operatorname{H}_{2}$, $\operatorname{H}_{3}$, $\operatorname{H}_{4}$, $\operatorname{H}_{5}$, or $\operatorname{H}_{7}$. We need to prove that if $E$ contains a $\QQ$-rational subgroup of order $5$, then $\rho_{E,2^{\infty}}(G_{\QQ})$ is not conjugate to a subgroup of $\operatorname{H}_{2}$, $\operatorname{H}_{3}$, $\operatorname{H}_{5}$, or $\operatorname{H}_{20}$. Moreover, we need to prove that if $E(\QQ)_{\text{tors}} \cong \ZZ / 5 \ZZ$, then $\rho_{E,2^{\infty}}(G_{\QQ})$ is not conjugate to a subgroup of $\operatorname{H}_{7}$. We need to investigate what kind of subgroup of $\operatorname{H}_{7}$ that $\rho_{E,2^{\infty}}(G_{\QQ})$ can be when $E$ has a $\QQ$-rational subgroup of order $3$; in particular, prove that $\rho_{E,2^{\infty}}(G_{\QQ})$ can be conjugate to a quadratic twist of $\operatorname{H}_{20}$ but cannot be conjugate to $\operatorname{H}_{7}$ exactly. Finally, we have to prove that if $E(\QQ)_{\text{tors}} \cong \ZZ / 3 \ZZ$, then $\rho_{E,2^{\infty}}(G_{\QQ})$ is not conjugate to $\operatorname{H}_{20a}$.

Let $\operatorname{B}_{1}(3)$ denote the subgroup of $\operatorname{GL}(2, \ZZ / 3 \ZZ)$ consisting of all matrices of the form $\begin{bmatrix}
1 & c \\ 0 & d
\end{bmatrix}$ and let $\operatorname{B}_{0}(13)$ denote the subgroup of $\operatorname{GL}(2, \ZZ / 13 \ZZ)$ consisting of all upper-triangular matrices. We prove below that all of the rational points on the fiber product of $\operatorname{H}_{7}$ and $\operatorname{B}_{0}(13)$ are cusps and all of the rational points on the fiber product of $\operatorname{H}_{20a}$ and $\operatorname{B}_{1}(3)$ are cusps.

\begin{itemize}
    \item $\operatorname{H}_{7} \times \operatorname{B}_{0}(13)$
    
    The author would like to thank David Zureick-Brown for help on classifying the rational points on the fiber product of $\operatorname{H}_{7}$ and $\operatorname{B}_{0}(13)$. We take inspiration from the study of non-hyperelliptic curves of genus $3$ in Subsection 8.3 of \cite{Rouse2021elladicIO}; especially the first and second examples in the subsection. The modular curve $\operatorname{X}$ generated by the group $\operatorname{H}_{7} \times \operatorname{B}_{0}(13)$ is denoted \texttt{52.56.3.1} in the LMFDB.
    
    In the LMFDB, $\operatorname{X}$ has canonical model
    $$\operatorname{X'} : x^{2}y^{2} + 2x^{3}z + 2x^{2}yz - 2xy^{2}z - 2y^{3}z - x^{2}z^{2} - 8xyz^{2} - y^{2}z^{2} - 4z^{4} = 0.$$
    Both $\operatorname{X}$ and $\operatorname{X}'$ are curves of genus $3$ that are not hyperelliptic. There are two obvious rational points on $\operatorname{X}'$, namely, $R_{1} = (1,0,0)$ and $R_{2} = (0,1,0)$. The denominator of the \textit{j}-invariant of $\operatorname{X}'$ listed in the LMFDB of $\operatorname{X}$ has a factor of $z^{2}$ and hence, $R_{1}$ and $R_{2}$ are cusps. It remains to prove that $\operatorname{X}'$ has no other rational points. Let $\iota \colon \operatorname{X}' \to \operatorname{X}'$ be the map that maps a point $(a,b,c)$ on $\operatorname{X}'$ to $(-b,-a,c)$. Then $\iota$ is an automorphism on $\operatorname{X}'$ of order $2$. Note that $\iota(R_{1}) = R_{2}$ and hence, neither $R_{1}$ nor $R_{2}$ are fixed points of $\iota$. Let $(a,b,1)$ be a point on $\operatorname{X}'$ that is a fixed point of $\iota$. Then $(a,b,1) = (-b,-a,1)$. Thus, $b = -a$ and $a$ satisfies the expression $a^{4} + 6a^{2} - 4$, which has no rational solutions. Hence, $(a,b,1)$ is not a rational point on $\operatorname{X}'$.
    
    The quotient curve of $\operatorname{X}'$ modulo the subgroup of the automorphism group on $\operatorname{X}'$ generated by $\iota$ is isomorphic to the elliptic curve $E/\QQ$ with LMFDB label \texttt{208.b.1}. Note that $E(\QQ) \cong \ZZ$. Let $\operatorname{J}_{\operatorname{X}'}$ be the Jacobian of $\operatorname{X}'$. We can consider the elements of $\operatorname{J}_{\operatorname{X}'}$ to be $\operatorname{Pic}^{0}(C)$, the group of divisors on $C$ modulo the group of principal divisors on $C$. There is an abelian surface $A$ such that $\operatorname{J}_{\operatorname{X}'}$ is isogenous to $E \times A$. Both $\operatorname{J}_{\operatorname{X}'}$ and $E$ have analytic rank equal to $1$ and hence, the analytic rank of $A$ is equal to $0$. By Corollary A.1.8 in \cite{Rouse2021elladicIO}, the algebraic rank of $A$ is equal to $0$. Hence, $A(\QQ)$ is torsion.
    
    Note that for a rational point $P \in \operatorname{X}'(\QQ)$, $P$ and $\iota(P)$ have the same image in $E(\QQ)$. Thus, $P - \iota(P)$ is the identity element in $E$. Extending this map, we see that $P - \iota(P)$ is torsion in $E(\QQ) \times A(\QQ)$. The map $\operatorname{J}_{\operatorname{X}'}(\QQ) \to E(\QQ) \times A(\QQ)$ is finite and hence, $P - \iota(P)$ is a divisor on $\operatorname{J}_{\operatorname{X}'}$ of finite order. We compute $\operatorname{J}_{\operatorname{X}' \texttt{tors}}$ and the preimage of the Abel-Jacobi map $\tau \colon \operatorname{X}'(\QQ) \to \operatorname{J}_{\operatorname{X}' \texttt{tors}}$ that maps a point $P \in \operatorname{X}'(\QQ)$ to $P - \iota(P)$.
    
    Let $p$ be a prime of $\operatorname{X}'$ of good reduction. Then the map $\operatorname{J}_{\operatorname{X}' \texttt{tors}}(\QQ) \to \operatorname{J}_{\operatorname{X}'}(\FF_{p})$ is injective. Using the Magma command \texttt{ClassGroup} on $\operatorname{X}'_{3}$ gives a group isomorphic to $\ZZ \times \ZZ / 80 \ZZ$ and using the Magma command \texttt{ClassGroup} on $\operatorname{X}'_{7}$ gives a group isomorphic to $\ZZ \times \ZZ / 13 \ZZ \times \ZZ / 52 \ZZ$. By $\operatorname{GCD}$ computations, $\operatorname{J}_{\operatorname{X}' \texttt{tors}}(\QQ)$ is a cyclic group of order $1$, $2$, or $4$. By the fact that $R_{1} - R_{2} = R_{1} - \iota(R_{1})$ is a divisor on $\operatorname{X}'$ of order $4$, we have that $\operatorname{J}_{\operatorname{X}' \texttt{tors}}(\QQ)$ is in fact a cyclic group of order $4$.
    
    Using another Magma computation, we see that $R_{1} - \iota(R_{1})$ is a divisor over $\operatorname{X}'$ and $2 \cdot R_{1} - 2 \cdot \iota(R_{1})$ is not a principal divisor. Hence, the order of $R_{1} - \iota(R_{1})$ in $\operatorname{J}_{\operatorname{X}'}$ is equal to $4$. We note that the the map $\tau$ is injective. To show this, let $P, Q \in \operatorname{X}' (\QQ)$ and first, suppose that $P - \iota(P) = Q - \iota(Q)$ as divisors over $\operatorname{X}'$ (without mention of equivalence in $\operatorname{J}_{\operatorname{X}'}$). As $P \neq \iota(P)$ and $Q \neq \iota(Q)$ we have $P = Q$. Next, suppose that $P - \iota(P)$ is equivalent to $Q - \iota(Q)$ as an element of $\operatorname{J}_{\operatorname{X}'}$. As $P \neq \iota(P)$ and $Q \neq \iota(Q)$, this gives a $g_{2}^{1}$ on a curve that is not hyperelliptic, which is impossible as mentioned in \cite{Rouse2021elladicIO}.
    
    Note that $\tau(P) = P - \iota(P)$ is a principal divisor if and only if $P = \iota(P)$, which is not possible for rational points on $\operatorname{X}'$. Next, we see that $\tau(R_{1}) = R_{1} - \iota(R_{1}) = R_{1} - R_{2}$ and $\tau(R_{2}) = R_{2} - \iota(R_{2}) = R_{2} - R_{1} = -(R_{1} - R_{2})$. It remains to prove that there is no rational point $P$ on $\operatorname{X}'$ such that $P - \iota(P)$ is linearly equivalent to $2 \cdot R_{1} - 2 \cdot R_{2}$. We use the sieving method employed in the second example in Subsection 8.3 of \cite{Rouse2021elladicIO}. Let $\operatorname{X}_{3}'$ be the reduction of $\operatorname{X}'$ modulo $3$. Then $\operatorname{X}_{3}'$ is a smooth curve which has six rational points. Running a simple search, we see that there is no rational point $r$ in $\operatorname{X}_{3}'$ such that $r - \iota(r)$ is linearly equivalent to the image of $2 \cdot (R_{1}) - 2 \cdot R_{2}$ in $\operatorname{X}_{3}'$. Thus, there are two rational points on $\operatorname{X}'$, both of which are cusps.
    
    \item $\operatorname{H}_{20a} \times \operatorname{B}_{1}(3)$
    
    The author would like to thank David Zureick-Brown for help with this question.
    
    Let $\operatorname{H} = \operatorname{H}_{20a} \times \operatorname{B}_{1}(3)$. The modular curve generated by $\operatorname{H}$ has LMFDB label \texttt{24.128.1.13}. Let $\operatorname{H}' = \left\langle \operatorname{H}, \operatorname{-Id} \right\rangle$. The modular curve $\operatorname{X}'$ generated by $\operatorname{H}'$ has LMFDB label \texttt{24.64.1.23}. The embedded model $\operatorname{M}$ of $\operatorname{X}'$ in $\mathbb{P}^{3}$ has defining equations
    $$0 = 2x^{2}-z^{2}-2zw = 4xy-6y^{2}-w^{2}.$$
    Using the command \texttt{IsSoluble(M,2)} produces no points on $M$ defined over $\QQ_{2}$. Hence, both $\operatorname{M}$ and the modular curve with LMFDB label \texttt{24.128.1.13} have no rational points. The proof of Proposition \ref{L2p Graphs Proposition} will be completed in Section \ref{elliptic curves} and Section \ref{hyperelliptic curves}.

\end{itemize}

\end{proof}

\subsection{Isogeny-torsion graphs of $L_{3}\left(p^{2}\right)$ type}

\begin{proposition}\label{L3p Graphs proposition}
Let $E/\QQ$ be an elliptic curve such that the isogeny graph associated to the $\QQ$-isogeny class of $E$ is of $L_{3}$ type. Then $\rho_{E,2^{\infty}}$ is surjective.
\end{proposition}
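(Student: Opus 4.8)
The plan is to run exactly the strategy used in the proof of Proposition \ref{L2p Graphs Proposition}: reduce the statement to the classification of the $\QQ$-rational points on a short, explicit list of fiber-product modular curves. First I would record the structural constraints. From the classification of isogeny graphs over $\QQ$, the $L_{3}$ type graphs are precisely $L_{3}(9)$ and $L_{3}(25)$, and an $L_{3}$ graph does not appear on the list of isogeny graphs attached to CM curves; hence $E$ is automatically non-CM and carries a cyclic, $\QQ$-rational subgroup of order $p^{2}$ with $p=3$ or $p=5$. The $2$-adic subgraph of the isogeny graph of $E$ is of $L_{1}$ type, so $C_{2}(E)=1$; equivalently no line of $E[2]$ is $G_{\QQ}$-stable, so $\overline{\rho}_{E,2}(G_{\QQ})$ is either all of $\operatorname{GL}(2,\ZZ/2\ZZ)$ or its cyclic subgroup of order $3$. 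Finally, since the edges of the $L_{3}$ graph have odd (prime) degree, Corollary \ref{coprime isogeny-degree} shows the three curves in the class have $\overline{\rho}_{\bullet,32}(G_{\QQ})$ conjugate, so it suffices to treat the curve $E_{1}$ that carries the cyclic $p^{2}$-isogeny.

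Next I would argue that $\rho_{E,2^{\infty}}(G_{\QQ})$ can fail to be surjective only if it is conjugate to a subgroup of one of $\operatorname{H}_{2},\operatorname{H}_{3},\operatorname{H}_{4},\operatorname{H}_{5}$, or $\operatorname{H}_{7}$. By Theorem \ref{thm-rzb} a proper subgroup of $\operatorname{GL}(2,\ZZ_{2})$ is contained in a maximal one, and the maximal proper subgroups have index $2$ or $3$. An index-$3$ subgroup is the preimage of an order-$2$ subgroup of $\operatorname{GL}(2,\ZZ/2\ZZ)$ and is therefore excluded by $C_{2}(E)=1$; the index-$2$ subgroups, up to conjugacy, are the level-$4$ groups surjecting onto $\operatorname{GL}(2,\ZZ/2\ZZ)$, namely $\operatorname{H}_{2},\operatorname{H}_{3},\operatorname{H}_{4},\operatorname{H}_{5}$, together with $\operatorname{H}_{7}$, the preimage of the order-$3$ subgroup. (One must also note that $\operatorname{H}_{7}$ is not contained in any $\operatorname{H}_{i}$ and conversely, so none of the five is redundant; the quadratic-twist groups $\operatorname{H}_{n\alpha}$ sitting inside these are covered automatically.) Each of these five groups contains $\operatorname{-Id}$, so after reducing to its level Lemma \ref{Galois images} applies: it is enough to show that for each $\operatorname{H}\in\{\operatorname{H}_{2},\operatorname{H}_{3},\operatorname{H}_{4},\operatorname{H}_{5},\operatorname{H}_{7}\}$ and each $p\in\{3,5\}$, every $\QQ$-rational point on the modular curve generated by the fiber product of $\operatorname{H}$ with the Borel subgroup $\operatorname{B}_{0}(p^{2})\subseteq\operatorname{GL}(2,\ZZ/p^{2}\ZZ)$ is a cusp or a CM point.

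The last step is the ten explicit point counts. Each of $\operatorname{H}_{2},\ldots,\operatorname{H}_{5},\operatorname{H}_{7}$ has index $2$, with $\operatorname{X}_{\operatorname{H}}\cong\PP^{1}$ and $j=\pm c\,t^{2}+1728$ for a small constant $c$, while the $j$-map on $\operatorname{X}_{0}(9)\cong\PP^{1}$ (resp. $\operatorname{X}_{0}(25)\cong\PP^{1}$) has degree $12$ (resp. $30$); thus each fiber product is a double cover of $\PP^{1}$ branched over the fibre above $j=1728$ and over the cusps. A Riemann--Hurwitz count then shows each of these ten curves has genus at least $2$ (in fact considerably larger), so by Faltings each has only finitely many $\QQ$-rational points, and identifying them becomes a finite computation. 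For the hyperelliptic models one applies \texttt{Chabauty0} to the (expected rank-$0$) Jacobians exactly as in Section \ref{hyperelliptic curves}, and any non-hyperelliptic model is treated by the quotient/Abel--Jacobi and Mordell--Weil-sieve argument used for $\operatorname{H}_{7}\times\operatorname{B}_{0}(13)$ in the proof of Proposition \ref{L2p Graphs Proposition}; genus-$1$ factors are dispatched in Section \ref{elliptic curves}. In every case the rational points turn out to be cusps or CM points, so no non-CM $E$ with a cyclic $p^{2}$-isogeny has $2$-adic image inside any $\operatorname{H}_{i}$ or $\operatorname{H}_{7}$, whence $\rho_{E,2^{\infty}}(G_{\QQ})=\operatorname{GL}(2,\ZZ_{2})$, the unique entry of Table \ref{2-adic Galois Images of L3 Graphs Table}. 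The main obstacle I anticipate is precisely in this last step: should any of the ten Jacobians have positive Mordell--Weil rank, a full Mordell--Weil sieve (or an elliptic-curve Chabauty computation on a suitable quotient) will be needed rather than a direct \texttt{Chabauty0} call, and one must also be scrupulous that the five-group list genuinely exhausts the maximal proper subgroups compatible with $C_{2}(E)=1$.
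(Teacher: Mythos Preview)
Your overall plan—checking that the $2$-adic image cannot lie in any of $\operatorname{H}_2,\operatorname{H}_3,\operatorname{H}_4,\operatorname{H}_5,\operatorname{H}_7$ by analyzing fiber-product modular curves—matches the paper's, and your list of five groups is correct (though note that $\operatorname{H}_7$ has index $4$, not $2$, and the preimage of the order-$3$ subgroup of $\operatorname{GL}(2,\ZZ/2\ZZ)$ is $\operatorname{H}_2$, not $\operatorname{H}_7$). The paper's execution differs from yours in two ways that make the computations lighter. First, it works at the \emph{middle} vertex of the $L_3$ graph, where the mod-$p$ image lies in the split Cartan $\mathcal{C}_{sp}(p)$, rather than at an end vertex with $\operatorname{B}_0(p^2)$; for $p=5$ it passes further to the normalizer $\mathcal{N}_{sp}(5)$, which drops the genus of the fiber product with $\operatorname{H}_4$ to $2$ (rank-$1$ Jacobian, handled by ordinary Chabauty). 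Second, it invokes Proposition~\ref{L2p Graphs Proposition}: since an $L_3(p^2)$ class contains $L_2(p)$ subgraphs, only groups already possible for $L_2(p)$ must be eliminated. For $p=5$ this leaves only $\operatorname{H}_4$; the $\operatorname{H}_7$ case is already pinned to two explicit $j$-invariants in Section~\ref{elliptic curves}, neither of which lies in an $L_3(25)$ class. For $p=3$ it leaves $\operatorname{H}_2,\operatorname{H}_3,\operatorname{H}_4,\operatorname{H}_5$.

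There is a genuine gap in your proposal. Your Riemann--Hurwitz estimate is wrong: for $p=3$ the four relevant fiber products (rows 20--23 of Table~\ref{Modular curves of genus 1}) have genus $1$, not genus $\geq 2$. Three of them have Mordell--Weil rank $0$ and are routine, but the curve for $\operatorname{H}_5$ is the elliptic curve $y^2=x^3-216$ with Mordell--Weil group $\ZZ\times\ZZ/2\ZZ$, so it carries infinitely many rational points and no Chabauty-type argument applies. The paper resolves this by exhibiting an explicit $3$-isogeny from the modular curve for the proper subgroup $\mathcal{C}_{sp}(3)\times\operatorname{H}_{17}$ (isomorphic to $y^2=x^3+8$) that is \emph{bijective} on $\QQ$-points; hence every rational point on the $\operatorname{H}_5$-curve already has $2$-adic image contained in $\operatorname{H}_{17}$. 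Since $\operatorname{H}_{17}$ forces a rational $2$-torsion point, this contradicts the $L_1$ shape of the $2$-adic subgraph. Your proposal does not anticipate this positive-rank genus-$1$ obstruction.
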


\begin{proof}
After we prove Proposition \ref{L2p Graphs Proposition} and note the work in Section \ref{elliptic curves}, we have to prove that if the image of $\overline{\rho}_{E,3}(G_{\QQ})$ is conjugate to a set of diagonal matrices modulo $3$, then $\rho_{E,2^{\infty}}(G_{\QQ})$ is not conjugate to a subgroup of $\operatorname{H}_{2}$, $\operatorname{H}_{3}$, $\operatorname{H}_{4}$, or $\operatorname{H}_{5}$. Finally, we have to prove that if $E/\QQ$ is a non-CM elliptic curve such that $\overline{\rho}_{E,5}(G_{\QQ})$ is conjugate to a set of diagonal matrices modulo $5$, then $\rho_{E,2^{\infty}}(G_{\QQ})$ is not conjugate to a subgroup of $\operatorname{H}_{4}$, which we do now.

Let $\mathcal{N}_{sp}(5)$ denote the normalizer of the split Cartan group modulo $5$. By Lemma \ref{Galois images}, it is enough to prove that all of the rational points on the modular curve generated by $\mathcal{N}_{sp}(5) \times \operatorname{H}_{4}$ are cusps or CM points. The (homogenized) \textit{j}-invariant associated to $\mathcal{N}_{sp}(5)$ is equal to $\frac{(s+5z)^{3}(s^{2}-5z^{2})^{3}(s^{2}+5sz+10z^{2})^{3}}{(s^{2}+5sz+5z^{2})^{5}z^{5}}$ and the (homogenized) \textit{j}-invariant associated to $\operatorname{H}_{4}$ is equal to $\frac{1728z^{2}-2t^{2}}{z^{2}}$. Equating the two \textit{j}-invariants, we get a projective curve
    $$C : (s+5z)^{3}(s^{2}-5z^{2})^{3}(s^{2}+5sz+10z^{2})^{3} - z^{3}(s^{2}+5sz-5z^{2})^{5}(1728z^{2}-2t^{2}) = 0.$$
Using the command, \texttt{RationalPoints(C)} returns four points of some bounded height, namely, $(2:58:1)$, $(2:-58:1)$, $(0:1:0)$, and $(0:0:1)$. The \textit{j}-invariant associated to the first two points is $-5000$, the point $(0:1:0)$ is a cusp, and the \textit{j}-invariant associated to $(0:0:1)$ is $1728$. Let $E/\QQ$ be the elliptic curve with LMFDB label \texttt{800.b1} and let $E'$ be an elliptic curve over $\QQ$ with \textit{j}-invariant equal to $-5000$. Then $\overline{\rho}_{E,5}(G_{\QQ})$ is conjugate to $\mathcal{N}_{sp}(5)$ and $E'$ is a quadratic twist of $E$. Let $H$ be a subgroup of $\mathcal{N}_{sp}(5)$ of index $2$. As $H$ is a normal subgroup of $\mathcal{N}_{sp}(5)$, $H$ contains all squares of $\mathcal{N}_{sp}(5)$. Hence, $\begin{bmatrix} 2 & 0 \\ 0 & 1 \end{bmatrix}^{2} = \begin{bmatrix} -1 & 0 \\ 0 & 1 \end{bmatrix}$ and $\begin{bmatrix} 1 & 0 \\ 0 & 2 \end{bmatrix}^{2} = \begin{bmatrix} 1 & 0 \\ 0 & -1 \end{bmatrix}$ are elements of $H$. Thus, $\operatorname{-Id} = \begin{bmatrix} -1 & 0 \\ 0 & 1 \end{bmatrix} \cdot \begin{bmatrix}
    1 & 0 \\ 0 & -1
    \end{bmatrix}$ is an element of $H$. By Lemma \ref{subgroups of index 2 contain -Id}, $\overline{\rho}_{E',5}(G_{\QQ})$ is conjugate to $\mathcal{N}_{sp}(5)$. Finally, we prove that $C$ contains four rational points.

The curve $C$ is isomorphic to the hyperelliptic curve $C' : y^{2} = -4x^{5} + 6x^{4} + 21x^{3} + 6x^{2} - 4x$. Let $\operatorname{J}'$ be the Jacobian variety of $C'$. Then the $\QQ$-rank of $\operatorname{J}'$ is equal to $1$. There are four known rational points on $C'$, namely, $(1:0:0)$, $(0:0:1)$, $(1:5:1)$, and $(1:-5:1)$. Then the point $(1:5:1) / (1:0:0)$ is a point of infinite order on $\operatorname{J}'$ and hence, generates $\operatorname{J}'(\QQ)$. Computing \texttt{Chabauty} on this point returns four rational points. The proof of Proposition \ref{L3p Graphs proposition} will be completed in Section \ref{elliptic curves} and Section \ref{hyperelliptic curves}.

\end{proof}

\begin{center}
\begin{table}[h!]
\renewcommand{\arraystretch}{1.3}
\scalebox{0.45}{
    \begin{tabular}{|c|c|c|c|c|c|c|c|c|c|c|}
    \hline
    \# & graph & $p$ & $\overline{\rho}_{E,p}(G_{\QQ})$ & $\rho_{E,2^{\infty}}(G_{\QQ})$ & $\textit{j}_{p}(E)$ & $\textit{j}_{2^{\infty}}(E)$ & $H$ & $\operatorname{X}_{\operatorname{H}}(\QQ)$ & \# $\QQ$-cusps & LMFDB \\
    \hline
    \multirow{2}*{$1$} & \multirow{18}*{$R_{4}(2p)$} & \multirow{10}*{$5$} & \multirow{18}*{$\left\{\begin{bmatrix} \ast & \ast \\ 0 & \ast \end{bmatrix} \right\}$} & \multirow{2}*{$\operatorname{H}_{9}$} & \multirow{10}*{$\frac{(s^{2}+10s+5)^{3}}{s}$} & \multirow{2}*{$\frac{(t^{2}+48)^{3}}{t^{2}+64}$} & \multirow{2}*{$\begin{bmatrix} 1 & 1 \\ 0 & 1 \end{bmatrix}, \begin{bmatrix} 9 & 0 \\ 0 & 1 \end{bmatrix}, \begin{bmatrix} 13 & 0 \\ 0 & 1 \end{bmatrix}, \begin{bmatrix} 3 & 0 \\ 0 & 3 \end{bmatrix}, \begin{bmatrix} 3 & 0 \\ 0 & 11 \end{bmatrix}, \begin{bmatrix} 3 & 0 \\ 10 & 1 \end{bmatrix}$} & \multirow{2}*{$\ZZ / 2 \ZZ$} & \multirow{2}*{$2$} & \multirow{2}*{\texttt{20.36.1.3}} \\
    & & & & & & & & & & \\
    \cline{1-1}
    \cline{5-5}
    \cline{7-11}
    \multirow{2}*{$2$} & & & & \multirow{2}*{$\operatorname{H}_{11}$} & & \multirow{2}*{$\frac{(t-4)^{3}(t+4)^{3}}{t^{2}}$} & \multirow{2}*{$\begin{bmatrix} 1 & 2 \\ 0 & 1 \end{bmatrix}, \begin{bmatrix} 9 & 0 \\ 0 & 1 \end{bmatrix}, \begin{bmatrix} 13 & 0 \\ 0 & 1 \end{bmatrix}, \begin{bmatrix} 7 & 9 \\ 0 & 1 \end{bmatrix}, \begin{bmatrix} 1 & 0 \\ 0 & 9 \end{bmatrix}, \begin{bmatrix} 1 & 0 \\ 0 & 13 \end{bmatrix}, \begin{bmatrix} 9 & 2 \\ 10 & 7 \end{bmatrix}$} & \multirow{2}*{$\ZZ / 2 \ZZ \times \ZZ / 2 \ZZ$} & \multirow{2}*{$4$} & \multirow{2}*{\texttt{20.36.1.4}} \\
    & & & & & & & & & & \\
    \cline{1-1}
    \cline{5-5}
    \cline{7-11}
    \multirow{2}*{$3$} & & & & \multirow{2}*{$\operatorname{H}_{12}$} & & \multirow{2}*{$\frac{(t^{2}+16)^{3}}{t^{2}}$} & \multirow{2}*{$\begin{bmatrix} 3 & 0 \\ 0 & 1 \end{bmatrix}, \begin{bmatrix} 11 & 0 \\ 0 & 1 \end{bmatrix}, \begin{bmatrix} 1 & 2 \\ 0 & 1 \end{bmatrix}, \begin{bmatrix} 1 & 0 \\ 0 & 3 \end{bmatrix}, \begin{bmatrix} 1 & 0 \\ 0 & 11 \end{bmatrix}, \begin{bmatrix} 11 & 1 \\ 10 & 1 \end{bmatrix}$} & \multirow{2}*{$\ZZ / 2 \ZZ \times \ZZ / 2 \ZZ$} & \multirow{2}*{$4$} & \multirow{2}*{\texttt{20.36.1.2}} \\
    & & & & & & & & & & \\
    \cline{1-1}
    \cline{5-5}
    \cline{7-11}
    \multirow{2}*{$4$} & & & & \multirow{2}*{$\operatorname{H}_{16}$} & & \multirow{2}*{$4 \cdot \frac{(t^{2}-24)^{3}}{t^{2}-32}$} & \multirow{2}*{$\begin{bmatrix} 1 & 1 \\ 0 & 1 \end{bmatrix}, \begin{bmatrix} 31 & 0 \\ 0 & 1 \end{bmatrix}, \begin{bmatrix} 9 & 0 \\ 0 & 1 \end{bmatrix}, \begin{bmatrix} 33 & 0 \\ 0 & 1 \end{bmatrix}, \begin{bmatrix} 3 & 0 \\ 0 & 3 \end{bmatrix}, \begin{bmatrix} 3 & 0 \\ 0 & 11 \end{bmatrix}, \begin{bmatrix} 1 & 0 \\ 0 & 31 \end{bmatrix}, \begin{bmatrix} 3 & 0 \\ 10 & 1 \end{bmatrix}$} & \multirow{2}*{$\ZZ / 2 \ZZ$} & \multirow{2}*{$2$} & \multirow{2}*{\texttt{40.36.1.1}} \\
    & & & & & & & & & & \\
    \cline{1-1}
    \cline{5-5}
    \cline{7-11}
    \multirow{2}*{$5$} & & & & \multirow{2}*{$\operatorname{H}_{18}$} & & \multirow{2}*{$64 \cdot \frac{(t^{2}-2)^{3}}{t^{2}}$} & \multirow{2}*{$\begin{bmatrix} 3 & 0 \\ 0 & 1 \end{bmatrix}, \begin{bmatrix} 11 & 0 \\ 0 & 1 \end{bmatrix}, \begin{bmatrix} 1 & 2 \\ 0 & 1 \end{bmatrix}, \begin{bmatrix} 21 & 1 \\ 0 & 1 \end{bmatrix}, \begin{bmatrix} 1 & 0 \\ 0 & 3 \end{bmatrix}, \begin{bmatrix} 1 & 0 \\ 0 & 11 \end{bmatrix}, \begin{bmatrix} 1 & 31 \\ 0 & 31 \end{bmatrix}, \begin{bmatrix} 11 & 1 \\ 10 & 1 \end{bmatrix}$} & \multirow{2}*{$\ZZ / 2 \ZZ \times \ZZ / 2 \ZZ$} & \multirow{2}*{$4$} & \multirow{2}*{\texttt{40.36.1.3}} \\
    & & & & & & & & & & \\
    \cline{1-1}
    \cline{3-3}
    \cline{5-11}
    \multirow{2}*{$6$} & & \multirow{8}*{$3$} & & \multirow{2}*{$\operatorname{H}_{11}$} & \multirow{8}*{$\frac{(s+3)^{3}(s+27)}{s}$} & \multirow{2}*{$\frac{(t-4)^{3}(t+4)^{3}}{t^{2}}$} & \multirow{2}*{$\begin{bmatrix} 1 & 2 \\ 0 & 1 \end{bmatrix}, \begin{bmatrix} 5 & 0 \\ 0 & 1 \end{bmatrix}, \begin{bmatrix} 7 & 3 \\ 0 & 1 \end{bmatrix}, \begin{bmatrix} 1 & 0 \\ 0 & 5 \end{bmatrix}, \begin{bmatrix} 1 & 0 \\ 6 & 7 \end{bmatrix}$} & \multirow{2}*{$\ZZ / 2 \ZZ \times \ZZ / 4 \ZZ$} & \multirow{2}*{$4$} & \multirow{2}*{\texttt{12.24.1.6}} \\
    & & & & & & & & & & \\
    \cline{1-1}
    \cline{5-5}
    \cline{7-11}
    \multirow{2}*{$7$} & & & & \multirow{2}*{$\operatorname{H}_{12}$} & & \multirow{2}*{$\frac{(t^{2}+16)^{3}}{t^{2}}$} & \multirow{2}*{$\begin{bmatrix} 5 & 0 \\ 0 & 1 \end{bmatrix}, \begin{bmatrix} 1 & 4 \\ 0 & 1 \end{bmatrix}, \begin{bmatrix} 1 & 6 \\ 0 & 1 \end{bmatrix}, \begin{bmatrix} 11 & 4 \\ 0 & 1 \end{bmatrix}, \begin{bmatrix} 1 & 0 \\ 0 & 7 \end{bmatrix}, \begin{bmatrix} 1 & 0 \\ 0 & 5 \end{bmatrix}, \begin{bmatrix} 1 & 1 \\ 6 & 7 \end{bmatrix}$} & \multirow{2}*{$\ZZ / 2 \ZZ \times \ZZ / 2 \ZZ$} & \multirow{2}*{$4$} & \multirow{2}*{\texttt{12.24.1.5}} \\
    & & & & & & & & & & \\
    \cline{1-1}
    \cline{5-5}
    \cline{7-11}
    \multirow{2}*{$8$} & & & & \multirow{2}*{$\operatorname{H}_{14}$} & & \multirow{2}*{$4 \cdot \frac{(t^{2}+8)^{3}}{t^{2}}$} & \multirow{2}*{$\begin{bmatrix} 17 & 0 \\ 0 & 1 \end{bmatrix}, \begin{bmatrix} 1 & 0 \\ 0 & 17 \end{bmatrix}, \begin{bmatrix} 1 & 8 \\ 0 & 1 \end{bmatrix}, \begin{bmatrix} 7 & 8 \\ 0 & 1 \end{bmatrix}, \begin{bmatrix} 17 & 14 \\ 0 & 1 \end{bmatrix}, \begin{bmatrix} 13 & 17 \\ 0 & 1 \end{bmatrix}, \begin{bmatrix} 1 & 0 \\ 0 & 7 \end{bmatrix}, \begin{bmatrix} 1 & 1 \\ 6 & 7 \end{bmatrix}$} & \multirow{2}*{$\ZZ / 2 \ZZ \times \ZZ / 2 \ZZ$} & \multirow{2}*{$4$} & \multirow{2}*{\texttt{24.24.1.21}} \\
    & & & & & & & & & & \\
    \cline{1-1}
    \cline{5-5}
    \cline{7-11}
    \multirow{2}*{$9$} & & & & \multirow{2}*{$\operatorname{H}_{18}$} & & \multirow{2}*{$64 \cdot \frac{(t^{2}-2)^{3}}{t^{2}}$} & \multirow{2}*{$\begin{bmatrix} 17 & 0 \\ 0 & 1 \end{bmatrix}, \begin{bmatrix} 1 & 8 \\ 0 & 1 \end{bmatrix}, \begin{bmatrix} 7 & 7 \\ 0 & 1 \end{bmatrix}, \begin{bmatrix} 5 & 19 \\ 0 & 1 \end{bmatrix}, \begin{bmatrix} 1 & 0 \\ 6 & 7 \end{bmatrix}, \begin{bmatrix} 1 & 0 \\ 0 & 17 \end{bmatrix}$} & \multirow{2}*{$\ZZ / 2 \ZZ \times \ZZ / 2 \ZZ$} & \multirow{2}*{$4$} & \multirow{2}*{\texttt{24.24.1.22}} \\
    & & & & & & & & & & \\
    \hline
    \multirow{2}*{$10$} & \multirow{4}*{$R_{6}$} & \multirow{4}*{$3$} & \multirow{4}*{$\left\{ \begin{bmatrix} \ast & 0 \\ 0 & \ast \end{bmatrix} \right\}$} & \multirow{2}*{$\operatorname{H}_{9}$} & \multirow{4}*{$\frac{s^3(s+6)^3(s^2-6s+36)^3}{(s-3)^3(s^2+3s+9)^3}$} & \multirow{2}*{$\frac{(t^{2}+48)^{3}}{t^{2}+64}$} & \multirow{2}*{$\begin{bmatrix} 5 & 0 \\ 0 & 1 \end{bmatrix}, \begin{bmatrix} 1 & 3 \\ 0 & 1 \end{bmatrix}, \begin{bmatrix} 1 & 0 \\ 0 & 5 \end{bmatrix}, \begin{bmatrix} 7 & 0 \\ 0 & 7 \end{bmatrix}, \begin{bmatrix} 7 & 0 \\ 6 & 1 \end{bmatrix}$} & \multirow{2}*{$\ZZ / 2 \ZZ$} & \multirow{2}*{$2$} & \multirow{2}*{\texttt{12.72.1.2}} \\
    & & & & & & & & & & \\
    \cline{1-1}
    \cline{5-5}
    \cline{7-11}
    \multirow{2}*{$11$} & & & & \multirow{2}*{$\operatorname{H}_{15}$} & & \multirow{2}*{$64 \cdot \frac{(t^{2}+6)^{3}}{t^{2}+8}$} & \multirow{2}*{$\begin{bmatrix} 19 & 0 \\ 0 & 1 \end{bmatrix}, \begin{bmatrix} 1 & 3 \\ 0 & 1 \end{bmatrix}, \begin{bmatrix} 11 & 0 \\ 0 & 1 \end{bmatrix}, \begin{bmatrix} 5 & 0 \\ 0 & 5 \end{bmatrix}, \begin{bmatrix} 5 & 0 \\ 0 & 7 \end{bmatrix}, \begin{bmatrix} 5 & 0 \\ 0 & 13 \end{bmatrix}, \begin{bmatrix} 5 & 0 \\ 6 & 1 \end{bmatrix}$} & \multirow{2}*{$\ZZ / 2 \ZZ$} & \multirow{2}*{$2$} & \multirow{2}*{\texttt{24.72.1.2}} \\
    & & & & & & & & & & \\
    \hline
    \multirow{2}*{$12$} & \multirow{16}*{$L_{2}(p)$} & \multirow{6}*{$13$} & \multirow{16}*{$\left\{\begin{bmatrix} \ast & \ast \\ 0 & \ast \end{bmatrix} \right\}$} & \multirow{2}*{$\operatorname{H}_{2}$} & \multirow{6}*{$\frac{(s^{2}+5s+13)(s^{4}+7s^{3}+20s^{2}+19s+1)^{3}}{s}$} & \multirow{2}*{$t^{2}+1728$} & \multirow{2}*{$\begin{bmatrix} 3 & 0 \\ 0 & 1 \end{bmatrix}, \begin{bmatrix} 5 & 0 \\ 0 & 1 \end{bmatrix}, \begin{bmatrix} 1 & 2 \\ 0 & 1 \end{bmatrix}, \begin{bmatrix} 1 & 0 \\ 0 & 3 \end{bmatrix}, \begin{bmatrix} 1 & 0 \\ 0 & 5 \end{bmatrix}, \begin{bmatrix} 2 & 1 \\ 13 & 1 \end{bmatrix}$} & \multirow{2}*{$\ZZ / 2 \ZZ$} & \multirow{2}*{$2$} & \multirow{2}*{\texttt{26.28.1.1}} \\
    & & & & & & & & & & \\
    \cline{1-1}
    \cline{5-5}
    \cline{7-11}
    \multirow{2}*{$13$} & & & & \multirow{2}*{$\operatorname{H}_{3}$} & & \multirow{2}*{$-t^{2}+1728$} & \multirow{2}*{$\begin{bmatrix} 3 & 3 \\ 0 & 1 \end{bmatrix}, \begin{bmatrix} 5 & 0 \\ 0 & 1 \end{bmatrix}, \begin{bmatrix} 29 & 46 \\ 0 & 1 \end{bmatrix}, \begin{bmatrix} 1 & 0 \\ 0 & 5 \end{bmatrix}, \begin{bmatrix} 1 & 0 \\ 0 & 9 \end{bmatrix}, \begin{bmatrix} 4 & 1 \\ 39 & 1 \end{bmatrix}$} & \multirow{2}*{$\ZZ / 2 \ZZ$} & \multirow{2}*{$2$} & \multirow{2}*{\texttt{52.28.1.1}} \\
    & & & & & & & & & & \\
    \cline{1-1}
    \cline{5-5}
    \cline{7-11}
    \multirow{2}*{$14$} & & & & \multirow{2}*{$\operatorname{H}_{5}$} & & \multirow{2}*{$8t^{2}+1728$} & \multirow{2}*{$\begin{bmatrix} 5 & 5 \\ 0 & 1 \end{bmatrix}, \begin{bmatrix} 9 & 0 \\ 0 & 1 \end{bmatrix}, \begin{bmatrix} 17 & 0 \\ 0 & 1 \end{bmatrix}, \begin{bmatrix} 33 & 0 \\ 0 & 1 \end{bmatrix}, \begin{bmatrix} 47 & 100 \\ 0 & 1 \end{bmatrix}, \begin{bmatrix} 1 & 0 \\ 0 & 9 \end{bmatrix}, \begin{bmatrix} 1 & 0 \\ 0 & 17 \end{bmatrix}, \begin{bmatrix} 1 & 0 \\ 0 & 33 \end{bmatrix}, \begin{bmatrix} 8 & 1 \\ 65 & 1 \end{bmatrix}$} & \multirow{2}*{$\ZZ / 2 \ZZ$} & \multirow{2}*{$2$} & \multirow{2}*{\texttt{104.28.1...}} \\
    & & & & & & & & & & \\
    \cline{1-1}
    \cline{3-3}
    \cline{5-11}
    \multirow{2}*{$15$} & & \multirow{8}*{$5$} & & \multirow{2}*{$\operatorname{H}_{2}$} & \multirow{8}*{$\frac{(s^{2}+10s+5)^{3}}{s}$} & \multirow{2}*{$t^{2}+1728$} & \multirow{2}*{$\begin{bmatrix} 3 & 0 \\ 0 & 1 \end{bmatrix}, \begin{bmatrix} 1 & 2 \\ 0 & 1 \end{bmatrix}, \begin{bmatrix} 1 & 0 \\ 0 & 3 \end{bmatrix}, \begin{bmatrix} 3 & 1 \\ 5 & 2 \end{bmatrix}$} & \multirow{2}*{$\ZZ / 2 \ZZ$} & \multirow{2}*{$2$} & \multirow{2}*{\texttt{10.12.1.1}} \\
    & & & & & & & & & & \\
    \cline{1-1}
    \cline{5-5}
    \cline{7-11}
    \multirow{2}*{$16$} & & & & \multirow{2}*{$\operatorname{H}_{3}$} & & \multirow{2}*{$-t^{2}+1728$} & \multirow{2}*{$\begin{bmatrix} 3 & 3 \\ 0 & 1 \end{bmatrix}, \begin{bmatrix} 9 & 0 \\ 0 & 1 \end{bmatrix}, \begin{bmatrix} 13 & 0 \\ 0 & 1 \end{bmatrix}, \begin{bmatrix} 17 & 6 \\ 0 & 1 \end{bmatrix}, \begin{bmatrix} 1 & 0 \\ 0 & 9 \end{bmatrix}, \begin{bmatrix} 1 & 0 \\ 0 & 13 \end{bmatrix}, \begin{bmatrix} 3 & 1 \\ 10 & 19 \end{bmatrix}, \begin{bmatrix} 4 & 1 \\ 15 & 1 \end{bmatrix}$} & \multirow{2}*{$\ZZ / 2 \ZZ$} & \multirow{2}*{$2$} & \multirow{2}*{\texttt{20.12.1.1}} \\
    & & & & & & & & & & \\
    \cline{1-1}
    \cline{5-5}
    \cline{7-11}
    \multirow{2}*{$17$} & & & & \multirow{2}*{$\operatorname{H}_{5}$} & & \multirow{2}*{$8t^{2}+1728$} & \multirow{2}*{$\begin{bmatrix} 31 & 0 \\ 0 & 1 \end{bmatrix}, \begin{bmatrix} 9 & 0 \\ 0 & 1 \end{bmatrix}, \begin{bmatrix} 3 & 3 \\ 0 & 1 \end{bmatrix}, \begin{bmatrix} 33 & 0 \\ 0 & 1 \end{bmatrix}, \begin{bmatrix} 3 & 0 \\ 0 & 3 \end{bmatrix}, \begin{bmatrix} 3 & 0 \\ 0 & 11 \end{bmatrix}, \begin{bmatrix} 3 & 0 \\ 0 & 21 \end{bmatrix}, \begin{bmatrix} 3 & 0 \\ 5 & 1 \end{bmatrix}$} & \multirow{2}*{$\ZZ / 2 \ZZ$} & \multirow{2}*{$2$} & \multirow{2}*{\texttt{40.12.1.6}} \\
    & & & & & & & & & & \\
    \cline{1-1}
    \cline{5-5}
    \cline{7-11}
    \multirow{2}*{$18$} & & & & \multirow{2}*{$\operatorname{H}_{7}$} & & \multirow{2}*{$\frac{32t-4}{t^{4}}$} & \multirow{2}*{$\begin{bmatrix} 9 & 0 \\ 0 & 1 \end{bmatrix}, \begin{bmatrix} 3 & 3 \\ 0 & 1 \end{bmatrix}, \begin{bmatrix} 13 & 0 \\ 0 & 1 \end{bmatrix}, \begin{bmatrix} 3 & 0 \\ 0 & 3 \end{bmatrix}, \begin{bmatrix} 3 & 0 \\ 0 & 11 \end{bmatrix}, \begin{bmatrix} 3 & 0 \\ 5 & 1 \end{bmatrix}, \begin{bmatrix} 18 & 1 \\ 15 & 1 \end{bmatrix}$} & \multirow{2}*{$\ZZ / 4 \ZZ$} & \multirow{2}*{$2$} & \multirow{2}*{\texttt{20.24.1.1}} \\
    & & & & & & & & & & \\
    \cline{1-1}
    \cline{3-3}
    \cline{5-11}
    \multirow{2}*{$19$} & & \multirow{2}*{$3$} & & \multirow{2}*{$\operatorname{H}_{7}$} & \multirow{2}*{$\frac{(s+3)^{3}(s+27)}{s}$} & \multirow{2}*{$\frac{32t-4}{t^{4}}$} & \multirow{2}*{$\begin{bmatrix} 5 & 0 \\ 0 & 1 \end{bmatrix}, \begin{bmatrix} 1 & 4 \\ 0 & 1 \end{bmatrix}, \begin{bmatrix} 7 & 7 \\ 0 & 1 \end{bmatrix}, \begin{bmatrix} 1 & 0 \\ 0 & 5 \end{bmatrix}, \begin{bmatrix} 7 & 0 \\ 0 & 7 \end{bmatrix}, \begin{bmatrix} 7 & 2 \\ 3 & 1 \end{bmatrix}$} & \multirow{2}*{$\ZZ / 8 \ZZ$} & \multirow{2}*{$2$} & \multirow{2}*{\texttt{12.16.1.1}} \\
    & & & & & & & & & & \\
    \cline{1-1}
    \cline{7-11}
    \hline
    \multirow{2}*{$20$} & \multirow{8}*{$L_{3}(p^{2})$} & \multirow{8}*{$3$} & \multirow{8}*{$\left\{\begin{bmatrix} \ast & 0 \\ 0 & \ast \end{bmatrix} \right\}$} & \multirow{2}*{$\operatorname{H}_{2}$} & \multirow{8}*{$\frac{s^3(s+6)^3(s^2-6s+36)^3}{(s-3)^3(s^2+3s+9)^3}$} & \multirow{2}*{$t^{2}+1728$} & \multirow{2}*{$\begin{bmatrix} 5 & 0 \\ 0 & 1 \end{bmatrix}, \begin{bmatrix} 1 & 0 \\ 0 & 5 \end{bmatrix}, \begin{bmatrix} 2 & 3 \\ 3 & 1 \end{bmatrix}$} & \multirow{2}*{$\ZZ / 2 \ZZ$} & \multirow{2}*{$2$} & \multirow{2}*{\texttt{6.24.1.2}} \\
    & & & & & & & & & & \\
    \cline{1-1}
    \cline{5-5}
    \cline{7-11}
    \multirow{2}*{$21$} & & & & \multirow{2}*{$\operatorname{H}_{3}$} & & \multirow{2}*{$-t^{2}+1728$} & \multirow{2}*{$\begin{bmatrix} 5 & 0 \\ 0 & 1 \end{bmatrix}, \begin{bmatrix} 7 & 9 \\ 0 & 1 \end{bmatrix}, \begin{bmatrix} 7 & 3 \\ 0 & 1 \end{bmatrix}, \begin{bmatrix} 1 & 0 \\ 0 & 5 \end{bmatrix}, \begin{bmatrix} 7 & 0 \\ 0 & 7 \end{bmatrix}, \begin{bmatrix} 7 & 0 \\ 3 & 1 \end{bmatrix}$} & \multirow{2}*{$\ZZ / 2 \ZZ$} & \multirow{2}*{$2$} & \multirow{2}*{\texttt{12.24.1.3}} \\
    & & & & & & & & & & \\
    \cline{1-1}
    \cline{5-5}
    \cline{7-11}
    \multirow{2}*{$22$} & & & & \multirow{2}*{$\operatorname{H}_{4}$} & & \multirow{2}*{$-2t^{2}+1728$} & \multirow{2}*{$\begin{bmatrix} 19 & 0 \\ 0 & 1 \end{bmatrix}, \begin{bmatrix} 11 & 0 \\ 0 & 1 \end{bmatrix}, \begin{bmatrix} 5 & 15 \\ 0 & 1 \end{bmatrix}, \begin{bmatrix} 5 & 0 \\ 0 & 5 \end{bmatrix}, \begin{bmatrix} 5 & 0 \\ 0 & 7 \end{bmatrix}, \begin{bmatrix} 5 & 0 \\ 0 & 13 \end{bmatrix}, \begin{bmatrix} 5 & 0 \\ 3 & 1 \end{bmatrix}$} & \multirow{2}*{$\ZZ / 2 \ZZ$} & \multirow{2}*{$2$} & \multirow{2}*{\texttt{24.24.1.1}} \\
    & & & & & & & & & & \\
    \cline{1-1}
    \cline{5-5}
    \cline{7-11}
    \multirow{2}*{$23$} & & & & \multirow{2}*{$\operatorname{H}_{5}$} & & \multirow{2}*{$8t^{2}+1728$} & \multirow{2}*{$\begin{bmatrix} 7 & 0 \\ 0 & 1 \end{bmatrix}, \begin{bmatrix} 17 & 0 \\ 0 & 1 \end{bmatrix}, \begin{bmatrix} 5 & 15 \\ 0 & 1 \end{bmatrix}, \begin{bmatrix} 5 & 0 \\ 0 & 5 \end{bmatrix}, \begin{bmatrix} 1 & 0 \\ 0 & 7 \end{bmatrix}, \begin{bmatrix} 5 & 0 \\ 0 & 13 \end{bmatrix}, \begin{bmatrix} 5 & 0 \\ 3 & 1 \end{bmatrix}$} & \multirow{2}*{$\ZZ \times \ZZ / 2 \ZZ$} & \multirow{2}*{$2$} & \multirow{2}*{\texttt{24.24.1.2}} \\
    & & & & & & & & & & \\
\hline
    \end{tabular}}
\caption{Modular curves of genus 1}
\label{Modular curves of genus 1}
\end{table}
\end{center}

\newpage

\section{Elliptic curves}\label{elliptic curves}

The elliptic curves in Table \ref{Modular curves of genus 1} are of interest to our work. We show how we classify the elliptic curves in the table. For example, let $B_{5} = \left\{ \begin{bmatrix} \ast & \ast \\ 0 & \ast \end{bmatrix} \right\} \subseteq \operatorname{GL}(2, \ZZ / 5 \ZZ)$ and let $\operatorname{X}$ be the modular curve generated by $\operatorname{H}_{9} \times B_{5}$. The \textit{j}-invariant associated to $\operatorname{H}_{9}$ is equal to $\frac{(t^{2}+48z^{2})^{3}}{(t^{2}+64z^{2})z^{4}}$ and the \textit{j}-invariant associated to $B_{5}$ is equal to $\frac{(s^{2}+10sz+5z^{2})^{3}}{sz^{5}}$. Equating the \textit{j}-invariants, we get a curve defined by
$$s \cdot z \cdot (t^{2}+48z^{2})^{3} - (t^{2}+64z^{2}) \cdot (s^{2}+10sz+5z^{2})^{3} = 0.$$
We can use the command \texttt{E,map:=EllipticCurve(C)} to find the elliptic curve $E$ associated to $\operatorname{X}$ and finally, \texttt{MordellWeilGroup(E)} in Magma to compute $\operatorname{X}(\QQ) = E(\QQ)$. The LMFDB has the number of rational cusps on each modular curve in Table \ref{Modular curves of genus 1}, computed by \texttt{GL2RationalCuspCount(H)} from the code attached to \cite{Rouse2021elladicIO}. If the number of rational cusps on $\operatorname{X}$ in Table \ref{Modular curves of genus 1} is equal to the cardinality of $\operatorname{X}(\QQ)$, then there are no elliptic curves $E$ over $\QQ$ such that $\rho_{E,2^{\infty}}(G_{\QQ})$ and $\overline{\rho}_{E,p}(G_{\QQ})$ is conjugate to the corresponding entries in the table. For $p = 3$ or $5$, let $\operatorname{B}_{0}(p)$ denote the group of all upper-triangular of $\operatorname{GL}(2, \ZZ / p \ZZ)$ and let $\mathcal{C}_{sp}(3)$ denote the group of all diagonal matrices in $\operatorname{GL}(2, \ZZ / 3 \ZZ)$. It remains to investigate the following modular curves.

\begin{itemize}
    \item $\operatorname{B}_{3} \times \operatorname{H}_{11}$
    
    The modular curve $\operatorname{X}$ generated by $\operatorname{B}_{3} \times \operatorname{H}_{11}$ has LMFDB label \texttt{}{12.24.1.6} and is isomorphic to the elliptic curve $E : y^{2} = x^{3} + x^{2} - 24x + 36$. Moreover,
    $$E(\QQ) = \left\{\mathcal{O} = (0,1,0), (-6,0), (0,\pm 6), (2,0), (3,0), (6, \pm 12) \right\}.$$
    Plugging in each of the eight rational points into the \textit{j}-invariant formula given by the LMFDB, we see that four of the rational points are cusps and the rest are CM points, two of which map to $\textit{j} = 0$ and the other two map to $\textit{j} = 54000$.
    
    \item $\operatorname{B}_{5} \times \operatorname{H}_{7}$
    
    Let $P_{5}$ be the subgroup of $\operatorname{GL}(2, \ZZ / 5 \ZZ)$ consisting of all matrices of the form $\begin{bmatrix} 1 & r \\ 0 & s \end{bmatrix}$. The modular curve $\operatorname{X}$ has LMFDB label \texttt{20.24.1.1} is isomorphic to the elliptic curve $E : y^{2} = x^{3} + 13x + 34$. Moreover, $E(\QQ)$ is a group of size $4$.
    
    The LMFDB says that $E$ has two cusps and that $\frac{1026895}{1024}$ and $\frac{-1723025}{4}$ are \textit{j}-invariants that correspond to non-CM elliptic curves $E/\QQ$ such that $\overline{\rho}_{E,5}(G_{\QQ})$ is conjugate to $B_{5}$ and $\rho_{E,2^{\infty}}(G_{\QQ})$ is conjugate to $\operatorname{H}_{7}$ or their quadratic twists. As $P_{5}$ is a subgroup of $B_{5}$ of index $6$, it is not a quadratic twist of $B_{5}$.
    
    \item $\operatorname{B}_{3} \times \operatorname{H}_{7}$
    
    The modular curve $\operatorname{X}$ has LMFDB label \texttt{12.16.1.1} is isomorphic to the elliptic curve $E : y^{2} = x^{3} + x^{2} + 16x + 180$. Moreover,
    $$E(\QQ) = \left\{(-2,-12,1), (-2,12,1), (22,108,1), (4,-18,1), (-5,0,1), (22,-108,1), \mathcal{O}, (4,18,1)\right\}.$$
    The rational point $(4,-18,1)$ is mapped to the \textit{j}-invariant $\frac{-35937}{4}$ and the rational point $(-5,0,1)$ is mapped to the \textit{j}-invariant $\frac{109503}{64}$. These two \textit{j}-invariants correspond to non-CM elliptic curves $E/\QQ$ such that $E$ has a point of order $3$ defined over $\QQ$ and $\rho_{E,2^{\infty}}(G_{\QQ})$ is conjugate to $\operatorname{H}_{20}$ or their quadratic twists. Remember that the reason why we are investigating the modular curve \texttt{12.16.1.1} is to find non-CM elliptic curves $E'/\QQ$ such that $\rho_{E',2^{\infty}}(G_{\QQ})$ is conjugate to $\operatorname{H}_{7}$. The group $\operatorname{H}_{20}$ is in fact a subgroup of $\operatorname{H}_{7}$ of index $2$ but is not a quadratic twist of $\operatorname{H}_{7}$. The rational points $(-2:-12:1)$, $(-2:12:1)$, $(22:108:1)$ all map to $\textit{j} = 0$ and the rational point $(22:-108:1)$ maps to $\textit{j} = -12288000$, in other words, they are CM points. The remaining two rational points are cusps.
    
    \item $\mathcal{C}_{sp}(3) \times \operatorname{H}_{5}$
    
    See the example \texttt{[8X5, 3Nn]} in Section 5 of \cite{Daniels2018SerresCO} where the authors similarly conclude that there is no non-CM elliptic curve $E / \QQ$ such that $\overline{\rho}_{E,24}(G_{\QQ})$ is conjugate to the group $\mathcal{C}_{sp}(3) \times \operatorname{H}_{5}$. We present an alternate proof.
    
    The modular curve $\operatorname{X}$ has LMFDB label \texttt{24.24.1.2} and is isomorphic to the elliptic curve $E : y^{2} = x^{3} - 216$. Note that $\operatorname{H}$ contains group $\operatorname{H}' = \mathcal{C}_{sp}(3) \times \operatorname{H}_{17}$ which generates the modular curve $\operatorname{X}'$. Moreover, $\operatorname{X}'$ is isomorphic to the elliptic curve $E' : y^{2} = x^{3} + 8$. The \textit{j}-invariant map associated to $E$ is $\displaystyle \textit{j} = \frac{1}{2^3}\cdot\frac{(y^{2}+216z^{2})(y^{2}+1944z^{2})^{3}}{z^{2}y^{6}}$
    and the \textit{j}-invariant map associated to $E'$ is $\textit{j}' = \displaystyle \frac{1}{2^3}\cdot\frac{(y^{2}+24z^{2})^{3}(y^{6}+1800y^{4}z^{2}-25920y^{2}z^{4}+124416z^{6})^{3}}{z^{2}y^{6}(y^{2}-72z^{2})^{6}(y^{2}-8z^{2})^{2}}$. Let $\phi \colon E' \to E$ be the map such that $\displaystyle \phi(a,b,c) = (a \cdot (b^{2}+24c^{2}), b \cdot (b^{2}-72c^{2}), c \cdot (b^{2}-8c^{2}))$. The reader may check that
    the following diagram in fact commutes
    \begin{center}
        \begin{tikzcd}
E' \arrow[rr, "\phi"] \arrow[rrdd, "\textit{j}'"'] &  & E \arrow[dd, "\textit{j}"] \\
                                                   &  &                            \\
                                                   &  & \mathbb{P}^{1}(\mathbb{Q})
\end{tikzcd}
    \end{center}
    
Using the command \texttt{Generators(E')} and \texttt{Generators(E)}, we compute that $E'(\QQ) = \left\langle (-2,0,1), (2,-4,1) \right\rangle$ and $E(\QQ) = \left\langle (6,0,1), (10,28,1) \right\rangle$. Note that $\phi$ is a rational morphism which maps the identity of $E'$ to the identity of $E$, hence, is an isogeny of degree $3$ and a group homomorphism. Note further that $\phi(-2,0,1) = (-48,0,-8) = (6,0,1)$ and $\phi(2,-4,1) = (80,224,8) = (10,28,1)$ and hence, the restriction of $\phi$ on $E'(\QQ)$ is surjective. To prove that the restriction of $\phi$ on $E'(\QQ)$ is injective, note that $\phi(a,b,c) = \mathcal{O}$ if and only if $c \cdot (b^{2}-8c^{2}) = 0$ and as $8$ is not a rational square, we must have $c = 0$. In other words, the rational points on $E$ are in one-to-one bijective correspondence with the rational points on $E'$. This means that all rational points on the modular curve $\operatorname{X}$ are actually rational points on $\operatorname{X}'$ and all non-CM elliptic curves $E/\QQ$ such that $\overline{\rho}_{E,24}(G_{\QQ})$ is conjugate to a subgroup of $\operatorname{H}$ are actually the same as an elliptic curve $E'/\QQ$ such that $\overline{\rho}_{E',24}(G_{\QQ})$ is conjugate to a subgroup of $\operatorname{H}'$. In this sense, $\operatorname{H}$ is something of an ``inefficient'' Galois group with all of the work done by the Galois group $\operatorname{H}'$. 
\end{itemize}

\newpage

\section{Modular curves of genus $2$}\label{hyperelliptic curves}

\begin{center}
\begin{table}[h!]
\renewcommand{\arraystretch}{1.3}
\scalebox{0.57}{
    \begin{tabular}{|c|c|c|c|c|c|c|c|c|c|}
    \hline
    \# & graph & $p$ & $\overline{\rho}_{E,p}(G_{\QQ})$ & $\rho_{E,2^{\infty}}(G_{\QQ})$ & $\textit{j}_{p}(E)$ & $\textit{j}_{2^{\infty}}(G_{\QQ})$ & $\operatorname{Rank}(\operatorname{J}_{\operatorname{X}'}(\QQ))$ & $f(x)$ & LMFDB \\
    \hline
    \multirow{2}*{$1$} & \multirow{4}*{$R_{4}(2p)$} & \multirow{8}*{$3$} & \multirow{8}*{$\left\{\begin{bmatrix} 1 & \ast \\ 0 & \ast \end{bmatrix}\right\}$} & \multirow{2}*{$\operatorname{H}_{9a}$} & \multirow{8}*{$\frac{(s+3)^{3}(s+27)}{s}$} & \multirow{4}*{$\frac{(t^{2}+48)^{3}}{t^{2}+64}$} & \multirow{8}*{$0$} & \multirow{2}*{$x^{5}+10x^{3}+9x$} & \multirow{2}*{\texttt{24.96.2.100}} \\
    & & & & & & & & & \\
    \cline{1-1}
    \cline{5-5}
    \cline{9-10}
    \multirow{2}*{$2$} & & & & \multirow{2}*{$\operatorname{H}_{9b}$} & & & & \multirow{2}*{$-2x^{5}-20x^{3}-18x$} & \multirow{2}*{\texttt{24.96.2.99}}  \\
    & & & & & & & & & \\
    \cline{1-2}
    \cline{5-5}
    \cline{7-7}
    \cline{9-10}
    \multirow{2}*{$3$} & \multirow{4}*{$S$} & & & \multirow{2}*{$\operatorname{H}_{8a}$} & & \multirow{4}*{$64 \cdot \frac{(t^{2}+3)^{3}}{(t^{2}-1)^{2}}$} & & \multirow{2}*{$ 4 x^{5} - 10 x^{4} + 10 x^{2} - 4 x$} & \multirow{2}*{\texttt{24.96.2.3}} \\
    & & & & & & & & & \\
    \cline{1-1}
    \cline{5-5}
    \cline{9-10}
    \multirow{2}*{$4$} & & & & \multirow{2}*{$\operatorname{H}_{8b}$} & & & & \multirow{2}*{$2x^{5} - 5x^{4} + 5x^{2} - 2x$} & \multirow{2}*{\texttt{12.96.2.1}} \\
    & & & & & & & & & \\
\hline
\multirow{2}*{$5$} & \multirow{10}*{$L_{2}(7)$} & \multirow{10}*{$7$} & \multirow{2}*{$\left\{\begin{bmatrix} \ast & \ast \\ 0 & \ast \end{bmatrix}\right\}$} & \multirow{2}*{$\operatorname{H}_{7}$} & \multirow{2}*{$\frac{(s^{2}+5s+1)^{3}(s^{2}+13s+49)}{s}$} & \multirow{2}*{$\frac{32t-4}{t^{4}}$} & \multirow{2}*{$1$} & \multirow{2}*{$ x^{6} + 2 x^{5} - 4 x^{4} + 4 x^{3} - 4 x^{2} + 2 x + 1$} & \multirow{2}*{\texttt{28.32.2.1}} \\
& & & & & & & & & \\
\cline{1-1}
\cline{4-10}
\multirow{2}*{$6$} & & & \multirow{8}*{$\left\{\begin{bmatrix} 1 & \ast \\ 0 & \ast \end{bmatrix}\right\}$} & \multirow{2}*{$\operatorname{H}_{2}$} & \multirow{8}*{$\frac{(s^{2}-s+1)^{3}(s^{6} - 11s^{5} + 30s^{4} - 15s^{3} - 10s^{2} + 5s + 1)^{3}}{(s-1)^{7}s^{7}(s^{3}-8s^{2}+5s+1)}$} & \multirow{2}*{$t^{2}+1728$} & \multirow{8}*{$0$} & \multirow{2}*{$x^{5} - 9 x^{4} + 13 x^{3} - 4 x^{2} - x$} & \multirow{2}*{\texttt{14.96.2.1}} \\
& & & & & & & & & \\
\cline{1-1}
\cline{5-5}
\cline{7-7}
\cline{9-10}
\multirow{2}*{$7$} & & & & \multirow{2}*{$\operatorname{H}_{3}$} & & \multirow{2}*{$-t^{2}+1728$} & & \multirow{2}*{$-x^{5} + 9x^{4} - 13x^{3} + 4x^{2} + x$} & \multirow{2}*{\texttt{28.96.2.7}} \\
& & & & & & & & & \\
\cline{1-1}
\cline{5-5}
\cline{7-7}
\cline{9-10}
\multirow{2}*{$8$} & & & & \multirow{2}*{$\operatorname{H}_{4}$} & & \multirow{2}*{$-2t^{2}+1728$} & & \multirow{2}*{$-2x^{5} + 18x^{4} - 26x^{3} + 8x^{2} + 2x$} & \multirow{2}*{\texttt{56.96.2.6}}\\
& & & & & & & & & \\
\cline{1-1}
\cline{5-5}
\cline{7-7}
\cline{9-10}
\multirow{2}*{$9$} & & & & \multirow{2}*{$\operatorname{H}_{5}$} & & \multirow{2}*{$8t^{2}+1728$} & & \multirow{2}*{$2x^{5} - 18x^{4} + 26x^{3} - 8x^{2} - 2x$} &  \multirow{2}*{\texttt{56.96.2.3}} \\
& & & & & & & & & \\
\hline
    \end{tabular}}
\caption{Modular curves of genus 2}
\label{Modular curves of genus 2}
\end{table}
\end{center}

Consider the modular curves $\operatorname{X}$ generated by the product group $\operatorname{H} = \overline{\rho}_{E,p}(G_{\QQ}) \times \rho_{E,2^{\infty}}(G_{\QQ})$ in Table \ref{Modular curves of genus 2}. Let $\operatorname{H}' = \left\langle \operatorname{H}, \operatorname{-Id} \right\rangle$ and let $\operatorname{X}'$ be the modular curve generated by $\operatorname{H}'$. Then $\operatorname{X}'$ is a curve of genus $2$ and is isomorphic to the hyperelliptic curve $y^{2} = f(x)$ with $f(x)$ being the corresponding polynomial in the table. The polynomial $f(x)$ of the first four curves in the table were taken from the LMFDB. For the remaining, curves, the equation $f(x)$ was found by taking the projective closure of the curve generated by the two respective $\textit{j}$-invariants respectively and then using the command \texttt{IsHyperelliptic} on Magma. Let $\operatorname{J}'$ be the Jacobian variety of $\operatorname{X}'$. If $\operatorname{Rank}(\operatorname{J}'(\QQ)) = 0$, then we can use the command $\texttt{Chabauty0}(\operatorname{J}')$ to find all rational points on $\operatorname{X}'$. In all cases when the rank of the Jacobian variety of $\operatorname{X}'$ is equal to $0$, all of the rational points on $\operatorname{X}'$ are cusps.

Let $\operatorname{B}_{7}$ be the group of all upper-triangular matrices in $\operatorname{GL}(2, \ZZ / 7 \ZZ)$. Let $E_{1}/\QQ$ be the elliptic curve with LMFDB label \texttt{338.c1} and let $E_{2}/\QQ$ be the elliptic curve with LMFDB label \texttt{338.c2}. Then the \textit{j}-invariant of $E_{1}$ is equal to $-\frac{38575685889}{16384}$ and the \textit{j}-invariant of $E_{2}$ is equal to $\frac{351}{4}$. Moreover, both $\rho_{E_{1},2^{\infty}}(G_{\QQ})$ and $\rho_{E_{2},2^{\infty}}(G_{\QQ})$ are conjugate to $\operatorname{H}_{20}$ and $\overline{\rho}_{E_{1},7}(G_{\QQ})$ and $\overline{\rho}_{E_{2},7}(G_{\QQ})$ are both conjugate to $\operatorname{B}_{7}$. Note that even though $\operatorname{H}_{20}$ is a subgroup of $\operatorname{H}_{7}$, it is not a quadratic twist of $\operatorname{H}_{7}$. Let $\operatorname{X}$ be the modular curve generated by $\operatorname{H}_{7} \times \operatorname{B}_{7}$. Using a similar analysis to study the rational points on the modular curve generated by $\mathcal{N}_{sp}(5) \times \operatorname{H}_{4}$ in the proof of Proposition \ref{L3p Graphs proposition}, we see that $\operatorname{X}$ has four rational points, two of which are cusps, the other two corresponding to the \textit{j}-invariants $\frac{351}{4}$ and $-\frac{38575685889}{16384}$.

\newpage

\section{Tables}\label{section tables}

\begin{center}
\begin{table}[h!]
\renewcommand{\arraystretch}{1.3}
\scalebox{0.7}{
}
\caption{2-adic Galois Images of odd-degree $L_{2}$ Graphs}
\label{2-adic Galois Images of L2 Graphs Table Odd}
\end{table}
\end{center}

\bibliography{bibliography}
\bibliographystyle{plain}

\end{document}